\newcounter{dummy}
\def\thmref@flush{%
   \ifx\thmref@last\empty\else
      \ifthmref@comma, \thmref@finaltrue\fi \thmref@commatrue
      \thmref@last \ifx\thmref@stack\empty\else s\fi \thmref@num 0
      \let\do\thmref@one \thmref@stack
      \ifcase\thmref@num\or\space and\else\thmref@finaltrue, and\fi
      ~\ref{\thmref@head}\let\thmref@stack\empty\fi}
\def\thmref@one#1{\ifnum\thmref@num>0,\fi
   \space\ref{#1}\advance\thmref@num 1\relax}
\newcommand\myitem[1][]{\item[#1]\refstepcounter{dummy}\def\@currentlabel{#1}}
\newcommand*{\mprime}{\ensuremath{'}}
\renewcommand{\emptyset}{\varnothing}
\newcommand{\E}{\mathbf{E}}
\renewcommand{\P}{\mathbf{P}}
\renewcommand{\Pr}{\P}
\newcommand{\1}{\mathbf{1}}
\newcommand{\f}{\frac}
\newcommand{\SOC}{self-organized criticality }
\newcommand{\SOCperiod}{self-organized criticality. }
\newcommand{\sgn}{\mathrm{sgn}}
\newcommand{\Sig}{\mathsf{Sig}}
\newcommand{\too}{\tikz[baseline=-0.5ex,shorten <=2pt,shorten >=2pt] \draw[-cm double to] (0,0) to (1.3em,0); }
\DeclareMathOperator{\Ber}{Bernoulli}
\DeclareMathOperator{\Bin}{Bin}
\DeclareMathOperator{\Geo}{Geo}
\DeclarePairedDelimiter\abs{\lvert}{\rvert}%
\DeclarePairedDelimiter\floor{\lfloor}{\rfloor}%
\DeclarePairedDelimiter\ceil{\lceil}{\rceil}%
\DeclarePairedDelimiter\ii{\llbracket}{\rrbracket}%
\newcommand{\bigmid}{\;\big\vert\;}
\newcommand{\Bigmid}{\:\Big\vert\:}
\newcommand{\eqd}{\overset{d}=}
\newcommand{\topr}{\xrightarrow[]{\text{prob}}}
\newcommand{\layer}{\mathrm{layer}}
\newcommand{\RR}{\mathbb{R}}
\newcommand{\erase}{\mathsf{GreedyPath}}
\newcommand{\badbox}{\mathsf{BadBox}}
\newcommand{\badboxdelta}{\theta\text{-}\mathsf{BadBox}}
\newcommand{\infectbox}{\theta\text{-}\mathsf{InfectedBox}}
\newcommand{\badcell}{\mathsf{BadCell}}
\newcommand{\ZZ}{\mathbb{Z}}
\newcommand{\Fscr}{\mathscr{F}}
\newcommand{\Blb}{\mathcal{R}}
\newcommand{\NN}{\mathbb{N}}
\newcommand{\instr}{\mathsf{instr}}
\newcommand{\Instr}{\mathsf{Instr}}
\newcommand{\eosos}[1]{\mathcal{O}_{#1}}
\newcommand{\ip}[1]{\mathcal{I}_{#1}}
\newcommand{\s}{\mathfrak{s}}
\newcommand{\critical}{\rho}
\newcommand{\lpcritical}{\rho_*}
\newcommand{\contained}{\mathsf{Contained}}
\newcommand{\widebar}[1]{\overline{#1}}
\newcommand{\ymin}{y^{\min}}
\newcommand{\xmin}{x^{\min}}
\newcommand{\smax}{s^{\max}}
\newcommand{\smin}{s^{\min}}
\newcommand{\sminp}{\tilde{s}^{\min}}
\newcommand{\rt}[2]{\mathcal{R}_{#2}(#1)}
\newcommand{\lt}[2]{\mathcal{L}_{#2}(#1)}
\newcommand{\emax}{e_{\max}}
\newcommand{\infections}{\mathsf{Infections}}
\newcommand{\opositions}{\mathsf{Positions}}
\newcommand{\stable}{\mathsf{Stable}}
\newcommand{\bx}[4][k]{\mathcal{B}^{#2}_{#1}(#3,#4)}
\DeclareSymbolFont{stixletters}{LS1}{stix}{m}{it}
\DeclareMathAccent{\cev}{\mathord}{stixletters}{"91}
\DeclareMathAccent{\vec}{\mathord}{stixletters}{"92}
\DeclareMathAccent{\vecev}{\mathord}{stixletters}{"95}
\newcommand{\greedy}[1][k]{\text{$k$-\erase}}
\newcommand{\crist}[1][\@nil]{%
\def\tmp{#1}%
   \ifx\tmp\@nnil
       \lpcritical
    \else
       \lpcritical^{(#1)}
    \fi}
\newcommand{\infset}[1][\@nil]{
  \def\tmp{#1}%
   \ifx\tmp\@nnil
       \zeta
    \else
       \zeta^{#1}
    \fi}
\newcommand{\altinfset}[1][\@nil]{
  \def\tmp{#1}%
   \ifx\tmp\@nnil
       \widetilde{\zeta}
    \else
       \widetilde{\zeta}^{#1}
    \fi}
\newcommand{\binfset}[1][\@nil]{
  \def\tmp{#1}%
   \ifx\tmp\@nnil
       \cev{\zeta}
    \else
       \cev{\zeta}^{#1}
    \fi}
\newcommand{\Left}{\ensuremath{\mathtt{left}}}
\newcommand{\Right}{\ensuremath{\mathtt{right}}}
\newcommand{\Sleep}{\ensuremath{\mathtt{sleep}}}
\newtheorem{thm}{Theorem}[section]
\newtheorem{lemma}[thm]{Lemma}
\newtheorem{prop}[thm]{Proposition}
\newtheorem{cor}[thm]{Corollary}
\newtheorem*{conjecture*}{Density Conjecture}
\theoremstyle{remark}
\newtheorem{remark}[thm]{Remark}
\theoremstyle{definition}
\newtheorem{define}[thm]{Definition}
\newtheorem{coupling}[thm]{Coupling}
\newtheorem{example}[thm]{Example}
\definecolor{hancolor}{rgb}{0.0 0.0, 1.0}
\newcommand{\critFE}{\critical_{\mathtt{FE}}}
\newcommand{\critDD}{\critical_{\mathtt{DD}}}
\newcommand{\critCY}{\critical_{\mathtt{CY}}}
\newcommand{\critPS}{\critical_{\mathtt{PS}}}
\newcommand{\ManyParticles}{\mathsf{ManyParticles}}
\newcommand{\FewSleepers}{\mathsf{FewSleepers}}
\newcommand{\mo}{\mathfrak{m}}
\begin{document}

 \title{The density conjecture for activated random walk}  

 \author{Christopher Hoffman}
 \address{Christopher Hoffman, Department of Mathematics, University of Washington}
 \email{\texttt{choffman@uw.edu}}
 \author{Tobias Johnson}
 \address{Tobias Johnson, Departments of Mathematics,  College of Staten Island, City University of New York}
\email{\texttt{tobias.johnson@csi.cuny.edu}}

 \author{Matthew Junge}
 	\address{Matthew Junge, Department of Mathematics, Baruch College, City University of New York}
	\email{\texttt{matthew.junge@baruch.cuny.edu}}

\begin{abstract}
  Bak, Tang, and Wiesenfeld developed their theory of \emph{self-organized criticality}
  in the late 1980s to explain why many real-life processes exhibit signs of critical behavior 
  despite the absence of a tuning parameter. 
  A decade later,
  Dickman, Mu\~noz, Vespignani, and Zapperi explained self-organized criticality as
  an external force pushing
  a hidden parameter toward the critical value of a traditional absorbing-state phase transition.
  As evidence, they observed empirically that for various sandpile models,
  the particle density in a finite box under driven-dissipative dynamics
  converges to the critical density of an infinite-volume version of the model.
  We give the first proof of this well-known \emph{density conjecture} in any setting by establishing it
  for activated random walk in one dimension. We prove that two other natural versions
  of the model have the same critical value, further establishing activated random
  walk as a universal model of self-organized criticality.
\end{abstract}
  \maketitle

 \setcounter{tocdepth}{2}
 \tableofcontents

\section{Introduction}

     Many complex systems, such as tectonic plates, snow slopes, and financial markets, seem to be driven to a critical state where minor disturbances can lead to major events---earthquakes, avalanches, and financial crashes. In 1987, Bak, Tang and Wiesenfeld proposed a general explanation for this phenomenon which they called \emph{self-organized criticality} 
     \cite{BakTangWiesenfeld87}. They theorized that the critical state arises without any external tuning, but rather from the steady accumulation and occasional dissipation of energy. Their work was paradigm-shifting and ranked among the most cited papers in physics over the ensuing decade \cite{bak2013nature}.

    The canonical physical example of \SOC is a pile formed by sprinkling sand over a flat table \cite{BakTangWiesenfeld88}. The sandpile grows until it reaches a critical slope and can grow no steeper. The in- and outflow of sand roughly balances in the long run, but sometimes small local additions cause large avalanches. Shortly after \cite{BakTangWiesenfeld87} was published, several experiments and observational studies in nature were conducted and generally found to support the theory of \SOC \cite{held1990experimental, czirok1993experimental, jaeger1989relaxation, knight1995density}.

    Systems exhibiting \SOC are ubiquitous and share many common characteristics. Thus the key features of these systems should not depend on any specific detail of the model, but should be more universal in nature.    In his book \emph{How Nature Works},  Bak wrote on this principle, ``The critical state must be robust with respect to modifications. This is of crucial importance for the concept of self-organized criticality to have any chance of describing the real world. In fact, this is the whole idea."  Bak goes on to describe the search for the simplest model of \SOC \cite{bak2013nature}.
    Though several processes have been proposed as universal models of self-organized criticality
    based on simulations, rigorous proofs of their properties remain elusive.

The proposed models of self-organized criticality typically feature particles on a graph 
that disperse when their density in a region becomes too large. 
Bak, Tang and Wiesenfeld proposed
the \emph{abelian sandpile model} 
which has a deterministic dispersion mechanism \cite{BakTangWiesenfeld87}.
Mathematicians from analysts to combinatorialists to algebraists have explored
the model's rich mathematical structure; one highlight is the work of Levine,
Pegden, and Smart analyzing the model's scaling limit 
  as the viscosity solution of a PDE \cite{PegdenSmart,levine2016apollonian, levine2017apollonian}.
     Unfortunately this rich structure 
     prevents the model from exhibiting the universality that a model of \SOC should have,
     rendering it highly sensitive to the initial configuration and other details of the model
     \cite{fey2010approach, dhar1999abelian, jo2010comment}. 

     Manna proposed a probabilistic variant of the abelian sandpile model that was later named the \emph{stochastic sandpile model} 
     \cite{manna1991two, dickman2002nonequilibrium}. This model has seen some rigorous analysis,
     but focus has mostly shifted to a close variant called
     \emph{activated random walk} (ARW). This is widely believed to be the most promising tractable mathematical model exhibiting \SOCperiod Physicists believe that all such sandpile models with stochastic dynamics
     belong to the \emph{Manna universality class} and have
     the same core behaviors as ARW \cite{lubeck2004universal}.

     ARW can be formulated as an interacting particle system on a graph with active and sleeping  particles. Sleeping particles remain in place, but become active if an active particle moves to their site. Active particles perform simple random walk at exponential rate 1. When an active particle is alone, it falls asleep at exponential rate $\lambda \in (0,\infty)$. A precise description is given in Section \ref{sec:construction}. The cases $\lambda=0$ and $\lambda = \infty$ are known as the frog model and IDLA, respectively \cite{hoffman2017recurrence, jerison2012logarithmic}.

The surveys \cite{dickman2010activated, rolla2020activated, levine2023universality}
outlined many fundamental questions about the model, most of which remain unsolved.
As described in all three surveys, one of the most central open problems in ARW is
known as the \emph{density conjecture}. It claims that finite-volume versions of ARW that
resemble the canonical sandpile on a table described earlier have a limiting critical density;
furthermore, this critical density is universal and coincides with the critical density
of models of ARW with conservative dynamics that do not exhibit self-organized criticality but
rather have a traditional phase transition.
This conjecture is a crucial step along the path to showing that ARW exhibits the universality that Bak called ``the whole idea" of self-organized criticality. 
In this paper, we prove the density conjecture in full for ARW in one dimension,
the first rigorous confirmation that ARW actually organizes itself into a critical state.
Our proof develops new tools that have the potential to solve other major
problems about ARW and related models.
    
\subsection{The density conjecture}

    Dickman, Mu\~{n}oz, Vespignani, and Zapperi    
    gave an explanation for self-organized criticality, expounded in \cite{dickman2000paths}.
    According to their theory, when a model exhibits self-organized criticality, it will
    have a variant with a conventional phase transition in some explicit parameter. 
    The original model has a hidden version of the parameter, and
    self-organized criticality occurs because it is driven by some external force to the critical
    value for the conventional phase transition. This theory is now widely accepted by physicists.
    
    In the context of ARW,
    the conventional phase transition occurs in the \emph{fixed-energy} version
    of the system, which takes place on a torus or infinite lattice with no boundary.
    Its particle dynamics are conservative, meaning that particles are not created or destroyed. 
    The particle density is an explicit parameter of the initial distribution and remains
    constant in time. The model undergoing self-organized criticality is the \emph{driven-dissipative}
    version of ARW. Here, the process runs on a finite graph with particles killed at the boundary. Eventually, it reaches an absorbing state $\xi_1$, a configuration in which all particles are sleeping. Then a new particle is added and the system runs again until it reaches its next absorbing state $\xi_2$, and so on. The sequence of absorbing states $\xi_1,\xi_2,\hdots$ is called the driven-dissipative Markov chain; its state space consists of stable configurations. This chain is mixing on any given finite graph, and so it has a unique stationary distribution that is the distributional limit of $\xi_n$.    

Dickman et al.\ made several claims in support of their theory, originally
for the stochastic sandpile model but in subsequent publications for ARW as well. 
First, they claimed that the fixed-energy model has a traditional absorbing-state phase transition parametrized by the density of the system: for small density, all particles eventually sleep forever, while for large density activity continues at all sites forever. Both phases are nontrivial, i.e., the critical density $\critFE$ is strictly between 0 and 1 on a $d$-dimensional lattice. Next, Dickman et al.\ claimed that the dynamics of the driven-dissipative
model push its density to $\critFE$, in the following sense:

\begin{conjecture*}[\cite{dickman2000paths}] The mean density of the invariant distribution
  for the driven-dissipative system on a $d$-dimensional box of width $n$ converges as $n \to \infty$ 
  to some value $\critDD$, which coincides with 
  $\critFE$.
\end{conjecture*}

Dickman et al.\ supported these claims with numerical evidence from simulations.
Mathematicians were inspired to study activated random walk, and starting in the 2010s significant progress
was made on the model, primarily on the conjecture
that $0 < \critFE < 1$ for the fixed-energy model on infinite lattices.
This line of research has been a great success, as we will discuss
in Section~\ref{sec:prior.results}, with both upper and lower bounds recently established in all dimensions.
But this work has not translated into progress toward the density conjecture.
One obstacle is that few results have applied to the driven-dissipative model;
another is that most of the fixed-energy results prove fixation or nonfixation far from the critical
density.
The density conjecture has remained a distant goal, 
despite being the main basis for the belief in ARW as a universal model of
self-organized criticality.


\subsection{Main result}
\label{sec:results}
We prove the density conjecture in one dimension, showing that the driven-dissipative model naturally
drives itself to the critical density of the fixed-energy model on the line.
We further show that the fixed-energy model on the cycle has a phase transition at this same critical
density, and that a large number of particles originating at a single site naturally spread out
until they reach this critical density. Our proof that these critical densities coincide
for all four models provides strong evidence for activated random walk as a universal
model for self-organized criticality, and it begins the investigation into its critical state.

The four models of ARW we consider are defined as follows. All the models
and all the critical densities depend on the sleep rate parameter $\lambda>0$. 
\begin{description}
    \item[Driven-dissipative model] The ARW dynamics take place on $\ii{0,n} :=\{0,\dots,n\}$ with sinks at $-1$
      and $n+1$. At each step of the chain one active particle
      is added and then the system evolves until only sleeping particles remain in $\ii{0,n}$.
      This produces a Markov chain whose state space consists of configurations of sleeping
      particles on $\ii{0,n}$. The active particle may be added uniformly at random or at a fixed site,
      with no effect on the stationary distribution of the chain \cite{levine2021exact}.  
    We let $S_{n}$ be the number of sleeping particles left in $\ii{0,n}$ in a sample from the stationary distribution.  Define the limiting expected density of sleeping particles
    $$\critDD:= \lim_{n \to \infty}\f{\E[S_{n}] }{n+1},$$ should it exist.

    \item[Point-source model]
    The process starts with $N$ active particles at the origin of $\mathbb Z$ and no particles elsewhere. It continues until all $N$ particles are sleeping. 
    Let $L_N$ be length of the shortest interval containing all the sleeping particles after stabilization. 
    Define the limiting expected density of sleeping particles
    $$\critPS:=\lim_{N \to \infty}\E\biggl[ \f N {L_N}\biggr],$$
    should it exist.
    
    \item[Fixed-energy model on $\ZZ$] This version takes place on $\mathbb Z$ with the initial number of active particles at each site chosen according to some stationary distribution with mean $\rho$. The system \emph{fixates} if, 
      under the ARW dynamics, each particle eventually sleeps forever.
    Rolla, Sidoravicius, and Zindy proved that there exists a critical value $\critFE$ such that if $\rho > \critFE$ then the system a.s.\ does not fixate, and if $\rho < \critFE$ then the system fixates a.s.\ \cite{rolla2019universality}.
    \item[Fixed-energy model on the cycle] The underlying graph in this model is the cycle of length~$n$. Start with $\lfloor \rho n\rfloor$ particles placed uniformly at random and run the process until all particles are sleeping. Let $\tau_n$ be the total number of jump and sleep instructions followed by all of the particles.    Should it exist, 
      $\critCY$ is the density such that
      that for some $b,c>0$, we have  $\P(\tau_n > e^{c n}) \to 1$ if $\rho>\critCY$
      and $\P(\tau_n<n^b)\to 1$ if $\rho< \critCY$.
\end{description}

Prior to this work only $\critFE$ was even known to exist.
The density conjecture is the statement that $\critDD$ exists
and is equal to $\critFE$. Physicists sometimes view $\ZZ$
as the limiting case of the cycle and instead state the density conjecture
as the existence and equality of $\critDD$ and $\critCY$, as in \cite{DickmanVespignaniZapperi98}.
In any event, we settle the conjecture in dimension one.

\begin{thm}\thlabel{thm:universal}
For each $\lambda>0$, the critical densities
$\critDD$, $\critPS$, $\critFE$, and $\critCY$ exist and are equal.
\end{thm}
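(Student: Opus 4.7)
The plan is to establish a chain of inequalities among the four critical densities, using the already-proved existence of $\critFE$ as the reference point. Specifically, I would show that $\critPS$ and $\critDD$ exist with $\critPS = \critDD = \critFE$, and separately that $\critCY = \critFE$, which together give the theorem. The key tools are the abelian property of ARW (the final stable configuration and per-site odometer do not depend on firing order), couplings of instruction sequences across the four models, and the already-established characterization of $\critFE$ via fixation versus non-fixation of stationary inputs on $\ZZ$.

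For the point-source model, fix $N$ particles at the origin and let $\eta_N$ denote the final stable configuration, supported on an interval of length $L_N$. The lower bound $\critPS \geq \critFE$ reduces to showing that if the density $N/L_N$ were below $\critFE - \epsilon$ with positive probability, one could couple $\eta_N$ into a block of a stationary fixed-energy configuration at some density $\rho < \critFE$; adding a small cloud of active particles on top still fixates by subcriticality, so the $N$ particles would have to sleep before visiting $L_N$ distinct sites, a contradiction. The matching upper bound $\critPS \leq \critFE$ uses supercriticality: a density above $\critFE + \epsilon$ realized as a sleeping configuration would be unstable once perturbed, but $\eta_N$ is already stable by construction.

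For the driven-dissipative model the same two-sided comparison must be carried out, but it is subtler because the stationary configuration lives in a finite box with sinks at the boundary. Using the abelian property, I would realize the stationary distribution as the stabilization of many particles accumulated in the box and couple it both to the point-source process (by concentrating the incoming mass at one site) and to a fixed-energy stationary measure (by removing the sinks and comparing odometers). A Borel--Cantelli / ergodic argument on local empirical densities then pins $\critDD$ to the common value $\critFE$. For the cycle, one uses that below $\critFE$ a cycle of length $n$ with $\rho n$ particles mimics a chunk of subcritical $\ZZ$-ARW and stabilizes in time polynomial in $n$, while above $\critFE$ non-fixation on $\ZZ$ propagates to the cycle and forces exponential stabilization time via a standard finite-to-infinite comparison.

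The main obstacle I anticipate is the upper bound $\critDD \leq \critFE$. Showing that the driven-dissipative chain cannot maintain density above $\critFE$ in stationarity requires a quantitative form of supercriticality: not merely that $\rho > \critFE$ configurations on $\ZZ$ do not fixate, but that they systematically leak mass to the boundary of a finite box at a definite rate. One must translate this mass-flux statement into a drift on the number of particles in the driven-dissipative Markov chain and rule out metastable stationary densities exceeding $\critFE$, all without an explicit description of the invariant measure. I expect this to require genuinely new structural tools in one dimension, most likely a quantitative odometer or flux estimate showing that any stable configuration of density exceeding $\critFE$ becomes unstable after a single particle addition, together with a careful finite-volume monotonicity argument tying the two-sided bounds together.
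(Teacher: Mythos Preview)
Your proposal has a genuine gap that you partly anticipate but underestimate: the qualitative existence of $\critFE$ (fixation for $\rho<\critFE$, non-fixation for $\rho>\critFE$ on $\ZZ$) does not by itself yield the quantitative finite-volume density bounds you need, and several of your coupling sketches do not go through as stated.

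Consider your argument for $\critPS\leq\critFE$. You write that a density above $\critFE+\epsilon$ ``realized as a sleeping configuration would be unstable once perturbed, but $\eta_N$ is already stable by construction.'' This is not a contradiction. A configuration of \emph{sleeping} particles is stable regardless of its density; the definition of $\critFE$ concerns whether \emph{active} particles at a given density fixate. Non-fixation of i.i.d.\ active particles at density $\rho>\critFE$ says nothing about whether a particular stable configuration at that density can arise from point-source dynamics. The same issue undermines the lower bound: subcritical fixation on $\ZZ$ tells you that particles eventually sleep, but gives no control over how far they spread before doing so, which is exactly what $L_N$ measures. Your proposed coupling of $\eta_N$ into a stationary block is also problematic because $\eta_N$ is a specific (instruction-dependent) configuration, not a sample from any stationary law compatible with the RSZ universality result.

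The paper takes an entirely different route: it introduces a new process, \emph{layer percolation}, defines its own critical growth rate $\crist$, and proves directly that each of the four densities equals $\crist$. The correspondence between stable odometers on $\ii{0,n}$ and infection paths in layer percolation (Proposition~\ref{bethlehem}) translates the question ``how many particles can a stable odometer leave sleeping?'' into ``what is the highest row reached by an infection path?'' Superadditivity gives the existence of $\crist$, the box lemma (Lemma~\ref{lem:box}) provides the constructive lower bound, and an exponential upper-tail bound (Proposition~\ref{thm:large.deviations}) supplies the uniform nonexistence of high-density stable odometers (Theorem~\ref{peanut butter}). This last result is precisely the ``quantitative odometer or flux estimate'' you identify as missing; the paper's point is that obtaining it requires the layer-percolation machinery rather than direct couplings among the four models.
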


We in fact prove stronger versions of this result, including exponential concentration bounds around
the critical value for the finite-volume models and statements about the driven-dissipative model
from arbitrary starting configurations; see Sections~\ref{sec:peanut.butter} and
\ref{sec:final.density.proofs}.
We further note that the celebrated bounds $\critFE>0$ and $\critFE<1$ (for small $\lambda$), first proven
in \cite{rolla2012absorbing} and \cite{BasuGangulyHoffman18}, are nearly trivial consequences
of our approach, as we describe in Section~\ref{sec:critical.bounds}.

Key to our analysis is a new stochastic process we develop called \emph{layer percolation}.
ARW and other abelian models are typically studied via their odometer functions, which count the number
of times a particle moves or falls asleep at each site.
The odometer function describing the behavior of the system is the minimal element of a wider class
of feasible odometer functions. Our innovation is to represent these odometer functions
as paths  of infections in a $(2+1)$-dimensional oriented infection process.
The growth rate of the infection path along one of the dimensions corresponds to the density of
particles left sleeping by the odometer function. The set of infected sites
in layer percolation has a limiting growth rate $\crist$ along this dimension, and
we prove \thref{thm:universal} by showing that each of the four critical densities is equal
to $\crist$. 
We believe that our approach has the potential to solve many of the remaining problems about ARW 
described in \cite{dickman2010activated, rolla2020activated, levine2023universality}.

\subsection{Prior results}
\label{sec:prior.results}
The bulk of mathematical research on ARW has been on the fixed-energy model
on infinite lattices, and more specifically on showing $0<\critFE<1$ in all dimensions. 
The first major result on ARW was Rolla and Sidoravicius's proof of the lower bound for $d=1$ \cite{rolla2012absorbing}. The upper bound for $d=1$ proved more difficult and was established first only for
sufficiently small values of the sleep parameter $\lambda$ 
\cite{BasuGangulyHoffman18} and then later for all choices of $\lambda$ \cite{HoffmanRicheyRolla20}. 
Both sides of the bound are now known for $d\geq 2$
by the collective efforts of
\cite{SidoraviciusTeixeira17,StaufferTaggi18,hu2022active,forien2022active,asselah2022critical}. Additionally, \cite{rolla2019universality} proved that $\critFE$ is the same for fixed-energy ARW on $\ZZ^d$ with any ergodic initial configuration of active particles, bolstering ARW's candidacy as a model of self-organized criticality.

The other models of ARW considered in this paper have been studied as well, though the literature on them
is smaller.
The fixed-energy model on a cycle of length~$n$ or a torus of width~$n$
has been shown to have two phases,
with fixation occurring either in polynomial or exponential time
\cite{BasuGangulyHoffmanRichey19, forien2022active,asselah2022critical}, but with
no proof that the transition is sharp or that it coincides with $\critFE$.
The point-source model is studied only in \cite{levine2021source}, which relates
the density of the model after stabilization to $\critFE$ and $\critDD$, without proving
existence of $\critDD$ or $\critPS$.

For the driven-dissipative model, there are also few results.
The model shows up implicitly in criteria for proving fixation or activity of the fixed-energy model
like \cite[Proposition~3]{RollaTournier18}.
The mixing time of the driven-dissipative model is studied
in \cite{levine2021source, BS}. This work is relevant to universality
of ARW---fast mixing suggests universality because the chain forgets its initial configuration quickly---but
 its main technique is to approximate ARW with internal 
diffusion-limited aggregation (the $\lambda=\infty$ case of ARW), which gives away too much to
be of use in proving the density conjecture.
The strongest result on the driven-dissipative process comes in the recent paper
\cite{nicolas2024macroscopic} by Forien:
starting with initial density $\rho > \critFE$ of active particles
on an interval of length~$n$, 
with probability bounded from 0 the final density on the interval after stabilization is strictly less than 
$\rho$ with positive probability. This result is the first to connect 
$\critFE$ to the driven-dissipative model in a significant way, and it holds
arbitrarily close to the critical density. Our paper proves the stronger result that
for any $\epsilon>0$, the final density after stabilization is less than $\critFE+\epsilon$
with probability converging exponentially to $1$.

\subsection{Proof sketch}

To prove the density conjecture, we show that the critical densities for all four models
are equal to a constant $\crist=\crist(\lambda)$ to be defined in terms of our layer percolation process.
Let us start with the driven-dissipative model. Levine and Liang proved that if we stabilize
ARW on the interval $\ii{0,n}$ starting with one active particle on each site, we obtain
an exact sample from the invariant distribution of the driven-dissipative Markov 
chain \cite[Theorem~1]{levine2021exact}. Thus, to prove that $\critDD=\crist$,
we seek to show that the number of sleeping particles on $\ii{0,n}$ after stabilization
is within $(\crist-\epsilon)n$ and $(\crist+\epsilon)n$ with high probability.

\subsubsection*{Stable odometers}

As is typical, we use the \emph{sitewise construction} of ARW. Each site contains
a stack of instructions telling particles to move or sleep when present on the site.
The final state of the system after stabilization
can be determined solely from the \emph{odometer},
the function counting the number of instructions executed at each site.
According to the \emph{least-action principle},
the true odometer is the minimal element of a larger collection we call the \emph{stable odometers},
which if executed would produce feasible flows of particles leading to a stable configuration.
Hence, the number of sleeping particles left on $\ii{0,n}$ by the true odometer is the most
of any stable odometer. This background material is covered in Section~\ref{sec:sitewise}.

Thus, if we can construct \emph{any} stable odometer leaving
$(\crist-\epsilon)n$ particles on $\ii{0,n}$, then we obtain our lower bound
on particle density for the driven-dissipative model.
Conversely, we can obtain the upper bound by showing the nonexistence of any stable odometer
leaving more than $(\crist+\epsilon)n$ particles.
Broadly speaking, constructing an odometer and applying the least-action
principle is the idea used in fixation results like \cite[Theorem~2]{rolla2012absorbing}.
Similarly, some existing nonfixation results
proceed by showing the nonexistence of stable odometers that leave many sleeping particles
(see \cite[Section~6]{BasuGangulyHoffman18}, for example).

\subsubsection*{Embedding stable odometers in layer percolation}

The main innovation of our paper is a stochastic process called \emph{layer percolation}
that helps us understand the set of stable odometers.
The process can be thought of as a sequence $(\zeta_k)_{k\geq 0}$ of subsets of $\NN^2$.
We think of a point $(r,s)\in\zeta_k$ as a \emph{cell} in column~$r$ and row~$s$ 
at step~$k$ of layer percolation that has been \emph{infected}.
At each step, every cell infects cells in the next step at random; the set $\zeta_{k+1}$
consists of all cells infected by a cell in $\zeta_k$.
The infections are defined in terms of the random instructions from the sitewise representation
of layer percolation. Each stable odometer on $\ii{0,n}$ is embedded in
layer percolation as
an \emph{infection path}, a chain of infections ending at some cell $(r,s)\in\zeta_n$.
Under this correspondence, the ending row $s$ of the infection path
is equal to the number of particles that the odometer leaves sleeping on the interval.
We define layer percolation and give this correspondence in Section~\ref{sec:LP.ARW}.

\subsubsection*{Analysis of layer percolation}

Since the number of particles left sleeping by a stable odometer on $\ii{0,n}$ corresponds
to the ending row of an infection path in layer percolation, and the true odometer maximizes
the number of particles left sleeping out of all stable odometers, 
the question turns to the maximum row present
in the set $\zeta_n$ of infected cells. Using properties of regularity and superadditivity present
in layer percolation but not in ARW, we show that this maximum row 
grows linearly and that its growth rate converges to a deterministic constant we call $\crist$
(see \thref{prop:subadditive}). 
Thus it is likely that layer percolation contains
infection paths ending in row $(\crist-\epsilon)n$ but no infection paths ending in row $(\crist+\epsilon)n$.

\subsubsection*{Back to odometers}
The existence of infection paths ending in row $(\crist-\epsilon)n$ but not 
in row $(\crist+\epsilon)n$ should translate back to the existence of stable
odometers leaving as many as $(\crist-\epsilon)n$ but no more
than $(\crist+\epsilon)n$ particles sleeping on $\ii{0,n}$, which would
complete the proof that $\critDD=\crist$.
But the correspondence between stable odometers in ARW and infection paths in layer percolation
is not as neat as we have made it out to be.
A full accounting of these complications is best given later,
but the summary is that they force us to prove a stronger version of
the statement that layer percolation contains
infection paths ending in row $(\crist-\epsilon)n$ but not in row $(\crist+\epsilon)n$.
In Section~\ref{sec:box}, we prove that there likely exist infection paths ending 
in row $(\crist-\epsilon)n$ in any column in a wide, deterministic range.
This predictable behavior helps us construct stable odometers from infection paths,
and it also allows us to prove that the probability
of infection paths ending at row $(\crist+\epsilon)n$ not only vanishes but does
so at exponential rate, the subject
of Section~\ref{sec:upper.bound}.

\subsubsection*{The other three models of ARW}

Similar techniques can be used in all models of ARW we consider. For example,
in the point-source model we consider an initial configuration of $N$ particles at the
origin rather than one particle everywhere; but the correspondence with layer percolation works
in the same way regardless of the initial configuration. Several of the bounds
are transferred from one model to another without applying layer percolation again.
These final proofs are given in Section~\ref{sec:critical.values}.

\section{Odometers} \label{sec:construction}

In Section~\ref{sec:sitewise}, we present background material on the sitewise construction of activated
random walk. In this representation of the process, particles move about the graph
and fall asleep according to stacks of random instructions at each site, executed at random times.
Because the process then obeys an abelian property---executing the same instructions in different
orders results in the same stable configuration---this representation works well for determining
the final state of the system when run until stabilization, and it is used in nearly
all work on activated random walk. We mostly follow \cite{rolla2020activated}, and we have
tried to leave as many technicalities aside as possible.

In Section~\ref{sec:stability}, we describe what we mean by a \emph{stable odometer}
and prove the \emph{least-action principle}, which states that the true odometer
resulting from running activated random walk until all particles stabilize is the minimal
stable odometer. 

\subsection{The sitewise construction}\label{sec:sitewise}

An activated random walk \emph{configuration} is a placement of particles on a set of sites $V$;
particles can be sleeping or active, but multiple particles on the same site must all be active.
We represent a configuration as an element of $\{\s,0,1,2,\ldots\}^V$, where $\s$ represents a single
sleeping particle and a natural number represents that quantity of active particles.
We define $\abs{\s}=1$ so that we can write
$\abs{\sigma(v)}$ for a configuration $\sigma$ 
to refer to the number of particles, sleeping or active, at $v$.
We say that a configuration $\sigma$ is \emph{stable on $U$} for $U\subseteq V$ if $\sigma(v)\in\{0,\s\}$
for all $v\in U$, and we call it \emph{stable} if it is stable on all sites $V$.

Activated random walk is a continuous-time Markov chain in which active particles
jump to neighbors at rate~$1$ according to some given set of transition probabilities
and when alone on a site fall asleep at rate~$\lambda$ for a given parameter $0<\lambda<\infty$.
See \cite[Sections~2 and 11]{rolla2020activated} for further formalities and existence results for the Markov chain.
We specialize now to the case of 
nearest-neighbor symmetric random walk on $\ZZ$ or a subinterval, which is all we consider in this paper,
but all of Section~\ref{sec:construction} generalizes in an obvious
way to arbitrary graphs and random walk transition probabilities.

As is typical, we analyze activated random walk via its \emph{sitewise representation}, for which
we follow the approach of \cite[Section~2.2]{rolla2020activated}.
Assign each site $v$ a list of \emph{instructions}, which in our one-dimensional setting
consist of the symbols \Left, \Right, and \Sleep. We write $\Instr_v(k)$ to denote the $k$th instruction
at site~$v$, and we take $\bigl(\Instr_v(k),\ k\geq 1\bigr)$ to be i.i.d.\ with
\begin{align}\label{eq:LRS.distribution}
  \Instr_v(k)=\begin{cases} \Left&\text{with probability $\frac{1/2}{1+\lambda}$,}\\
    \Right&\text{with probability $\frac{1/2}{1+\lambda}$,}\\
    \Sleep&\text{with probability $\frac{\lambda}{1+\lambda}$.}
  \end{cases}
\end{align}
At a site with at least one active particle, it is \emph{legal} to execute the next
unexecuted instruction from its list, which is called \emph{toppling} the site.
To execute a \Left\ (resp.\ \Right) instruction at a site~$v$ with an active particle,
we subtract $1$ from the configuration at $v$ and add $1$ at
$v-1$ (resp.\ $v+1$), interpreting $\s+1$ as $2$.
To execute a \Sleep\ instruction at a site~$v$ with an active particle, we alter the configuration
at $v$ to $\s$ if it is currently $1$ and make no change if it is $2$ or more.
If at each site~$v$ we execute instructions from its list
at rate equal to $1+\lambda$
times the number of active particles at $v$, the resulting Markov chain is activated random walk
as defined previously.
The advantage of the sitewise representation is that the long-term
state of the system is determined solely by the initial configuration and list of instructions, with no role
played by the timing or order of executions. To make this statement precise, fix the instruction lists
and the initial configuration and define the
\emph{odometer} for any sequence of legal topplings
as the function $u\colon \ZZ\to\NN$ where $u(v)$ gives the number of topplings of vertex~$v$.
(Note that we will generalize this notion of odometer in Section~\ref{sec:stability}.)
For any finite set $V\subseteq\ZZ$, we say that a finite sequence of topplings \emph{stabilizes}
$V$ if it leaves a stable configuration
on $V$ (i.e., all active particles in $V$ are left
sleeping or are driven out of $V$). While we can stabilize $V$ in more than one order,
in the end we arrive at the same place:
\begin{lemma}[{\cite[Lemma~2.4]{rolla2020activated}}]\thlabel{lem:abelian}
  All sequences of topplings within a finite set $V$ that stabilize $V$ have the same odometer.
\end{lemma}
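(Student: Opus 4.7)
The plan is a two-step argument built around the abelian (``diamond'') property.

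First, I would establish local confluence: for any configuration $\sigma$ and any two sites $v_1, v_2$, if both $(v_1, v_2)$ and $(v_2, v_1)$ are legal toppling sequences from $\sigma$, they produce identical configurations and advance the same instructions at each site. This reduces to case analysis on $v_1$ and $v_2$. The cases $v_1 = v_2$ and $\lvert v_1 - v_2 \rvert > 1$ are trivial, so the interesting case is neighboring sites. Among those, two \Left/\Right\ instructions commute since each one simply moves one particle between specified sites. The real subtlety is \Sleep: one must check by direct inspection that, say, a \Sleep\ at $v_1$ followed by a \Right\ at $v_2 = v_1 - 1$ yields the same result as the opposite ordering. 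This works out because \Sleep\ is a no-op whenever multiple active particles are present, and in either ordering the site $v_1$ ends with the same total number of particles (its original one plus the one arriving from $v_2$), all active.

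Second, I would globalize via induction. Given legal sequences $\alpha$ and $\beta$ of topplings within $V$ that both stabilize $V$, induct on $\abs{\alpha} + \abs{\beta}$. If $\alpha$ begins with a toppling at a site $v$, then $v$ is active in the initial configuration. Topplings at sites other than $v$ can only add particles to $v$, never remove them, and no \Sleep\ at $v$ has yet been consumed. Hence $v$ remains active throughout the prefix of $\beta$ up to $\beta$'s first toppling of $v$, say at position $k$. I would then apply the local lemma iteratively to swap that toppling leftward one step at a time; each swap is legal because $v$ is active just before the swap (by the preceding sentence) and the toppling at position $k-1$ is still legal after toppling $v$ first (a toppling at $v$ cannot remove particles from a neighboring site). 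Each swap preserves the odometer and the final configuration. After the swap, both sequences begin with a toppling at $v$; deleting it from each gives shorter legal sequences from a common new starting configuration that both still stabilize $V$, and the inductive hypothesis finishes the argument.

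The main obstacle is precisely the \Sleep\ instruction, whose effect depends on the current configuration and not on the instruction alone. This is what prevents the diamond lemma from being a purely combinatorial statement about permuting instruction executions and forces the configuration-by-configuration verification in step one. Once that local lemma is secured, the global induction is comparatively routine, since the monotonicity observation---topplings at other sites never decrease particle counts, so an initially active site stays active until it is itself toppled---gives a clean handle for shifting topplings into a canonical order.
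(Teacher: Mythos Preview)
Your argument is correct. The paper does not prove this lemma directly---it is cited from \cite{rolla2020activated}---but the Remark immediately following \thref{lem:lap} outlines a genuinely different derivation. There one first proves the least-action principle in the form ``every odometer obtained from a legal stabilizing sequence in $V$ is $\leq$ every weakly stable odometer $u'$ on $V$,'' which does not require \thref{lem:abelian}: run the given stabilizing sequence and stop at the first moment some site $v$ has already been toppled $u'(v)$ times and is about to be toppled again; the partial odometer $w$ then satisfies $w\leq u'$ and $w(v)=u'(v)$ with $v$ unstable, and the inequality computation in the proof of \thref{lem:lap} contradicts weak stability of $u'$. Since any stabilizing odometer is itself stable and hence weakly stable, two such odometers $u,\tilde u$ satisfy $u\leq\tilde u$ and $\tilde u\leq u$.

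Your diamond-lemma route is the classical direct proof and makes the commutation visible at the level of individual topplings; its cost is the \Sleep\ case analysis and the swap induction. The paper's route replaces all of that with a single monotonicity argument and has the bonus of delivering the least-action principle simultaneously, which is the tool the paper actually uses downstream.
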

If this odometer exists, we call it the \emph{true odometer stabilizing $V$}.
It is a simple corollary, to be discussed in the next section (see \eqref{eq:initbalance}),
that all sequences of topplings in $V$ that stabilize $V$ result in the same stable
configuration.
We call this configuration the \emph{stabilization} of the initial configuration on $V$.
It is also easy to see that with random instruction lists as defined previously, the true odometer
stabilizing $V$ exists almost
surely. We can always find it by toppling
sites in $V$ in any order until all particles have fallen asleep or left $V$.
We can think of this procedure as piling up active vertices on the outer boundary of $V$
or as stabilization with sinks on the outer boundary of $V$.

\subsection{Stable odometers and the least-action principle}\label{sec:stability}
Generalizing the odometers derived from sequences of topplings in the previous section,
we call any function $u\colon\ZZ\to\NN$ an \emph{odometer}.
We say that $u$ is an \emph{odometer on $V\subseteq\ZZ$} if it is zero everywhere off $V$.
Whether or not an odometer arises from a sequence of topplings,
we still think of it as indicating a quantity of instructions executed at each site.
For an odometer $u$, we write $\lt{u}{v}$ and $\rt{u}{v}$ to refer to the number of \Left\ 
and \Right\ instructions, respectively, executed at site~$v$; that is,
\begin{align*}
  \lt{u}{v} = \sum_{i=1}^{u(v)}\1\{\Instr_v(i)=\Left\} \quad\quad\text{and}\quad\quad
  \rt{u}{v} = \sum_{i=1}^{u(v)}\1\{\Instr_v(i)=\Right\}.
\end{align*}

If $u$ is an odometer derived from a sequence of topplings starting from an initial configuration $\sigma$, 
the number of particles ending at site~$v$ is
\begin{align}\label{eq:initbalance}
  \abs{\sigma(v)} + \rt{u}{v-1} + \lt{u}{v+1} - \lt{u}{v} - \rt{u}{v}.
\end{align}
The terms $\rt{u}{v-1}$ and $\lt{u}{v+1}$ count the number of times a particle arrives at $v$,
while $\lt{u}{v}$ and $\rt{u}{v}$ count the number of times a particle departs it.
For the true odometer stabilizing $V$, (a) the quantity \eqref{eq:initbalance} is equal to $0$
or $1$ at each site $v\in V$, and (b) it is equal to $1$ if and only if the final instruction
executed at $v$ is \Sleep.
For a general odometer $u$ that need not correspond to any sequence of legal topplings,
we still think of \eqref{eq:initbalance} as the number of particles left on $v$
by the odometer. With this interpretation in mind, we abstract (a) and (b) for a general
odometer as follows:
\begin{define}\thlabel{def:stable}
  Let $u$ be an odometer on $\ZZ$ and 
  let $\sigma$ be an ARW configuration with no sleeping particles.
  We call $u$ \emph{stable on $V$} for the initial configuration $\sigma$ and 
  instructions $\bigl(\Instr_v(i),\,v\in \ZZ,\,i\geq 1\bigr)$ if for all $v\in V$,
  \begin{enumerate}[(a)]
    \item $h(v) := \sigma(v)+\rt{u}{v-1} + \lt{u}{v+1} - \lt{u}{v} - \rt{u}{v} \in\{0,1\}$;\label{i:balance}
    \item $h(v)=1$ if and only if $\Instr_v(u(v))=\Sleep$.
      \label{i:end.in.sleep}
  \end{enumerate}
  We call $u$ \emph{weakly stable on $V$} if for all $v\in V$ it
  satisfies \ref{i:balance} and
  \begin{enumerate}
    \myitem[(b\mprime)] if $h(v)=1$ then $\Instr_v(u(v))=\Sleep$.\label{i:weak.end.in.sleep}
  \end{enumerate}
  For any odometer $u$ stable or weakly stable on $V$, we say that
  $\sum_{v\in V}h(v)$ is the quantity of particles \emph{left on $V$ by $u$}.
\end{define}
The true odometer stabilizing $V$ is always stable on $V$.
Note that this relies on our assumption that $\sigma$ contains no sleeping particles here, since if it did
the true odometer might execute zero instructions at a site starting with a sleeping particle.
We could drop this assumption by modifying \ref{i:end.in.sleep} to allow this, but we have no need
to consider such initial configurations in this paper.

There exist other stable odometers besides the true stabilizing odometer. Some of them can be obtained
if we permit the toppling of sleeping particles: these are the
\emph{acceptable} but non-legal topplings of \cite{rolla2020activated}.
But there are even more stable odometers beyond these, which can be thought of as 
arising from sequences of topplings with
particle counts permitted to be negative, say by carrying out all topplings indicated by the odometer
except any terminating \Sleep\ instructions in any order, and then executing the final \Sleep\ instruction.
But as commented in \cite[Section~2.2]{rolla2020activated}, the abelian property fails if instructions
can be executed with no particles present, and hence it is unwise to consider toppling sequences
along these lines. We instead view the stable odometers as the fundamental objects,
and we do not associate them with any sequences of topplings.

We now give our least-action principle, stating that the true stabilizing odometer is minimal among
all weakly stable odometers. This least-action principle differs slightly from
the most commonly cited \cite[eq.~(2.3)]{rolla2020activated}, which states
that the true odometer is minimal among all odometers obtained from sequences
of acceptable topplings, but it is not really new. It is the same
as the ``strong form of the least action principle'' used in \cite{FeyLevinePeres10} for the abelian
sandpile model, and it could also be obtained from the very general
framework in \cite{BondLevine16}.
We give a direct proof here since it is not complicated to do so.

\begin{lemma}[Least-action principle]\thlabel{lem:lap}
  Let $u$ be the true odometer stabilizing finite $V\subseteq\ZZ$ with given instructions and initial configuration
  with no sleeping particles.
  Let $u'$ be an odometer on $\ZZ$ that is weakly stable on $V$ for the same instructions and 
  initial configuration.
  Then
  \begin{align*}
    u(v)\leq u'(v)
  \end{align*}
  for all $v\in V$.
\end{lemma}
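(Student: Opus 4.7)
The plan is to construct a legal toppling sequence inside $V$ whose odometer $t$ satisfies $t(v)\leq u'(v)$ pointwise and which stabilizes $V$. Then by \thref{lem:abelian}, $t$ equals the true odometer $u$, and $u(v)\leq u'(v)$ for every $v\in V$. The toppling rule is simple: starting from the initial configuration $\sigma$, whenever there exists some $v\in V$ carrying an active particle with $t(v)<u'(v)$, topple $v$ once and update $t$. Since the total number of topplings is bounded above by $\sum_{v\in V} u'(v)<\infty$, the procedure halts after finitely many steps, leaving a final odometer $t\leq u'$ supported on $V$.

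The core step is to show that at the halt the configuration is stable on $V$. Suppose not: some $v^*\in V$ still supports an active particle while $t(v^*)=u'(v^*)$. Because every toppling performed was legal, the particle count at $v^*$ equals
\begin{align*}
n(v^*) = \sigma(v^*) + \rt{t}{v^*-1} + \lt{t}{v^*+1} - \lt{t}{v^*} - \rt{t}{v^*}.
\end{align*}
Using $t(v^*)=u'(v^*)$ and $t\leq u'$ on neighbors, and the fact that $\mathcal{R}_v$ and $\mathcal{L}_v$ are nondecreasing in the odometer, we obtain $n(v^*)\leq h(v^*)\leq 1$ by weak stability of $u'$ at $v^*$. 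Since an active particle is present, $n(v^*)\geq 1$, so $n(v^*)=h(v^*)=1$, and by \ref{i:weak.end.in.sleep} the instruction $\Instr_{v^*}(u'(v^*))$ is $\Sleep$.

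The contradiction comes from what occurred at $v^*$ just after this final $\Sleep$ was executed. Either (i) exactly one active particle was present and the $\Sleep$ put it to sleep, or (ii) two or more active particles were present and the $\Sleep$ had no effect. In both cases the toppling budget at $v^*$ is now spent, so no further toppling of $v^*$ ever occurs and the particle count at $v^*$ is nondecreasing afterward. In case (i), to see an active particle at $v^*$ at termination at least one neighboring arrival must have woken the sleeper, yielding at least two particles; in case (ii) the count was already at least two. Either way $n(v^*)\geq 2$, contradicting $n(v^*)\leq 1$.

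Hence the halted $t$ is the odometer of a legal toppling sequence that stabilizes $V$, and \thref{lem:abelian} gives $t=u$, completing the proof. The main obstacle is the case analysis at $v^*$: weak stability only pins down the \emph{last} instruction executed at $v^*$ under $u'$, not the instantaneous configuration there, so one has to combine it with the dynamics to rule out a lingering active particle. The structural fact making this work is that once a site exhausts its $u'$-budget the particle count on it can only grow under later neighbor topplings, so a $\Sleep$ as the final instruction is inconsistent with an active particle being present at termination together with $h(v^*)\leq 1$.
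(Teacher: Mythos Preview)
Your proof is correct and follows essentially the same approach as the paper: build a legal toppling sequence bounded by $u'$, then argue that it must stabilize $V$ (so that \thref{lem:abelian} identifies its odometer with $u$). The paper splits the contradiction into two cases (multiple particles versus one active particle) and asserts without elaboration that one active particle forces $\Instr_{v^*}(w(v^*))\neq\Sleep$; your case analysis of what happens after the final \Sleep\ is exactly the dynamical justification the paper leaves implicit, so your argument is if anything a bit more careful on this point.
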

\begin{proof}
  Consider the following toppling procedure: Starting with our initial configuration $\sigma$, arbitrarily
  choose any nonstable site $v\in V$ that has been toppled fewer than $u'(v)$ times and topple it.
  Continue until no such sites exist, i.e., all sites $v\in V$ are stable or have been
  toppled $u'(v)$ times, which is guaranteed to occur eventually since $V$ is finite and $u'(v)<\infty$
  for all $v$. Let $w$ be the odometer on $V$ derived from this sequence of topplings. By
  construction $w\leq u'$.
  
  We claim that $w=u$, which completes the proof.
   We need only show that our toppling procedure stabilizes
  $V$, since then $w=u$ by \thref{lem:abelian}.
  Suppose that some site $v$ is left unstable. Then $w(v)=u'(v)$, and either
  multiple particles are left on $v$ or one active particle is left on $v$.
  In the first case,
  \begin{align*}
    \sigma(v)+\rt{w}{v-1} + \lt{w}{v+1} - \lt{w}{v} - \rt{w}{v} >1.
  \end{align*}
  But since $w(v)=u'(v)$ and $w\leq u'$, this means that
  \begin{align*}
    1&< \sigma(v)+\rt{w}{v-1} + \lt{w}{v+1} - \lt{w}{v} - \rt{w}{v}\\
    &\leq
      \sigma(v) + \rt{u'}{v-1} + \lt{u'}{v+1} - \lt{u'}{v} - \rt{u'}{v},
  \end{align*}
  demonstrating that \thref{def:stable}\ref{i:balance} fails for $u'$, a contradiction
  since $u'$ is weakly stable.
  In the second case,
  \begin{align*}
    \sigma(v)+\rt{w}{v-1} + \lt{w}{v+1} - \lt{w}{v} - \rt{w}{v} =1
  \end{align*}
  but $\Instr_v(w(v))\neq\Sleep$. This time, from $w(v)=u'(v)$ and $w\leq u'$ we obtain
  \begin{align*}
    1 &= \sigma(v)+\rt{w}{v-1} + \lt{w}{v+1} - \lt{w}{v} - \rt{w}{v}\\
      &\leq \sigma(v) + \rt{u'}{v-1} + \lt{u'}{v+1} - \lt{u'}{v} - \rt{u'}{v}
  \end{align*}
  with $\Instr_v(u'(v))\neq\Sleep$. If $\sigma(v) + \rt{u'}{v-1} + \lt{u'}{v+1} - \lt{u'}{v} - \rt{u'}{v}>1$
  then \thref{def:stable}\ref{i:balance} fails for $u'$.
  If $\sigma(v) + \rt{u'}{v-1} + \lt{u'}{v+1} - \lt{u'}{v} - \rt{u'}{v}=1$, then
  \thref{def:stable}\ref{i:weak.end.in.sleep} fails for $u'$. Either way yields a contradiction
  since $u'$ is weakly stable.
\end{proof}
\begin{remark}
  We used \thref{lem:abelian} from \cite{rolla2020activated} in this proof, but in fact
  we could use this proof to establish \thref{lem:abelian}. First, restate
  the least-action principle as applying to any odometer $u$
  obtained from topplings in $V$ that stabilize $V$. Then if $u$ and $\tilde{u}$ are two
  such odometers, both are stable on $V$ and hence $u\leq\tilde{u}$
  and $\tilde{u}\leq u$, thus proving $u=\tilde{u}$.
\end{remark}

Since the true odometer stabilizing an interval is minimal among weakly stable odometers, it in particular
executes the minimal number of instructions on the boundary of the interval.
Thus, we can interpret the least-action principle as saying that the true odometer
maximizes the number of particles left sleeping on the interval.
\begin{lemma}\thlabel{lem:dual.lap}
  Let $u'$ be an odometer on $\ii{a,b}$ that is weakly stable on $\ii{a,b}$, for given initial configuration
  and instruction lists on the interval.
  The true odometer stabilizing $\ii{a,b}$ leaves at least as many
  particles sleeping on $\ii{a,b}$ as does $u'$,
   in the sense given in \thref{def:stable}.
\end{lemma}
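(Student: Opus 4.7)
The plan is to combine the least-action principle from \thref{lem:lap} with a simple telescoping identity that expresses the number of particles left inside $\ii{a,b}$ as the total initial mass minus the particles pushed out through the two boundary sites.

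First, I would apply \thref{lem:lap} to the true odometer $u$ stabilizing $\ii{a,b}$ and the weakly stable odometer $u'$ to conclude that $u(v) \leq u'(v)$ for every $v \in \ii{a,b}$. Since $\lt{u}{v}$ and $\rt{u}{v}$ are counts over the first $u(v)$ instructions, they are monotone in $u(v)$; in particular,
\begin{align*}
  \lt{u}{a} \leq \lt{u'}{a} \qquad\text{and}\qquad \rt{u}{b} \leq \rt{u'}{b}.
\end{align*}
Both $u$ and $u'$ are odometers on $\ii{a,b}$, so they vanish outside, giving $\rt{u}{a-1}=\lt{u}{b+1}=\rt{u'}{a-1}=\lt{u'}{b+1}=0$.

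Next, I would sum the balance expression from \thref{def:stable}\ref{i:balance} over $v \in \ii{a,b}$. The terms $\rt{u}{v-1} - \rt{u}{v}$ and $\lt{u}{v+1} - \lt{u}{v}$ telescope, yielding
\begin{align*}
  \sum_{v=a}^{b} h(v) = \sum_{v=a}^{b}\sigma(v) - \lt{u}{a} - \rt{u}{b},
\end{align*}
with the analogous identity for $u'$ and $h'$. Subtracting gives
\begin{align*}
  \sum_{v=a}^{b} h(v) - \sum_{v=a}^{b} h'(v) = \bigl(\lt{u'}{a} - \lt{u}{a}\bigr) + \bigl(\rt{u'}{b} - \rt{u}{b}\bigr) \geq 0,
\end{align*}
which is the desired conclusion, since by \thref{def:stable} both $h$ and $h'$ take values in $\{0,1\}$ on $\ii{a,b}$ and hence their sums exactly count the sleeping particles left behind.

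There is no substantive obstacle here; the proof is really just the observation that the only way particles leave $\ii{a,b}$ is by being toppled off the two endpoints, so the least-action inequality $u \leq u'$ immediately translates into a dual inequality about particle retention. The cleanness of this argument is the reason the paper can treat stable odometers abstractly and reduce the driven-dissipative density question to understanding the maximal value of $\sum h(v)$ over weakly stable odometers.
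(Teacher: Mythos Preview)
Your proof is correct and follows essentially the same approach as the paper: telescope the balance expression over $\ii{a,b}$ to get $\sum h(v)=\sum_v\abs{\sigma(v)}-\lt{\cdot}{a}-\rt{\cdot}{b}$, then use the least-action principle at the two endpoints to compare $u$ and $u'$. The paper's proof is slightly terser but identical in substance.
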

\begin{proof}
  The number of particles left by $u'$ is by definition
  \begin{align*}
    \sum_{v=a}^b\Bigl( \abs{\sigma(v)} + \rt{u'}{v-1} + \lt{u'}{v+1} - \lt{u'}{v} - \rt{u'}{v}\Bigr).
  \end{align*}
  Since all terms $\lt{u'}{v}$
  and $\rt{u'}{v}$ cancel except for $\rt{u'}{a-1}+\lt{u'}{b+1}-\lt{u'}{a}-\rt{u'}{b}$,
  and the first two of these terms are zero since we have assumed that $u'$ is zero off of
   $\ii{a,b}$, this sum is equal to
  \begin{align}\label{eq:u'.left}
    \sum_{v=a}^b\abs{\sigma(v)} -\lt{u'}{a}-\rt{u'}{b},
  \end{align}
  which is the number of particles in the initial configuration minus the number
  of particles pushed off the interval by $u'$.
  The true number of particles left on the interval is
  \begin{align*}
    \sum_{v=a}^b\abs{\sigma(v)} -\lt{u}{a}-\rt{u}{b},
  \end{align*}
  where $u$ is the true odometer $u$ stabilizing $\ii{a,b}$, and this quantity is at least
  \eqref{eq:u'.left} since
  $u(a)\leq u'(a)$ and $u(b)\leq u'(b)$ by the least-action principle.
\end{proof}

\section{Layer percolation} \label{sec:LP.ARW}

Our approach to activated random walk is to understand the set of stable odometers, given
the initial configuration and instructions at each site. Layer percolation encodes this set
as an infection process in a $(2+1)$-dimensional
oriented percolation model. 
At each time step, a two-dimensional set of sites randomly infects sites in the next step.
We call the sites \emph{cells} to reserve
the term \emph{site} for locations of particles in activated random walk. We refer to the cells
at step~$k$ as $(r,s)_k$ for $r,s\geq 0$. 
Typically we will consider $(0,0)_0$ to be the lone infected cell at step~$0$, denoting
this singleton set as $\infset_0$. Then we define $\infset_1$ as the set of cells at step~$1$ infected
by $(0,0)_0$, then $\infset_2$ as the set of cells at step~$2$ infected by a cell in $\infset_1$, and so on.

An instance of layer percolation can be defined from the instructions in activated random walk;
the instructions on site~$k+1$ of activated random walk will determine the infections in
layer percolation going from step~$k$ to step~$k+1$. When activated random walk and layer percolation
are coupled in this way, the odometers stable on the interior of $\ii{0,n}$ 
correspond to length~$n$ \emph{infection paths} in layer percolation, sequences
of cells each infecting the next starting at $(0,0)_0$. We will not prove this correspondence
until Section~\ref{sec:ARW.percolation.connection}, but in Section~\ref{sec:tree}
we give an example to give a sense of the correspondence and motivate layer percolation.
We define layer percolation in Section~\ref{sec:define}, and then
in Section~\ref{sec:example2} we revisit the example from the perspective of
layer percolation.

\subsection{Example: finding stable odometers}
\label{sec:tree}

Consider ARW on an interval $\ii{0,n}$ with initial configuration $\sigma\equiv 1$
where the instructions at sites $0$, $1$, and $2$ are as follows, with
\texttt{L}, \texttt{R}, and \texttt{S} short for \Left, \Right, and \Sleep.
{\setlength{\tabcolsep}{0.5pt}
\newcolumntype{i}{>{\tt}l}
\begin{align}\label{eq:ex.instructions}
\begin{tabular}{w{l}{6em}iiiii|iiiii|iiiii|iiiii|iiiii|l}
  Site~0:&S&R&S&S&L&L&R&L&R&L&R&R&L&R&R&L&R&S&L&R&R&L&S&R&R&\ldots\\
  Site~1:&R&L&S&L&R&R&S&S&R&L&R&S&L&L&R&R&L&R&L&R&S&R&S&L&R&\ldots\\
  Site~2:&S&L&R&R&L&S&L&S&R&L&L&R&S&L&L&S&L&S&R&R&S&S&L&L&S&\ldots
\end{tabular}
\end{align}
}

Suppose we want to construct all odometers on $\ii{0,n}$ stable on the interior of the interval.
To get started, we choose some $u_0$ and restrict ourselves to constructing odometers $u$ satisfying
$u(0)=u_0$. We also make a choice $f_0$, representing a net flow from $0$ to $1$, and further
restrict ourselves to odometers satisfying $\rt{u}{0}-\lt{u}{1}=f_0$.
In the following example, we take $u_0=20$ and $f_0=2$.

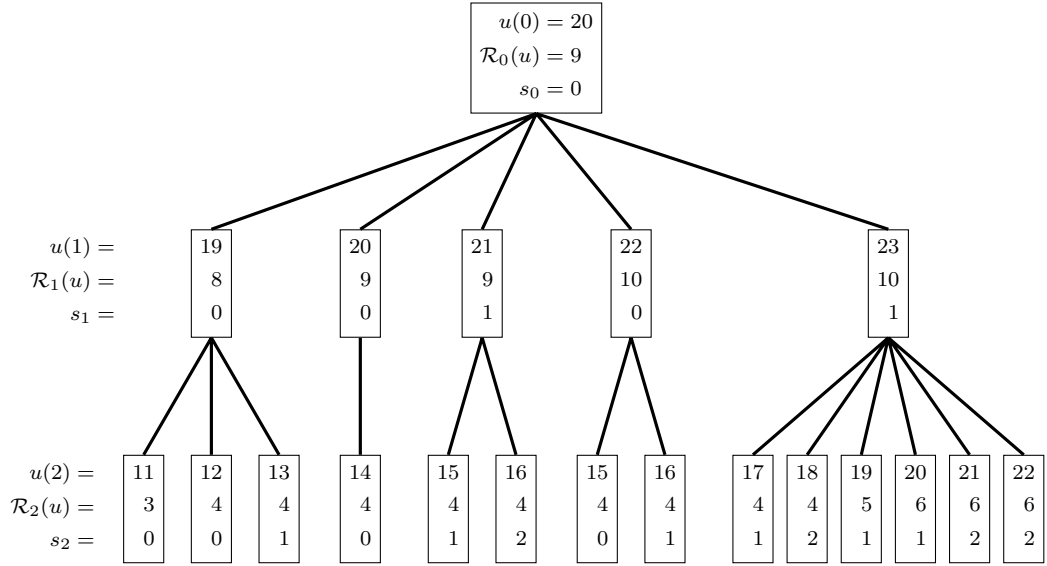
\begin{figure}
  \begin{tikzpicture}[every node/.style={draw, font=\footnotesize},xscale=.9]
    \path
      (-1.2,0) node[] (20) {$\begin{aligned}u(0)&=20\\\rt{u}{0}&=9\\s_0&=0\end{aligned}$};
    \path (0,0)
      +(-8,-3) node[align=left,draw=none] {$\begin{aligned}u(1)&={}\\\rt{u}{1}&={}\\s_1&={}\end{aligned}$}
      ++(-6,-3) node (20-19) {$\begin{aligned}19\\8\\0\end{aligned}$}
      ++(2.2,0) node (20-20) {$\begin{aligned}20\\9\\0\end{aligned}$}
      ++(1.8,0) node (20-21) {$\begin{aligned}21\\9\\1\end{aligned}$}
      ++(2.2,0) node (20-22) {$\begin{aligned}22\\10\\0\end{aligned}$}
      ++(3.8,0) node (20-23) {$\begin{aligned}23\\10\\1\end{aligned}$};
    \path (20-19)
      ++(-2.3,-3) node[align=left,draw=none] {$\begin{aligned}u(2)&={}\\\rt{u}{2}&={}\\s_2&={}\end{aligned}$}
      ++(1.3,0) node (20-19-11) {$\begin{aligned}11\\3\\0\end{aligned}$}
      ++(1,0) node (20-19-12) {$\begin{aligned}12\\4\\0\end{aligned}$}
      ++(1,0) node (20-19-13) {$\begin{aligned}13\\4\\1\end{aligned}$}
      ;
    \path (20-20)
      ++(0,-3) node (20-20-14) {$\begin{aligned}14\\4\\0\end{aligned}$};
    \path (20-21)
      ++(-0.5,-3) node (20-21-15) {$\begin{aligned}15\\4\\1\end{aligned}$}
      ++(1,0) node (20-21-16) {$\begin{aligned}16\\4\\2\end{aligned}$};
    \path (20-22)
      ++(-0.5,-3) node (20-22-15) {$\begin{aligned}15\\4\\0\end{aligned}$}
      ++(1,0) node (20-22-16) {$\begin{aligned}16\\4\\1\end{aligned}$};
    \path (20-23)
      ++(-2,-3) node (20-23-17) {$\begin{aligned}17\\4\\1\end{aligned}$}
      ++(.8,0) node (20-23-18) {$\begin{aligned}18\\4\\2\end{aligned}$}
      ++(.8,0) node (20-23-19) {$\begin{aligned}19\\5\\1\end{aligned}$}
      ++(.8,0) node (20-23-20) {$\begin{aligned}20\\6\\1\end{aligned}$}
      ++(.8,0) node (20-23-21) {$\begin{aligned}21\\6\\2\end{aligned}$}
      ++(.8,0) node (20-23-22) {$\begin{aligned}22\\6\\2\end{aligned}$}
      ;
    \draw[very thick] (20.south)--(20-19.north) (20.south)--(20-20.north) (20.south)--(20-21.north) (20.south)--(20-22.north) (20.south)--(20-23.north)
      (20-19.south)--(20-19-11.north)
      (20-19.south)--(20-19-12.north)  
      (20-19.south)--(20-19-13.north)    
      (20-20.south)--(20-20-14.north)
      (20-21.south)--(20-21-15.north)
      (20-21.south)--(20-21-16.north)
      (20-22.south)--(20-22-15.north)
      (20-22.south)--(20-22-16.north)
      (20-23.south)--(20-23-17.north)
      (20-23.south)--(20-23-18.north)
      (20-23.south)--(20-23-19.north)
      (20-23.south)--(20-23-20.north)
      (20-23.south)--(20-23-21.north)
      (20-23.south)--(20-23-22.north)
          ;
    (20-23)--(20-23-17.north) (20-23)--(20-23-18.north)
    ;
  \end{tikzpicture}

\caption{Each of the fourteen possibilities for $\bigl(u(0),\,u(1),\,u(2)\bigr)$ to make a stable
odometer consistent with \eqref{eq:ex.instructions},
$\sigma\equiv 1$, $u_0=20$ and $f_0=2$ is represented as a leaf in the tree. 
For example, the leftmost leaf represents $\bigl(u(0),\,u(1),\,u(2)\bigr)=(20,19,11)$, while
the rightmost represents $\bigl(u(0),\,u(1),\,u(2)\bigr)=(20,23,22)$.
The quantity $s_k$ denotes the number of sites $1,\ldots,k$ on which the final instruction
executed under the odometer is \Sleep, representing the number of particles left on $\ii{1,k}$
by the odometer.
}
\label{fig:decision.tree}  
\end{figure}

From $u_0$ and the instructions at site~$0$, we have
$\rt{u}{0}=9$ for any odometer in this class. From $f_0$, we must have $\lt{u}{1}=7$, and therefore
$u(1)\in\ii{19,23}$. Each choice of $u(1)$ then yields different possibilities for $u(2)$.
\begin{enumerate}[label=\textbf{Case \arabic*:},leftmargin=*,labelindent=0pt,labelwidth=0pt,itemindent=20pt]
  \item $u(1)=19$.\\
    Then $\rt{u}{1}=8$, and $u$ leaves no particle sleeping at $1$ since the final
    instruction executed there is \Left. The net flow from site~$0$ to site~$1$ was $2$, and therefore
    the net flow from site~$1$ to site~$2$ must be $3$ to remove the particle initially at site~$1$.
    Hence $\lt{u}{2}=8-3=5$, yielding $u(2)\in\ii{11,13}$.
  \item $u(1)=20$.\\
    Then $\rt{u}{1}=9$, and $u$ leaves no particle sleeping at $1$ since the final
    instruction executed there is \Right. The net flow from site~$0$ to site~$1$ was $2$, and therefore
    the net flow from site~$1$ to site~$2$ must be $3$ to remove the particle initially at site~$1$.
    Hence $\lt{u}{2}=9-3=6$, yielding $u(2)=14$.
  \item $u(1)=21$.\\
    Then $\rt{u}{1}=9$, and $u$ leaves a particle sleeping at $1$ since the final
    instruction executed there is \Sleep. The net flow from site~$0$ to site~$1$ was $2$, and therefore
    the net flow from site~$1$ to site~$2$ must also be $2$.
    Hence $\lt{u}{2}=9-2=7$, yielding $u(2)\in\ii{15,16}$.
  \item $u(1)=22$.\\
    Then $\rt{u}{1}=10$, and $u$ leaves no particle sleeping at $1$ since the final
    instruction executed there is \Right. The net flow from site~$0$ to site~$1$ was $2$, and therefore
    the net flow from site~$1$ to site~$2$ must be $3$ to remove the particle initially at site~$1$.
    Hence $\lt{u}{2}=10-3=7$, yielding $u(2)\in\ii{15,16}$.
  \item $u(1)=23$.\\
    Then $\rt{u}{1}=10$, and $u$ leaves a particle sleeping at $1$ since the final
    instruction executed there is \Sleep. The net flow from site~$0$ to site~$1$ was $2$, and therefore
    the net flow from site~$1$ to site~$2$ must be $2$.
    Hence $\lt{u}{2}=10-2=8$, yielding $u(2)\in\ii{17,22}$.
\end{enumerate}
The fourteen possible odometers produced so far are illustrated in decision tree form
in Figure~\ref{fig:decision.tree}. Given the instructions at sites $2,\ldots,n$, we could
continue forming the decision tree to find all stable odometers for our specified $u_0$
and $f_0$. As illustrated by our example, at each step the choice made so far of $u(0),\ldots,u(k)$ determines
$\rt{u}{k}$ and the net flow from site~$k$ to site~$k+1$ required to make $u$ stable at $k$,
thus determining $\lt{u}{k+1}$. Then $\lt{u}{k+1}$ determines the possible choices
of $u(k+1)$, ranging from the $\lt{u}{k+1}$th
\Left\ instruction up to but excluding the $(\lt{u}{k+1}+1)$th \Left\ instruction.

We have depicted the set of stable odometers in tree form, but different branches of the tree
are highly dependent. For example, in Figure~\ref{fig:decision.tree}, the level~$1$ nodes
representing $\bigl(u(0),\,u(1)\bigr)=(20,21)$ and $\bigl(u(0),\,u(1)\bigr)=(20,22)$ have nearly
identical children because both are generated from the same range of the site~$2$ instructions.
On the other hand, their grandchildren will not match up. This complex dependency poses an obstacle
to analyzing the stable odometers.

The key insight in making sense of the odometers is that we do not need to know the entire past history
$u(0),\ldots,u(k)$ to determine the possible choices for $u(k+1)$. Rather, two pieces of information
suffice: The first is $\rt{u}{k}$. The second is the number of sites~$1,\ldots,k$ on which the final
instruction executed under $u$ is \Sleep, a quantity we denote by $s_k$. 
From these two pieces of information, we can find the number
of \Left\ instructions needed at site~$k+1$ to make $u$ stable at $k$, thus determining
the range of instructions at site~$k+1$ that are possible choices for $u(k+1)$.
Each increase of $\rt{u}{k}$ forces $\lt{u}{k+1}$ to increase by one to balance the flow of particles
onto site~$k$.
Likewise, each increase of $s_k$ forces $\lt{u}{k+1}$ to increase by one, since it means that
an additional particle sleeps on $\ii{1,k}$ and hence the net flow from site~$k$ to site~$k+1$
must decrease by one. The explanation for why
$\bigl(u(0),\,u(1)\bigr)=(20,21)$ and $\bigl(u(0),\,u(1)\bigr)=(20,22)$ lead to the same possible
values $u(2)\in\ii{15,16}$ is that $\rt{u}{1}+s_1=10$ for both choices of $\bigl(u(0),\,u(1)\bigr)$.

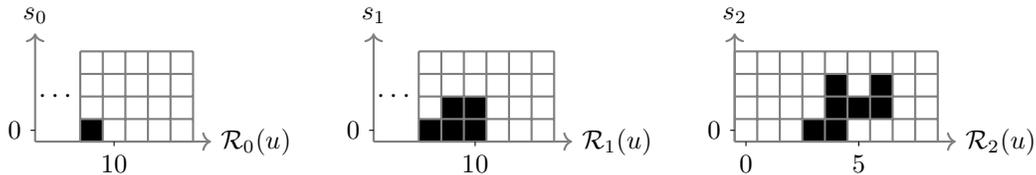
\begin{figure}
  \centering
\begin{tikzpicture}[scale=.3]
\begin{scope}[shift={(0,0)}]
  \begin{scope}[shift={(2,0)}]
    \fill (0,0) rectangle +(1,1);
    \draw[thick,gray] (0,0) grid[step=1] (5,4);
  \end{scope}
  \draw[thick,gray,->] (0,0) -- (7.8,0) node[black,right] {$\rt{u}{0}$};
  \draw[thick,gray,->] (0,0) -- (0,4.8) node[black,above] {$s_0$};
  \draw (3.50000000000000,0)-- ++(0,-.2) node[below] {10};
  \draw (0,0.500000000000000)-- ++(-.2,0) node[left] {0};
  \path (1,2) node[font=\large] {$\cdots$};
\end{scope}
\begin{scope}[shift={(17,0)}]
  \fill (0,0) rectangle +(1,1);
  \fill (1,0) rectangle +(1,1);
  \fill (2,0) rectangle +(1,1);
  \fill (1,1) rectangle +(1,1);
  \fill (2,1) rectangle +(1,1);
  \draw[thick,gray] (0,0) grid[step=1] (6,4);
  \draw[thick,gray,->] (-2,0) -- (6.8,0) node[black,right] {$\rt{u}{1}$};
  \draw[thick,gray,->] (-2,0) -- (-2,4.8) node[black,above] {$s_1$};
  \draw (2.500000000000000,0)-- ++(0,-.2) node[below] {10};
  \draw (-2,0.500000000000000)-- ++(-.2,0) node[left] {0};
  \path (-1,2) node[font=\large] {$\cdots$};
\end{scope}
\begin{scope}[shift={(34,0)}]
  \fill (0,0) rectangle +(1,1);
  \fill (1,0) rectangle +(1,1);
  \fill (1,1) rectangle +(1,1);
  \fill (1,2) rectangle +(1,1);
  \fill (2,1) rectangle +(1,1);
  \fill (3,1) rectangle +(1,1);
  \fill (3,2) rectangle +(1,1);
  \draw[thick,gray] (-3,0) grid[step=1] (6,4);
  \draw[thick,gray,->] (-3,0) -- (6.8,0) node[black,right] {$\rt{u}{2}$};
  \draw[thick,gray,->] (-3,0) -- (-3,4.8) node[black,above] {$s_2$};
  \draw (-2.500000000000000,0)-- ++(0,-.2) node[below] {0};
  \draw (2.5,0)-- ++(0,-.2) node[below] {5};
  \draw (-3,0.500000000000000)-- ++(-.2,0) node[left] {0};
\end{scope}
\end{tikzpicture}

  \caption{Each node of the tree in Figure~\ref{fig:decision.tree} is represented as a \emph{cell}
  in the above diagram. A node at level~$k$ of the tree is plotted at step~$k$ of this diagram,
  with $\rt{u}{k}$ plotted on the horizontal axis and $s_k$ plotted on the vertical axis.
  }
  \label{fig:layer.perc.version}
\end{figure}

We can thus capture all information in each level of the decision tree in Figure~\ref{fig:decision.tree} 
as a collection of pairs $(\rt{u}{k},s_k)$, as depicted in Figure~\ref{fig:layer.perc.version}.
The nodes at level~$k$ of the tree become infected \emph{cells}, depicted as black squares, at step~$k$ of our
layer percolation process. If one node is the parent of another in the tree, we say that the cell
corresponding to the parent \emph{infects} the cell corresponding to the child.
For example, the cell $(\rt{u}{2},s_2)=(4,0)$ at step~$2$ is infected by three different cells
$(\rt{u}{1},s_1)=(8,0),(9,0),(10,0)$ at step~$1$. This process in which cells with two-dimensional
coordinates infect cells in the next step is \emph{layer percolation}, to be
defined formally in the next section.

We can describe the odometer-generating
algorithm purely from the perspective of layer percolation. The initial cell
$(\rt{u}{0},s_0)=(9,0)$ infects cells according to instructions $19,\ldots,23$
at site~$1$ (\texttt{LRSRS}),
starting at the fifth \Left\ instruction and ending immediately before the sixth.
These five choices correspond to the five infected cells in the middle image in 
Figure~\ref{fig:layer.perc.version}. The three non-\Sleep\ instructions (i.e., choosing $u(1)$
to be $19$, $20$, or $22$) 
lead respectively to the infection of three consecutive cells in row~$0$: 
the row is $0$ because these choices
of instructions do not leave a particle sleeping at site~$1$, and the columns span three cells
because each additional \Right\ instruction executed by the odometer moves the infected cell to the right.
Choosing a \Sleep\ instruction ($u(1)=21$ or $u(1)=23$) leads to an infection in row~$1$,
since a particle is left sleeping a site~$1$; the column infected is the same for the first
non-\Sleep\ instruction prior to the \Sleep\ instruction.

We can apply the same analysis to determine the cells infected by any cell $(\rt{u}{k},s_k)$.
The infected cells are determined by some string of instructions at site~$k+1$
starting with a \Left\ instruction
and ending immediately prior to the next \Left\ instruction. A stretch of cells in row~$s_k$ is infected
whose length is given by the number of non-\Sleep\ instructions in the stretch.
And some of the cells above this stretch in row~$s_k+1$ are also infected, depending on the presence
of \Sleep\ instructions in between successive non-\Sleep\ instructions.
From \eqref{eq:LRS.distribution}, the stretch of cells infected in row~$s_k$ has length $1+\Geo(1/2)$,
and each cell in row $s_k+1$ in the stretch is infected independently with probability $\lambda/(1+\lambda)$.
(Here and elsewhere, $\Geo(p)$ denotes the geometric distribution on the nonnegative integers
with parameter~$p$, i.e., the one placing probability $(1-p)^kp$ on $k$ for $k\in\NN=\{0,1,\ldots\}$.
We abuse notation slightly by conflating distributions and random variables to permit an expression
like $1+\Geo(1/2)$.)

Next, we try to unravel the dependence between the infections
arising from different cells. First, we observe that two cells at step~$k$ with the same value of
$\rt{u}{k}+s_k$ infect cells according to the same range of instructions at site~$k+1$.
The cells they infect have the same shape but are in shifted rows from each other
depending on the value of $s_k$ for each infector (see for example the cells
infected by $(\rt{u}{1},s_1)=(10,0)$ and $(\rt{u}{1},s_1)=(9,1)$).
On the other hand, the infections stemming from cells with values of $\rt{u}{k}+s_k$---in other words, 
cells in different antidiagonals---are based on distinct ranges of instructions and hence
have independent shapes.

In the description so far, we have discussed the width of each infected range of cells while
obscuring the exact columns infected, which we consider now.
First, all infections from a given antidiagonal
occur at the same columns---the infected cells are shifted in rows but not in columns.
If we move from one antidiagonal to the next, the range of instructions determining
the infections moves ahead by one \Left\ instruction. That is,
suppose that a given antidiagonal at step~$k$ uses instructions from the $j$th \Left\ up to but excluding
the $(j+1)$th \Left\ instruction at site~$k+1$. Then the next antidiagonal will use instructions from
$(j+1)$th \Left\ instruction up to but excluding the $(j+2)$th.
The final instruction before the $(j+1)$th \Left\ instruction corresponds to the same
value of $\rt{u}{k+1}$ as the $(j+1)$th \Left\ instruction itself.
Hence the rightmost column infected from a given antidiagonal is the same as the leftmost
column infected from the next. This phenomenon is the reason we call our process layer percolation:
each successive antidiagonal at one step infects a layer of cells in the next, and the layers accumulate
one after the other.

In the next section, we give a formal description of the layer percolation process,
motivated by this discussion but without reference to odometers and instructions.
The one notable difference between the process as sketched here
and as formally described in the next section is that the first coordinate of cells will not match
up exactly with $\rt{u}{k}$. Instead all cells in a given step
will be shifted at each step by a constant to put the leftmost
infected cell in column~$0$.

\subsection{Definition of layer percolation}
\label{sec:define}

We define \emph{layer percolation} with parameter $\lambda>0$ as a process in which \emph{cells}
denoted $(r,s)_k$ for integers $r,s,k\geq 0$ infect one or more cells $(r',s')_{k+1}$.
We say that a cell $(r,s)_k$ is at \emph{step~$k$} of layer percolation; since
we plot $r$ on the horizontal and $s$ on the vertical axis, we call $r$ the cell's \emph{column}
and $s$ the cell's \emph{row}. 
We write $(r,s)_k\to(r',s')_{k+1}$ to denote the event that $(r,s)_k$ infects $(r',s')_{k+1}$.
Formally, layer percolation up to step~$n\leq\infty$ is the collection of indicators
\begin{align*}
  \Bigl(\1\bigl\{(r,s)_{k-1}\to(r',s')_k\bigr\},\ r,s\geq 0,\, 1\leq k\leq n\Bigr).
\end{align*}
Note that the
process is oriented, in the sense that cells at step~$k$ only infect cells at step~$k+1$.
The infections $\bigl(\1\bigl\{(r,s)_{k-1}\to(r',s')_k\bigr\},\ r,s\geq 0\bigr)$ for different
values of $k$ are i.i.d.
To define the process, it therefore suffices to state the distribution of 
$\bigl(\1\bigl\{(r,s)_{k-1}\to(r',s')_k\bigr\},\ r,s\geq 0\bigr)$ for any particular $k\geq 1$,
which we give now.

Take $k$ as fixed.
We refer to the $j+1$ cells $\{(r,s)_{k-1}\colon r+s=j\}$ as \emph{diagonal~$j$} at step~$k-1$.
Cells in diagonal~$j$ at step~$k-1$ infect cells only in \emph{layer~$j$} at step~$k$,
which we define now.
Let $R_0,R_1,\ldots$ be independent with distribution $\Geo(1/2)$. 
For $j=0,1,\ldots$, we define layer~$j$ at step~$k$
as the vertical strip of cells starting at column $R_0+\cdots+R_{j-1}$
and ending at column $R_0+\cdots+R_j$, including these endpoints.
Let
\begin{align}\label{eq:layer.def}
  \layer(j)=\layer_k(j)=\ii{R_0+\cdots+R_{j-1},\, R_0+\cdots+R_j},
\end{align}
so that layer~$j$ at step~$k$ consists of all cells $(r,s)_k$ for $r\in\layer(j)$.
We call $1+R_j$ the \emph{width} of layer~$j$, the number of columns it spans.
More informally, the layer structure at step~$k$ is defined as follows.
The widths of the layers are independent with distribution $1+\Geo(1/2)$.
Layer~$0$ is a strip of columns starting at $0$ with the given width. Layer~$1$
begins in the final column of layer~$0$, overlapping it by one column, and then extends to
the right for its given width. Then layer~$2$
begins in the final column of layer~$1$, and so on.
By the memoryless property of the geometric distribution, we can also imagine forming the layers
starting in column~$0$ with layer~$0$. At each step, with equal probability we extend the current
layer by one column, or we remain at the same column and start a new layer, continuing in this way
forever to carve up the cells into vertical strips each overlapping the last by one column.

\begin{figure}
\centering
  \begin{tikzpicture}[scale=.3]
    \fill (0,0) rectangle +(1,1);
    \fill (1,0) rectangle +(1,1);
    \fill (2,0) rectangle +(1,1);
    \fill (3,0) rectangle +(1,1);
    \fill (4,0) rectangle +(1,1);
    \fill (0,1) rectangle +(1,1);
    \fill (3,1) rectangle +(1,1);
  \draw[thick,gray] (0,0) grid[step=1] (5,2);
  \end{tikzpicture}
  
  \caption{A cell $(r,s)_{k-1}$ in diagonal~$j$ (i.e., $r+s=j$) 
  infects an interval of $R_j+1$ cells in row~$s$ at step~$k$, along with
  some of the cells above them in row~$s+1$ as determined by the Bernoulli random variables
  $\bigl(B^j_0,\ldots,B^j_{R_j}\bigr)$. The example above shows the shape of cells infected
  when $R_j=4$ and $\bigl(B^j_0,\ldots,B^j_{r}\bigr)=(1,0,0,1,0)$.
   }
  \label{fig:shape}
\end{figure}
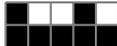

Now, we explain which cells in layer~$j$ at step~$k$ are infected by each cell in diagonal~$j$
at step~$k-1$. For each $j$, let $B^j=(B^j_0,\ldots,B^j_{R_j})$ consist of 
Bernoulli random variables with $\E B^j_i=\lambda/(1+\lambda)$, independent for all $i$ and $j$.
The random variables $(R_j,B^j)$ encodes the \emph{shape} of cells infected at step~$k$
by a cell in diagonal~$j$ at step~$k-1$; see Figure~\ref{fig:shape}.
A cell $(r,s)_{k-1}$ in diagonal~$j$ infects all $1+R_j$ cells at row~$s$ in layer~$j$,
and it also infects the $i$th cell in row~$s+1$ in layer~$j$ if $B^j_i=1$ (here we call
column $R_0+\cdots+R_{j-1}$ the $0$th column of layer~$j$). That is,
we have $(r,s)_{k-1}\to(u,t)_k$ if either
\begin{enumerate}[(i)]
  \item $u\in \layer(r+s)$ and $t=s$, or
  \item $u\in\layer(r+s)$ and $t=s+1$ and $B^j_i=1$, where $i=u-(R_0+\cdots+R_{r+s-1})$.
\end{enumerate}
This completes our description of the infections from step~$k-1$ to step~$k$.
As we said before, the infections from other steps are independent with the same distribution,
and thus our definition of layer percolation is complete.
\begin{remark}\thlabel{rmk:what.is.layer.percolation}
  We defined layer percolation up to step~$n$ as the collection of indicators on events
  $(r,s)_{k-1}\to(r',s')_k$ for all $r,s\geq 0$ and $1\leq k\leq n$.
  We note that for any $k$, this collection of indicators
  can be generated from $(R_j)_{j\geq 0}$ and $(B^j_0,\ldots,B^j_{R_j})_{j\geq 1}$ and vice versa, 
  and thus layer percolation could equally well be defined by $(R_j)_{j\geq 0}$ and 
  $(B^j_0,\ldots,B^j_{R_j})_{j\geq 1}$ for each $1\leq k\leq n$.
\end{remark}

We take a break from formality now to look back to our example from
Section~\ref{sec:tree}. The random variables $R_j$ and $B^j$ specify the shape
of cells infected in step~$k$ by a cell in diagonal~$j$ at step~$k-1$. In terms of the tree
in Figure~\ref{fig:decision.tree}, they describe the children of all nodes
at level~$k-1$ with a certain value of $\rt{u}{k-1}+s_{k-1}$. The quantity $R_j$
corresponds to the number of \Right\ instructions between a certain pair of \Left\ instructions
at site~$k$. And the Bernoulli random variables $B^j_0,\ldots,B^j_{R_j}$ are indicators
on the presence of \Sleep\ instructions occurring between the non-\Sleep\ instructions
in the string of instructions from the two \Left\ instructions.
Our main interest in layer percolation will be
the set of infected cells at each step originating from the single infected cell
$(0,0)_0$, for which we will give notation below. A chain of infections of length~$n$ starting at $(0,0)_0$
will correspond to an odometer on $\ii{0,n}$ stable on $\ii{1,n-1}$, modulo various complications.

Finally, we explain again why we call the process
\emph{layer percolation}. Given the infected cells in diagonal~$j$ at step~$k-1$, 
we can determine the \emph{shape} of the set of cells they infect from $R_j$ and $B^j$ alone.
But to determine the actual set of cells infected, we must first find
the cells infected by diagonals $0,\ldots,j-1$. In effect, we need to find the shapes of the
sets of cells infected by diagonals $0,\ldots,j$ and then paste them together from left to right.
Thus our infection sets are built up layer by layer. This differs from classical (and even modern \cite{hartarsky2021generalised}) percolation
processes and to us seems essential. 

We also mention that layer percolation can be generalized by defining the random variables
$R_0,R_1,\ldots$ to have distribution $\Geo(\theta)$ for a parameter $\theta$. For $\theta$
other than $1/2$, this layer percolation will match up with activated random walk in which particles
move as nearest-neighbor random walk with a bias in one direction. This fundamentally
alters the behavior of layer percolation by changing $\E R_i$ from $1$,
thus making key branching processes subcritical or supercritical rather than critical.
We plan to examine this process in future work.

We close the section with some additional notation. We have now defined the event
of $(r,s)_k$ infecting $(r',s')_{k+1}$, denoting it by $(r,s)_k\to(r',s')_{k+1}$.
If we want to say that $(r,s)_k$ infects $(r',s')_{k+1}$, we will often just write
$(r,s)_k\to(r',s')_{k+1}$ rather than ``the event $(r,s)_k\to(r',s')_{k+1}$ holds''.
We write
\begin{align*}
  (r_0,s_0)_k\to(r_1,s_1)_{k+1}\to\cdots\to(r_n,s_n)_{n+k}
\end{align*}
as a shorthand for the event that $(r_i,s_i)_{k+i}\to(r_{i+1},s_{i+1})_{k+i+1}$ for all $0\leq i< k$.
In this case we call this sequence of cells an \emph{infection path} from $(r_0,s_0)_k$ to $(r_n,s_n)_{n+k}$.
For $k\geq 2$, we write $(r,s)_k\to(r',s')_{n+k}$ for the event that there exists an infection path
from $(r,s)_k$ to $(r',s')_{n+k}$.
For consistency of notation, we say that the event $(r,s)_k\to(r',s')_k$ holds if and only
if $r=r'$ and $s=s'$.

Define the \emph{infection set from $(r,s)_k$ after $n$ steps}, denoted $\infset[(r,s)_k]_n$ (and
often abbreviated to $\infset_n$),
as the set of cells at step~$k+n$ infected starting from the given cell $(r,s)_k$.
Formally,
\begin{align}
  \infset[(r,s)_k]_n = \bigl\{ (u,t)_{k+n}\colon (r,s)_k\to(u,t)_{k+n} \bigr\}. \label{eq:forward}
\end{align}
Similarly, the \emph{backward infection set from $(r,s)_k$ after $n$ steps}, denoted
$\binfset[(r,s)_k]_n$, consists of all cells at step $k-n$ that infect
$(r,s)_k$, or
\begin{align}
  \binfset[(r,s)_k]_n = \bigl\{ (u,t)_{k-n}\colon (u,t)_{k-n}\to(r,s)_k \bigr\}. \label{eq:backward}
\end{align}
As we will see in Section~\ref{sec:couplings}, layer percolation has a sort of duality
that relates the distribution of backward infection sets to forward ones.

\subsection{Example: stable odometers from the layer percolation perspective}
\label{sec:example2}
In Section~\ref{sec:tree}, we described an algorithm to find all odometers stable on the interior
of an interval
given the initial configuration and instructions from activated random walk, the value
$u_0$ of the odometer at site~$0$, and the net flow $f_0$ of particles from site~$0$ to site~$1$
under the odometer. We sketched a representation of these odometers in terms of layer percolation.
Now that we have defined layer percolation and given notation for it, we return to the
example from Section~\ref{sec:tree} and show how we construct an instance of layer percolation
from the instructions of the activated random walk.
We will be informal in this section, trying to get the idea across of the correspondence
between activated random walk and layer percolation while
saving rigor for Section~\ref{sec:ARW.percolation.connection}.

Recall that there are two key quantities for an odometer $u$ at each site~$k$. The
first is $\rt{u}{k}$,
the number of \Right\ instructions executed at site~$k$ under the odometer.
The second is $s_k$, the number out of the indices $i\in\ii{1,k}$ for which the $u(i)$th
instruction at site~$i$ is \Sleep; that is, it represents the number of times the odometer $u$
leaves a particle sleeping at site~$i\in\ii{1,k}$. The importance of $\rt{u}{k}$ and $s_k$
is that they provide enough information to determine all possible extensions of $u$
from $\ii{1,k}$ to $\ii{1,n}$.

Recall the instructions \eqref{eq:ex.instructions} and that in the example we fix $u_0=20$
and $f_0=2$.
Our goal is to generate an instance of layer percolation that corresponds to 
Figure~\ref{fig:layer.perc.version} and encodes the stable odometers.
To generate $R_0,R_1,\ldots$ and $B^0,B^1,\ldots$ for generating the infections
from step~$0$ to step~$1$ of layer percolation, we first recall that from $u_0$
and $f_0$, the minimum (and only) value that $\lt{u}{1}$ could have is $7$.
Thus we start our generation of layer percolation at the seventh
\Left\ instruction at site~$1$. We set $R_0$ equal to the number
of \Right\ instructions between the $7$th and $8$th \Left\ instructions,
then $R_1$ equal to the number of \Right\ instructions between the $8$th and $9$th \Left\ instructions,
and so on. Since the number of \Right\ instructions between successive \Left\ instructions
are independent with distribution $\Geo(1/2)$ by the strong Markov property,
our random variables $R_0,R_1,\ldots$ are i.i.d.-$\Geo(1/2)$ as desired. In this example,
$R_0=2$.

To determine $B^j_0,\ldots,B^j_{R_j}$, we look at the portion of instructions from the $(7+j)$th
to the $(8+j)$th \Left\ instruction, which contains $R_j$ \Right\ instructions.
We let $B^j_i$ be an indicator on the presence of a \Sleep\ instruction 
immediately following the $i$th of these \Right\ instructions (or following the initial
\Left\ instruction for $i=0$). Thus the instructions \texttt{LRSRS} from the 7th \Left\ instruction
up to the 8th give us $(B^0_0,B^0_1,B^0_{2})=(0,1,1)$ because there is not a \Sleep\ instruction
following the initial \Left\ instruction but there are \Sleep\ instructions following the first
and second \Right\ instructions. Since the instruction following any given instruction
is \Sleep\ with probability $\lambda/(1+\lambda)$, these random variables $B^j_i$
are i.i.d.-$\Ber\bigl(\lambda/(1+\lambda)\bigr)$.

These values $R_0=2$ and $(B^0_0,B^0_1,B^0_{2})=(0,1,1)$ represent the shape
\begin{align*}
    \begin{tikzpicture}[scale=.3]
    \fill (0,0) rectangle +(1,1);
    \fill (1,0) rectangle +(1,1);
    \fill (2,0) rectangle +(1,1);
    \fill (1,1) rectangle +(1,1);
    \fill (2,1) rectangle +(1,1);
  \draw[thick,gray] (0,0) grid[step=1] (3,2);
  \end{tikzpicture}
\end{align*}
To form the infection set at step~$1$ starting from $(0,0)_0$,
the cell $(0,0)_0$ in diagonal~$0$ infects a set of cells of the shape above, starting at column~$0$
(because the leftmost cell infected by the diagonal~$0$ cell is always in column~$0$)
and at row~$0$ (because a cell in row~$s$ always infects cells in row~$s$ and $s+1$).
Thus the infection set at step~$1$ consists of the following cells $(r,s)_1$:
\begin{align}\label{eq:step1}
  \begin{tikzpicture}[scale=.3,baseline=.5cm]
    \fill[red] (0,0) rectangle +(1,1);
    \fill[orange] (1,0) rectangle +(1,1);
    \fill[yellow!95!black] (2,0) rectangle +(1,1);
    \fill[yellow!95!black] (1,1) rectangle +(1,1);
    \fill[green!95!black] (2,1) rectangle +(1,1);
    \draw[thick,gray] (0,0) grid[step=1] (6,4);
    \draw[thick,gray,->] (0,0) -- (6.8,0) node[black,right] {$r$};
    \draw[thick,gray,->] (0,0) -- (0,4.8) node[black,above] {$s$};
    \draw (0.500000000000000,0)-- ++(0,-.2) node[below] {0};
    \draw (5.50000000000000,0)-- ++(0,-.2) node[below] {5};
    \draw (0,0.500000000000000)-- ++(-.2,0) node[left] {0};
  \end{tikzpicture}
\end{align}
The colors are to be used in illustrating the next step; they correspond to the diagonal
of each cell.
Note that these infected cells match step~$1$ in Figure~\ref{fig:layer.perc.version}
except for being shifted along the horizontal axis.

Next, we form the random variables $R_0,R_1,\ldots$ and $B^0,B^1,\ldots$ that
generate the infections from step~$1$ to step~$2$ of layer percolation.
We will form them from the site~$2$ instructions, setting $R_j$ equal to
the number of \Right\ instructions between the $(m+j)$th and $(m+j+1)$th \Left\ instructions
and setting $(B^j_0,\ldots,B^j_{R_j})$ as indicators on the presence of \Sleep\ instructions
in this span. But we must first determine the value of $m$, which we can obtain from the index of
the first possible instruction that $u(2)$ might take. In this example, the minimal
value of $u(2)$ is $11$, corresponding to the 5th \Left\ instruction, yielding $m=5$.
In general, to find these minimal odometer values at each step, we follow the leftmost branch of
the tree in Figure~\ref{fig:decision.tree}, simply choosing the minimal value of $u(k+1)$ at each step
given our previous choices of $u(0),\ldots,u(k)$. The odometer obtained in this way
is called the \emph{minimal odometer} and plays an important role in our theory.\label{pg:min.odometer}
(But note that while it is minimal among odometers stable on $\ii{1,n-1}$
with a fixed choice of $u_0$ and $f_0$,
it is typically not stable at the boundary points $0$ and $n$
 and hence it is not minimal in the sense of \thref{lem:lap}).

Counting up the number of \Right\ instructions between successive \Left\ instructions
at site~$2$ starting at the 5th \Left\ instruction,
\begin{align*}
  (R_0,R_1,R_2,R_3,\ldots)=(1,0,0,2,\ldots).
\end{align*}
Examining the presence of \Sleep\ instructions interspersed between the non-\Sleep\ instructions,
\begin{align*}
  B^0 &= (0,1),\qquad
  B^1 = (0),\qquad
  B^2 = (1),\qquad
  B^3 = (1,0,1),\ldots
\end{align*}
Thus the shapes for diagonals $0,\ldots,4$ are:
\begin{align}\label{eq:shapes}
  \begin{tikzpicture}[scale=.3,baseline=.25cm]
    \begin{scope}[fill=red]
      \fill (0,0) rectangle +(1,1);
      \fill (1,0) rectangle +(1,1);
      \fill (1,1) rectangle +(1,1);
      \draw[thick,gray] (0,0) grid[step=1] (2,2);
    \end{scope}
    \begin{scope}[shift={(7,0)},fill=orange]
      \fill (0,0) rectangle +(1,1);
      \draw[thick,gray] (0,0) grid[step=1] (1,2);
    \end{scope}
    \begin{scope}[shift={(13,0)},fill=yellow!95!black]
      \fill (0,0) rectangle +(1,1);
      \fill (0,1) rectangle +(1,1);
      \draw[thick,gray] (0,0) grid[step=1] (1,2);
    \end{scope}
    \begin{scope}[shift={(19,0)},fill=green!95!black]
      \fill (0,0) rectangle +(1,1);
      \fill (1,0) rectangle +(1,1);
      \fill (2,0) rectangle +(1,1);
      \fill (0,1) rectangle +(1,1);
      \fill (2,1) rectangle +(1,1);
      \draw[thick,gray] (0,0) grid[step=1] (3,2);
    \end{scope}
  \end{tikzpicture}
\end{align}
Here the shapes are colored to match up the diagonals in \eqref{eq:step1}.
Each cell of a certain color in \eqref{eq:step1} will infect a block of cells in the shape
above with the same color. 

Using the formula $\layer(j) = \ii[\big]{\sum_{i=0}^{j-1}R_i,\sum_{i=0}^{j}R_i}$,
the layers in step~$2$ consist of the columns
\begin{align*}
  \layer(0) = \ii{0,1},\qquad
  \layer(1) = \ii{1,1},\qquad
  \layer(2) = \ii{1,1},\qquad
  \layer(3) = \ii{1,3},\ldots
\end{align*}
A cell $(r,s)_1$ in diagonal~$j$ (i.e., $r+s=j$) infects cells in the columns found in $\layer(j)$.
It will infect all cells $(r,s)_2$ and some of the cells $(r,s+1)_2$ for $r\in\layer(j)$.
Here we show the layers one at a time. Layers~0, 1, and 3 each consist of a single copy of one
of the shapes in \eqref{eq:shapes}, while layer~2 is formed from two copies of the yellow shape,
one starting in row~$0$ and one in row~$1$:
\begin{center}
  \begin{tikzpicture}[scale=.3]
  \begin{scope}[shift={(0,0)}]
  \fill[red] (0,0) rectangle +(1,1);
  \fill[red] (1,0) rectangle +(1,1);
  \fill[red] (1,1) rectangle +(1,1);
  \draw[thick,gray] (0,0) grid[step=1] (6,4);
  \draw[thick,gray,->] (6,0) -- ++(.8,0) node[black,right] {$r$};
  \draw[thick,gray,->] (0,4) -- ++(0,.8) node[black,above] {$s$};
  \draw (0.500000000000000,0)-- ++(0,-.2) node[below] {0};
  \draw (5.50000000000000,0)-- ++(0,-.2) node[below] {5};
  \draw (0,0.500000000000000)-- ++(-.2,0) node[left] {0};
  \node[anchor=west] at (1.1,-2.5) {layer~0};
  \end{scope}
    \begin{scope}[shift={(12,0)}]
  \fill[orange] (1,0) rectangle +(1,1);
  \draw[thick,gray] (0,0) grid[step=1] (6,4);
  \draw[thick,gray,->] (6,0) -- ++(.8,0) node[black,right] {$r$};
  \draw[thick,gray,->] (0,4) -- ++(0,.8) node[black,above] {$s$};
  \draw (0.500000000000000,0)-- ++(0,-.2) node[below] {0};
  \draw (5.50000000000000,0)-- ++(0,-.2) node[below] {5};
  \draw (0,0.500000000000000)-- ++(-.2,0) node[left] {0};
  \node[anchor=west] at (1.1,-2.5) {layer~1};
  \end{scope}
    \begin{scope}[shift={(24,0)}]
  \fill[yellow] (1,0) rectangle +(1,1);
  \fill[yellow] (1,1) rectangle +(1,1);
  \fill[yellow] (1,2) rectangle +(1,1);
  \draw[thick,gray] (0,0) grid[step=1] (6,4);
  \draw[thick,gray,->] (6,0) -- ++(.8,0) node[black,right] {$r$};
  \draw[thick,gray,->] (0,4) -- ++(0,.8) node[black,above] {$s$};
  \draw (0.500000000000000,0)-- ++(0,-.2) node[below] {0};
  \draw (5.50000000000000,0)-- ++(0,-.2) node[below] {5};
  \draw (0,0.500000000000000)-- ++(-.2,0) node[left] {0};
  \node[anchor=west] at (1.1,-2.5) {layer~2};
  \end{scope}
  \begin{scope}[shift={(36,0)}]
  \fill[green] (1,1) rectangle +(1,1);
  \fill[green] (2,1) rectangle +(1,1);
  \fill[green] (3,1) rectangle +(1,1);
  \fill[green] (1,2) rectangle +(1,1);
  \fill[green] (3,2) rectangle +(1,1);
  \draw[thick,gray] (0,0) grid[step=1] (6,4);
  \draw[thick,gray,->] (6,0) -- ++(.8,0) node[black,right] {$r$};
  \draw[thick,gray,->] (0,4) -- ++(0,.8) node[black,above] {$s$};
  \draw (0.500000000000000,0)-- ++(0,-.2) node[below] {0};
  \draw (5.50000000000000,0)-- ++(0,-.2) node[below] {5};
  \draw (0,0.500000000000000)-- ++(-.2,0) node[left] {0};
  \node[anchor=west] at (1.1,-2.5) {layer~3};
  \end{scope}
  \end{tikzpicture}
\end{center}
The infection set at step~$2$ is the union of these cells:
\begin{center}
\begin{tikzpicture}[scale=.3]
  \fill[] (0,0) rectangle +(1,1);
  \fill[] (1,0) rectangle +(1,1);
  \fill[] (1,1) rectangle +(1,1);
  \fill[] (1,2) rectangle +(1,1);
  \fill[] (2,1) rectangle +(1,1);
  \fill[] (3,1) rectangle +(1,1);
  \fill[] (3,2) rectangle +(1,1);
  \draw[thick,gray] (0,0) grid[step=1] (6,4);
  \draw[thick,gray,->] (6,0) -- ++(.8,0) node[black,right] {$r$};
  \draw[thick,gray,->] (0,4) -- ++(0,.8) node[black,above] {$s$};
  \draw (0.500000000000000,0)-- ++(0,-.2) node[below] {0};
  \draw (5.50000000000000,0)-- ++(0,-.2) node[below] {5};
  \draw (0,0.500000000000000)-- ++(-.2,0) node[left] {0};
\end{tikzpicture}
\end{center}
This set is the one pictured in step~$2$ in Figure~\ref{fig:layer.perc.version},
except that it is shifted to begin in column~$0$.

\section{The correspondence between ARW and layer percolation}
\label{sec:ARW.percolation.connection}

We are now ready to connect layer percolation to activated random walk, as
alluded to in Sections~\ref{sec:tree} and \ref{sec:example2}.
The idea is to define layer percolation by constructing the random
variables $R_0,R_1,\ldots$ and $B^0,B^1,\ldots$ that yield the infections
from step~$k-1$ to step~$k$ from the ARW instructions at site~$k$.
The layer widths $R_0,R_1,\ldots$ are given by the counts of \Right\ instructions
between successive \Left\ instructions; the Bernoulli random variables $B^j_i$ are determined
by the presence of \Sleep\ instructions.

The end result of this construction will be a correspondence given in \thref{bethlehem} between
what we call \emph{extended odometers} in ARW
and infection paths in the coupled layer percolation. The correspondence is between
a set of extended odometers $\eosos{n}(\instr, \sigma, u_0, f_0)$ on $\ii{0,n}$
and a set of length~$n$ infection paths
$\ip{n}(\instr, \sigma, u_0, f_0)$ in the coupled layer percolation
and is given by a map $\Phi$ that is a bijection up to identification of odometers that yield
the same final configuration.
We get started now on the task of defining
$\eosos{n}(\instr, \sigma, u_0, f_0)$,
$\ip{n}(\instr, \sigma, u_0, f_0)$, and  $\Phi$.
The proof is technical, but the ideas in it are simple and are apparent in the
example from Sections~\ref{sec:tree} and \ref{sec:example2}.

\subsection{Extended odometers}

As we described in Section~\ref{sec:tree}, we consider odometers on $\ii{0,n}$
stable on $\ii{1,n-1}$
with a given value $u_0$ at $0$ and given net flow $f_0$ from site~$0$ to site~$1$.
Our goal is to relate this set of odometers to
the set of infection paths of length~$n$ starting from $(0,0)_0$ in layer percolation.
But there is an obstacle not apparent in our example
in Sections~\ref{sec:tree} and \ref{sec:example2}, which is that
things can go wrong when $u_0$ is too small or $f_0$ is too large.
For example,
suppose we take $u_0=10$ and $f_0=20$. Then there are no odometers at all with the given properties,
because we cannot have net flow $\rt{u}{0}-\lt{u}{1}=20$ unless $\rt{u}{0}\geq 20$, which cannot
occur if $u(0)=10$. 
Since our odometers are to correspond to a set of infection paths in layer percolation
with no dependence on $u_0$ and $f_0$, we need to make its size the same (in distribution)
regardless of the choice of $u_0$ and $f_0$.

We carry this out by introducing \emph{extended odometers} that can take negative values.
We extend each stack of instructions to be two-sided.
We think of negative odometer values as representing the execution of instructions on the negative-index
portion of the instruction list, but with the reverse of their normal effects:
a $\Left$ instruction at site~$1$ takes a particle from $0$ to $1$ rather than sending
a particle from $1$ to $0$, for example. Thus we can produce an odometer satisfying $u_0=10$ and $f_0=20$
by making the odometer negative at site~$1$. These extended odometers have no useful interpretation
from the perspective of activated random walk, but by allowing them we regularize the set of
stable odometers and make them independent of the choice of $u_0$ and $f_0$.

We make this precise now.
For $v\geq 0$, let $\instr_v$ denote the two-sided list
of instructions at site~$v$, with instruction number~$i$ notated as $\instr_v(i)$
for $i\in\ZZ$, and with $\instr_v(0)=\Left$ for all $v$.
In Section~\ref{sec:sitewise} we defined $\Instr_v$ to be a one-sided list of i.i.d.\ instructions.
Here we think of $\instr_v$ as being any deterministic list of instructions with
$\instr_v(0)=\Left$. The correspondence
between ARW and layer percolation is deterministic, 
and probability plays no role here until Section~\ref{sec:Instr}.

If $u(v)\geq 0$, we define $\lt{u}{v}$ and $\rt{u}{v}$
as before as the number of \Left\ and \Right\ instructions, respectively,
in $\instr_v(1),\ldots,\instr_v(u(v))$. Note that we have $\lt{u}{v}=\rt{u}{v}=0$ if $u(v)=0$.
For $u(v)<0$, we define $\lt{u}{v}$ and $\rt{u}{v}$
as the negative of the number of \Left\ and \Right\ instructions, respectively,
in $\instr_v(u(v)+1),\ldots,\instr_v(0)$. Note that because $\instr_v(0)=\Left$,
we always have $\lt{u}{-1}=-1$ and $\rt{u}{-1}=0$. This is the point of
requiring $\instr_v(0)=\Left$, since it makes it so that extending our instruction lists
to be two-sided does not create new values of $i$ with $\lt{u}{i}=0$.

We define an \emph{extended odometer} on $\ii{a,b}$
as a function $u\colon\ZZ\to\ZZ$ that is zero off of $\ii{a,b}$.
We call an extended odometer \emph{stable}, either at a site or a set of sites, by the same criteria
given in \thref{def:stable} but with our extended definition of
$\lt{\cdot}{v}$ and $\rt{\cdot}{v}$.
And we define
$\eosos{n}(\instr, \sigma, u_0, f_0)$ to be the set of extended odometers $u$ on
$\ii{0,n}$ for the instructions $\instr$ and initial configuration $\sigma$
that satisfy $u(0)=u_0$ and $\rt{u}{0}-\lt{u}{1}=f_0$ and are stable on $\ii{1,n-1}$.
This is the set that we will embed in an instance of layer percolation.

We start by giving a criterion for an extended odometer $u$ on $\ii{0,n}$ to be
stable on $\ii{1,n-1}$. Essentially we are
just rewriting \thref{def:stable} in a convenient form.

\begin{lemma}\thlabel{lem:flow}
  Let $u$ be an extended odometer on $\ii{0,n}$.
  Let $f_v=\rt{u}{v}-\lt{u}{v+1}$, the net flow from $v$ to $v+1$ under the odometer.
  Let $s_v=\sum_{i=1}^v\1\{\instr_i(u(i))=\Sleep\}$.
  Then $u$ is stable on $\ii{1,n-1}$ if and only if
  \begin{align}\label{eq:lemflow}
    f_v = f_0 + \sum_{i=1}^v\abs{\sigma(i)} - s_v\quad\text{for all $v\in\ii{0,n-1}$.}
  \end{align}
\end{lemma}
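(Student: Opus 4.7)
The plan is to recognize that \thref{lem:flow} is essentially a rewriting of \thref{def:stable} in the language of flows, and so the proof should be a direct algebraic computation. The work is to relate the ``balance'' quantity $h(v)$ from the definition of stability to the discrete differences $f_v - f_{v-1}$ and $s_v - s_{v-1}$.

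First, I would combine conditions \ref{i:balance} and \ref{i:end.in.sleep} of \thref{def:stable} into the single statement that $u$ is stable on $\ii{1,n-1}$ if and only if
\begin{align*}
  h(v) = \1\{\instr_v(u(v)) = \Sleep\} \quad \text{for all } v \in \ii{1,n-1},
\end{align*}
since this equation automatically forces $h(v) \in \{0,1\}$ and encodes the iff from condition \ref{i:end.in.sleep}.

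Next, I would compute $h(v)$ directly in terms of the flow differences. Expanding
\begin{align*}
  f_v - f_{v-1} = \rt{u}{v} - \lt{u}{v+1} - \rt{u}{v-1} + \lt{u}{v},
\end{align*}
and comparing with the definition $h(v) = \abs{\sigma(v)} + \rt{u}{v-1} + \lt{u}{v+1} - \lt{u}{v} - \rt{u}{v}$, one gets the identity $h(v) = \abs{\sigma(v)} - (f_v - f_{v-1})$. Similarly, $s_v - s_{v-1} = \1\{\instr_v(u(v)) = \Sleep\}$ by the very definition of $s_v$. Substituting these into the reformulated stability criterion shows that $u$ is stable on $\ii{1,n-1}$ if and only if
\begin{align*}
  f_v - f_{v-1} = \abs{\sigma(v)} - (s_v - s_{v-1}) \quad \text{for all } v\in\ii{1,n-1}.
\end{align*}

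Finally, I would telescope this recurrence from $0$ to $v$ (with the $v=0$ case of \eqref{eq:lemflow} being the tautology $f_0 = f_0$), yielding $f_v = f_0 + \sum_{i=1}^v \abs{\sigma(i)} - s_v$ for every $v \in \ii{0,n-1}$. The converse direction is immediate: given \eqref{eq:lemflow}, subtracting the equations for $v$ and $v-1$ recovers the per-site identity, and reversing the substitution shows that $h(v)$ matches $s_v - s_{v-1}$, which is a $\{0,1\}$-valued indicator of the sleep condition.

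There is essentially no obstacle here; the only mild point of care is being consistent with the extended definition of $\lt{u}{v}$ and $\rt{u}{v}$ for negative $u(v)$, but these enter the computation only through the combination $\rt{u}{v-1} + \lt{u}{v+1} - \lt{u}{v} - \rt{u}{v}$, so the algebraic identity relating $h(v)$ to $f_v - f_{v-1}$ is unaffected by the sign of $u$.
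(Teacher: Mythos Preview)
Your proposal is correct and follows essentially the same approach as the paper: both derive the per-site identity $f_v - f_{v-1} = \abs{\sigma(v)} - \1\{\instr_v(u(v))=\Sleep\}$ directly from \thref{def:stable}, then telescope to obtain \eqref{eq:lemflow}, and for the converse difference \eqref{eq:lemflow} at consecutive values of $v$ to recover the stability criteria. You simply spell out the intermediate step $h(v) = \abs{\sigma(v)} - (f_v - f_{v-1})$ more explicitly than the paper does.
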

\begin{proof}
  Suppose $u$ is stable on $\ii{1,n-1}$. Then from
  \thref{def:stable},
  \begin{align}\label{eq:f-f}
    f_i-f_{i-1} &= \abs{\sigma(i)}  -  \1\{\instr_i(u(i))=\Sleep\}
  \end{align}
  for all $i\in\ii{1,n-1}$. Summing this equation from $i=1$ to $i=v$ yields \eqref{eq:lemflow}.
  
  Conversely, suppose \eqref{eq:lemflow} holds. Then \eqref{eq:f-f} holds, which shows that
  the criteria for stability given in \thref{def:stable} hold.
\end{proof}

\subsection{The minimal odometer}
Our embedding of $\eosos{n}(\instr, \sigma, u_0, f_0)$ into layer percolation
will take the smallest odometer in $\eosos{n}(\instr, \sigma, u_0, f_0)$
to the infection path  $(0,0)_0\to(0,0)_1\to\cdots\to(0,0)_n$;
see the discussion of the \emph{minimal odometer} on page~\pageref{pg:min.odometer}.
We now define this minimal odometer before proving in \thref{lem:min.is.min} that it is
indeed the minimal element of $\eosos{n}(\instr, \sigma, u_0, f_0)$.

\begin{define}\thlabel{def:minimal.odometer}
The \emph{minimal odometer} $\mo$ of $\eosos{n}(\instr, \sigma, u_0, f_0)$ is the extended
odometer defined by the following inductive procedure.
First let $\mo(0)=u_0$. Now suppose that $\mo(v-1)$ has already been defined. We define
$\mo(v)$ to be the minimum integer such that
\begin{align*}
  \lt{\mo}{v} = \rt{\mo}{v-1} - f_0 - \sum_{i=1}^{v-1}\abs{\sigma(i)}.
\end{align*}
\end{define}
This procedure always makes $\instr_v(\mo(v))=\Left$ for $v\in\ii{1,n}$, since if $\instr_v(\mo(v))$
were anything other than \Left, we could decrease $\mo(v)$ by one without changing $\lt{\mo}{v}$
(note that this argument applies even when $\mo(v)\leq 0$).

The idea of the definition is that at each step we are choosing $\mo(v)$
to be the minimal value making $\mo$ stable at $v-1$.
If we ever made $\mo(v)$ a larger value, then we would execute
an additional \Right\ instruction or cause a particle to sleep at $v$,
either of which would require an additional \Left\ instruction at site~$v+1$.
\begin{lemma}\thlabel{lem:min.is.min}
  The odometer $\mo$ constructed in \thref{def:minimal.odometer} is an element
  of $\eosos{n}(\instr,\sigma,u_0,f_0)$ satisfying
  \begin{align*}
     \mo(v)\leq u(v)\text{ for all $v\in\ii{0,n}$}
  \end{align*}
  for any $u \in \eosos{n}(\instr,\sigma,u_0,f_0)$.
  The net flow from site~$v$ to $v+1$ under $\mo$ is
  \begin{align}\label{eq:min.o.constr}
    \rt{\mo}{v}-\lt{\mo}{v+1}=f_0+\sum_{i=1}^v\abs{\sigma(i)} \quad\text{for $v\in\ii{0,n-1}$.}
  \end{align}
\end{lemma}
\begin{proof}
  Equation~\eqref{eq:min.o.constr} is immediate from the construction of the minimal odometer.
  With $s_v=\sum_{i=1}^v\1\{\instr_i(\mo(i))=\Sleep\}$, we have $s_v=0$ for all $v\geq 0$
  since $\instr_i(\mo(i))=\Left$ for $i\in\ii{1,n}$.
  Hence \eqref{eq:lemflow}
  is satisfied, and \thref{lem:flow} proves that $\mo$ is stable on $\ii{1,n-1}$.
  By construction, the minimal odometer satisfies $\mo(0)=u_0$
  and by \eqref{eq:min.o.constr} we have $\rt{\mo}{0}-\lt{\mo}{1}=f_0$.
  Thus $\mo$ is an element of $\eosos{n}(\instr,\sigma,u_0,f_0)$.
  
  To prove the minimality of $\mo$, fix $u\in \eosos{n}(\instr,\sigma,u_0,f_0)$
  and proceed by induction on $v$. Assume that $\mo(v)\leq u(v)$ for some $v\in\ii{0,n-1}$,
  and we will prove that $\mo(v+1)\leq u(v+1)$. Since $u$ is stable on $\ii{1,n-1}$, 
  \thref{lem:flow} gives us
  \begin{align*}
    \rt{u}{v}-\lt{u}{v+1} &= \rt{u}{0}-\lt{u}{1} + \sum_{i=1}^v\abs{\sigma(i)} - \sum_{i=1}^v\1\{\instr_i(u(i))=\Sleep\}\\ &= f_0+ \sum_{i=1}^v\abs{\sigma(i)} - \sum_{i=1}^v\1\{\instr_i(u(i))=\Sleep\}\\
    &\leq f_0+ \sum_{i=1}^v\abs{\sigma(i)}=\rt{\mo}{v}-\lt{\mo}{v+1}.
  \end{align*}
  Thus $\lt{u}{v+1}\geq\rt{u}{v}-\rt{\mo}{v}+\lt{\mo}{v+1}$.
  Since $u(v)\geq \mo(v)$, we have $\rt{u}{v}-\rt{\mo}{v}\geq 0$, proving that $\lt{u}{v+1}\geq\lt{\mo}{v+1}$.
  If $\lt{u}{v+1}$ is strictly greater than $\lt{\mo}{v+1}$, then $u(v+1)>\mo(v+1)$.
  And if $\lt{u}{v+1}=\lt{\mo}{v+1}=\ell$, then since $\mo(v+1)$ was chosen as the minimum value making
  $\lt{\mo}{v+1}$ equal to $\ell$, we have $u(v+1)\geq \mo(v+1)$.
\end{proof}
We emphasize that despite being minimal in $\eosos{n}(\instr,\sigma,u_0,f_0)$,
the minimal odometer is not minimal in the sense of the least-action principle and is not typically
the true odometer.
While the true odometer stabilizing $\ii{0,n}$ is minimal out of all odometers stable on $\ii{0,n}$,
the minimal odometer of $\eosos{n}(\instr,\sigma,u_0,f_0)$ is stable only on $\ii{1,n-1}$
and is usually only an extended odometer, not an odometer.

\subsection{Statement of the correspondence}
\label{sec:correspondence.statement}
We are nearly ready to state the main result of Section~\ref{sec:ARW.percolation.connection}.
First we construct  layer percolation from the ARW instructions.
Recall that layer percolation can be represented as random variables
$(R_j)_{j\geq 0}$ and $(B^j_0,\ldots,B^j_{R_j})_{j\geq 1}$ for each $v\in\ii{1,n}$
that determine the infections from step~$v-1$ to $v$ (see \thref{rmk:what.is.layer.percolation}). 
\begin{define}[$\Theta_n$]
  We define a mapping of
  realizations of ARW to realizations of layer percolation, expressed as
  \begin{align*}
     \Theta_n\colon\Bigl((\instr_v)_{v\in\ii{0,n}},\, \sigma,\, u_0,\,f_0\Bigr)\mapsto 
     \Bigl((R_j(v))_{j\geq 0},\,(B^j_0(v),\ldots,B^j_{R_j}(v))_{j\geq 1}\Bigr)_{v\in\ii{1,n}},
  \end{align*}
  where $R_j(v)$ are deterministic nonnegative integers and $B^j_0(v),\ldots,B^j_{R_j}(v)$
  are deterministic values in $\{0,1\}$.
  So long as we are working with one value of $v$ at a time, we will suppress
  $v$ from the notation by fixing it and referring to
  $(R_j)_{j\geq 0}$ and $(B^j_0,\ldots,B^j_{R_j})_{j\geq 0}$ as the component
  of $\Theta_n(\instr,\sigma,u_0,f_0)$ for step~$v$.

Given $(\instr_v)_{v\in\ii{0,n}}$, $\sigma$, $u_0$, and $f_0$, let $\mo$ be the minimal
odometer of $\eosos{n}(\instr,\sigma,u_0,f_0)$. Fix $v\in\ii{1,n}$
and consider the instruction list
\begin{align}\label{eq:reduceddef}
  \instr_v\bigl(\mo(v)\bigr),\, \instr_v\bigl(\mo(v)+1\bigr),\,\instr_v\bigl(\mo(v)+2\bigr),\ldots,
\end{align}
which always starts with \Left. 
We define $\Theta_n(\instr,\sigma,u_0,f_0)$ by setting
$R_0,R_1,\ldots$ for step~$v$ as
the number of \Right\ instructions between successive \Left\ instructions in this list.
That is, taking the initial \Left\ instruction on the list as the $0$th \Left\ instruction,
$R_j$ is the number of \Right\ instructions occurring between the $j$th and $(j+1)$th \Left\ instruction.

Now fix $j$ and consider this portion of the list
from the $j$th to the $(j+1)$th \Left\ instruction with $R_j$ many \Right\ instructions between them.
In the $R_j+1$ regions of the string between each of these $R_j+2$ many instructions, there might or
might not be a string of \Sleep\ instructions.
We define $B^j_0,\ldots,B^j_{R_j}$ as indicators on the presence of a sleep instruction in
the respective slots.
Carrying this out for each $v$, we have defined $\Theta_n(\instr,\sigma,u_0,f_0)$.
\end{define}
Note that this construction matches our example in Section~\ref{sec:example2},
where we constructed layer percolation from the sitewise instructions \eqref{eq:ex.instructions}.
Also note that all objects so far are deterministic. We have constructed a deterministic
map from a realization of ARW to a realization of layer percolation, and soon we will establish
a deterministic bijection between the extended odometers for the realization of ARW
and the infection paths in the realization of layer percolation.
Only in Section~\ref{sec:Instr} will we add randomness, 
when we show that the image of $\Theta_n$ with random instructions is
layer percolation as defined in Section~\ref{sec:LP.ARW},
i.e., it consists of independent $\Geo(1/2)$ and $\Ber(\lambda/(1+\lambda))$
random variables for each $v\in\ii{1,n}$.

Next we give the map from extended odometers to infection paths:

\begin{define}[$\ip{n}(\instr,\sigma,u_0,f_0)$ and $\Phi_n$]
Let $\ip{n}(\instr,\sigma,u_0,f_0)$ be the set of infection paths of length~$n$ starting from
$(0,0)_0$ in the realization of layer percolation given by $\Theta_n(\instr,\sigma,u_0,f_0)$.
And let $\Phi\colon \eosos{n}(\instr,\sigma,u_0,f_0)\to\ip{n}(\instr,\sigma,u_0,f_0)$
be defined as follows.
For any extended odometer $u$ on $\ii{0,n}$, we define $\Phi(u)$ as the sequence of cells
 $\bigl((r_v,s_v)_v,\,0\leq v\leq n\bigr)$ given by
\begin{align*}
  r_v &= \rt{u}{v} - \rt{\mo}{v},\\
  s_v &= \sum_{i=1}^v\1\{\Instr_i(u(i))=\Sleep \}.
\end{align*}
It is not clear that the resulting sequence of cells
is an infection path, i.e., that in the realization of layer
percolation $\Theta_n(\instr,\sigma,u_0,f_0)$, we have $(r_{v-1},s_{v-1})_{v-1}\to(r_v,s_v)_v$ in 
for all $1\leq v\leq n$, but we will prove it as part of \thref{bethlehem}.
\end{define}

To summarize, we start with a realization of ARW on $\ii{0,n}$.
To form the corresponding realization of layer percolation $\Theta_n(\instr,\sigma,u_0,f_0)$, 
we consider the instruction list at site~$v$ starting at index $\mo(v)$,
where $\mo$ is the minimal odometer.
We break up the instruction list into blocks separated by \Left\ instructions.
The layer widths $R_0(v),R_1(v),\ldots$ are determined by the number of \Right\ instructions
in each block; the Bernoulli random variables $(B^j_i(v))$ determining the upward
spread of layer percolation are determined by the presence of \Sleep\ instructions.

Then, we consider $\eosos{n}(\instr,\sigma,u_0,f_0)$, the extended odometers stable on $\ii{1,n-1}$
with given value $u_0$ at site~$0$ and given net flow $f_0$ from site~$0$ to $1$,
and $\ip{n}(\instr,\sigma,u_0,f_0)$, the infection paths in the layer percolation $\Theta_n(\instr,\sigma,u_0,f_0)$.
The map $\Phi$ goes from $\eosos{n}(\instr,\sigma,u_0,f_0)$ to
$\ip{n}(\instr,\sigma,u_0,f_0)$ by taking an extended odometer $u$
to the sequence of cells $\bigl((r_v,s_v)_v,\,0\leq v\leq n\bigr)$ where
$r_v$ gives the number of extra right instructions
executed by odometer $u$ at position~$v$ compared to the minimal odometer, and $s_v$ gives the number
of particles that the odometer leaves sleeping at positions $1,\ldots,v$.

Can we expect $\Phi$ to be injective?
Suppose an extended odometer $u\in\eosos{n}(\instr,\sigma,u_0,f_0)$ leaves a particle sleeping
at site~$v$, i.e., it satisfies $\instr_v(u(v))=\Sleep$.
Suppose that the instruction at the next index $u(v)+1$ at site~$v$ is also \Sleep.
By our definition of $\Phi$, the value of $\Phi(u)$ is unchanged if we increase $u(v)$ by $1$.
Thus $\Phi$ is not injective. On the other hand, it makes sense to identify these two extended odometers,
because an odometer may take any index within a string of consecutive \Sleep\ instructions
without altering any of its properties from the perspective of \thref{def:stable}.
The following result shows that if we identify odometers differing in this way,
then $\Phi$ becomes a bijection:
\begin{prop}\thlabel{bethlehem}
The map $\Phi$ is a surjection from $\eosos{n}(\instr,\sigma,u_0,f_0)$
onto $\ip{n}(\instr,\sigma,u_0,f_0)$. If $u,u'\in\eosos{n}(\instr,\sigma,u_0,f_0)$
are two extended odometers with $\Phi(u)=\Phi(u')$, then for all $v\in\ii{0,n}$,
either $u(v)=u'(v)$
or $u(v)$ and $u'(v)$ are two indices in $\instr_v$ in a string of consecutive \Sleep\ instructions.
\end{prop}

It is a technical but not difficult task to prove this result.
The idea of the bijection is more apparent from an example than a proof, and we urge
our readers to consult the example from Sections~\ref{sec:tree} and \ref{sec:example2}.
The basic idea, visible in the example from Sections~\ref{sec:tree} and \ref{sec:example2},
is to match each choice of odometer value at site~$v$ with a collection of infections from step~$v-1$
to $v$ in layer percolation.

\subsection{Reduced instructions}
When we construct $\Theta_n(\instr,\sigma,u_0,f_0)$,
we first compute the minimal odometer $\mo$ from $\instr_0$, $\sigma$, $u_0$, and $f_0$.
Then from the portion of $\instr_v$ starting at index $\mo(v)$,
we compute the component of $\Theta_n(\instr,\sigma,u_0,f_0)$ for step~$v$.
It will be helpful to represent this portion of the instruction list in a different form:
\begin{define}\thlabel{def:reduced1}
We define the \emph{reduced instructions from $(\instr,\sigma,u_0,f_0)$} at site~$1\leq v\leq n$
as the two sequences
  $a_1,a_2,\ldots\in\{\Left,\Right\}$ and $b_1,b_2,\ldots\in\{0,1\}$ defined as follows.
  Let $\mo$ be the minimal odometer of $\eosos{n}(\instr,\sigma,u_0,f_0)$.
  Let $\mo(v)=i_1<i_2<\cdots$ be the indices of the non-\Sleep\ instructions in $\instr_v$
  starting at index~$\mo(v)$. We define $a_1,a_2,\ldots$
  to be $\instr_v(i_1),\,\instr_v(i_2),\ldots$, the subsequence of \Left\ and \Right\ instructions
  in $\instr_v$ starting at index~$\mo(v)$. Note that $a_1$ is always equal to \Left.
  Next, for $j\geq 1$ we set $b_j=1$ if $\instr_v$ contains a \Sleep\ instruction
  between indices $i_j$ and $i_{j+1}$ and $b_j=0$ otherwise.
\end{define}

It also makes sense to start with layer percolation and 
arrive at reduced instructions from the other direction.
\begin{define}\thlabel{def:reduced2}
  Let $R_0,R_1,\ldots$ and $B^0,B^1,\ldots$ be the values defining the infections from
  step~$v-1$ to step~$v$ in a realization of layer percolation
  (see \thref{rmk:what.is.layer.percolation}).
  We define the \emph{reduced instructions} corresponding to this step of layer percolation
  as the two sequences
  $a_2,a_3,\ldots\in\{\Left,\Right\}$ with $a_1=\Left$ and $b_1,b_2,\ldots\in\{0,1\}$ defined as follows.
  We form $a_1,a_2,\ldots$ as a \Left\ instruction followed by $R_0$ many \Right\ instructions,
  then a \Left\ instruction followed by $R_1$ many \Right\ instructions, and so on.
  To define $b_1,b_2,\ldots$, let
  $1=\ell_0<\ell_1<\cdots$ be the indices in $a_1,a_2,\ldots$ of \Left\ instructions.
  Then for each $j\geq 0$, we define
  \begin{align}\label{eq:Bb.def}
    \bigl( b_{\ell_j},\,b_{\ell_j+1},\ldots,b_{\ell_{j+1}-1}\bigr)
      &= \bigl(B^j_0,\ldots,B^j_{R_j}\bigr).
  \end{align}
  
  Similarly, we define the step of layer percolation corresponding to reduced instructions
  $a_2,a_3,\ldots\in\{\Left,\Right\}$ with $a_1=\Left$ and $b_1,b_2,\ldots\in\{0,1\}$ as follows.
  Let
  $1=\ell_0<\ell_1<\cdots$ be the indices in $a_1,a_2,\ldots$ of \Left\ instructions.
  Let $R_j$ be the number of \Right\ instructions in $a_1,a_2,\ldots$ 
  between indices $a_{\ell_j}$ and $a_{\ell_{j+1}}$, or equivalently $R_j=\ell_{j+1}-\ell_j-1$.
  And define $\bigl(B^j_0,\ldots,B^j_{R_j}\bigr)$ from $b_1,b_2,\ldots$
  according to \eqref{eq:Bb.def}.  
\end{define}
The following lemma is a simple exercise:
\begin{lemma}\thlabel{lem:reduced=RB}
  The maps in \thref{def:reduced2} between $(R_j)_{j\geq 0},\,(B^j)_{j\geq 0}$
  and $(a_j)_{j\geq 1},\,(b_j)_{j\geq 1}$ are inverses.
  Hence reduced instructions are in bijection with realizations of
  $(R_j)_{j\geq 0},\,(B^j)_{j\geq 0}$ and provide another way to specify a step of layer percolation.
\end{lemma}
Finally, we observe that \thref{def:reduced1,def:reduced2} are compatible with each other.
\begin{prop}\thlabel{prop:same.reduced}
  The reduced instructions from $(\instr,\sigma,u_0,f_0)$ at site~$v\in\ii{1,n}$
  are the same as the reduced instructions corresponding to step~$v$ of
  the layer percolation $\Theta_n(\instr,\sigma,u_0,f_0)$.
\end{prop}
\begin{proof}
  Let $a_1,a_2,\ldots$ and $b_1,b_2,\ldots$ be the reduced instructions from $(\instr,\sigma,u_0,f_0)$
  at site~$v$
  and let $a'_1,a'_2,\ldots$ and $b'_1,b_2',\ldots$ be the reduced instructions
  corresponding to step~$v$ of
  $\Theta_n(\instr,\sigma,u_0,f_0)$.
  Let $R_0,R_1,\ldots$ and $B^0,B^1,\ldots$ be the component of
  $\Theta_n(\instr,\sigma,u_0,f_0)$ for step~$v$.
  
  The sequence $a_1,a_2,\ldots$ is defined as the subsequence of non-\Sleep\ instructions
  within $\instr_v$ starting at index~$\mo(v)$.
  Meanwhile, $R_0,R_1,\ldots$ are defined as the number of \Right\ instructions
  between successive \Left\ instructions in $\instr_v$ starting at $\mo(v)$,
  and then $(a'_1,a'_2,\ldots)=(a_1,a_2,\ldots)$ by \thref{def:reduced2}.
  
  With $1=\ell_0<\ell_1<\cdots$ the indices in $a_1,a_2,\ldots$ of \Left\ instructions,
  we have $B^j_i=b_{\ell_j+i}$ because both are defined as indicators on the presence
  of a \Sleep\ instruction in the same portion of $\instr_v$. Then $b'_{\ell_j+i}$ for $0\leq i\leq R_j$
  is defined to be equal to $B^j_i$, demonstrating that $(b_1,b_2,\ldots)=(b'_1,b'_2,\ldots)$.
\end{proof}
\begin{example}\thlabel{ex:running}
  We return to the example presented in Sections~\ref{sec:tree} and \ref{sec:example2}.
  We take the positive-index portions of $\instr_0,\instr_1,\instr_2$ to be given by
  \eqref{eq:ex.instructions}, and we take $\sigma\equiv 1$, $u_0=20$,
  and $f_0=2$. Here are the site~$2$ instructions, with the portion starting
  at the minimal odometer index highlighted:  
  {\setlength{\tabcolsep}{0.5pt}
  \definecolor{Gray}{gray}{0.9}
  \newcolumntype{I}{>{\columncolor{Gray}\tt}l}
\newcolumntype{i}{>{\tt}l}
\begin{align*}
\begin{tabular}{iiiii|iiiii|IIIII|IIIII|IIIII|I}
  S&L&R&R&L&S&L&S&R&L&L&R&S&L&L&S&L&S&R&R&S&S&L&L&S&\ldots
\end{tabular}
\end{align*}
}
The reduced instructions from $(\instr,\sigma,u_0,f_0)$ at site~$2$ are
{\setlength{\tabcolsep}{0.5pt}
\newcolumntype{i}{>{\tt}l}
\begin{align}\label{eq:ex.reduced}
\begin{tabular}{w{l}{2em}iiiiiiiiil}
  $a$:&L&R&L&L&L&R&R&L&L&\ldots\\
  $b$:&0&1&0&1&1&0&1&0&1&\ldots
\end{tabular}
\end{align}
}
The component of $\Theta_n(\instr,\sigma,u_0,f_0)$ for site~$v$ is
\begin{gather*}
  (R_0,R_1,\ldots)=(1,0,0,2,0,\ldots),\\
  B^0 = (0,1),\ B^1 = (0),\ B^2=(1),\ B^3=(1,0,1),\ B^4=(0),\ldots
\end{gather*}
The reduced instructions corresponding to step~$2$ of $\Theta_n(\instr,\sigma,u_0,f_0)$
are also equal to \eqref{eq:ex.reduced}. The sequence $R_0,R_1,\ldots$ 
gives the length of runs of \Right\ instructions in $a_1,a_2,\ldots$, and $B^0,B^1,\ldots$
when concatenated form the sequence $b_1,b_2,\ldots$.
\end{example}

\subsection{Proof of the correspondence}
To prove \thref{bethlehem}, we first construct a bijection between 
infections in a step of layer percolation and locations within the reduced
instructions corresponding to this step.
We start by defining a set $\infections_v$ consisting of all infections from 
step~$v-1$ to step~$v$ in a realization of layer percolation.
\begin{align*}
  \infections_v &=\{(r,s,r',s')\colon (r,s)_{v-1}\to(r',s')_v\}.
\end{align*}
Taking $a_1,a_2,\ldots$ and $b_1,b_2,\ldots$ as the reduced instructions 
corresponding to this step of layer percolation,
we define a set $\opositions_v$ consisting of every possible position
in the reduced instructions, represented as follows:
\begin{align*}
  \opositions_v &=
  \bigl\{(i,z)\colon i\geq 1,\,z\in\{0,1\},\,z\leq b_i\bigr\}.
\end{align*}
In the definition of $\opositions_v$, 
the value of $i$ represents an index in the reduced instructions, while $z$ represents
whether a $\Sleep$ instruction is chosen, if it is available.

To give a sense of the bijection before we rigorously define it, we again
consider our running example. In the realization of layer percolation
described in Section~\ref{sec:example2}, we have $(1,1)_1\to(1,2)_2$.
The infections from $(1,1)_1$ stem from indices $15$ and $16$ at site~$2$ (see Case~3
in Section~\ref{sec:tree}), with the infection of $(1,2)_2$ corresponding
to $u(2)=16$. Looking at \thref{ex:running}, index~$16$ in $\instr_2$
is the \Sleep\ instruction after the 
fourth non-\Sleep\ instruction starting at index $\mo(2)=11$. 
Hence index~$16$ in $\instr_2$ corresponds to $(4,1)\in\opositions_2$.
This will be represented in the following lemma by a map $\Psi\colon\infections_v\to\opositions_v$
taking the infection $(1,1)_1\to(1,2)_2$ (represented as $(1,1,1,2)\in\infections_2$)
to $(4,1)\in\opositions_2$.
This map is surjective, but it is not bijective because all of the ``parallel'' infections
from cells along a common diagonal are mapped to the same element of $\opositions_v$.
\begin{lemma}\thlabel{lem:Psi}
  Let $a_1,a_2,\ldots$ and $b_1,b_2,\ldots$ be the reduced instructions
  corresponding to step~$v$ of layer percolation represented by
  $(R_j)_{j\geq 0}$ and $(B^j_0,\ldots,B^j_{R_j})_{j\geq 0}$, and consider
  the associated sets $\infections_v$ and $\opositions_v$.
  \begin{enumerate}[(a)]
    \item\label{i:Psi.welldef}
      For each $(r,s,r',s')\in\infections_v$, there
      is a unique index $j$ such that $a_1,\ldots,a_j$ contains
      $r+s+1$ $\Left$ and $r'$ $\Right$ instructions and $b_j\geq s'-s$.
  \end{enumerate}
  Thus we can define a map $\Psi_v$ on $\infections_v$ taking $(r,s,r',s')$
  to $(j,s'-s)$, where $j$ is the index given in \ref{i:Psi.welldef}.
  \begin{enumerate}[(a),resume]
    \item \label{i:Psi.surjective}
      The map $\Psi_v$ is a surjection from
      $\infections_v$ to $\opositions_v$.
    \item \label{i:Psi.preimages}
      For each $(j,z)\in\opositions_v$,
    \begin{align*}
      \Psi_v^{-1}\bigl((j,z)\bigr) = \bigl\{ (r,s,r',s')\colon \text{$r,s,s'\in\NN$ such that
        $r+s=q-1$ and $s'-s=z$}\bigr\},
    \end{align*}
    where $q$ and $r'$ are the number of $\Left$ and $\Right$ instructions, respectively,
    in $a_1,\ldots,a_j$.
  \end{enumerate}
\end{lemma}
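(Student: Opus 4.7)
The proof is essentially a careful unwinding of \thref{cpl:reduced.to.layerperc}. I would first record the basic dictionary between the $\Left$-indices $1 = i_0 < i_1 < \cdots$ in the reduced sequence $a_1, a_2, \ldots$ and the layer boundaries: the identity $i_k = 1 + k + R_0 + \cdots + R_{k-1}$ yields $\layer(j) = \ii{i_j - j - 1,\, i_{j+1} - j - 2}$ and turns the coupling into $B^j_i = b_{i_j + i}$. Every subsequent step rearranges these identities.

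For \ref{i:Psi.welldef}, given an infection $(r,s)_{v-1} \to (r',s')_v$, I would set $j = (r+s+1) + r'$ and verify all three conditions at once. The value $r + s$ is the diagonal, so by the infection rule $r' \in \layer(r+s)$, which by the dictionary places $j$ in the range $\ii{i_{r+s},\, i_{r+s+1} - 1}$; this is exactly the set of indices at which $a_1, \ldots, a_j$ has precisely $r + s + 1$ $\Left$'s. Because only $\Right$ instructions appear between consecutive $\Left$'s in $a$, the number of $\Right$'s in $a_1, \ldots, a_j$ increases by one with each unit increase of $j$ on this range, so $j = (r+s+1) + r'$ is the unique index recording $r'$ $\Right$'s. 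For the last condition, the coupling gives $b_j = B^{r+s}_{r' - (R_0 + \cdots + R_{r+s-1})}$: when $s' = s$ the requirement $b_j \geq 0$ is vacuous, while when $s' = s + 1$ the infection rule forces the right-hand side to equal $1$. This gives both existence and uniqueness, so $\Psi_v$ is well-defined.

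For \ref{i:Psi.surjective} and \ref{i:Psi.preimages}, I would start from an arbitrary $(j, z) \in \opositions_v$, let $\ell$ and $r'$ be the numbers of $\Left$ and $\Right$ in $a_1, \ldots, a_j$, and check that any tuple $(r, s, r', s')$ with $r, s \geq 0$, $r + s = \ell - 1$, and $s' = s + z$ satisfies $(r, s)_{v-1} \to (r', s')_v$ and maps under $\Psi_v$ to $(j, z)$. The membership $r' \in \layer(\ell - 1)$ follows from $i_{\ell - 1} \leq j \leq i_\ell - 1$; when $z = 1$, the coupling reduces the required infection condition at row $s + 1$ to $b_j = 1$, which holds since $(j, z) \in \opositions_v$ forces $z \leq b_j$. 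The uniqueness established in \ref{i:Psi.welldef} then shows that every preimage must have exactly these parameters, giving the formula in \ref{i:Psi.preimages}. The entire argument is essentially bookkeeping; the only place where care is genuinely required is matching the dual role of $i_k$ as both the rightmost column of layer $k-1$ and the leftmost column of layer $k$, and the parametrization $j = r + s + r' + 1$, viewed as ``the total number of non-$\Sleep$ instructions used,'' handles this cleanly on both sides of the correspondence.
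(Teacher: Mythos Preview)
Your proposal is correct and follows essentially the same approach as the paper's proof. The only cosmetic difference is that you state the closed form $j = (r+s+1) + r'$ upfront as the ``total number of non-\Sleep\ instructions used,'' whereas the paper derives the equivalent formula $j = i_{r+s} + r' - (R_0 + \cdots + R_{r+s-1})$ and never simplifies it; your parametrization makes the bookkeeping slightly cleaner but the logical structure of both arguments is identical.
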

\begin{proof}
  Let $1=\ell_0<\ell_1<\cdots$ be the indices of \Left\ instructions in $a_1,a_2,\ldots$.
  For $j\in \ii{\ell_{r+s},\,\ell_{r+s+1}-1}$ and no other values of $j$, there are exactly $r+s+1$ \Left\ 
  instructions in $a_1,\ldots,a_j$. According to \thref{def:reduced2}, the number of \Right\ instructions
  in $a_1,\ldots,a_{\ell_{r+s}}$ is $R_0+\cdots+R_{r+s-1}$, and the number of \Right\ instructions in
  $a_1,\ldots,a_{\ell_{r+s+1}-1}$ is $R_0+\cdots+R_{r+s}$.
  
  Suppose that $(r,s)_{v-1}\to(r',s')_v$. Then
  \begin{align*}
    R_0+\cdots+R_{r+s-1}\leq r'\leq R_0+\cdots+R_{r+s}
  \end{align*}
  by definition of layer percolation. Hence there is a unique $j\in\ii{\ell_{r+s},\,\ell_{r+s+1}-1}$
  such that $a_1,\ldots,a_j$ contains $r'$ \Right\ instructions, namely 
  \begin{align}\label{eq:baf.j}
    j=\ell_{r+s} + r' - (R_0+\cdots+R_{r+s-1}).
  \end{align}
  Thus we have found a unique index $j$ so that $a_1,\ldots,a_j$ contains $r'$ \Right\ instructions,
  within a unique stretch of indices $j$ so that $a_1,\ldots,a_j$ contains $r+s+1$ \Left\ instructions.
  To prove \ref{i:Psi.welldef}, it just remains to prove that for this choice of $j$
  we have $b_j=1$ when $s'-s\geq 1$. By \thref{def:reduced2}
  we have $b_j = B^{r+s}_{j-\ell_{r+s}}$.
  If $s'-s\geq 1$, then $B^{r+s}_{r'-(R_0+\cdots+R_{r+s-1})}=1$ by definition
  of layer percolation, and $B^{r+s}_{r'-(R_0+\cdots+R_{r+s-1})} = B^{r+s}_{j-\ell_{r+s}}$
  by \eqref{eq:baf.j}. Thus \ref{i:Psi.welldef} is proven, and $\Psi_v$ is well defined.
  
  To prove \ref{i:Psi.preimages}, let $(j,z)\in\opositions_v$.
  Let $q$ and $r'$ be the number of $\Left$ and $\Right$ instructions, respectively,
  in $a_1,\ldots,a_j$. 
  By \thref{def:reduced2},
  \begin{align*}
    R_0+\cdots+R_{q-2}\leq r'\leq R_0+\cdots+R_{q-1},
  \end{align*}
  and $r'-(R_0+\cdots+R_{q-2})=j-\ell_{q-1}$
  Suppose that $r+s=q-1$. Then $r'\in\layer_v(r+s)$, using the notation
  given in \eqref{eq:layer.def}. If $z=0$, then we have $(r,s)\to(r',s)$.
  Hence $(r,s,r',s)\in\infections_v$, and $\Psi_v(r,s,r',s)=(j,0)$
  since $a_1,\ldots,a_j$ contains $q=r+s+1$ \Left\ instructions and
  $r'$ \Right\ instructions. If $z=1$, then $b_j=1$ by definition of $\opositions_v$.
  By \thref{def:reduced2}, we have $B^{q-1}_{j-\ell_{q-1}}=b_j=1$.
  Hence $B^{r+s}_{r'-(R_0+\cdots+R_{r+s-1})}=1$, and so $(r,s,r',s+1)\in\infections_v$
  and $\Psi_v(r,s,r',s+1)=(j,1)$. This proves that $\Psi_v^{-1}\bigl((j,z)\bigr)$
  contains all $(r,s,r',s')$ such that $r+s=q-1$ and $s'-s=z$.
  For the converse, observe that if $r+s\neq q-1$, then $\Psi_v(r,s,r',s')=
  (j',s'-s)$ with $j'\neq j$, since $j'$ is by definition
  of $\Psi_v$ an index such that $a_1,\ldots,a_{j'}$ contains $r+s+1$ \Left\ instructions.
  And if $r+s=q-1$ but $s'-s\neq z$, of course $\Psi_v(r,s,r',s')\neq(j,z)$,
  since $\Psi_v(r,s,r',s')=(j',z')$ where $z'=s'-s$. This completes the proof
  of \ref{i:Psi.preimages}.
  
  To prove \ref{i:Psi.surjective}, by \ref{i:Psi.preimages}
  we need only assert that for every $(j,z)\in\opositions_v$, there exists
  $r,s,r',s'$ such that $r+s=q-1$ and $s'-s=z$.
  In fact, with $q,r'$ defined as above, there are exactly $q$ choices
  of $(r,s,r',s')$, obtained by taking $r\in\ii{0,q-1}$, then defining $s=q-r$
  and $s'=z+s$. And $q\geq 1$ since $a_1=\Left$.
\end{proof}

Proving \thref{bethlehem} amounts to repeatedly applying \thref{lem:Psi}.
We first need to define a map $\tau_v$ taking indices for the 
instructions at site~$v$ to indices of the reduced instructions.
Let $a_1,a_2,\ldots$ and $b_1,b_2,\ldots$ be the reduced instructions from $(\instr,\sigma,u_0,f_0)$
at site~$v$, and let $\mo(v)=i_1<i_2<\cdots$ be the indices of non-\Sleep\ instructions in $\instr_v$
as in \thref{def:reduced1}.
We now define $\tau_v(k)$ for $k\geq \mo(v)$.
If $\instr_v(k)\in\{\Left,\Right\}$, then $k$ is the index of the $j$th non-\Sleep\ instruction
at or after index $\mo(v)$ for some $j\geq 1$. Then we define $\tau_v(k)=(j,0)$.
If $\instr_v(k)=\Sleep$, then let $k'<k$ be the first index before $k$ of a \Left\ or \Right\ 
instruction. Note that $k'\geq \mo(v)$ since $\mo(v)$ is the index of a \Left\ instruction.
Then $\tau_v(k')$ has already been defined as $(j,0)$ for some $j\geq 1$, and we define
$\tau_v(k)$ to be $(j,1)$. It is easy to see that $\tau_v$ maps $\{\mo(v),\mo(v+1),\ldots\}$
surjectively onto $\opositions_v$, and that $\tau_v(k)=\tau_v(k')$ only when $k=k'$
or $k$ and $k'$ are indices within a string of consecutive \Sleep\ instructions in
$\instr_v$.

\begin{proof}[Proof of \thref{bethlehem}]
  First, we demonstrate that $\Phi$ maps an element  $u\in\eosos{n}(\instr,\sigma,u_0,f_0)$
  into $\ip{n}(\Instr,\sigma,u_0,f_0)$.
  Let $\bigl((r_v,s_v)_v,\,0\leq v\leq n\bigr)=\Phi(u)$.
  We have $r_0=0$ since $u(0)=u_0=\mo(0)$, and $s_0=0$ by definition. To show that 
  $\Phi(u)\in\ip{n}(\Instr,\sigma,u_0,f_0)$, then, we must show that $(r_{v-1},s_{v-1})_{v-1}\to(r_v,s_v)_v$
  for each $v\in\ii{1,n}$. By definition of $\Phi$,
  \begin{align*}
    r_{v-1} &= \rt{u}{v-1}-\rt{\mo}{v-1},\qquad r_v=\rt{u}{v}-\rt{\mo}{v},\\
    s_v-s_{v-1} &= \1\{\Instr_v(u(v))=\Sleep \}.
  \end{align*}
  Let $a_1,a_2,\ldots$ and $b_1,b_2,\ldots$ be the reduced instructions 
  from $(\instr,\sigma,u_0,f_0)$ at $v$. 
  Let $(j,z)=\tau_v(u(v))$, the element of $\opositions_v$ corresponding to $u(v)$.
  By definition of $\tau_v$, there are $\rt{u}{v}-\rt{u}{\mo}=r_v$
  \Right\ instructions and $\ell_v:=\lt{u}{v}-\lt{\mo}{v}+1$ \Left\ instructions
  in $a_1,\ldots,a_j$, and $z=\1\{\instr_v(u(v))=\Sleep\}=s_v-s_{v-1}$.
  Putting these facts together,
  \begin{align*}
    r_{v-1}-\ell_v &= \rt{u}{v-1} - \lt{u}{v}-\bigl(\rt{\mo}{v-1}-\lt{\mo}{v}\bigr)-1\\
      &= \Biggl(f_0 + \sum_{i=1}^{v-1}\abs{\sigma(i)} - s_{v-1}\Biggr)
        - \Biggl(f_0 + \sum_{i=1}^{v-1}\abs{\sigma(i)} \Biggr)-1 = -s_{v-1}-1,
  \end{align*}
  applying \thref{lem:flow} and \eqref{eq:min.o.constr} to get to the second line.
  Hence $r_{v-1}+s_{v-1}=\ell_v-1$. By \thref{lem:Psi}\ref{i:Psi.preimages},
  we have $(r_{v-1},s_{v-1},r_v,s_v)\in\Psi_v^{-1}\bigl((j,z)\bigr)$.
  In particular, $(r_{v-1},s_{v-1})_{v-1}\to (r_v,s_v)_v$.
  This shows that $\Phi(u)\in\ip{n}(\instr,\sigma,u_0,f_0)$.
  
  Next, we define a map $\chi$ from $\ip{n}(\instr,\sigma,u_0,f_0)$ back
  to $\eosos{n}(\instr,\sigma,u_0,f_0)$ that we will use to prove surjectivity of $\Phi$.
  Consider an infection path
  \begin{align}\label{eq:chi.apply}
    (0,0)_0=(r_0,s_0)_0\to\cdots\to(r_n,s_n)_n\in\ip{n}(\instr,\sigma,u_0,f_0).
  \end{align}
  Now we construct an odometer $u$ that we define as the image of this infection path
  under $\chi$. First set $u(0)=u_0$. Now fix $v\in\ii{1,n}$,
  and again let $a_1,a_2,\ldots$ and $b_1,b_2,\ldots$ be the reduced instructions at $v$.
  Let $(j,z)=\Psi_v(r_{v-1},s_{v-1},r_v,s_v)$. Define $u(v)$ as the smallest
  value making $\tau_v(u(v))=(j,z)$, which exists by surjectivity of $\tau_v$ onto
  $\opositions_v$. By definition of $\tau_v$, we have
  $\instr_v(u(v))=\Sleep$ if and only if $z=1$.
  By definition of $\Psi_v$, we have $z=s_v-s_{v-1}$, yielding
  \begin{align}\label{eq:vf1}
    s_v-s_{v-1}=\1\{\instr_v(u(v))=\Sleep\}.
  \end{align}
  Also by definition of $\Psi_v$, there are $r_{v-1}+s_{v-1}+1$ \Left\ instructions and $r_v$
  \Right\ instructions in $a_1,\ldots,a_j$.
  Since $\tau_v(u(v))=(j,z)$,
  \begin{align}\label{eq:uv9}
    \rt{u}{v} &= r_v + \rt{\mo}{v},\\
    \lt{u}{v} &= r_{v-1}+s_{v-1}+\lt{\mo}{v}.\label{eq:lv9}
  \end{align}
  These equalities hold for $v\in\ii{1,n}$,
  and \eqref{eq:uv9} also holds for $v=0$ since $u(0)=\mo(0)$ and $r_v=0$.
  
  Now we show that $u\in\eosos{n}(\instr,\sigma,u_0,f_0)$.
  For any $v\in\ii{1,n}$, by \eqref{eq:uv9} and \eqref{eq:lv9}
  \begin{align}\label{eq:chi.flow}
    \rt{u}{v-1}-\lt{u}{v} &= \rt{v-1}{\mo}-\lt{v}{\mo}-s_{v-1}
       = f_0+\sum_{i=1}^{v-1}\abs{\sigma(i)} - s_{v-1},
  \end{align}
  applying \thref{lem:min.is.min} for the last equality.
  By \eqref{eq:vf1}, we have
  \begin{align}\label{eq:sv9}
    s_v=\sum_{i=1}^v\1\{\instr_i(u(i))=\Sleep\}
  \end{align}
  in agreement with the definition of $s_v$ in \thref{lem:flow},
  which we can then apply to show that $u$ is stable on $\ii{1,n-1}$.
  We have $u(0)=u_0$ by construction. By the $v=1$ case of \eqref{eq:chi.flow},
  we have $\rt{u}{0}-\lt{u}{1}=f_0$. Hence $u\in\eosos{n}(\instr,\sigma,u_0,f_0)$,
  showing that $\chi$ does indeed map $\ip{n}(\instr,\sigma,u_0,f_0)$ back into
  $\eosos{n}(\instr,\sigma,u_0,f_0)$.
  From \eqref{eq:uv9} and \eqref{eq:sv9} and the definition of $\Phi$, 
  we see that $\Phi(u)$ is our original infection path
  \eqref{eq:chi.apply}. That is, $\Phi\circ\chi$ is the identity on $\ip{n}(\instr,\sigma,u_0,f_0)$.
  This proves that $\Phi$ is surjective.
  
  Finally, suppose $\Phi(u)=\Phi(u')=\bigl((r_v,s_v)_v,\,0\leq v\leq n\bigr)$ for
  $u,u'\in\eosos{n}(\instr,\sigma,u_0,f_0)$.
  From the definition of $\Phi$,
  \begin{align*}
    \rt{u}{v}&=\rt{u'}{v}\quad\text{for $v\in\ii{0,n}$,}
  \end{align*}
  and
  \begin{align*}
    \instr_v(u(v))=\Sleep \text{ if and only if } \instr_v(u'(v))=\Sleep\quad \text{for $v\in\ii{1,n}$.}
  \end{align*}
  Using these facts together with the stability of $u$ and $u'$ on $\ii{1,n-1}$,
  we have $\lt{u}{v}=\lt{u'}{v}$ for $v\in\ii{1,n}$. Thus at each site in $\ii{1,n}$,
  the odometers $u$ and $u'$ execute identical
  numbers of \Left\ and \Right\ instructions. Hence $u(v)$ and $u'(v)$
  can only differ by pointing to different \Sleep\ instructions in a consecutive block of them.
  \end{proof}

\subsection{Layer percolation constructed from ARW is correctly distributed}
\label{sec:Instr}  

Extending our definition of $\Instr_v$ from Section~\ref{sec:sitewise}, we let
$\bigl(\Instr_v(k),\,v\in\ZZ\bigr)$ be independent with
\begin{align*}
  \Instr_v(k)=\begin{cases} \Left&\text{with probability $\frac{1/2}{1+\lambda}$,}\\
    \Right&\text{with probability $\frac{1/2}{1+\lambda}$,}\\
    \Sleep&\text{with probability $\frac{\lambda}{1+\lambda}$,}
  \end{cases}
\end{align*}
for $k\neq 0$ and $\Instr_v(0)=\Left$.
To finish the correspondence, we show now that the layer percolation
$\Theta_n(\Instr,\sigma,u_0,f_0)$ constructed from these random instructions
is actually distributed as layer percolation as defined in Section~\ref{sec:LP.ARW}.
Note that we will leave the instructions at 0 as the arbitrary, deterministic list $\instr_0$,
which will prove technically helpful later on.

\begin{prop}\thlabel{prop:reduced.to.layerperc}
  Let
  $(R_j(v))_{j\geq 0}$ and $(B^j_i(v))_{j\geq 0,\,0\leq i\leq R_j}$ be the component
  of
  \begin{align*}
    \Theta_n\bigl( (\instr_0,\Instr_1,\ldots,\Instr_n),\, \sigma,\, u_0,\, f_0 \bigr)
  \end{align*}
  for step~$v$.
  For different choices of $v$, these random variables are independent. 
  For any particular $v\in\ii{1,n}$, the random variables $(R_j(v))_{j\geq 0}$
  are i.i.d.-$\Geo(1/2)$, and
  conditional on $(R_j(v))_{j\geq 0}$, the random variables $(B^j_i(v))_{j\geq 0,\,0\leq i\leq R_j}$
  are i.i.d.-$\Ber\bigl(\lambda/(1+\lambda)\bigr)$.
\end{prop}

\begin{proof}
  For any $v\geq 0$, let $\Fscr_v$ be the $\sigma$-algebra generated by $\Instr_1,\ldots,\Instr_v$,
  and observe that 
  the random variables $(R_j(u))_{j\geq 0}$ and $(B^j_i(u))_{j\geq 0,\,0\leq i\leq R_j}$ 
  for $u\leq v$ are measurable with respect to $\Fscr_v$.
  Thus it suffices to show that for any $v\geq 1$, the distribution
  of $(R_j(v))_{j\geq 0}$ and  $(B^j_i(v))_{j\geq 0,\,0\leq i\leq R_j}$
  conditional on $\Fscr_{v-1}$ is as stated.
  
  Let $\mo$ be the minimal odometer of $\eosos{n}(\Instr,\sigma,u_0,f_0)$.
  We claim that the instructions
  \begin{align}\label{eq:rand.seq}
    \Instr_v\bigl(\mo(v)\bigr),\,\Instr_v\bigl(\mo(v)+1\bigr),\,\Instr_v\bigl(\mo(v)+2\bigr)\ldots
  \end{align}
  used to generate $(R_j(v))_{j\geq 0}$ and $(B^j_i(v))_{j\geq 0,\,0\leq i\leq R_j}$
  are distributed conditional on
  $\Fscr_{v-1}$ as a \Left\ instruction followed by i.i.d.\ instructions with the usual distribution, namely
  \Left\ and \Right\ each with probability $1/2(1+\lambda)$ and \Sleep\ with 
  probability $\lambda/(1+\lambda)$. The lemma follows, by
  simple properties of i.i.d.\ sequences and by definition of
  $(R_j(v))_{j\geq 0}$ and $(B^j_i(v))_{j\geq 0,\,0\leq i\leq R_j}$.
  
  Now we establish the claim.
  For a two-sided sequence of instructions $\ldots,x_{-1},x_0,x_1,\ldots$ with $x_0=\Left$, for $\ell\in\ZZ$
  we refer to the $\ell$th instance of \Left\ in the sequence to mean
  the $\ell$th instance after index~$0$ when $\ell$ is positive, the $\abs{\ell}$th
  instance before index~$0$ when $\ell$ is negative, and $x_0$ itself when $\ell=0$.
  We define $T_\ell(\ldots,x_{-1},x_0,x_1,\ldots)$ as the shift of the sequence
  that puts the $\ell$th occurrence of \Left\ at position~$0$.
  
  Let $\ell_v=\rt{\mo}{v-1} - f_0 - \sum_{i=1}^{v-1}\abs{\sigma(i)}$
  and recall that $\mo(v)$ is the index of the $\ell_v$th
  \Left\ instruction in $\Instr_v$.
  Since $\ell_v$ is measurable with respect to $\Fscr_{v-1}$, and 
  conditioning on $\Fscr_{v-1}$ does not alter the distribution of $\Instr_v$,
  the claim is equivalent to showing that the sequence $\Instr_v$
  is invariant under the action of $T_\ell$ for any deterministic $\ell$.
  By symmetry it suffices to show this claim for $\ell\geq 1$,
  and by iteration it is enough to show it for $T_1$.
  And this is a simple technical fact following from the strong Markov property for an i.i.d.\ sequence.
\end{proof}

\begin{cor}\thlabel{cor:ip}
  For any initial configuration $\sigma$ on $\ii{0,n}$ and any $u_0,f_0\in\ZZ$,
  the set $\ip{n}(\Instr,\sigma,u_0,f_0)$ is distributed as the collection of length~$n$
  infection paths from $(0,0)_0$ in layer percolation.
  Furthermore the conclusion holds for random $u_0$ measurable with respect to $\Instr_0$.  
\end{cor}
\begin{proof}
  Since \thref{prop:reduced.to.layerperc} is stated for deterministic instructions at site~$0$,
  we can apply it to show that $\Theta_n(\Instr,\sigma,u_0,f_0)$ conditional on $\Instr_0$
  is distributed as layer percolation. Hence it is distributed unconditionally as layer percolation,
  and $\ip{n}(\Instr,\sigma,u_0,f_0)$ has the claimed distribution.
\end{proof}
Together, \thref{bethlehem,cor:ip} show that
the set of extended odometers $\eosos{n}(\Instr,\sigma,u_0,f_0)$
is (nearly) in bijection with the length~$n$ infection paths
in layer percolation.

Since the distribution of $\ip{n}(\Instr,\sigma,u_0,f_0)$ does not depend on $\sigma$, $u_0$,
and $f_0$, it follows from the definition of $\Phi$ that
the distribution of the extended odometers $\eosos{n}(\Instr,\sigma,u_0,f_0)$ 
depends on $\sigma$, $u_0$, and $f_0$ only in that they affect the minimal odometer.

We have taken care to allow $u_0$ to depend on $\Instr_0$ for the following reason.
All extended odometers in $\eosos{n}(\Instr,\sigma,u_0,f_0)$ are stable on $\ii{1,n-1}$
by definition. While it will take quite a bit of work to form odometers stable also at $n$ (see
Section~\ref{sec:box}), we can make them stable at $0$ by the right choice of $u_0$:
\begin{prop}\thlabel{prop:stable.at.0}
  Given initial configuration $\sigma$ on $\ii{0,n}$ and $f_0<0$,
  let $u_0$ be the index of the $(\sigma(0)-f_0)$th
  occurrence of \Left\ within $\Instr_0(1),\Instr_0(2),\ldots$. Then all 
  $u\in\eosos{n}(\Instr,\sigma,u_0,f_0)$ are stable at $0$.
\end{prop}
\begin{proof}
  Since $u(-1)=0$ by definition for $u\in\eosos{n}(\Instr,\sigma,u_0,f_0)$, the stability
  condition for $u$ at site~$0$ is that
  \begin{align*}
    \sigma(0)  -\lt{u}{0} - \rt{u}{0} + \lt{u}{1} =\1\bigl\{\Instr_0(u(0))=\Sleep\bigr\}.
  \end{align*}
  Since $u(0)=u_0$ and $\rt{u}{0}-\lt{u}{1}=f_0$, the extended odometer $u$ is stable
  at $0$ if and only if
  \begin{align*}
    \lt{u}{0} = \sigma(0) -f_0 -\1\bigl\{\Instr_0(u_0)=\Sleep\bigr\}.
  \end{align*}
  The proof is complete since our choice of $u_0$ makes this equation hold.
\end{proof}

\section{Basic properties of layer percolation}

We establish fundamental properties of infection in layer percolation that we will use throughout the
rest of the paper. In Section~\ref{sec:infection}, we prove that 
forward and backward infection sets are connected, in slightly different senses.
In Section~\ref{sec:bounds.on.infection.paths}, we translate bounds on the rows of an infection path into bounds on the columns. Additionally we give general lower and upper bounds on infection sets. We include a lower bound on the minimal odometer (which is associated with ARW rather than layer percolation), since it uses
similar techniques. In Section~\ref{sec:couplings} we describe how the law of layer percolation is basically invariant with respect to the starting cell, and we describe a coupling that establishes a duality between forward and reverse infection.
In Section~\ref{sec:greedy}, we stitch together infection paths of locally optimal growth in the second
coordinate (the row) and prove growth estimates in both row and column for the resulting paths.
Then in Section~\ref{sec:critical.density}, we apply these greedy paths to establish the existence
of a limiting speed $\crist$ for the maximum row infected and prove an exponential lower tail bound
for it.

\label{sec:layer.perc.misc}
\subsection{Connectivity of infection sets} \label{sec:infection}

\begin{figure}
  \centering
  \begin{tikzpicture}[scale=.28,forward/.style={blue!40!white}, backward/.style={red!40!white}, both/.style={red!60!blue!70}, fpath/.style={blue},bpath/.style={red},bothpath/.style={purple}]
    \fill[forward] (12,4) rectangle +(1,1);
\fill[forward] (23,4) rectangle +(1,1);
\fill[forward] (17,3) rectangle +(1,1);
\fill[forward] (11,5) rectangle +(1,1);
\fill[forward] (15,5) rectangle +(1,1);
\fill[forward] (6,2) rectangle +(1,1);
\fill[forward] (26,5) rectangle +(1,1);
\fill[forward] (4,2) rectangle +(1,1);
\fill[forward] (27,6) rectangle +(1,1);
\fill[forward] (5,3) rectangle +(1,1);
\fill[forward] (8,2) rectangle +(1,1);
\fill[forward] (19,2) rectangle +(1,1);
\fill[forward] (17,5) rectangle +(1,1);
\fill[forward] (26,7) rectangle +(1,1);
\fill[forward] (6,4) rectangle +(1,1);
\fill[forward] (29,6) rectangle +(1,1);
\fill[forward] (7,3) rectangle +(1,1);
\fill[forward] (18,3) rectangle +(1,1);
\fill[forward] (21,2) rectangle +(1,1);
\fill[forward] (22,3) rectangle +(1,1);
\fill[forward] (27,8) rectangle +(1,1);
\fill[forward] (5,5) rectangle +(1,1);
\fill[forward] (8,4) rectangle +(1,1);
\fill[forward] (19,4) rectangle +(1,1);
\fill[forward] (9,3) rectangle +(1,1);
\fill[forward] (28,7) rectangle +(1,1);
\fill[forward] (26,9) rectangle +(1,1);
\fill[forward] (21,4) rectangle +(1,1);
\fill[forward] (31,8) rectangle +(1,1);
\fill[forward] (22,5) rectangle +(1,1);
\fill[forward] (9,5) rectangle +(1,1);
\fill[forward] (11,2) rectangle +(1,1);
\fill[forward] (13,5) rectangle +(1,1);
\fill[forward] (15,2) rectangle +(1,1);
\fill[forward] (24,5) rectangle +(1,1);
\fill[forward] (25,4) rectangle +(1,1);
\fill[forward] (31,10) rectangle +(1,1);
\fill[forward] (12,6) rectangle +(1,1);
\fill[forward] (14,3) rectangle +(1,1);
\fill[forward] (17,2) rectangle +(1,1);
\fill[forward] (23,6) rectangle +(1,1);
\fill[forward] (11,4) rectangle +(1,1);
\fill[forward] (15,4) rectangle +(1,1);
\fill[forward] (26,4) rectangle +(1,1);
\fill[forward] (16,3) rectangle +(1,1);
\fill[forward] (25,6) rectangle +(1,1);
\fill[forward] (20,3) rectangle +(1,1);
\fill[forward] (14,5) rectangle +(1,1);
\fill[forward] (5,2) rectangle +(1,1);
\fill[forward] (17,4) rectangle +(1,1);
\fill[forward] (15,6) rectangle +(1,1);
\fill[forward] (26,6) rectangle +(1,1);
\fill[forward] (16,5) rectangle +(1,1);
\fill[forward] (7,2) rectangle +(1,1);
\fill[forward] (18,2) rectangle +(1,1);
\fill[forward] (25,8) rectangle +(1,1);
\fill[forward] (29,8) rectangle +(1,1);
\fill[forward] (22,2) rectangle +(1,1);
\fill[forward] (5,4) rectangle +(1,1);
\fill[forward] (9,2) rectangle +(1,1);
\fill[forward] (10,3) rectangle +(1,1);
\fill[forward] (13,2) rectangle +(1,1);
\fill[forward] (7,4) rectangle +(1,1);
\fill[forward] (18,4) rectangle +(1,1);
\fill[forward] (31,7) rectangle +(1,1);
\fill[forward] (22,4) rectangle +(1,1);
\fill[forward] (12,3) rectangle +(1,1);
\fill[forward] (23,3) rectangle +(1,1);
\fill[forward] (30,8) rectangle +(1,1);
\fill[forward] (10,5) rectangle +(1,1);
\fill[forward] (13,4) rectangle +(1,1);
\fill[forward] (24,4) rectangle +(1,1);
\fill[forward] (31,9) rectangle +(1,1);
\fill[forward] (12,5) rectangle +(1,1);
\fill[forward] (14,2) rectangle +(1,1);
\fill[forward] (23,5) rectangle +(1,1);
\fill[forward] (25,5) rectangle +(1,1);
\fill[forward] (16,2) rectangle +(1,1);
\fill[forward] (6,3) rectangle +(1,1);
\fill[forward] (20,2) rectangle +(1,1);
\fill[forward] (31,11) rectangle +(1,1);
\fill[forward] (23,7) rectangle +(1,1);
\fill[forward] (14,4) rectangle +(1,1);
\fill[forward] (4,3) rectangle +(1,1) ;
\fill[forward] (27,7) rectangle +(1,1);
\fill[forward] (28,6) rectangle +(1,1);
\fill[forward] (8,3) rectangle +(1,1);
\fill[forward] (19,3) rectangle +(1,1);
\fill[forward] (25,7) rectangle +(1,1);
\fill[forward] (16,4) rectangle +(1,1);
\fill[forward] (26,8) rectangle +(1,1);
\fill[forward] (6,5) rectangle +(1,1);
\fill[forward] (20,4) rectangle +(1,1);
\fill[forward] (29,7) rectangle +(1,1);
\fill[forward] (21,3) rectangle +(1,1);
\fill[forward] (14,6) rectangle +(1,1);
\fill[forward] (27,9) rectangle +(1,1);
\fill[forward] (28,8) rectangle +(1,1);
\fill[forward] (8,5) rectangle +(1,1);
\fill[forward] (10,2) rectangle +(1,1);
\fill[forward] (9,4) rectangle +(1,1);
\fill[forward] (21,5) rectangle +(1,1);
\fill[forward] (12,2) rectangle +(1,1);
\fill[forward] (23,2) rectangle +(1,1);
\fill[forward] (30,7) rectangle +(1,1);
\fill[forward] (10,4) rectangle +(1,1);
\fill[forward] (11,3) rectangle +(1,1);
\fill[forward] (15,3) rectangle +(1,1);
\draw[thick,gray] (4,2) grid[step=1] (37,12);


  \end{tikzpicture}
  \begin{tikzpicture}[scale=.28,forward/.style={blue!40!white}, backward/.style={red!40!white}, both/.style={red!60!blue!70}, fpath/.style={blue},bpath/.style={red},bothpath/.style={purple}]
\fill[backward] (26,5) rectangle +(1,1);
\fill[backward] (27,6) rectangle +(1,1);
\fill[backward] (29,6) rectangle +(1,1);
\fill[backward] (22,5) rectangle +(1,1);
\fill[backward] (24,5) rectangle +(1,1);
\fill[backward] (23,6) rectangle +(1,1);
\fill[backward] (26,4) rectangle +(1,1);
\fill[backward] (25,6) rectangle +(1,1);
\fill[backward] (26,6) rectangle +(1,1);
\fill[backward] (24,4) rectangle +(1,1);
\fill[backward] (23,5) rectangle +(1,1);
\fill[backward] (25,5) rectangle +(1,1);
\fill[backward] (23,7) rectangle +(1,1);
\fill[backward] (28,6) rectangle +(1,1);
\fill[backward] (31,5) rectangle +(1,1);
\fill[backward] (34,4) rectangle +(1,1);
\fill[backward] (12,10) rectangle +(1,1);
\fill[backward] (27,4) rectangle +(1,1);
\fill[backward] (30,6) rectangle +(1,1);
\fill[backward] (32,6) rectangle +(1,1);
\fill[backward] (33,5) rectangle +(1,1);
\fill[backward] (13,8) rectangle +(1,1);
\fill[backward] (29,4) rectangle +(1,1);
\fill[backward] (16,7) rectangle +(1,1);
\fill[backward] (15,8) rectangle +(1,1);
\fill[backward] (20,7) rectangle +(1,1);
\fill[backward] (21,6) rectangle +(1,1);
\fill[backward] (12,9) rectangle +(1,1);
\fill[backward] (22,7) rectangle +(1,1);
\fill[backward] (14,9) rectangle +(1,1);
\fill[backward] (28,5) rectangle +(1,1);
\fill[backward] (17,8) rectangle +(1,1);
\fill[backward] (30,5) rectangle +(1,1);
\fill[backward] (19,8) rectangle +(1,1);
\fill[backward] (32,5) rectangle +(1,1);
\fill[backward] (11,10) rectangle +(1,1);
\fill[backward] (15,7) rectangle +(1,1);
\fill[backward] (20,6) rectangle +(1,1);
\fill[backward] (31,6) rectangle +(1,1);
\fill[backward] (22,6) rectangle +(1,1);
\fill[backward] (36,2) rectangle +(1,1);
\fill[backward] (12,8) rectangle +(1,1);
\fill[backward] (28,4) rectangle +(1,1);
\fill[backward] (14,8) rectangle +(1,1);
\fill[backward] (27,5) rectangle +(1,1);
\fill[backward] (30,4) rectangle +(1,1);
\fill[backward] (19,7) rectangle +(1,1);
\fill[backward] (32,4) rectangle +(1,1);
\fill[backward] (11,9) rectangle +(1,1);
\fill[backward] (24,6) rectangle +(1,1);
\fill[backward] (10,10) rectangle +(1,1);
\fill[backward] (13,9) rectangle +(1,1);
\fill[backward] (35,3) rectangle +(1,1);
\fill[backward] (16,8) rectangle +(1,1);
\fill[backward] (29,5) rectangle +(1,1);
\fill[backward] (18,8) rectangle +(1,1);
\fill[backward] (21,7) rectangle +(1,1);
\draw[thick,gray] (4,2) grid[step=1] (37,12);


  \end{tikzpicture}
  \caption{Examples of forward (blue) and backward (red) infection sets at a  step.  \thref{prop:connectivity.forward} proves that each cell (except the bottom-left) in the forward infection set has another infected cell to its south or west. \thref{prop:connectivity.backward} proves that each cell (except the top-left) in the backward infection set has another cell to its west or northwest.}
  \label{fig:Y}
\end{figure}
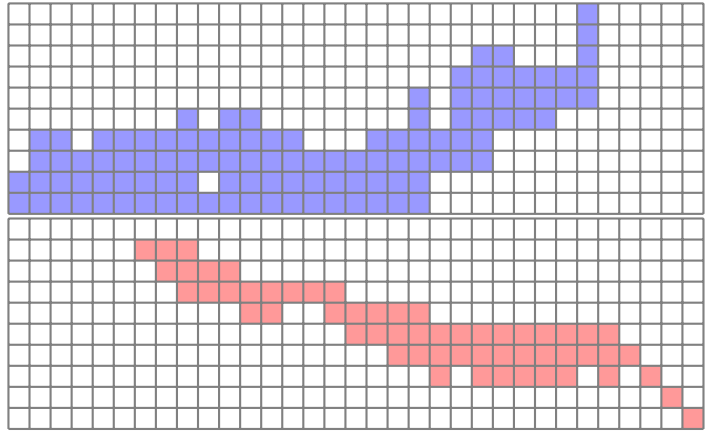

At the end of Section~\ref{sec:define}, we define $\infset[(r,s)_k]_n$ as the set of cells
at step~$k+n$ infected starting from $(r,s)_k$, and we define
$\binfset[(r,s)_k]_n$ as the set of cells at step~$k-n$ that infect $(r,s)_k$.
These sets have geometric properties that are apparent from examples (see Figure~\ref{fig:Y})
but which would be challenging even to state in terms of odometers.
We show now that each cell in an infection set other than the most southwestern has a neighbor immediately
to its south or west. Hence we can move within an infection
set from any cell to any other by taking steps in compass directions.

\begin{prop}\thlabel{prop:connectivity.forward} 
      Fix $(u,t)_0$ and consider the forward infection set $\infset_n=\infset[(u,t)_0]_n$ as defined in \eqref{eq:forward}.
      For any $(r,s)_n\in \infset_n$ other than the leftmost cell in row~$t$,
      either $(r-1,s)_n\in\infset_n$ or $(r,s-1)_n\in\infset_n$.
\end{prop}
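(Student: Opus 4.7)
The plan is to proceed by induction on $n$. The base case $n=0$ is vacuous: $\infset_0=\{(u,t)_0\}$ consists only of the leftmost cell in row $t$. For the inductive step, fix $(r,s)_n\in\infset_n$ not leftmost in row $t$, choose any infection path from $(u,t)_0$ to $(r,s)_n$, and write $(r_{n-1},s_{n-1})_{n-1}\to(r,s)_n$ for its last step, with $j=r_{n-1}+s_{n-1}$. Then $r\in\layer_n(j)$ and $s-s_{n-1}\in\{0,1\}$.

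The two easy cases are handled directly by the single infector $(r_{n-1},s_{n-1})_{n-1}$. If the last step increased the row ($s_{n-1}=s-1$), then the infector also infects every cell in row $s-1$ of $\layer_n(j)$, so $(r,s-1)_n\in\infset_n$. If instead $s_{n-1}=s$ and $r$ is not the leftmost column of $\layer_n(j)$, then $r-1\in\layer_n(j)$ and the same infector supplies $(r-1,s)_n$.

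The remaining case has $s_{n-1}=s$ and $r=\ell_n(j):=R_0+\cdots+R_{j-1}$, the leftmost column of $\layer_n(j)$. Here I plan to iterate the inductive hypothesis westward along row $s$ of $\infset_{n-1}$. At stage $i\geq 0$ I maintain a cell $(r_{n-1}-i,s)_{n-1}\in\infset_{n-1}$ together with $R_{j-1}=\cdots=R_{j-i}=0$ (vacuously at $i=0$), which forces $\ell_n(j-i)=r$. Applying the inductive hypothesis to this cell yields three branches. The leftmost-in-row-$t$ branch forces $s=t$; since the leftmost row-$t$ cell of $\infset_n$ has column $\ell_n(c+t)$ where $c$ is the column of the leftmost row-$t$ cell of $\infset_{n-1}$, the identity $\ell_n(j-i)=r$ makes $(r,t)_n$ the leftmost row-$t$ cell of $\infset_n$, contradicting our assumption. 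The southern-neighbor branch places $(r_{n-1}-i,s-1)_{n-1}\in\infset_{n-1}$, and since $r=\ell_n(j-i)$ is the rightmost column of $\layer_n(j-i-1)$, this cell infects $(r,s-1)_n$. The western-neighbor branch places $(r_{n-1}-i-1,s)_{n-1}\in\infset_{n-1}$; if $R_{j-i-1}\geq 1$ then $r-1\in\layer_n(j-i-1)$ and this cell infects $(r-1,s)_n$, while if $R_{j-i-1}=0$ we advance to stage $i+1$.

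The main obstacle is that a consecutive block of vanishing $R$-values collapses successive layers onto the single column $r$, so one application of the inductive hypothesis may not immediately uncover a useful neighbor. The iteration terminates within at most $r_{n-1}$ stages because columns are nonnegative; at the extreme column $0$ the western branch is impossible, and the inductive hypothesis must supply either the southern neighbor (which then gives $(r,s-1)_n$ by the argument above) or the leftmost-in-row-$t$ branch (which yields the required contradiction), closing the induction.
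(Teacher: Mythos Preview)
Your proof is correct. The logic of the three branches and the invariant $\ell_n(j-i)=r$ maintained by the vanishing $R$-values is sound, and the termination argument at column~$0$ goes through.

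The paper's proof is organized around the same induction and the same initial case split, but it avoids your westward iteration entirely by a single choice: rather than taking an arbitrary infector $(r_{n-1},s)_{n-1}$ of $(r,s)_n$ in row~$s$, it takes the \emph{leftmost} such infector $(r',s)_{n-1}$. When $r$ is the minimum of $\layer_n(r'+s)$, it is the maximum of $\layer_n(r'+s-1)$, so $(r'-1,s)_{n-1}$ would also infect $(r,s)_n$; leftmost-ness of $r'$ then forces $(r'-1,s)_{n-1}\notin\infset_{n-1}$. One application of the inductive hypothesis to $(r',s)_{n-1}$ therefore rules out the western neighbor and immediately yields the southern neighbor $(r',s-1)_{n-1}\in\infset_{n-1}$, which infects $(r,s-1)_n$. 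In effect, the paper jumps directly to the last stage of your iteration. Your version is more hands-on and makes the layer collapse under $R_{j-i}=0$ explicit; the paper's version is shorter and buys a one-line Case~3 in place of your loop.
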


\begin{proof}
  We proceed by induction and assume that the connectivity property holds for $\infset_{n-1}$.
  Now, we consider a cell $(r,s)_n\in\infset_n$ and show that $(r-1,s)_n\in\infset_n$
  or $(r,s-1)_n\in\infset_n$ or $(r,s)$ is the leftmost cell in row~$t$ in $\infset_n$.
  We break the proof into cases based on the source of infection
  of $(r,s)_n$ in $\infset_{n-1}$.
  \begin{enumerate}[label=\textbf{Case \arabic*:},leftmargin=*,labelindent=0pt,labelwidth=0pt,itemindent=20pt]
    \item $(r,s)_n$ is infected by some $(r',s-1)_{n-1}\in\infset_{n-1}$.\\
      If $(r',s-1)_{n-1}\to(r,s)_n$, then $(r',s-1)_{n-1}\to(r,s-1)_n$ too.
      Thus $(r,s-1)_n\in\infset_n$.
  \end{enumerate}
  In Cases~2--4, $(r,s)_n$ is infected by a cell in $\infset_{n-1}$
  in row~$s$. We let $(r',s)_{n-1}\in\infset_{n-1}$ be the leftmost such cell.
  \begin{enumerate}[label=\textbf{Case \arabic*:},leftmargin=*,labelindent=0pt,
                    labelwidth=0pt,itemindent=20pt,resume]
    \item $(r,s)_n$ is infected by $(r',s)_{n-1}\in\infset_{n-1}$ 
      and column~$r$ is not the minimum value in $\layer_n(r'+s)$.\\[4pt]
      Then $(r',s)_{n-1}\to (r-1,s)_n$, and hence $(r-1,s)_n\in\infset_n$.
    \item $(r,s)_n$ is infected by $(r',s)_{n-1}\in\infset_{n-1}$,
      column~$r$ is the minimum value in $\layer_n(r'+s)$,
      and $(r',s)_{n-1}$ is not the leftmost cell in row~$t$ of $\infset_{n-1}$.\\[4pt]
      Since $r$ is the minimum of $\layer_n(r'+s)$, it is the maximum
      of $\layer_n(r'+s-1)$. Thus $(r'-1,s)_{n-1}\to(r,s)_n$.
      Since $(r',s)_{n-1}$ is the leftmost cell in row~$s$ of $\infset_{n-1}$
      infecting $(r,s)_n$, we can conclude that $(r'-1,s)_{n-1}\notin\infset_{n-1}$.
      And since $(r',s)_{n-1}$ is not the leftmost cell in row~$t$ of $\infset_{n-1}$,
      the inductive hypothesis yields that $(r',s-1)_{n-1}\in\infset_{n-1}$.
      And $(r',s-1)_{n-1}\to(r,s-1)_n$, because $r\in\layer_n(r'+s-1)$.
      Thus we have shown that $(r,s-1)_n\in\infset_n$.
      
    \item $(r,s)_n$ is infected by $(r',s)_{n-1}\in\infset_{n-1}$, 
      column~$r$ is the minimum value in $\layer_n(r'+s)$, and $(r',s)_{n-1}$ is the leftmost
      cell in row $t$ of $\infset_{n-1}$.\\[4pt]
      We have $s=t$. Infections in row~$t$ can only come from
      cells in row~$t$, since $t$ is the minimum row present in these infections sets.
      Since $r$ is the minimum value in the layer infected by the leftmost
      cell in row~$t$ at step~$n-1$, the cell $(r,t)_n$ is the leftmost cell 
      in $\infset_n$ in row~$t$.\qedhere
  \end{enumerate}
\end{proof}

Similarly, any cell in the backward infection set other than the top-left one has a neighbor either
immediately to its left or upper-left:
\begin{prop}\thlabel{prop:connectivity.backward}
  Fix $(u,t)_{m+n}$ and consider the backward infection set $\binfset_n=\binfset[(u,t)_{m+n}]_n$
  as defined in \eqref{eq:backward}. For any $(r,s)_m\in\binfset_n$ other than the leftmost
  cell in row~$t$, if $r>0$ then either
  $(r-1,s)_m\in\binfset_n$ or $(r-1,s+1)_m\in\binfset_n$.
\end{prop}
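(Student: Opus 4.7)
My plan is to induct on $n$, paralleling the case analysis in the proof of \thref{prop:connectivity.forward}. The base case $n=0$ is trivial since $\binfset_0=\{(u,t)_m\}$ contains only the leftmost cell of row $t$. For the inductive step, assume the statement for $\binfset_{n-1}=\binfset[(u,t)_{m+n}]_{n-1}$ (which lives at step $m+1$) and fix an arbitrary $(r,s)_m\in\binfset_n$ with $r>0$ that is not the leftmost cell in row $t$.

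I will organize the cells of $\binfset_{n-1}$ reachable from $(r,s)_m$ by introducing
\begin{align*}
C &= \{c\in\layer_{m+1}(r+s) : (c,s)_{m+1}\in\binfset_{n-1}\},\\
D &= \{c\in\layer_{m+1}(r+s) : (c,s+1)_{m+1}\in\binfset_{n-1}\}.
\end{align*}
Membership of $(r,s)_m$ in $\binfset_n$ forces $C\cup D\neq\emptyset$. The easy case is $D\neq\emptyset$: for any $c\in D$, the cell $(r-1,s+1)_m$ lies in the same diagonal $r+s$ as $(r,s)_m$, so its row-$(s+1)$ infections into $\layer_{m+1}(r+s)$ are automatic, giving $(r-1,s+1)_m\to(c,s+1)_{m+1}$ and hence $(r-1,s+1)_m\in\binfset_n$. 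Similarly, if $D=\emptyset$ but $r':=\min C$ equals $\min\layer_{m+1}(r+s)$, then $r'$ is also the rightmost column of $\layer_{m+1}(r+s-1)$, so the diagonal-$(r+s-1)$ cell $(r-1,s)_m$ infects $(r',s)_{m+1}\in\binfset_{n-1}$, placing $(r-1,s)_m$ in $\binfset_n$.

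The remaining case is $D=\emptyset$ with $r'>\min\layer_{m+1}(r+s)$. I apply the inductive hypothesis to $(r',s)_{m+1}$. Since $r'-1$ still lies in $\layer_{m+1}(r+s)$, either conclusion $(r'-1,s)_{m+1}\in\binfset_{n-1}$ or $(r'-1,s+1)_{m+1}\in\binfset_{n-1}$ would contradict the minimality $r'=\min C$ or the assumption $D=\emptyset$, respectively. So the inductive hypothesis can only be consistent if $(r',s)_{m+1}$ is itself the leftmost cell in row $t$ of $\binfset_{n-1}$, which forces $s=t$.

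The main obstacle is ruling out this last scenario under our standing assumption that $(r,t)_m$ is not the leftmost cell in row $t$ of $\binfset_n$, since the natural induction has now become circular (the leftmost cell is exempt from the hypothesis). The key observation that breaks the circle is that rows along any infection path are nondecreasing, so a path from a row-$t$ cell at step $m$ to the row-$t$ target $(u,t)_{m+n}$ must stay in row $t$ throughout. Consequently any hypothetical $(\tilde r,t)_m\in\binfset_n$ with $\tilde r<r$ infects some $(r_1,t)_{m+1}\in\binfset_{n-1}$ with
\[
r_1\leq \max\layer_{m+1}(\tilde r+t)\leq \min\layer_{m+1}(r+t)<r',
\]
contradicting the leftmost-in-row-$t$ status of $(r',t)_{m+1}$. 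This row-monotonicity argument, not needed in the forward case, is what finishes the proof.
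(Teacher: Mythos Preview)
Your proof is correct and follows the same inductive case analysis as the paper's proof. Your organization is marginally cleaner: by defining $D$ to be all columns $c\in\layer_{m+1}(r+s)$ with $(c,s+1)_{m+1}\in\binfset_{n-1}$ (rather than only those that $(r,s)_m$ actually infects via a Bernoulli success), your $D\neq\emptyset$ case absorbs both the paper's Case~1 and the endpoint of its Case~3, and your row-monotonicity argument in the last case is exactly the paper's Case~4.
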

\begin{proof}
  The proof is nearly the same as the previous one.
  We assume the connectivity property holds for $\binfset_{n-1}$ and extend it to 
  $\binfset_{n}$. Suppose that $(r,s)_m\in\binfset_n$. We must show
  that $(r-1,s)_m\in\binfset_n$ or $(r-1,s+1)_m\in\binfset_n$ or $(r,s)_m$ is the leftmost
  cell in row~$t$ of $\binfset_n$.
  Again, we consider cases based on the cell in $\binfset_{n-1}$ that $(r,s)_m$ infects.
  We assume throughout that $r>0$.
  
  \begin{enumerate}[label=\textbf{Case \arabic*:},leftmargin=*,labelindent=0pt,labelwidth=0pt,itemindent=20pt]
    \item $(r,s)_m$ infects some $(r',s+1)_{m+1}\in\binfset_{n-1}$.\\
      We have $r'\in\layer_{m+1}(r+s)$. Hence
      $(r-1,s+1)_m\to(r',s+1)_{m+1}$, which makes $(r-1,s+1)_m\in\binfset_{n}$.
  \end{enumerate}
  In Cases~2--4, $(r,s)_m$ infects a cell in $\infset_{n-1}$
  in row~$s$. We let $(r',s)_{m+1}\in\binfset_{n-1}$ be the leftmost such cell.
  \begin{enumerate}[label=\textbf{Case \arabic*:},leftmargin=*,labelindent=0pt,
                    labelwidth=0pt,itemindent=20pt,resume]
    \item $(r,s)_m\to(r',s)_{m+1}\in\binfset_{n-1}$ and column~$r'$ is the minimum value
      in $\layer_{m+1}(r+s)$.\\
      Then $r'$ is the maximum in $\layer_{m+1}(r+s-1)$. Since $r>0$, we have a
      cell $(r-1,s)_m$ also infecting $(r',s)_{m+1}$. Hence $(r-1,s)_m\in\binfset_n$.
    \item $(r,s)_m\to(r',s)_{m+1}\in\binfset_{n-1}$ and column~$r'$ is not the minimum
      value in $\layer_{m+1}(r+s)$ and $(r',s)_{m+1}$ is not the leftmost cell in row~$t$ in $\binfset_{n-1}$.\\[4pt]
      Since $r'$ is not the minimum of $\layer_{m+1}(r+s)$, we have $r'>0$ and
      $(r,s)_m\to(r'-1,s)_{m+1}$.
      Since $(r',s)_{m+1}$ was the leftmost cell in $\binfset_{n-1}$ infected by $(r,s)_m$,
      we conclude that $(r'-1,s)_{m+1}\notin\binfset_{n-1}$. Since $(r',s)_{m+1}$ is not
      the leftmost cell in row~$t$ in $\binfset_{n-1}$,  the inductive hypothesis yields
      $(r'-1,s+1)_{m+1}\in\binfset_{n-1}$.
      And $(r-1,s+1)_m\to(r'-1,s+1)_{m+1}$ since $r'-1\in\layer_{m+1}(r+s)$, yielding
      $(r-1,s+1)_m\in\binfset_m$.
    \item $(r,s)_m\to(r',s)_{m+1}\in\binfset_{n-1}$ and column~$r'$ is not the minimum
      value in $\layer_{m+1}(r+s)$ and $(r',s)_{m+1}$ is the leftmost cell in row~$t$ in 
      $\binfset_{n-1}$.\\[4pt]
      We have $s=t$. Since $r'$ is not the minimum value in $\layer_{m+1}(r+t)$, 
      we have $\layer_{m+1}(p+t)\subseteq\ii{0,r'-1}$ for all $p<r$.
      Since $(r',t)_{m+1}$ is the leftmost cell in row~$t$ in $\binfset_{n-1}$ and
      cells $(p,t)_m$ for $p<r$ only infect cells in columns $r'-1$ and below,
      cell $(r,s)_m=(r,t)_m$ is the leftmost cell in row~$t$ of $\binfset_n$.\qedhere
  \end{enumerate}
\end{proof}

\subsection{Bounds on infection paths}
\label{sec:bounds.on.infection.paths}

Branching processes with migration---where the population size is increased or decreased by a time-dependent
but deterministic quantity at each step---feature
prominently in our analysis of layer percolation. It is helpful to allow the population size to decrease below
zero, so that migration always shifts the population by a given amount and its expectation can be computed.
To accomplish this, we define a signed Galton--Watson process with migration, with the following dynamics:
The child distribution is as usual on the nonnegative integers.
If the population is nonnegative, each member gives birth to a number of children sampled
independently from the child distribution as usual,
and then the amount of migration (possibly negative) is added to get the size of the next generation.
If the population is negative, then each member gives birth to a number of children sampled
independently from the child distribution; the size of this next generation is the negative
of the resulting quantity of children, plus the amount of migration. We give the formal definition now.
\begin{define}\thlabel{def:GW}
We call $(X_j)_{j\geq 0}$ a \emph{signed Galton--Watson process with child distribution $L$
and migration $(e_j)_{j\geq 1}$}
if $X_{j+1}$ is distributed conditional on $X_j$ as
\begin{align*}
  \sgn(X_j)\sum_{i=1}^{\abs{X_j}}L_i + e_{j+1},
\end{align*}
where $L_1,L_2,\ldots$ are independent copies of $L$. The migration $e_j$ may be positive or negative.
If $e_j\geq 0$ for all $j$, we call it an \emph{immigration} process.
When the migration counts are all negative, we often call it a signed Galton--Watson process
with \emph{emigration $(f_j)_{j\geq 1}$} where $f_j=-e_j\geq 0$.
When the child distribution is $\Geo(1/2)$, we call the process 
a \emph{critical geometric branching process}.
\end{define}

The aim of our first two lemmas is to transfer bounds
on the rows of an infection path (i.e., their second coordinates) into bounds on the columns.
\begin{lemma}\thlabel{lem:smin.bound}
  Let $\smin_0,\ldots,\smin_n$ and $r_0$ be nonnegative integers.  Define $r_1,\ldots,r_n$ inductively
  by setting $r_{i}$ to be the minimum value in $\layer_{k+i}(r_{i-1}+\smin_{i-1})$.
  Then
  \begin{enumerate}[(a)]
    \item for any infection path $(u_0,s_0)_k\to\cdots\to(u_n,s_n)_{k+n}$ satisfying
      $u_0\geq r_0$ and
      $s_i\geq\smin_i$ for $0\leq i\leq n$, we have $u_i\geq r_i$ for $1\leq i\leq n$;
      \label{i:smin.bound.a}
    \item the random sequence $r_0+\smin_0,\ldots,r_n+\smin_n$ is a critical geometric branching process
      with immigration $\smin_i$ at step~$i$ for $i\geq 1$. \label{i:smin.bound.b}
  \end{enumerate}
\end{lemma}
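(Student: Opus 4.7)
The plan is to prove the two parts in sequence, both by fairly direct arguments, since the lemma is essentially a translation of definitions.

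For part \ref{i:smin.bound.a}, I would proceed by induction on $i$. The key observation is that for any $j \leq j'$, the minimum column of $\layer_{k+i}(j)$ is at most the minimum column of $\layer_{k+i}(j')$, because $\min \layer_{k+i}(j) = R_0 + \cdots + R_{j-1}$ is nondecreasing in $j$ (where the $R_j$ are the geometric widths at step $k+i$ from \eqref{eq:layer.def}). For the inductive step, assume $u_{i-1} \geq r_{i-1}$; combined with the hypothesis $s_{i-1} \geq \smin_{i-1}$, this gives $u_{i-1} + s_{i-1} \geq r_{i-1} + \smin_{i-1}$. From the definition of layer percolation, an infection $(u_{i-1}, s_{i-1})_{k+i-1} \to (u_i, s_i)_{k+i}$ forces $u_i \in \layer_{k+i}(u_{i-1} + s_{i-1})$, hence
\begin{align*}
  u_i \geq \min \layer_{k+i}(u_{i-1} + s_{i-1}) \geq \min \layer_{k+i}(r_{i-1} + \smin_{i-1}) = r_i,
\end{align*}
where the last inequality uses monotonicity and the last equality is the definition of $r_i$.

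For part \ref{i:smin.bound.b}, set $X_i := r_i + \smin_i$. By the definition of $r_i$ and \eqref{eq:layer.def},
\begin{align*}
  r_i = \min \layer_{k+i}(r_{i-1} + \smin_{i-1}) = \sum_{j=0}^{X_{i-1} - 1} R_j^{(k+i)},
\end{align*}
where the $R_j^{(k+i)}$ are the i.i.d.\ $\Geo(1/2)$ random variables governing step $k+i$ of layer percolation. Thus
\begin{align*}
  X_i = \sum_{j=1}^{X_{i-1}} R_{j-1}^{(k+i)} + \smin_i.
\end{align*}
Since $X_{i-1} \geq 0$ (as both $r_{i-1}$ and $\smin_{i-1}$ are nonnegative), this matches \thref{def:GW} with child distribution $\Geo(1/2)$ and immigration $\smin_i$ at step $i$. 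Independence across steps follows because the random variables $R_j^{(k+i)}$ defining infections from step $k+i-1$ to step $k+i$ are i.i.d.\ across different values of $i$.

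There is no real obstacle here; the only subtlety worth flagging is keeping straight the identification $X_i = r_i + \smin_i$ so that the recursion reads cleanly as a branching-with-immigration recurrence, and ensuring nonnegativity so that the signed-GW dynamics reduce to the standard (unsigned) one.
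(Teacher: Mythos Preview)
Your proof is correct and takes essentially the same approach as the paper's: part~\ref{i:smin.bound.a} is by the same induction using monotonicity of $\min\layer_{k+i}(j)$ in $j$, and part~\ref{i:smin.bound.b} is the same identification of $r_i$ as a sum of $r_{i-1}+\smin_{i-1}$ independent $\Geo(1/2)$ variables. The paper's version is terser, but the content is identical.
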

\begin{proof}
  To prove \ref{i:smin.bound.a}, we proceed inductively, assuming that $u_i\geq r_i$
  and showing that $u_{i+1}\geq r_{i+1}$.
  The column of the leftmost cell infected by $(u_i,s_i)_{k+i}$ is the minimum value in
  $\layer_{k+i+1}(u_i+s_i)$. Since $u_i\geq r_i$ and $s_i\geq\smin_i$, this column is at least
  the minimum value of $\layer_{k+i+1}(r_i+\smin_i)=r_{i+1}$, proving that $u_{i+1}\geq r_{i+1}$.
  
  Fact~\ref{i:smin.bound.b} follows from the dynamics of layer percolation, since
  the minimum value in $\layer_{k+i+1}(r_i+\smin_i)$ is by definition the sum of $r_i+\smin_i$
  independent random variables with distribution $\Geo(1/2)$.
\end{proof}

The upper bound is nearly the same, and we omit its proof.
The only difference is that $r_i$ must be set to the \emph{maximum} value in
$\layer_{i+k}(r_{i-1}+\smax_{i-1})$, which is equal to the minimum
value in $\layer_{i+k}(r_{i-1}+\smax_{i-1}+1)$
\begin{lemma}\thlabel{lem:smax.bound}
  Let $\smax_0,\ldots,\smax_n$ and $r_0$ be nonnegative integers.  Define $r_1,\ldots,r_n$ inductively
  by setting $r_{i}$ to be the maximum value in $\layer_{k+i}(r_{i-1}+\smax_{i-1})$,
  or equivalently, the minimum value in $\layer_{k+i}(r_{i-1}+\smax_{i-1}+1)$.
  Then
  \begin{enumerate}[(a)]
    \item for any infection path $(u_0,s_0)_k\to\cdots\to(u_n,s_n)_{k+n}$ satisfying
      $u_0\leq r_0$ and
      $s_i\leq\smax_i$ for $0\leq i\leq n$, we have $u_i\leq r_i$ for $1\leq i\leq n$;
      \label{i:smax.bound.a}
    \item the random sequence $r_0+\smax_0+1,\ldots,r_n+\smax_n+1$ is a critical geometric branching process
      with immigration $\smax_i+1$ at step~$i$ for $i\geq 1$. \label{i:smax.bound.b}
  \end{enumerate}
\end{lemma}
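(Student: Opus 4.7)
The plan is to mirror the proof of \thref{lem:smin.bound} almost verbatim, with the sole change being that we track the maximum rather than the minimum column of each layer. Since the author has already indicated the proof is "nearly the same," the main task is bookkeeping: understanding where the extra $+1$ appears in part (b), and checking the monotonicity used in part (a).

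For part (a), I would proceed by induction on $i$. The base case $i=0$ holds by hypothesis. For the inductive step, assume $u_i \leq r_i$. Since $(u_i, s_i)_{k+i} \to (u_{i+1}, s_{i+1})_{k+i+1}$, the definition of infection in layer percolation (see~\eqref{eq:layer.def}) gives $u_{i+1} \in \layer_{k+i+1}(u_i + s_i)$, so $u_{i+1}$ is at most the maximum of $\layer_{k+i+1}(u_i + s_i)$. Because $\layer_{k+i+1}(j) = \ii{R_0 + \cdots + R_{j-1},\, R_0 + \cdots + R_j}$ with $R_j \geq 0$, the maximum of $\layer_{k+i+1}(\cdot)$ is nondecreasing in its argument. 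Combining $u_i + s_i \leq r_i + \smax_i$ with this monotonicity yields $u_{i+1} \leq \max \layer_{k+i+1}(r_i + \smax_i) = r_{i+1}$.

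For part (b), I would appeal directly to the layer dynamics. At step $k+i+1$, if we write $R_0^{(i+1)}, R_1^{(i+1)}, \ldots$ for the i.i.d.-$\Geo(1/2)$ widths defining the layers (with these sequences independent across different $i$), then
\begin{align*}
  r_{i+1} \;=\; \max \layer_{k+i+1}(r_i + \smax_i) \;=\; \sum_{j=0}^{r_i+\smax_i} R_j^{(i+1)},
\end{align*}
a sum of $r_i + \smax_i + 1$ i.i.d.-$\Geo(1/2)$ variables. Setting $X_i = r_i + \smax_i + 1$, this rewrites as $X_{i+1} = \sum_{j=1}^{X_i} L_j + (\smax_{i+1} + 1)$ with $L_j \sim \Geo(1/2)$, which by \thref{def:GW} is exactly the recursion defining a critical geometric branching process with immigration $\smax_{i+1} + 1$ at step $i+1$, started from $X_0 = r_0 + \smax_0 + 1$.

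There is no serious obstacle here; the only substantive difference from the lower-bound case is that taking the maximum rather than the minimum of a layer sums one extra $\Geo(1/2)$ term, which produces the $+1$ shift in the immigration amount and the starting value. I would expect the statement "$r_0+\smax_0+1,\ldots,r_n+\smax_n$" to be read as the sequence $(r_i + \smax_i + 1)_{0 \leq i \leq n}$, since this is what the calculation above produces and what makes the indices in the immigration claim consistent.
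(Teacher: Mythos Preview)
Your proof is correct and matches what the paper intends: it explicitly omits the proof, noting that it is ``nearly the same'' as \thref{lem:smin.bound} with the maximum of the layer replacing the minimum, exactly as you have done. Your reading of the sequence in part~(b) as $(r_i+\smax_i+1)_{0\leq i\leq n}$ is also correct and consistent with how the paper later uses this lemma in \thref{lem:ur.bound}.
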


Suppose we have a sequence of sets $\xi_0,\ldots,\xi_n$ 
where $\xi_i$ contains cells from step~$k+i$ (in typical applications these will be
infection sets). 
We say that a sequence of cells $(r_0,s_0)_{k},\ldots,(r_n,s_n)_{k+n}$ is a \emph{lower-left
bound} (resp.\ \emph{upper-right bound}) for $\xi_0,\ldots,\xi_n$ if
$r_i\leq r'_i$ and $s_i\leq s'_i$ (resp. $r'_i\leq r_i$ and $s'_i\leq s_i$)
for all $(r'_i,s'_i)_{k+i}\in\xi_i$ and all $0\leq i\leq n$.
\thref{lem:smin.bound,lem:smax.bound} give easy lower-left and upper-right bounds for a
sequence of infection sets. For the lower-left bound,
define the \emph{minimal infection path from $(r,s)_k$} as the infection path 
\begin{align*}
  (r_0,s)_k\to(r_1,s)_{k+1}\to\ldots
\end{align*}
that always chooses $(r_{n},s)_{n+k}$ to be the leftmost cell in row~$s$ infected
by $(r_{n-1},s)_{n+k-1}$. That is,
set $r_0=r$, and then inductively define
$r_{n}$ as the minimum value in $\layer_{n+k}(r_{n-1}+s)$.
This path is very much analogous to the minimal odometer. Indeed, under the bijection
to odometers given in Section~\ref{sec:ARW.percolation.connection}, 
it corresponds to picking the first permissible $\Left$ instruction, exactly as is done
when defining the minimal odometer. Hence it corresponds to a minimal
odometer started at site~$k$ for some initial odometer value at $k$ and net flow from $k$ to $k+1$.

\begin{lemma}\ \thlabel{lem:ll.bound}
  \begin{enumerate}[(a)]
    \item   If $u\geq r$ and $t\geq s$, then the minimal infection path from $(r,s)_k$
      is a lower-left bound
      for the infection sets $\infset[(u,t)_k]_0,\,\infset[(u,t)_{k}]_1,\ldots$.
      \label{i:ll.bound.a}
    \item Let $(r_0,s)_k\to(r_1,s)_{k+1}\to\cdots$ be the minimal infection path
      from $(r,s)_k$. Then the random sequence $r_0+s,r_1+s,\ldots$ is distributed
      as a critical geometric branching process with constant immigration $s$.
      \label{i:ll.bound.b}
  \end{enumerate}
\end{lemma}
\begin{proof}
  Since all infection paths
  from $(u,t)_k$ have all cells at or above row~$s$, the lemma follows
  by applying \thref{lem:smin.bound} with $\smin_i=s$ for all $i\geq 0$.
\end{proof}

For a generic upper-right bound, we define
the \emph{upper-right cell sequence from $(r,s)_k$} as the sequence of cells
\begin{align*}
  (r_0,s)_k,\;(r_1,s+1)_{k+1},\;(r_2,s+2)_{k+2},\ldots
\end{align*}
where $r_0=r$ and $r_{n+1}$ is the maximum of $\layer_{n+k+1}(r_n+s+n)$ for $n\geq 0$.
This sequence is not an infection
path: while $(r_n,s_n+n)_{n+k}$ infects $(r_{n+1},s+n)_{n+k+1}$,
it is not guaranteed to infect $(r_{n+1},s+n+1)_{n+k}$ unless $\lambda=\infty$.
One might think to modify the sequence into an infection path by
incrementing the row only when this infection occurs,
but the resulting infection path is not necessarily an upper-right bound for infection sets.

\begin{lemma}\ \thlabel{lem:ur.bound}
  \begin{enumerate}[(a)]
    \item If $r\geq u$ and $s\geq t$, then the upper-right cell sequence from $(r,s)_k$
      is an upper-right bound
      for the infection sets $\infset[(u,t)_k]_0,\,\infset[(u,t)_{k}]_1,\ldots$.\label{i:ur.bound.a}
    \item Let $(r_0,s)_k,\,(r_1,s+1)_{k+1},\,(r_2,s+2)_{k+2},\ldots$ be the upper-right cell sequence
      from $(r,s)_k$. Then the random sequence $(r_n+s+n+1)_{n\geq 0}$ is distributed
      as a critical geometric branching process with immigration $e_n=s+n+1$ for $n\geq 1$.\label{i:ur.bound.b}
  \end{enumerate}
\end{lemma}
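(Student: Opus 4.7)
\medskip

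\noindent\textbf{Proof plan for \thref{lem:ur.bound}.}

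For part~\ref{i:ur.bound.a}, the plan is induction on $n$. The base case is immediate: $\infset[(u,t)_k]_0 = \{(u,t)_k\}$, and by hypothesis $u\leq r = r_0$ and $t\leq s = s_0$. For the inductive step, suppose $(r_i,s+i)_{k+i}$ bounds $\infset[(u,t)_k]_i$ for $i\leq n$, and take an arbitrary $(r',s')_{k+n+1}\in\infset[(u,t)_k]_{n+1}$. Then there is some $(u',t')_{k+n}\in\infset[(u,t)_k]_n$ with $(u',t')_{k+n}\to(r',s')_{k+n+1}$. By the inductive hypothesis, $u'\leq r_n$ and $t'\leq s+n$, and from the definition of an infection in layer percolation we get $s'\leq t'+1\leq s+n+1$, handling the row coordinate. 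For the column coordinate, the infection requires $r'\in\layer_{k+n+1}(u'+t')$; since $u'+t'\leq r_n+s+n$ and the maximum of $\layer_{k+n+1}(j)$ is $R_0+\cdots+R_j$ (with the $R_j$ the widths at step $k{+}n{+}1$), this maximum is nondecreasing in $j$, so $r'$ is at most the maximum of $\layer_{k+n+1}(r_n+s+n)$, which is $r_{n+1}$ by definition.

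For part~\ref{i:ur.bound.b}, the key observation is bookkeeping: by definition of the layers, the maximum of $\layer_{k+n}(j)$ is a sum of $j+1$ i.i.d.\ $\Geo(1/2)$ variables, and these sums use fresh (independent) layer widths for each step $k+n$. So, writing $R^{(n)}_0,R^{(n)}_1,\ldots$ for the i.i.d.\ $\Geo(1/2)$ widths at step $k+n$,
\begin{equation*}
  r_n = \sum_{i=0}^{r_{n-1}+s+n-1} R^{(n)}_i,
\end{equation*}
a sum of exactly $r_{n-1}+s+n$ terms. Setting $Y_n:=r_n+s+n+1$ gives $Y_{n-1}=r_{n-1}+s+n$, so
\begin{equation*}
  Y_n = \sum_{i=1}^{Y_{n-1}} R^{(n)}_i + (s+n+1),
\end{equation*}
which is precisely the recursion of a signed Galton--Watson process with child distribution $\Geo(1/2)$ and deterministic immigration $e_n = s+n+1$, starting from $Y_0 = r+s+1$. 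The independence across $n$ of the families $\{R^{(n)}_i\}$ follows from the independence of layer percolation across different steps, completing the identification as a critical geometric branching process with the stated immigration sequence.

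I don't anticipate any real obstacle here; part~\ref{i:ur.bound.a} is a direct mirror of the inductive argument behind \thref{lem:smax.bound} (with the simplification that rows grow by exactly one each step rather than being controlled by a general $\smax_i$ sequence), and part~\ref{i:ur.bound.b} is a reindexing of the layer-width sum. The only place to be slightly careful is the off-by-one in the immigration: the sum producing $r_n$ has $r_{n-1}+s+n$ summands, one more than $r_{n-1}+s+n-1$, which is why shifting by $s+n+1$ rather than $s+n$ is what converts the recursion into the standard branching-with-immigration form.
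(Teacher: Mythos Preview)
Your proposal is correct and takes essentially the same approach as the paper: the paper's proof simply sets $\smax_n=s+n$ and invokes \thref{lem:smax.bound}, while you unpack that invocation into the direct induction and the explicit reindexing computation. Since the paper omits the proof of \thref{lem:smax.bound} (calling it ``nearly the same'' as \thref{lem:smin.bound}), your spelled-out version is effectively what the paper defers.
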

\begin{proof}
  Let $\smax_n = s+n$ for $n\geq 0$.
  Since the row in an infection path can increase by at most one at each step, any infection path
  starting at $(u,t)_k$ has its row dominated by $\smax_0,\smax_1,\ldots$ at each step. Then an application of
  \thref{lem:smax.bound} proves the lemma.  
\end{proof}

Finally, we give a bound on the minimal odometer. We sneak the proof into this section
even though it applies directly to odometers and not to layer percolation because it uses
the same idea as all the other bounds of this section.
\begin{prop}\thlabel{prop:min.odometer.concentration}
  For a given initial configuration $\sigma$ on $\ii{0,n}$, initial odometer value $u_0$ at
  site~$0$, and net flow $f_0$ from site~$0$ to site~$1$, let  
  $\mo$ be the minimal odometer of $\eosos{n}(\Instr,\sigma,u_0,f_0)$.
  Let
  \begin{align*}
    e_i= -f_0-\sum_{v=1}^i\abs{\sigma(v)}
  \end{align*}
  and suppose that $\abs{e_i}\leq\emax$ for some $\emax\geq 1$.
  For some constants $c,C>0$ depending only on $\lambda$, it holds
  for all $t\geq 4\emax$ that
  \begin{align}\label{eq:min.odometer.bound}
    \P\Biggl[\abs[\bigg]{\rt{\mo}{j}-\biggl(\frac{u_0}{2(1+\lambda)}+\sum_{i=1}^je_i\biggr)} \geq t\Biggr]
      \leq C\exp\biggl(-\frac{ct^2}{n\bigl(n\emax + u_0+t\bigr)}\biggr)
  \end{align}
  for all $1\leq j\leq n$.
\end{prop}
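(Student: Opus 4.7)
The plan is to recognize $R_v := \rt{m}{v}$ as essentially a signed Galton--Watson process with migration $(e_v)$ in the sense of \thref{def:GW}, and then prove concentration via a Bernstein/Freedman-type bound on an associated martingale, using a stopping-time argument to tame the state-dependent conditional variance of its increments.

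\emph{Step 1 (process structure).} Write $L_v := \lt{m}{v}$. From \eqref{eq:min.o.constr}, $L_{v+1} = R_v + e_v$. By construction, $m(v+1)$ points to the $L_{v+1}$-th $\Left$ instruction (counted with sign) at site~$v+1$, so given $L_{v+1}$ the conditional distribution of $R_{v+1}$ is $\sgn(L_{v+1}) \sum_{i=1}^{|L_{v+1}|} Y_i^{(v+1)}$, where the $Y_i^{(v+1)}$ are i.i.d.\ copies of the random variable $Y$ counting $\Right$ instructions between two successive $\Left$ instructions. By the $\Left$/$\Right$ symmetry, $\E[Y] = 1$, and $Y$ is stochastically dominated by $\Geo(1/2)$, so $Y$ has exponential moments bounded in terms of $\lambda$ alone.

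\emph{Step 2 (mean and reduction).} Iterating $\E[R_{v+1} \mid \mathcal{F}_v] = L_{v+1} = R_v + e_v$, where $\mathcal{F}_v$ is generated by the instructions at sites $0,\dots,v$, and using $\E[R_0] = u_0 p_R$ with $p_R := 1/(2(1+\lambda))$ (since $R_0 \sim \Bin(u_0, p_R)$), we get $\E[R_j] = u_0 p_R + \sum_{i=0}^{j-1} e_i$. This differs from the centering in \eqref{eq:min.odometer.bound} by $e_j - e_0 \in [-2\emax, 2\emax]$. Since $t \geq 4\emax$, it suffices to prove $\P(|R_j - \E[R_j]| \geq t/2)$ obeys a bound of the stated form.

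\emph{Step 3 (martingale + MGF bound).} Decompose $R_j - \E[R_j] = (R_0 - u_0 p_R) + M_j$, where $M_v := \sum_{w=1}^v D_w$ with $D_w := R_w - L_w$. Since $L_w$ is $\mathcal{F}_{w-1}$-measurable, $(M_v)$ is an $(\mathcal{F}_v)$-martingale, and conditional on $\mathcal{F}_{w-1}$ the increment $D_w$ is a sum of $|L_w|$ independent centered copies of $Y-1$. The exponential moments of $Y$ give a Bernstein-type MGF bound
\begin{align*}
  \E\bigl[e^{\theta D_w} \bigm| \mathcal{F}_{w-1}\bigr] \leq \exp\bigl(C_\lambda |L_w| \theta^2\bigr), \qquad |\theta| \leq \theta_0,
\end{align*}
with $C_\lambda, \theta_0 > 0$ depending only on $\lambda$. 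The initial term is handled separately by Hoeffding, $\P(|R_0 - u_0 p_R| \geq t/4) \leq 2\exp(-c t^2/u_0)$, which is already stronger than needed.

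\emph{Step 4 (stopping-time bootstrap).} Set $K := 2(u_0 p_R + n\emax) + t$ and $\tau := \inf\{v \geq 1 : |M_v| > t/2\} \wedge j$. On $\{w \leq \tau\}$, the bounds $|M_{w-1}| \leq t/2$, $|\E[R_{w-1}]| \leq u_0 p_R + n\emax$, together with control of $|R_0 - u_0 p_R|$, force $|L_w| \leq K$. Hence $\sum_{w=1}^{\tau} C_\lambda |L_w| \leq C_\lambda n K$, and applying the exponential Markov inequality to the stopped supermartingale $\exp\bigl(\theta M_{v \wedge \tau} - C_\lambda \theta^2 \sum_{w \leq v \wedge \tau} |L_w|\bigr)$ and optimizing over $\theta \in [0, \theta_0]$ gives
\begin{align*}
  \P\Bigl(\sup_{v \leq j} \abs{M_{v \wedge \tau}} \geq t/4\Bigr) \leq 2\exp\Bigl(-\frac{c' t^2}{nK}\Bigr) \leq 2\exp\Bigl(-\frac{c'' t^2}{n(n\emax + u_0 + t)}\Bigr).
\end{align*}
On $\{\tau < j\}$ one has $\sup_{v\leq j}|M_{v \wedge \tau}| \geq t/2 > t/4$, so this event is already contained in the left-hand side. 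Combining with the Hoeffding bound on $R_0 - u_0 p_R$ (absorbed using $u_0 \leq n(u_0 + n\emax + t)$) yields \eqref{eq:min.odometer.bound}.

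\emph{Main obstacle.} The difficulty is the self-bounding nature of the argument: the conditional variance of $D_w$ is proportional to $|L_w|$, but $|L_w|$ is precisely the quantity being controlled. The stopping-time threshold $K$ must be chosen large enough that $\{\tau < j\}$ automatically falls inside the deviation event under study, yet small enough that $nK$ produces the denominator $n(n\emax + u_0 + t)$. Balancing these, and carrying along the initial fluctuation $R_0 - u_0 p_R$, is where the technical bookkeeping lives.
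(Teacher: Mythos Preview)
Your proof is correct and follows a genuinely different route from the paper.  The paper identifies $Z_j=\rt{m}{j}+e_j$ as a signed critical geometric branching process with migration $(e_j)$ and then invokes the appendix result \thref{prop:GW.emigration.concentration}, whose proof goes through a convex-order comparison (\thref{lem:GW.emigration.comparison}) to the difference of two independent branching processes with constant immigration, each of which has an explicit distribution as a sum of geometrics.  You instead prove the concentration directly by a Freedman-type stopped-supermartingale argument, exploiting the sub-exponential MGF of the $\Geo(1/2)$ increments and a self-consistent choice of the stopping threshold $K$.  The paper's route is more modular (the appendix proposition is reused several times, e.g.\ inside the box lemma), while your route is self-contained and avoids the convex-order machinery entirely.

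Two small cosmetic remarks: the inter-\Left\ count of \Right\ instructions is exactly $\Geo(1/2)$, not merely stochastically dominated by it, so your constants $C_\lambda$ and $\theta_0$ in Step~3 are in fact absolute and do not depend on $\lambda$; and in Step~4 you should make explicit that the bound $|L_w|\le K$ for $w\le\tau$ is asserted on the $\mathcal F_0$-measurable event $\{|R_0-u_0 p_R|\le t/4\}$, so that the conditional supermartingale/Doob argument runs cleanly given $\mathcal F_0$ before you take expectations and union-bound with Hoeffding.  Neither point affects the validity of the argument.
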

\begin{proof}
  Let $Z_j=\rt{\mo}{j}+e_j$.  First, we claim that $Z_0,\ldots,Z_n$ is a critical geometric signed
  branching process with migration $(e_j)_{j\geq 1}$. Suppose that $Z_j$ is positive.
  By definition of the minimal odometer, $\mo(j+1)$ is equal to the index of the $Z_j$th
  \Left\ instruction at site~$j+1$, 
  and $\rt{\mo}{j+1}$ is equal to the number of \Right\ instructions prior to this left instruction.
  Since the counts of \Right\ instructions prior to each left instruction are independent with
  distribution $\Geo(1/2)$,
  this makes the distribution of $Z_{j+1}$ given $Z_j$ equal to the sum of $Z_j$ independent copies
  of $\Geo(1/2)$ plus an additional $e_{j+1}$.
  If $Z_j= 0$, then $\mo(j+1)=0$ by the definition of the minimal odometer (this is a consequence
  of our insistence that $\Instr_{j+1}(0)$ always be equal to \Left). Thus
  $Z_{j+1}=\rt{\mo}{j+1}+e_{j+1}=e_{j+1}$, again consistent with a signed Galton--Watson
  process with migration $e_{j+1}$ at this step.
  And if $Z_j<0$, then $\mo(j+1)$ is equal to the index of the $\abs{Z_j}$th occurence of \Left\ prior
  to index~$0$. Then $\rt{\mo}{j+1}$ is equal to the negative of the number of \Right\ instructions
  at indices $\mo(j+1)+1,\ldots,-1$, which is distributed as the sum of $\abs{Z_j}$ copies of 
  $\Geo(1/2)$, which again gives $Z_{j+1}$ the correct distribution given $Z_j$.
  
  Let $\mu_j=Z_0+\sum_{i=1}^j e_i$.
  We can now invoke \thref{prop:GW.emigration.concentration} conditional on $Z_0$ to obtain
  \begin{align*}
    \P\bigl[\abs{Z_j-\mu_j}\geq t\bigmid Z_0\bigr]
      &\leq C\exp\biggl(-\frac{ct^2}{j(j\emax+\abs{Z_0}+t)}\biggr).
  \end{align*}
  Applying the bounds $\abs{Z_0}\leq u_0+\emax$ and $j\leq n$ and taking expectations,
  \begin{align}
    \P\bigl[\abs{Z_j-\mu_j}\geq t\bigr]
      &\leq C\exp\biggl(-\frac{ct^2}{n\bigl((n+1)\emax + u_0+t\bigr)}\biggr).\label{eq:condZ0}
  \end{align}
  Now we have
  \begin{align*}
    &P\biggl[\abs[\bigg]{\rt{\mo}{j}-\biggl(\frac{u_0}{2(1+\lambda)}+\sum_{i=1}^je_i\biggr)}\geq t\biggr]\\
      &\qquad\qquad\qquad= \P\Biggl[ \abs[\bigg]{Z_j-\mu_j-e_j+\rt{\mo}{0}+e_0-\frac{u_0}{2(1+\lambda)}}\geq t\Biggr]\\
      &\qquad\qquad\qquad\leq \P\biggl[\abs[\big]{Z_j-\mu_j-e_j}\geq \frac{t}{2}\biggr]+
        \P\Biggl[ \abs[\bigg]{\rt{\mo}{0}+e_0-\frac{u_0}{2(1+\lambda)}}\geq \frac{t}{2}\Biggr]\\
        &\qquad\qquad\qquad\leq \P\biggl[\abs[\big]{Z_j-\mu_j}\geq \frac{t}{4}\biggr]+
        \P\Biggl[ \abs[\bigg]{\rt{\mo}{0}-\frac{u_0}{2(1+\lambda)}}\geq \frac{t}{4}\Biggr].
  \end{align*}
  We use $t\geq4\emax$ to arrive at the final inequality above.
  We then bound the first summand with \eqref{eq:condZ0} with $t/4$ substituted for $t$.
  And by Hoeffding's inequality, we bound the second by $2e^{-t^2/8u_0}$.
  Combining the two bounds with modified constants proves
  \eqref{eq:min.odometer.bound}.
\end{proof}
Typically $\emax=n$
and $u_0\leq c'n^2$ for some constant $c'$, in which case the previous proposition yields
\begin{align*}  
  \P\Biggl[\abs[\bigg]{\rt{\mo}{j}-\biggl(\frac{u_0}{2(1+\lambda)}+\sum_{i=1}^je_i\biggr)} \geq tn^{3/2}\Biggr]
  \leq C\exp\biggl(-\frac{ct^2}{1 + c'+t/\sqrt{n}}\biggr),
\end{align*}
which shows that $\rt{\mo}{j}$ has fluctuations on the order of $n^{3/2}$.

\subsection{Shifts and reversals}
\label{sec:couplings}

How does the distribution of the infection set $\infset[(r,s)_0]_n$ depend on
the cell $(r,s)_0$? And how do backward infection sets relate to forward ones?
In both cases, we can connect the different infection sets to each other by a coupling

To address the first question, the lower-left cell of $\infset[(r,s)_0]_n$ lies on the minimal
infection path starting from $(r,s)_0$. But in fact this is the only dependence:
all infection sets $\infset[(r,s)_0]_n$ have the same distribution once we shift
the column by the minimal infection path.
We prove this by constructing a coupling between $\infset[(r,s)_0]_n$ and $\infset[(0,0)_0]_n$.
When we work with two different instances of layer percolation, we use the symbol
$\to$ to denote infection as usual in the first instance and use $\too$
to denote infection in the second.

\begin{prop}\thlabel{prop:shift}
  Let $\infset[(0,0)_0]_n$ as usual denote
  the infection set from $(0,0)_0$ in layer percolation with parameter $\lambda>0$,
  and let $\altinfset[(r,s)_0]_n$ denote the infection set from $(r,s)_0$ in a second instance of layer
  percolation.
  Let
  \begin{align}\label{eq:shift.mip}
    (r,s)_0=(r_0,s)_0\too(r_1,s)_1\too(r_2,s)_2\to\cdots
  \end{align}
  be the minimal infection path starting from $(r,s)_0$ in the second layer
  percolation.
  There exists a coupling of the two instances such that \eqref{eq:shift.mip} is independent
  of the first layer percolation and
  \begin{align}\label{eq:shift.ip}
    \altinfset[(r,s)_0]_n = \bigl\{ (r'+r_n,\,s'+s)_n\colon (r',s')_n\in\infset[(0,0)_0]_n\bigr\}
  \end{align}
  for all $n\geq 0$.
\end{prop}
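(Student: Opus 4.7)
The plan is to build the second instance of layer percolation by splicing the first instance together with fresh randomness that generates the minimal infection path. The idea is that the shape data (widths $R_j^{(k)}$ and Bernoulli indicators $B^{j,(k)}$) indexed by diagonals $j < r_{k-1}+s$ only influences the minimal path from $(r,s)_0$, while the shape data indexed by diagonals $j \geq r_{k-1}+s$ governs everything in $\altinfset[(r,s)_0]$ that sits at or above row~$s$. By reusing the first instance's shape data exactly for the second range of diagonals, we make $\altinfset[(r,s)_0]_n$ a translate of $\infset[(0,0)_0]_n$, and by filling in the first range with independent randomness we keep the minimal path independent of the first layer percolation.

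Concretely, let $\bigl(R_j^{(k)},\, B^{j,(k)}\bigr)_{j\geq 0,\,k\geq 1}$ be the shape variables of the first layer percolation. I will inductively define $r_k$ and, at each step~$k\geq 1$, the shape variables $\bigl(\tilde R_j^{(k)},\, \tilde B^{j,(k)}\bigr)_{j\geq 0}$ for the second instance. Set $r_0=r$. Assuming $r_{k-1}$ is defined, sample $\tilde R_0^{(k)},\ldots,\tilde R_{r_{k-1}+s-1}^{(k)}$ i.i.d.-$\Geo(1/2)$ and sample the accompanying $\tilde B^{j,(k)}$ for $j<r_{k-1}+s$ with the correct $\Ber(\lambda/(1+\lambda))$ entries, all independent of each other and of the first layer percolation. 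Set
\begin{align*}
  r_k = \sum_{j=0}^{r_{k-1}+s-1}\tilde R_j^{(k)}.
\end{align*}
Then, for $j\geq r_{k-1}+s$, set $\tilde R_j^{(k)} = R_{j-r_{k-1}-s}^{(k)}$ and $\tilde B^{j,(k)} = B^{j-r_{k-1}-s,(k)}$. Because the spliced variables and the fresh variables are independent and all have the correct marginal distributions, the $\bigl(\tilde R_j^{(k)},\,\tilde B^{j,(k)}\bigr)$ define a bona fide instance of layer percolation with parameter $\lambda$, which we use to form $\altinfset[(r,s)_0]$.

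To finish, I verify three things. First, $r_k$ as defined is exactly the column of the minimal infection path from $(r,s)_0$ at step~$k$: the previous cell $(r_{k-1},s)_{k-1}$ lies in diagonal $r_{k-1}+s$, so the leftmost cell it infects is in column $\min\tilde\layer_k(r_{k-1}+s)=\sum_{j=0}^{r_{k-1}+s-1}\tilde R_j^{(k)}=r_k$. Second, since each $r_k$ is measurable with respect to the fresh variables only, the entire sequence $(r_k)_{k\geq 0}$ is independent of the first layer percolation. Third, I prove \eqref{eq:shift.ip} by induction on $n$. The base case $n=0$ is immediate. For the induction step, a cell $(u',t')_{n-1}\in\altinfset[(r,s)_0]_{n-1}$ equals $(u+r_{n-1},t+s)_{n-1}$ for some $(u,t)_{n-1}\in\infset[(0,0)_0]_{n-1}$, and hence lies on diagonal $u+t+r_{n-1}+s$. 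The splicing identity
\begin{align*}
  \min\tilde\layer_n(u+t+r_{n-1}+s) = r_n + \min\layer_n(u+t)
\end{align*}
and the analogous identity for the maximum show that $\tilde\layer_n(u+t+r_{n-1}+s)$ is the $r_n$-shift of $\layer_n(u+t)$, and the coupling of the Bernoullis makes the sleep indicators agree position by position. Thus $(u+r_{n-1},t+s)_{n-1}$ infects $(r'',s'')_n$ in the second instance if and only if $(u,t)_{n-1}$ infects $(r''-r_n,s''-s)_n$ in the first, which is exactly \eqref{eq:shift.ip} at step~$n$.

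The main thing to get right is bookkeeping: making sure the sums of the $\tilde R_j^{(k)}$ for $j<r_{k-1}+s$ really do give the correct column shift $r_n$ at every step, and that only the spliced-in variables (never the shared ones) appear in the definition of $r_k$. No deep obstacle arises here because the layer structure is, by construction, built up left-to-right one independent block at a time, so conditioning on (or replacing) an initial segment of layers leaves the remaining layers intact in distribution.
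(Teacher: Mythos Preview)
Your proposal is correct and follows essentially the same approach as the paper: construct the second instance step by step by prepending $r_{k-1}+s$ fresh independent $(\tilde R_j^{(k)},\tilde B^{j,(k)})$ pairs and then reusing the first instance's shape data, so that $r_k$ depends only on the fresh randomness and the layers at diagonals $\geq r_{k-1}+s$ are exact shifts of the first instance's layers. The paper's proof and yours differ only in notation and presentation, not in substance.
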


\begin{proof}
  Observe that $\altinfset[(r,s)_0]_0=\{(r,s)_0\}$ and $\infset[(0,0)_0]_0=\{(0,0)_0\}$,
  consistent with \eqref{eq:shift.ip} for $n=0$.
  Proceeding inductively, we suppose we have a coupling such that \eqref{eq:shift.ip}
  holds for $n$, and we extend the coupling to step~$n+1$.
  We write $(a,b)_n\to(a',b')_{n+1}$ to denote infection in the first layer percolation
  and $(a,b)_n\too(a',b')_{n+1}$ to denote infection in the second.
  
  Let $R_0,R_1,\ldots$ and $B^0,B^1,\ldots$ be the random variables from Section~\ref{sec:define}
  used to define the infections in the first layer percolation from step~$n$ to step~$n+1$
  (see \thref{rmk:what.is.layer.percolation}).
  To form the analogous random variables $\widebar{R}_0,\widebar{R}_1,\ldots$ and $\widebar{B}^0,
  \widebar{B}^1,\ldots$ for the second layer of percolation, we just insert new independent
  random variables to the beginning of $R_0,R_1,\ldots$ and $B^0,B^1,\ldots$.
  More specifically, for $i<r_n+s$ we let $\widebar{R}_i$ be a random variable
  independent of all else with distribution $\Geo(1/2)$
  and we let $\widebar{B}^i=(\widebar{B}^i_0,\ldots,\widebar{B}^i_{\widebar{R}_i})$
  be independent wih common distribution $\Ber\bigl(\lambda/(1+\lambda)\bigr)$.
  For $i\geq r_n+s$, let $\widebar{R}_i=R_{i-r_n-s}$ and $\widebar{B}^i=B^{i-r_n-s}$.
  
  Now, we claim that
  \begin{align}\label{eq:ipext}
    (r',s')_n\to(r'',s'')_{n+1} \Longleftrightarrow (r'+r_n,\,s'+s)_n\too(r''+r_{n+1},\,s''+s)_{n+1}.
  \end{align}
  Indeed, we have
  \begin{align*}
    r_{n+1} = \widebar{R}_0+\cdots+\widebar{R}_{r_n+s-1},
  \end{align*}
  since $r_{n+1}$ is the first column in layer~$r_n+s$ at step~$n+1$ in the second layer percolation.
  Thus layer~$r'+s'+r_n+s$ in the second layer percolation is equal to layer~$r'+s'$
  in the first layer percolation shifted by $r_{n+1}$. Since $\widebar{B}^{r'+s'+r_n+s}=B^{r_n+s}$,
  the cell $(r',s')_n$ in the first layer percolation infects the same cells
  as $(r'+r_n,s'+s)_n$, shifted to the right by $r_{n+1}$ and upward by $s$, confirming
  \eqref{eq:ipext}.
  
  Combining \eqref{eq:shift.ip} with \eqref{eq:ipext} shows that \eqref{eq:shift.ip} holds 
  with $n$ replaced by $n+1$, thus extending the induction.
\end{proof}
Combining \thref{lem:ll.bound}\ref{i:ll.bound.b} with the previous proposition yields
\begin{cor}\thlabel{cor:shift.branch}
  Fix a cell $(r,s)_0$ and
  let $(Z_n)_{n\geq 0}$ be a critical geometric branching process with constant immigration $s$
  starting from $Z_0=r+s$, independent of layer percolation. Then
  \begin{align*}
    \infset[(r,s)]_n\eqd  \bigl\{ (r'-s+Z_n,\,s'+s)_n\colon (r',s')_n\in\infset[(0,0)_0]_n \bigr\}.
  \end{align*}
\end{cor}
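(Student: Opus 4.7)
The plan is to combine the two results flagged by the authors directly. By \thref{prop:shift}, there exists a coupling of two independent instances of layer percolation in which the infection set $\altinfset[(r,s)_0]_n$ is obtained from $\infset[(0,0)_0]_n$ by the deterministic shift
\[
  (r',s')_n \mapsto (r'+r_n,\,s'+s)_n,
\]
where $(r_n)_{n\geq 0}$ is the column sequence of the minimal infection path starting from $(r,s)_0$, and crucially, this minimal infection path is independent of the first layer percolation.

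Next, I would invoke \thref{lem:ll.bound}\ref{i:ll.bound.b} on this minimal infection path: it tells me that $(r_n+s)_{n\geq 0}$ is distributed as a critical geometric branching process with constant immigration $s$, started from $r_0+s=r+s$. Writing $Z_n := r_n+s$, I have $Z_0=r+s$, and $r_n = Z_n-s$. Substituting this into the shift from \thref{prop:shift} turns $(r'+r_n,\,s'+s)_n$ into $(r'-s+Z_n,\,s'+s)_n$, exactly the expression appearing in the statement.

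Finally I would note that the independence of the minimal infection path from the first layer percolation, guaranteed by \thref{prop:shift}, makes $Z_n$ independent of $\infset[(0,0)_0]_n$, matching the hypothesis on $(Z_n)$ in the corollary. Since the left-hand side $\infset[(r,s)_0]_n$ has the same distribution as the coupled $\altinfset[(r,s)_0]_n$, the distributional identity follows.

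There is no real obstacle here: both ingredients are already proved, and the only ``work'' is the bookkeeping to identify the shift random variable $r_n$ with $Z_n-s$ and to notice that the required independence is exactly what \thref{prop:shift} supplies. The proof is essentially a one-line substitution.
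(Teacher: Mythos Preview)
Your proposal is correct and takes essentially the same approach as the paper, which simply states that the corollary follows by combining \thref{lem:ll.bound}\ref{i:ll.bound.b} with \thref{prop:shift}. Your bookkeeping of setting $Z_n=r_n+s$ and verifying the independence is precisely the intended one-line substitution.
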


Next, we describe the backward infection set $\binfset[(u,t)_m]_n$ as a transformed
version of the forward infection set $\infset[(0,0)_0]_n$.
The coupling connecting these two infection sets can hold only for a finite time,
unlike the coupling in \thref{prop:shift}, since for a fixed $u$ and $t$, the set
$\binfset[(u,t)_m]_n$ will be empty for sufficiently large $n$.

The key idea for the coupling making one layer percolation the reverse of another
is to swap $\Left$ and $\Right$ instructions. Using the material of 
Section~\ref{sec:ARW.percolation.connection} to interpret the two coupled instances
of layer percolation in terms of odometers, the original instance represents 
odometers stable on the interior of an interval with a fixed initial value $u_0$ and left-to-right net 
flow $f_0$ on the left endpoint,
while the second represents odometers with fixed initial value and right-to-left net flow
on the right endpoint.

\begin{coupling}\thlabel{cp:reverse}
  Fix $u,t,m\geq 0$. Consider layer percolation together with 
  an independent signed critical geometric branching process
  $(Z_k)_{k\geq 0}$ with emigration~$t$ at each step and $Z_0=u$.
  We couple these processes with another instance of layer percolation,
  with infections from step~$n$ to step~$n+1$ in the first layer percolation
  coupled with those from step~$m-n-1$ to step~$m-n$ in the second, for all
  $0\leq n< m$.

  Suppose we have constructed the coupling up to step~$n$ in the first layer percolation
  and step~$m-n-1$ in the second.
  Now we construct the coupling for the next step. 
  If $Z_n<0$, then we just allow the two layer percolations and the branching process $Z_{n+1}$ to evolve
  independently  
  When $Z_n\geq 0$, we construct the coupling as follows.
  Start with the reduced instructions
  $a_1,a_2,\ldots$ and $b_1,b_2,\ldots$ corresponding to
  step~$n+1$ of the first layer percolation (see \thref{def:reduced2}). 
  To make reduced instructions corresponding to 
  step~$m-n$ for the second layer percolation, prepend random instructions 
  $\Left$ or $\Right$ with equal probability
  one at a time to the left of the list starting with $a_1,a_2,\ldots$ until
  $Z_n$ $\Left$ instructions have been added.
   (If $Z_n=0$, then no instructions
  are added.) Let $A$ be the number of added instructions. Note that our new
  list always starts with a $\Left$ instruction, since we stop prepending instructions
  after adding $Z_n$ $\Left$ instructions.  Now, take our new list
  consisting of $A$ new instructions followed by $a_1,a_2,\ldots$, and for all but the
  first instruction change $\Left$ instructions to $\Right$ instructions
  and vice versa to produce our new list $a'_1,a'_2,\ldots$.
  From the construction it is evident that $a'_1=\Left$ and that $a'_2,a'_3,\ldots$ consists of independent
  uniformly random instructions $\Left$ and $\Right$.
  
  Define $Z_{n+1}$ as the number of $\Left$ instructions contained in $a'_2,\ldots,a'_A$ minus $t$.
  Since the $A$ instructions added to the beginning of $a_1,a_2,\ldots$
  were independent of $a_1,a_2,\ldots$,
  the value of $Z_{n+1}$ is independent of $a_1,a_2,\ldots$ as desired. And since the number
  of $\Left$ instructions contained in $a'_2,\ldots,a'_A$ was the number of added $\Right$ instructions
  prior to the $\Left$-$\Right$ swap before the $Z_n$th $\Left$ instruction, the value of $Z_{n+1}$ is
  the next step in a branching process distributed as desired.
  
  Finally, we define $b'_1,b'_2,\ldots$ by prepending $A$ random
  $\Ber(\lambda/(1+\lambda))$ random variables
  to the left of $b_1,b_2,\ldots$. 
  Now define step~$m-n$ of the second layer percolation from $a'_1,a'_2,\ldots$
  and $b'_1,b'_2,\ldots$ (see \thref{lem:reduced=RB}). This creates a valid coupling of 
  the two layer percolations and $(Z_j)_{j=0}^{n+1}$.  
\end{coupling}

\begin{prop}\thlabel{cor:reverse}
  Consider \thref{cp:reverse} for given $u,t,m\geq 0$.
  Let $\infset_n=\infset[(0,0)_0]_n$
  denote the infection set for the first layer percolation
  and $\binfset_n=\binfset[(u,t)_m]_n$ the backward infection set for the second layer percolation.
  For all $0\leq n \leq m$,
  \begin{align}\label{eq:reverse.set}
    \text{if $Z_n\geq 0$, then } \binfset_n = \bigl\{(Z_n+r+s,\,t-s)_{m-n} \colon (r,s)_n\in\infset_n,\,s\leq t \bigr\}.
  \end{align}
\end{prop}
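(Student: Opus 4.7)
The argument is by induction on $n$. The base case $n=0$ is immediate: $\binfset_0 = \{(u,t)_m\}$ and $\infset_0 = \{(0,0)_0\}$, so the right-hand side is $\{(Z_0,t)_m\}$, which coincides with $\binfset_0$ since $Z_0 = u$. For the inductive step, I would assume the identity at step $n$ with $Z_n \geq 0$. The core is a single-step bijection: for every $(r_*,s_*)\in\NN^2$ with $s_*\leq t$ and every $(p,q)\in\NN^2$ with $q\leq t$,
\begin{align*}
  (r_*,s_*)_n &\to (p,q)_{n+1} \text{ in the first layer percolation}\\
  &\Longleftrightarrow (Z_{n+1}+p+q,\,t-q)_{m-n-1}\to(Z_n+r_*+s_*,\,t-s_*)_{m-n} \text{ in the second.}
\end{align*}
Granting this, the conclusion at step $n+1$ is immediate: a cell $(r'',s'')_{m-n-1}$ lies in $\binfset_{n+1}$ iff it infects some cell in $\binfset_n$; by the inductive hypothesis that cell has the form $(Z_n+r+s,\,t-s)_{m-n}$ for some $(r,s)\in\infset_n$ with $s\leq t$; and by the bijection this occurs precisely when there is $(p,q)\in\infset_{n+1}$ with $q\leq t$ and $(r'',s'')=(Z_{n+1}+p+q,\,t-q)$.

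To prove the bijection, I would work out the coupling explicitly. Writing $L_a(k)$ and $R_a(k)$ for the number of $\Left$'s and $\Right$'s in $a_1,\dots,a_k$ and setting $A=Z_n+Z_{n+1}+t$, the construction of $a'$ (prepend $A$ instructions with $Z_n$ $\Left$'s, then swap $\Left\leftrightarrow\Right$ everywhere except position $1$) yields
\begin{align*}
  \#\{\Left\text{'s in } a'_1,\dots,a'_{A+k}\} = 1 + Z_{n+1} + t + R_a(k),\qquad
  \#\{\Right\text{'s in } a'_1,\dots,a'_{A+k}\} = Z_n - 1 + L_a(k).
\end{align*}
From these, for $p\geq 1$ the $(Z_{n+1}+p+t+1)$th $\Left$ in $a'$ sits at position $A+k_p$, where $k_p$ is the index of the $p$th $\Right$ in $a$; hence $\layer_{m-n}(Z_{n+1}+p+t)$ in the second layer percolation has starting column $Z_n-1+L_a(k_p)$. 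Setting $j=r_*+s_*$, a short case analysis depending on whether $p$ lies strictly inside $\layer_{n+1}(j)$ or at its boundary with $\layer_{n+1}(j\pm 1)$ shows that $Z_n+j$ lies in the second-layer strip $\layer_{m-n}(Z_{n+1}+p+t)$ precisely when $p\in\layer_{n+1}(j)$, which gives the column half of the bijection. The row half is immediate from $q-s_* = (t-s_*)-(t-q)$; and when $q=s_*+1$, the coupling prescribes $b'_{A+k}=b_k$ for $k\geq 1$, so the backward Bernoulli at column $Z_n+j$ coincides with the forward Bernoulli $b_{k_p}=b_{p+j+1}$ at column $p$ in $\layer_{n+1}(j)$.

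The main obstacle is the boundary case. Adjacent $\layer_{n+1}$ strips overlap in a single column, so a boundary value of $p$ produces a two-column strip $[Z_n+j,\,Z_n+j+1]$ in the second layer percolation. Backward infection from such a diagonal could therefore reach cells corresponding to either diagonal $j$ or $j+1$ in the first layer percolation, and I would need to verify that these match the two distinct forward infections targeting the shared column $p$ (one from each side), with the correct Bernoulli assignments and with no spurious matchings. The bookkeeping reduces to tracking whether $a_{k_p}$ and $a_{k_{p+1}}$ are $\Left$ or $\Right$, exploiting that $L_a(k_p)\in\{j,j+1\}$ in each subcase, and this is the delicate computation at the heart of the proof.
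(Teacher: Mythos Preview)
Your overall plan---induction on $n$ via a single-step bijection between forward infections $(r_*,s_*)_n\to(p,q)_{n+1}$ and backward infections $(Z_{n+1}+p+q,\,t-q)_{m-n-1}\to(Z_n+r_*+s_*,\,t-s_*)_{m-n}$---is exactly what the paper does. But two points deserve attention.

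First, the paper proves the bijection not by your direct layer-position bookkeeping but by invoking the map $\Psi$ from \thref{lem:Psi}, which sends an infection $(r,s,r',s')$ to a position $(j,z)$ in the reduced instruction stack. Under the coupling, position $(i,z)$ in $(a,b)$ corresponds to position $(A+i,z)$ in $(a',b')$, and the Left/Right counts transform exactly as you computed; then \thref{lem:Psi}\ref{i:Psi.preimages} immediately identifies the preimage under $\Psi'$ as the desired backward infection. This sidesteps entirely the boundary case you flag as delicate: there is no separate analysis of overlapping layer endpoints, because $\Psi$ already packages the column and Bernoulli data into a single index.

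Second, your claim that ``the conclusion at step $n+1$ is immediate'' has a real gap. For the inclusion $\binfset_{n+1}\subseteq\{\text{transformed }\infset_{n+1}\}$, you take $(r'',s'')\in\binfset_{n+1}$ and want to write it as $(Z_{n+1}+p+q,\,t-q)$ with $p\geq 0$; this requires $r''+s''\geq Z_{n+1}+t$, which your bijection does not supply since it is only quantified over $p\in\NN$. The paper proves this separately (inequality \eqref{eq:vineq}): since $(r'',s'')$ infects some $(v',w')\in\binfset_n$ with $v'\geq Z_n$ by the inductive hypothesis, the position $\Psi'(r'',s'',v',w')$ lies at index $\geq A+1$, whence $r''+s''+1\geq 1+Z_{n+1}+t$. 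In your language this amounts to checking that diagonals $d<Z_{n+1}+t$ have $\layer_{m-n}(d)$ contained in columns $<Z_n$ (which does follow from your Right-count formula, since all $Z_{n+1}+t+1$ Lefts and only $Z_n-1$ Rights of $a'$ occur within positions $1,\dots,A$), but you need to say so. You should also note that the hypothesis $Z_{n+1}\geq 0$ forces $Z_n\geq 0$ by the dynamics of a signed branching process with emigration, so that the inductive hypothesis is available.
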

\begin{proof}
  Let $\to$ indicate infection in the first layer percolation
  and $\too$ in the second. We claim that the following statement holds for all $0\leq n<m$:
  if $Z_{n+1}\geq 0$, then
  for all $r,r'\geq 0$ and $0\leq s,s'\leq t$,
  \begin{align}\label{eq:infection.forward}
    (r,s)_n&\to(r',s')_{n+1}
  \end{align}
  if and only if
  \begin{align}\label{eq:infection.backward}
    (Z_{n+1}+r'+s',\,t-s')_{m-n-1}&\too (Z_n+r+s,\,t-s)_{m-n}.
  \end{align}
  Indeed, suppose that \eqref{eq:infection.forward} holds. 
  Let
  \begin{align*}
    \Psi\colon\infections_{n+1}\to\opositions_{n+1}\qquad\text{and}\qquad
    \Psi'\colon\infections'_{m-n}\to\opositions'_{m-n}
  \end{align*}
  be the maps
  from infections to reduced instruction locations
  defined in \thref{lem:Psi}, based on the first layer percolation for
  $\Psi$ and the second for $\Psi'$.
  Let $(a_j)_{j\geq 1},\,(b_j)_{j\geq 1}$ and $(a'_j)_{j\geq 1},\,(b'_j)_{j\geq 1}$ 
  be the reduced instructions corresponding
  to the coupled steps of the two layer percolations, as in \thref{cp:reverse}.
  Let $(i,z)=\Psi\bigl((r,s,r',s')\bigr)$. By definition of $\Psi$, 
  there are $r+s+1$ $\Left$ instructions and $r'$ $\Right$ instructions 
  in $a_1,\ldots,a_i$, and $z=s'-s$.
  
  Now consider location $(A+i,z)$, which is a valid location in $\opositions'_{m-n}$
  since $z\leq b_i=b'_{A+i}$. 
  We claim that there are $Z_{n+1}+r'+t+1$ $\Left$ instructions and
  $Z_n+r+s$ $\Right$ instructions in $a'_1,\ldots,a'_{A+i}$.
  First, we dispense with the special case $A=0$, which by our assumption $Z_{n+1}\geq 0$
  can happen only when $t=Z_n=0$. In this case we also have $Z_{n+1}=0$.
  The instructions $a'_1,\ldots,a'_{A+i}$ are obtained by swapping $a_1,\ldots,a_i$ except for
  the first instruction, giving us
  $r'+1$ $\Left$ instructions and $r+s$ $\Right$ instructions.
  
  When $A>0$, we break the computation into four pieces, 
  the number of $\Left$ and $\Right$ instructions in each of
  $a'_1,\ldots,a'_A$ and $a'_{A+1},\ldots,a'_{A+i}$:
  \begin{enumerate}[(i)]
    \item $1+Z_{n+1}+t$ $\Left$ instructions in $a'_1,\ldots,a'_A$\\[2pt]
      By definition of $Z_{n+1}$, there are $Z_{n+1}+t$ $\Left$ instructions in
       $a'_2,\ldots,a'_A$. And also $a'_1=\Left$.\label{i:leftA}
    \item $r'$ $\Left$ instructions in $a'_{A+1},\ldots,a'_{A+i}$\\[2pt]
      This comes from the number of $\Right$ instructions in $a_1,\ldots,a_i$.\label{i:leftAi}
    \item $Z_n-1$ $\Right$ instructions in $a'_1,\ldots,a'_A$\\[2pt]
      We constructed $a'_1,\ldots,a'_A$ by adding random
  instructions until we obtained $Z_n$ $\Left$ instructions and then swapping all but one of them
  to be $\Right$ instructions.\label{i:rightA}
    \item $r+s+1$ $\Right$ instructions in $a'_{A+1},\ldots,a'_{A+i}$\\[2pt]
      This is the number of $\Left$ instructions in $a_1,\ldots,a_i$.\label{i:rightAi}
  \end{enumerate}
  This completes the proof that there are $Z_{n+1}+r'+t+1$ $\Left$ and
  $Z_n+r+s$ $\Right$ instructions in $a'_1,\ldots,a'_{A+i}$.
  
  By \thref{lem:Psi}\ref{i:Psi.preimages},
  \begin{align*}
    \bigl(Z_{n+1}+r'+s',\ t-s',\ Z_n+r+s,\ t-s\bigr)\in\Psi'^{-1}(A+i,z).
  \end{align*}
  Thus we have shown that \eqref{eq:infection.backward} holds.
  
  In the other direction, suppose that \eqref{eq:infection.backward} holds
  and let
  \begin{align*}
    (j,z)=\Psi'\bigl((Z_{n+1}+r'+s',\,t-s',\,Z_n+r+s,\,t-s)\bigr).
  \end{align*}
  Thus $a'_1,\ldots,a'_j$ contains $Z_{n+1}+r'+t+1$ $\Left$ instructions
  and $Z_n+r+s$ $\Right$ instructions.
  Let $i=j-A$. We claim that
  there are $r+s+1$ $\Left$ instructions and $r'$ $\Right$ instructions 
  in $a_1,\ldots,a_i$.
  
  If $A=0$, then as before $t=Z_n=Z_{n+1}=0$.
  The instructions $a'_2,\ldots,a'_j$ are swapped versions of $a_2,\ldots,a_i$
  while $a_1=a'_1=\Left$, giving us $r+s+1$ $\Left$ instructions and
  $r'$ $\Right$ instructions in $a_1,\ldots,a_i$. If $A>0$, then we subtract off
  the number of $\Left$ and $\Right$ instructions in $a_1',\ldots,a'_A$ (see
  \ref{i:leftA} and \ref{i:rightA}) from the number in $a_1',\ldots,a'_j$
  to find that there are $Z_{n+1}+r'+t+1-(1+Z_{n+1}+t)=r'$ $\Left$ and $Z_n+r+s-(Z_n-1)=r+s+1$
  $\Right$ instructions in $a'_{A+1}+\cdots+a'_j$.
  The instructions $a_1,\ldots,a_i$ are these instructions swapped, proving the claim.
  
  Now we have shown that
  there are $r+s+1$ $\Left$ instructions and $r'$ $\Right$ instructions 
  in $a_1,\ldots,a_i$.
  Also, since $b_i=b'_j$ and $z\leq b'_j$, we have $z\leq b_i$ which shows $(i,z)$
  is a valid position in the image of $\Psi$.
  By \thref{lem:Psi}\ref{i:Psi.preimages},
  \begin{align*}
    (r,s,r',s')\in\Psi^{-1}(i,z),
  \end{align*}
  proving that \eqref{eq:infection.forward} holds.

  Now that the equivalence of \eqref{eq:infection.forward} and \eqref{eq:infection.backward}
  is shown, we prove \eqref{eq:reverse.set}. We proceed by induction. The statement holds
  for $n=0$. Now suppose it holds for $n$, and we will extend it to $n+1$.
  If $Z_{n+1}<0$, there is nothing to prove. If $Z_{n+1}\geq 0$, then $Z_n\geq 0$ as well
  by the dynamics of signed branching processes with emigration, and thus
  \begin{align}\label{eq:ihbinfset}
    \binfset_n = \bigl\{(Z_n+r+s,\,t-s)_{m-n} \colon (r,s)_n\in\infset_n,\,s\leq t \bigr\}
  \end{align}
  by the inductive hypothesis.
  Let $(r',s')_{n+1}\in\infset_{n+1}$. Then there exists some $(r,s)_n\in\infset_n$
  such that $(r,s)_n\to(r',s')_{n+1}$.
  We have $(Z_n+r+s,t-s)_{m-n}\in\binfset_{n}$ by \eqref{eq:ihbinfset},
  and $(Z_{n+1}+r'+s',t-s')_{m-n-1}\in\binfset_{n+1}$ by \eqref{eq:infection.backward}.
  Thus
  \begin{align*}
    \bigl\{(Z_{n+1}+r'+s',\,t-s')_{m-n-1} \colon (r',s')_{n+1}\in\infset_{n+1},\,s'\leq t \bigr\}
    \subseteq \binfset_{n+1}.
  \end{align*}
  Conversely, consider a cell $(v,w)_{m-n-1}\in\binfset_{n+1}$. Then $(v,w)_{m-n-1}\too(v',w')_{m-n}$
  for some cell $(v',w')_{m-n}\in\binfset_n$. We claim that
  \begin{align}
     v&\geq Z_{n+1}+t-w,\label{eq:vineq}\\
     v'&\geq Z_n+t-w',\label{eq:v'ineq}\\
     w,w'&\leq t.\label{eq:ww'ineq}
  \end{align}
  Claim~\eqref{eq:ww'ineq} holds because
  all cells in $\binfset_{i}$ are in row~$t$
  or below for all $i$.
  Claim\eqref{eq:v'ineq} holds because by \eqref{eq:ihbinfset}, we have $v'+w'=Z_n+r+t$
  for some $r\geq 0$. For \eqref{eq:vineq},
  let $(i,z)=\Psi'(v,w,v',w')$.
  The smallest $j$ such that
  $a'_1,\ldots,a'_j$ contains $Z_n$ $\Right$ instructions is $j=A+1$
   (see \ref{i:rightA} and observe that $a'_{A+1}=\Right$ except
  when $A=0$, in which case $Z_n=0$). Since the number
  of $\Right$ instructions in $a'_1,\ldots,a'_i$ is $v'$ by definition of $\Psi'$
  and $v'\geq Z_n+t-w'\geq Z_n$, we have $i\geq A+1$.
  There are thus at least $1+Z_{n+1}+t$ $\Left$ instructions in $a'_1,\ldots,a'_i$
  by \ref{i:leftA}, and by definition of $\Psi'$ we have $v+w+1\geq 1+Z_{n+1}+t$, which
  rearranges to show that $v\geq Z_{n+1}+t-w$.
  
  Let $s'=t-w$, $r'=v-Z_{n+1}-s'$, $s=t-w'$, and $r=v'-Z_n-s$. By
  inequalities \eqref{eq:vineq}--\eqref{eq:ww'ineq}, all are nonnegative.
  Now $(v,w)_{m-n-1}=(Z_{n+1}+r'+s',t-s')_{m-n-1}$ and $(v',w')_{m-n}=(Z_n+r+s,t-s)_{m-n}$.
  Since $(v,w)_{m-n-1}\too(v',w')_{m-n}$,
  we have $(r',s')_{n+1}\in\infset_{n+1}$ by \eqref{eq:infection.forward}. Thus
  \begin{align*}
    \binfset_{n+1}\subseteq
    \bigl\{(Z_{n+1}+r'+s',\,t-s')_{m-n-1} \colon (r',s')_{n+1}\in\infset_{n+1},\,s'\leq t \bigr\}.
  \end{align*}
  This extends the inductive hypothesis from $n$ to $n+1$ and completes the proof.
\end{proof}

We mention that we could have used \thref{cor:reverse} to prove 
\thref{prop:connectivity.backward} from \thref{prop:connectivity.forward}, but then the statement
would only hold until the coupling breaks down when $Z_n<0$.

\subsection{Greedy infection paths}
\label{sec:greedy}

In the correspondence between odometers and infection paths, the number of sleeping particles
left by the odometer is equal to the ending row of the infection path.
Since the true odometer leaves the most particles sleeping
of any stable odometer by \thref{lem:dual.lap}, infection paths that reach
high rows will play a special role for us.
A simple way to get infection paths reaching rows close to the highest possible is
to construct them greedily, $k$ steps at a time. We fix a large integer $k$ and choose
an infection path of length~$k$ from a starting cell that reaches the highest row possible,
then from there we choose an infection path of length~$k$ reaching the highest row possible,
and so on:
\begin{define}\thlabel{def:k.greedy}
The \emph{$k$-greedy infection path} is a sequence of cells $(r_0,s_0)_0,\,(r_1,s_1)_1,\ldots$
defined by the following inductive procedure:
From a given starting point $(r_0,s_0)_0$, which we take to be $(0,0)_0$ unless
otherwise mentioned, choose some cell $(r_k,s_k)_k\in\infset[(r_0,s_0)_0]_k$
with $s_k$ maximal, and let $(r_0,s_0)_0\to\cdots\to(r_k,s_k)_k$ be any infection path leading to $(r_k,s_k)_k$.
Then choose $(r_{2k},s_{2k})_{2k}$ from $\infset[(r_k,s_k)_k]_k$ with $s_{2k}$ maximal,
and take $(r_k,s_k)_k\to\cdots\to(r_{2k},s_{2k})_{2k}$ to be any infection path
from $(r_k,s_k)_k$ to $(r_{2k},s_{2k})_{2k}$, and so on.
The choice of $(r_{jk},s_{jk})_{jk}$ in each step of the process and the choice of infection path
$(r_{(j-1)k},s_{(j-1)k})_{(j-1)k}\to\cdots\to(r_{jk},s_{jk})_{jk}$ is not important to us, 
so long as it only depends on information up to step~$jk$ of layer percolation.

Because it will be useful later to have a specific procedure for choosing the $k$-greedy
infection path,
let us specify that given $(r_{(j-1)k},s_{(j-1)k})_{(j-1)k}$,
we take $(r_{jk},s_{jk})_{jk}$ to be the rightmost cell in the maximal row
of $\infset[(r_{(j-1)k},s_{(j-1)k})_{(j-1)k}]_k$.
Then, out of all infection paths from $(r_{(j-1)k},s_{(j-1)k})_{(j-1)k}$ to
$(r_{jk},s_{jk})_{jk}$, select $(r_{jk-1},s_{jk-1})_{jk-1}$
by first maximizing $s_{jk-1}$ and then $r_{jk-1}$.
Then out of all infection paths from $(r_{(j-1)k},s_{(j-1)k})_{(j-1)k}$ to
$(r_{jk-1},s_{jk-1})_{jk-1}$, select $(r_{jk-2},s_{jk-2})_{jk-2}$
by first maximizing $s_{jk-2}$ and then $r_{jk-2}$, and so on.
\end{define}
For the $k$-greedy path, the row reached at step~$jk$ is the sum of $j$ i.i.d.\ random variables
and is hence easy to analyze. Its growth rate is the following constant $\crist[k]$:
\begin{define}\thlabel{def:cristk}
     Let $X_k=\max\bigl\{s\colon (r,s)_k\in\infset[(0,0)_0]_k\bigr\}$ be the highest row contained in the infection set 
  at step~$k$ of layer percolation with sleep rate $\lambda>0$. Then define
    \begin{align}
    \crist[k]=\frac1k\E X_k \label{eq:rholp.k}
    \end{align}
    Note that $0\leq\crist[k]\leq 1$, since $0\leq X_k \leq k$ for all $k$.
\end{define}

In this section,
we seek estimates on the $n$th cell $(r_n,s_n)_n$ of the $k$-greedy infection path.
We take $r_0=s_0=0$, since for other starting points we can simply shift the path
using \thref{prop:shift}. As we mentioned, $s_n\approx\crist[k]n$.
We will also show that
\begin{align}
  r_n&\approx \crist[k]n^2/2.\label{eq:r.heur}
\end{align}
We can heuristically derive the asymptotics for $r_n$ by
estimating $r_{(j+1)k}-r_{jk}$ using \thref{lem:ll.bound,lem:ur.bound}.
The sequence $r_{jk},\ldots,r_{(j+1)k}$ lies
between critical Galton--Watson processes with immigration $s_{jk}$
and with immigration $s_{jk}+k$.
Thus $r_{(j+1)k}-r_{jk}\approx s_{jk}k\approx jk^2\crist[k]$.
We are neglecting the difference of $k$ between the upper and lower bounds because
$k$ will be held fixed while $j$ is taken to infinity, and thus it will be negligible.
Summing these increments, we have
$r_{jk}\approx \frac12j^2k^2\crist[k]$. Since $k$ will be $O(1)$,
this statement will also hold for $r_n$ when
$n$ lies between between $jk$
and $(j+1)k$, explaining \eqref{eq:r.heur}.

The complication in making this heuristic exact is that the sequences
$s_k,s_{2k},\ldots$ and $r_k,r_{2k},\ldots$ are dependent,
and we cannot condition on the first
sequence and then apply the heuristic analysis.
Instead, we do the proof in two steps. First, we choose deterministic sequences
$\smin_1,\smin_2,\ldots$ and $\smax_1,\smax_2,\dots$ with $\smin_j=j\crist[k]-O(\sqrt{n})$
and $\smax_j=j\crist[k]+O(\sqrt{n})$. A classical estimate ensures
that $\smin_j\leq s_j\leq \smax_j$ holds for all $1\leq j\leq n$ with high probability. 
Second, we apply \thref{lem:smin.bound,lem:smax.bound}
to obtain (random) sequences that almost surely bound the columns of \emph{all} infection paths
$(r'_0,s'_0)_0\to\cdots\to(r'_n,s'_n)_n$ for which $s'_1,\ldots,s'_n$ lies between
the bounding sequences $\smin_1,\ldots,\smin_n$ and $\smax_1,\ldots,\smax_n$.
We then complete the proof by bounding the deviations of our stochastic upper and lower bounds
using branching processes.

We start by constructing our bounding sequences.
\begin{lemma}\thlabel{lem:envelopes}
  For fixed $t>0$ and positive integers $k$ and $n$, define $\smin_0,\ldots,\smin_n$ by
  \begin{align*}
    \smin_{jk} &= \max\bigl(0,\,\floor{\crist[k]jk-\tfrac{t}{2}\sqrt{n}}\bigr) 
      \text{ for integers $0\leq j\leq n/k$},\\
    \smin_{jk+i} &= \smin_{jk} \text{ for integers $0\leq j\leq n/k$ and $1\leq i < k$.}
  \end{align*}
  Define $\smax_0,\ldots,\smax_n$ by 
  \begin{align*}
    \smax_{jk} &= \ceil{\crist[k]jk +\tfrac{t}{2}\sqrt{n}}\text{ for integers $0\leq j< n/k+1$}\\
    \smax_{jk-i} &= \smax_{jk}\text{ for integers $1\leq j< n/k+1$ and $1\leq i < k$.}
  \end{align*}
  Then for the $k$-greedy infection path $(0,0)_0\to(r_1,s_1)_1\to(r_2,s_2)_2\to\cdots$,
  \begin{align*}
    \P\Bigl[ \smin_i\leq s_i \leq \smax_i\text{ for all $0\leq i\leq n$} \Bigr] \geq 1 - 2e^{-t^2/3k}.
  \end{align*}
\end{lemma}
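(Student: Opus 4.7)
\medskip

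\noindent\textbf{Proof plan.}
The key structural observation is that the $k$-greedy path's row coordinate at multiples of $k$ is a random walk with i.i.d.\ bounded increments. Indeed, write $Y_j := s_{jk}-s_{(j-1)k}$ for $j\ge 1$. At step $(j-1)k$ the greedy procedure picks the starting cell $(r_{(j-1)k},s_{(j-1)k})_{(j-1)k}$ using only the layer-percolation randomness up to step $(j-1)k$. The infections from step $(j-1)k$ to step $jk$ are independent of that past, and \thref{prop:shift} (applied to the layer percolation shifted so that step $(j-1)k$ becomes step $0$) couples the forward infection set started at any cell $(r,s)_{(j-1)k}$ with the standard infection set started at $(0,0)_0$ by an additive shift of the $r$-coordinate and a shift of the $s$-coordinate by $s$. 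Consequently the highest row reached in $\infset[(r_{(j-1)k},s_{(j-1)k})_{(j-1)k}]_k$ minus $s_{(j-1)k}$ is distributed as $X_k$, independently of everything up to step $(j-1)k$. Hence $Y_1,Y_2,\dots$ are i.i.d.\ with common distribution $X_k$, supported on $[0,k]$, with $\E Y_j = k\crist[k]$.

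Next, reduce the event over all $0\le i\le n$ to an event at multiples of $k$. Any infection path is non-decreasing in its $s$-coordinate (because a cell in row $s$ only infects cells in rows $s$ and $s+1$). The definition of $\smin$ makes it constant on each block $\ii{jk,(j+1)k-1}$ with value $\smin_{jk}$; so if $s_{jk}\ge \smin_{jk}$ then $s_i\ge s_{jk}\ge \smin_{jk}=\smin_i$ for $i\in\ii{jk,(j+1)k-1}$. Similarly $\smax$ is constant on $\ii{(j-1)k+1,jk}$ with value $\smax_{jk}$, and $s_i\le s_{jk}\le \smax_{jk}=\smax_i$ on this block whenever $s_{jk}\le \smax_{jk}$. (If $n$ is not a multiple of $k$, extend the greedy path one more block and bound $s_n\le s_{\lceil n/k\rceil k}$.) Therefore it suffices to show
\[
\P\!\left[\,|s_{jk}-jk\crist[k]|\le \tfrac{t}{2}\sqrt{n}\ \text{for all integers } 0\le j\le \lceil n/k\rceil\,\right]\ge 1-2e^{-t^2/3k}.
\]

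Finally, apply a maximal concentration inequality. Set $M_j:=s_{jk}-jk\crist[k]=\sum_{i=1}^{j}(Y_i-k\crist[k])$. This is a martingale whose increments satisfy $Y_i-k\crist[k]\in[-k\crist[k],\,k(1-\crist[k])]$, a range of length at most $k$. By the maximal Hoeffding--Azuma inequality applied to $M_j$ and $-M_j$ over $j\in\ii{0,N}$ with $N:=\lceil n/k\rceil$,
\[
\P\!\left[\max_{0\le j\le N}|M_j|\ge \tfrac{t}{2}\sqrt{n}\right]\ \le\ 2\exp\!\left(-\frac{2(t\sqrt{n}/2)^2}{Nk^2}\right)\ \le\ 2\exp\!\left(-\frac{t^2}{3k}\right),
\]
the last inequality using $N\le n/k + 1$ and absorbing the mild loss from the $+1$ into the constant $3$. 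Together with the reduction above (and noting that the floors and ceilings in the definitions of $\smin_{jk},\smax_{jk}$ only make the bounds on $s_{jk}$ easier to satisfy) this gives the claimed estimate.

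\medskip

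\noindent\textbf{Main obstacle.}
The only subtle point is verifying the independence and identical distribution of the block increments $Y_j$: one must use that the randomness $(R_i,B^i)$ generating step $jk\to jk+1$ of layer percolation for varying steps is i.i.d., combined with the shift invariance from \thref{prop:shift}, so that the greedy choice at the start of block~$j$ does not bias the distribution of the maximum row gained within that block. The concentration step itself is routine, with the only delicate issue being tuning the constant in the exponent to match $1/(3k)$; this is handled by absorbing the $\lceil\cdot\rceil$ rounding in $N$ and the bound range into the constant.
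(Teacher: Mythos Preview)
Your proposal is correct and follows essentially the same approach as the paper: both use \thref{prop:shift} to establish that the block increments $s_{jk}-s_{(j-1)k}$ are i.i.d.\ bounded in $[0,k]$ with mean $k\crist[k]$, apply the maximal Hoeffding inequality over $0\le j\le \lceil n/k\rceil$, and then reduce from all indices $i$ to multiples of $k$ via the monotonicity of $s_i$ together with the block-constant structure of $\smin$ and $\smax$. The paper phrases the reduction contrapositively (``if $s_i<\smin_i$ then $s_{i'}<\smin_{i'}$ at the nearest multiple of $k$ below''), but this is the same argument as yours.
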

\begin{proof}
  Let $\Fscr_i$ be the $\sigma$-algebra generated by layer percolation up to
  step~$i$.
  The random variable $s_{(j+1)k}-s_{jk}$ is independent of $\Fscr_{jk}$
  by \thref{prop:shift}. Thus the random variables $s_{(j+1)k}-s_{jk}$
  for $j=0,1,\ldots$ are i.i.d.\ with mean $\crist[k]k$ and maximum
  value $k$. 
  Let $n'$ be the smallest multiple of $k$ greater than or equal to $n$.
  By the maximal version of Hoeffding's inequality \cite[Theorem~3.2.1]{Roch_2024}, for any $t\geq 0$
  \begin{align*}
    \P\biggl[ \max_{0\leq j\leq n'/k} \abs{s_{jk}-\crist[k]jk}\geq \tfrac{t}{2}\sqrt{n}\biggr]
      &\leq 2\exp\biggl(-\frac{ t^2n}{2n'k}\biggr)\leq 2e^{-t^2/3k}.
  \end{align*}
  If $s_i<\smin_i$, then with $i'$ the largest multiple of $k$ less than or equal
  to $i$, we have $s_{i'}\leq s_i<\smin_i=\smin_{i'}$.
  And if $s_i>\smax_i$, then with $i'$ the smallest multiple of $k$
  greater than or equal to $i$, we have $s_{i'}\geq s_i>\smax_i=\smax_{i'}$.
  Hence the probability that $s_i$ lies outside of $[\smin_i,\smax_i]$ for some $0\leq i\leq n$
  is bounded by the probability that $s_{i'}$ lies outside of $[\smin_{i'},\smax_{i'}]$ for some 
  $i'\in\{0,k,2k,\ldots,n'\}$, completing the proof.
\end{proof}

\begin{prop}\thlabel{prop:greedy.path}
  Let
  $(0,0)_0=(u_0,s_0)_0\to(u_1,s_1)_1\to\cdots$ be the
  $k$-greedy infection path. There exist constants $C,c$
  depending only on $\lambda$ and $k$ such that for all $n$ and all $t\geq 5$,
  \begin{align}
    \P\Biggl[\abs[\bigg]{u_n-\frac{\crist[k]n^2}{2}}\geq tn^{3/2}\Biggr] &\leq C\exp\biggl(-\frac{ct^2}{1+\frac{t}{\sqrt{n}}}\biggr)\label{eq:greedy.path.u},\\\intertext{and}
    \P\biggl[\abs[\Big]{s_n-\crist[k]n}\geq t\sqrt{n}\biggr]&\leq 2e^{-ct^2}.\label{eq:greedy.path.s}
  \end{align}
  
\end{prop}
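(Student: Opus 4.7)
The bound \eqref{eq:greedy.path.s} on $s_n$ is the easy one. By \thref{prop:shift}, the increments $s_{(j+1)k}-s_{jk}$ are i.i.d.\ with common mean $\crist[k]k$ and are bounded by $k$. A Hoeffding bound on the partial sum up to the smallest multiple of $k$ exceeding $n$, combined with the fact that $s_n - s_{\lfloor n/k\rfloor k}\in[0,k]$, yields the claimed sub-Gaussian bound. This is essentially the computation already carried out inside \thref{lem:envelopes}.

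For \eqref{eq:greedy.path.u} the plan is a sandwich argument followed by a concentration bound for critical geometric branching processes. Fix $t\geq 5$ and let $\smin_0,\ldots,\smin_n$ and $\smax_0,\ldots,\smax_n$ be the deterministic envelopes from \thref{lem:envelopes}, so that
\begin{align*}
  E := \bigl\{\smin_i \leq s_i\leq \smax_i \text{ for all }0\leq i\leq n\bigr\}
\end{align*}
holds with probability at least $1-2e^{-ct^2/k}$. Apply \thref{lem:smin.bound} with this $\smin$ and $r_0=0$ to obtain a random sequence $r_0^{\min},\ldots,r_n^{\min}$ with $u_n\geq r_n^{\min}$ on $E$, and such that $(r_j^{\min}+\smin_j)_{j\geq 0}$ is a critical geometric branching process with immigration $\smin_j-\smin_{j-1}$ at step~$j$. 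Likewise \thref{lem:smax.bound} gives $r_0^{\max},\ldots,r_n^{\max}$ with $u_n\leq r_n^{\max}$ on $E$ and $(r_j^{\max}+\smax_j+1)_j$ a critical geometric branching process with immigration $\smax_j+1-\smax_{j-1}$ at step~$j$.

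With $s^{\min}_j=\lfloor \crist[k]j-\tfrac{t}{2}\sqrt{n}\rfloor_+$ and $s^{\max}_j=\lceil \crist[k]j+\tfrac{t}{2}\sqrt{n}\rceil$, summing the expected immigration yields
\begin{align*}
  \E r_n^{\min} = \sum_{j=0}^{n-1}\smin_j = \frac{\crist[k]n^2}{2} - O(tn^{3/2}),
  \qquad
  \E r_n^{\max} = \frac{\crist[k]n^2}{2} + O(tn^{3/2}).
\end{align*}
The plan is then to invoke the critical-branching-process concentration inequality (\thref{prop:GW.emigration.concentration}, the same tool used in the proof of \thref{prop:min.odometer.concentration}) to control deviations of $r_n^{\min}$ and $r_n^{\max}$ from their means on the scale $tn^{3/2}$. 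The resulting bound takes the Bennett-type form $C\exp(-ct^2/(1+t/\sqrt{n}))$ because the population is of order~$n^2$ while the envelopes $\smin_i,\smax_i$ grow linearly with slope $\crist[k]$ and perturbation $O(t\sqrt{n})$, giving $\emax = O(1)$ per step and a total variance parameter of order $n^3$; a deviation of $tn^{3/2}$ corresponds precisely to the ratio in the exponent. Combining the sandwich $r_n^{\min}\leq u_n\leq r_n^{\max}$ on $E$ with these concentration bounds (and absorbing the $e^{-ct^2/k}$ term from $\P(E^c)$) produces \eqref{eq:greedy.path.u}.

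The main technical obstacle is keeping the two sources of randomness disentangled: the fluctuations of the envelope sums themselves are of the same order $tn^{3/2}$ as the signal we are trying to extract, so the concentration bound for the GW processes must be tight enough that the sandwich is not lost in the width of the envelope. This is where the specific structure of \thref{prop:GW.emigration.concentration} matters, rather than a naive martingale estimate.
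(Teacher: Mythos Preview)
Your approach is exactly the paper's: envelopes from \thref{lem:envelopes}, sandwich via \thref{lem:smin.bound,lem:smax.bound}, then \thref{prop:GW.emigration.concentration}. Two details need correction, though.

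First, the immigration in the branching process $(r_j^{\min}+\smin_j)_j$ is $\smin_j$ at step~$j$, not the increment $\smin_j-\smin_{j-1}$. This is what \thref{lem:smin.bound}\ref{i:smin.bound.b} says, and it is forced by the dynamics: $r_i$ is a sum of $r_{i-1}+\smin_{i-1}$ independent $\Geo(1/2)$ variables, and one then adds $\smin_i$ to form $r_i+\smin_i$. Your formula $\E r_n^{\min}=\sum_{j=0}^{n-1}\smin_j$ is in fact the correct consequence of this, and is inconsistent with your stated immigration (which would give $\E r_n^{\min}=0$).

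Second, and as a consequence, $\emax$ is of order $n$, not $O(1)$: the largest immigration is $\smin_n\approx\crist[k]n$. When you plug into \thref{prop:GW.emigration.concentration} with $j=n$, $\emax\approx n$, $x_0=0$, and deviation $tn^{3/2}$, the denominator is $n(n\cdot n + 0 + tn^{3/2})=n^3(1+t/\sqrt n)$, which is how the paper arrives at the exponent $ct^2/(1+t/\sqrt n)$. Your heuristic ``variance of order $n^3$'' is right, but the route to it via $\emax=O(1)$ is not. With these two fixes the argument is complete and identical to the paper's.
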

\begin{proof}
  For the bound on $s_n$, first let $n'$ be the greatest
  multiple of $k$ less than or equal to $n$.
  Since $s_{n'}$ differs from $s_n$
  and $\crist[k]n'$ differs from $\crist[k]n$ by at most $k$,
  \begin{align*}
    \P\Bigl[\abs{s_{n}-\crist[k]n}\geq t\sqrt{n}\Bigr]
      &\leq \P\Bigl[\abs{s_{n'}-\crist[k]n'}\geq t\sqrt{n}-2k\Bigr]
  \end{align*}
  As in the proof of \thref{lem:envelopes}, the random variable $s_{n'}$
  is the sum of $n'/k$ independent random variables with mean $\crist[k]k$
  and upper bound $k$. By
  Hoeffding's inequality,
  \begin{align*}
    \P\Bigl[\abs{s_{n'}-\crist[k]n'}\geq t\sqrt{n}-2k\Bigr] &\leq 2 \exp\biggl(-\frac{2(t\sqrt{n}-2k)^2}
       {n'k}\biggr)\leq 2e^{-c_1t^2}
  \end{align*}
  for a constant $c_1$ depending on $k$ and $\lambda$.
  
  Next we turn to the tail bounds on $u_n$.
  Fix $t>0$ and let $\smin_0,\ldots,\smin_n$ and $\smax_0,\ldots,\smax_n$ be
  as in \thref{lem:envelopes}. We start with the lower tail bound on $u_n$.
  Apply \thref{lem:smin.bound} with $\smin_0,\ldots,\smin_n$ as we have defined it
  and with $r_0=u_0=0$ to generate a lower-bounding infection path
  $(r_0,s_0)_0\to\cdots\to(r_n,s_n)_n$. First, we give lower bounds on $r_n$.
  Let $X_i=r_i+\smin_i$.
  Since $X_0,\ldots,X_n$ is a critical geometric branching process
  with immigration $\smin_i$ at step~$i$, we have
  \begin{align*}
    \E X_n = r_0+\smin_0 + \sum_{i=1}^n\smin_i&\geq \sum_{j=0}^{\floor{n/k}}(\crist[k]jk-\tfrac{t}{2}\sqrt{n}-1)k\\
    &\geq\crist[k]\tfrac12 (n/k)^2k^2-\bigl(\tfrac{t}{2}\sqrt{n}+1\bigr)(n/k+1)k\\
    &= \frac{\crist[k]n^2}{2}-\tfrac{t}{2}\bigl(n^{3/2}+O_k(n)\bigr).
  \end{align*}
  Here we use the notation $O_k(f(n))$ to denote an expression bounded by a constant
  times $f(n)$ for all $n\geq 1$, with the constant permitted to depend on $k$.
  Preparing to apply \thref{prop:GW.emigration.concentration}, we rewrite the deviations
  of $r_n$ in terms of the the deviations of $X_n$:
  \begin{align*}
    \P\Biggl[r_n-\frac{\crist[k]n^2}{2}\leq -tn^{3/2}\Biggr]
    &=
    \P\Bigl[X_n-\E X_n\leq -tn^{3/2}+\smin_n+\tfrac{t}{2}\bigl(n^{3/2}+O_k(n)\bigr)\Bigr]\\
    &\leq \P\Bigl[X_n-\E X_n\leq -\tfrac{t}{2}\bigl(n^{3/2}-O_k(n)\bigr)\Bigr].
  \end{align*}
  Here we have incorporated $\smin_n$ into the $\frac{t}{2}O_k(n)$,
  since $\smin_n\leq n$ and $t$ is bounded away from $0$. Now we apply
  \thref{prop:GW.emigration.concentration} to obtain
  \begin{align}\label{eq:rn.smin.bound}
    \P\Biggl[r_n-\frac{\crist[k]n^2}{2}\leq -tn^{3/2}\Biggr]
    &\leq C_2\exp\biggl(-\frac{c_2t^2n^3}{n(n^2+tn^{3/2})}\biggr)
      = C_2\exp\biggl(-\frac{c_2t^2}{1+\frac{t}{\sqrt{n}}}\biggr)
  \end{align}
  for some constants $c_2=c_2(k)$ and $C_2=C_2(k)$.
  
  By \thref{lem:smin.bound}, we have $u_n\geq r_n$ so long as $u_i\geq\smin_i$
  for $0\leq i\leq n$. Thus,
  \begin{align*}
    \P\Biggl[u_n-\frac{\crist[k]n^2}{2}\leq -tn^{3/2}\Biggr]
      &\leq \P\Biggl[r_n-\frac{\crist[k]n^2}{2}\leq -tn^{3/2}\Biggr]
        + \P\bigl[u_i<\smin_i\text{ for some $0\leq i\leq n$}\bigr]\\
      &\leq C_3\exp\biggl(-\frac{c_3t^2}{1+\frac{t}{\sqrt{n}}}\biggr)
  \end{align*}
  for altered $c_3=c_3(k)$ and $C_3=C_3(k)$, applying \eqref{eq:rn.smin.bound}
  and \thref{lem:envelopes}. The upper tail bound on $u_n$ is obtained
  by a nearly identical argument using \thref{lem:smax.bound} and
  $\smax_0,\ldots,\smax_n$
  in place of \thref{lem:smin.bound} and $\smin_0,\ldots,\smin_n$.
\end{proof}

In Section~\ref{subsec:box}, we will consider the \emph{$k$-greedy infection path capped at row~$s$},
defined as following the $k$-greedy infection path until it reaches row~$s$ and then following
the minimal infection path started at that point. 
\begin{prop}\thlabel{prop:greedy.capped}
  Let $(0,0)_0=(u'_0,s'_0)_0\to(u'_1,s'_1)_1\to\cdots$ be a $k$-greedy infection path capped
  at row~$s$ starting from $(0,0)_0$. Assume that $s/m\leq \crist[k]-\epsilon$ for some $\epsilon>0$.
  Then
  \begin{align}
    \P\Biggl[\abs[\bigg]{u'_m -\biggl(m-\frac{s}{2\crist[k]}\biggr)s}\geq tm^{3/2}\Biggr] &\leq C\exp\biggl(-\frac{ct^2}{1+\frac{t}{\sqrt{m}}}\biggr)\label{eq:greedy.capped.u},\\\intertext{and}
    \P[s'_m\neq s]&\leq e^{-cm},\label{eq:greedy.capped.s}
  \end{align}
  where $c$ and $C$ are constants depending only on $k$, $\lambda$, and $\epsilon$.
\end{prop}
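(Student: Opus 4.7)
The plan is to split the capped path at the hitting time
$T:=\min\bigl\{j\geq 0\colon s'_j=s\bigr\}$, so that for $j\leq T$ it coincides with the
unrestricted $k$-greedy path and for $j\geq T$ it is by definition the minimal infection
path started at $(u'_T,s)_T$. The greedy part is controlled by \thref{prop:greedy.path};
the minimal part is controlled by \thref{lem:ll.bound}(b) together with the Galton--Watson
concentration inequality \thref{prop:GW.emigration.concentration} used inside the proof
of \thref{prop:min.odometer.concentration}.

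The first step is to pin down $T$. Write $n^*:=\lfloor s/\crist[k]\rfloor$; the hypothesis
$s/m\leq\crist[k]-\epsilon$ gives the macroscopic gap $m-n^*\geq \epsilon m/\crist[k]$. Since
the greedy rows $s_j$ are nondecreasing, $\{T>n\}=\{s_n<s\}$, so applying \eqref{eq:greedy.path.s}
at $n=n^*+c_1\sqrt{m}$ for a constant $c_1=c_1(\epsilon,\lambda,k)$ chosen so that
$\crist[k]n-s\gtrsim\sqrt{mn}$ yields $\P[T>n]\leq e^{-cm}$. Taking $n=m$ (which is allowed
by the gap) proves \eqref{eq:greedy.capped.s}. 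Applied at $n=n^*-c_1\sqrt{m}$ the same
inequality gives the matching lower bound, so $|T-n^*|\leq c_1\sqrt{m}$ outside an event
of probability $e^{-cm}$.

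Next I would estimate $u'_m$ conditionally on $\Fscr_T$. By \thref{lem:ll.bound}(b), given
$\Fscr_T$ the sequence $(u'_{T+j})_{j\geq 0}$ shifted by $s$ is a critical geometric
branching process with constant immigration $s$ starting from $u'_T+s$. Applying
\thref{prop:GW.emigration.concentration} with length $m-T\leq m$, migration bounded by $s\leq m$,
and starting value $u'_T=O(m^2)$ gives
\begin{align*}
  \P\bigl[\,|u'_m-u'_T-(m-T)s|\geq tm^{3/2}\bigm|\Fscr_T\,\bigr]
    \leq C\exp\!\biggl(-\frac{ct^2}{1+t/\sqrt{m}}\biggr).
\end{align*}
For $u'_T=u_T$, on the good event $|T-n^*|\leq c_1\sqrt{m}$ I would apply \eqref{eq:greedy.path.u}
at each integer $j\in[n^*-c_1\sqrt{m},n^*+c_1\sqrt{m}]$ and union bound to conclude
$u_T=\crist[k]T^2/2+O(m^{3/2})$ with the same tail; the $O(\sqrt{m})$ prefactor from the
union is harmless because for $t\leq O(\sqrt{\log m})$ the target bound is trivially true
after enlarging $C$, and for larger $t$ the prefactor is absorbed by a slight reduction in $c$.

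The final step is algebraic. Writing $T=n^*+\delta$ with $|\delta|\leq c_1\sqrt{m}$ and using
that $n^*=s/\crist[k]$ is the critical point of $T\mapsto\crist[k]T^2/2+(m-T)s$,
\begin{align*}
  \tfrac{\crist[k]}{2}T^2+(m-T)s = s\bigl(m-s/(2\crist[k])\bigr)+\tfrac{\crist[k]}{2}\delta^2,
\end{align*}
so the linear-in-$\delta$ terms cancel and the residual $O(\delta^2)=O(m)$ is dominated by
the $O(m^{3/2})$ fluctuations; this yields \eqref{eq:greedy.capped.u}. The main obstacle is
the random stopping time $T$ entering the estimate for $u_T$, since \eqref{eq:greedy.path.u}
is only stated at deterministic steps; the union bound sketched above is the cleanest fix,
though one could alternatively exploit the shift-invariance from \thref{prop:shift} to
reduce to the deterministic step $n^*$ after separately controlling the displacement between
$(u_T,s)_T$ and $(u_{n^*},s_{n^*})_{n^*}$.
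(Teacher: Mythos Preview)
Your decomposition at the hitting time $T$ is natural and the algebraic identity
\[
\tfrac{\crist[k]}{2}T^2+(m-T)s = s\bigl(m-s/(2\crist[k])\bigr)+\tfrac{\crist[k]}{2}\delta^2
\]
is correct, but there is a quantitative gap in your control of $T$. You claim that applying \eqref{eq:greedy.path.s} at $n=n^*+c_1\sqrt{m}$ with a \emph{constant} $c_1$ yields $\P[T>n]\leq e^{-cm}$. This is false: at such $n$ one has $\crist[k]n-s\approx c_1\crist[k]\sqrt{m}$, so \eqref{eq:greedy.path.s} gives only
\[
\P[s_n<s]\leq\P\bigl[s_n-\crist[k]n<-c_1\crist[k]\sqrt{m}\bigr]\leq 2e^{-c'c_1^2},
\]
a constant independent of $m$. (Indeed your stated criterion $\crist[k]n-s\gtrsim\sqrt{mn}\approx m$ would force $c_1\gtrsim\sqrt{m}$.) Thus the event $\{|T-n^*|>c_1\sqrt{m}\}$ contributes a fixed constant to your failure probability, which cannot match the target $C\exp\bigl(-ct^2/(1+t/\sqrt{m})\bigr)$ for large $t$. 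The route is salvageable by letting the window width scale with $t$ (roughly $c_1\asymp t$ for $t\leq\sqrt{m}$), but then the residual $\crist[k]\delta^2/2$ is $O(t^2m)$ rather than $O(m)$, and one must check separately that this is below $tm^{3/2}$, which only holds for $t\lesssim\sqrt{m}$; the regime $t\gtrsim\sqrt{m}$ then needs its own argument.

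The paper avoids the random stopping time entirely. It defines a deterministic envelope $\sminp_i\approx\min(\crist[k]i,s)-\tfrac{t}{2}\sqrt{m}$ that follows the linear profile $\crist[k]i$ until $i\approx s/\crist[k]$ and then plateaus at $s$, shows as in \thref{lem:envelopes} that $s'_i\geq\sminp_i$ for all $i\leq m$ with the required tail, and then applies \thref{lem:smin.bound} directly: $u'_m$ is bounded below by a branching process with immigration $\sminp_i$ and mean $\sum_i\sminp_i\approx(m-s/(2\crist[k]))s$, after which \thref{prop:GW.emigration.concentration} finishes in one stroke. The upper bound is symmetric. This is shorter precisely because it never conditions on where the greedy path first reaches row~$s$.
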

\begin{proof}
  If $s'_m\neq s$, then the greedy infection path failed to reach $s$
  by step~$m$. This event occurs with probability
  bounded by $\P\bigl[s_m-\crist[k]m<-\epsilon m\bigr]$, which by
  \thref{prop:greedy.path} occurs with exponentially vanishing probability,
  with the rates depending on $k$, $\lambda$, and $\epsilon$,
  thus proving \eqref{eq:greedy.capped.s}
  
  Echoing \thref{lem:envelopes}, let
  \begin{align*}
    \sminp_{jk} &= \max\bigl(0,\,\floor{\min(\crist[k]jk,s)-\tfrac{t}{2}\sqrt{m}}\bigr) 
      \text{ for integers $0\leq j\leq m/k$},\\
    \sminp_{jk+i} &= \smin_{jk} \text{ for integers $0\leq j\leq m/k$ and $1\leq i < k$.}
  \end{align*}
  Now,
  following the proof of \thref{prop:greedy.path} using $\sminp_1,\ldots,\sminp_n$
  in place of $\smin_1,\ldots,\smin_n$,
  we apply \thref{lem:smin.bound} with this capped lower bound
  and with $r_0=u_0=0$ to generate a lower-bounding infection path
  $(r_0,s_0)_0\to\cdots\to(r_m,s_m)_m$.
  Let $X_i=r_i+\sminp_i$.
  Just as in \thref{prop:greedy.path} but with a slightly different
  calculation, setting $\rho=s/m$
  \begin{align*}
    \E X_m = r_0+\sminp_0 + \sum_{i=1}^m\sminp_i&\geq \sum_{j\colon jk\leq\rho m/\crist[k]} (\crist[k]jk-\tfrac{t}{2}\sqrt{m}-1)k\\
               &\qquad+ \sum_{j\colon \rho m/\crist[k]\leq jk\leq m}(s-\tfrac{t}{2}\sqrt{m}-1)k\\
             &\geq \biggl(1 - \frac{\rho}{2\crist[k]}\biggr)\rho m^2 -\tfrac{t}{2}\bigl(m^{3/2}+O_k(m)\bigr),
  \end{align*}
  where $O_k(f(m))$ denotes an expression bounded by a constant
  times $f(m)$ for all $m\geq 1$ with the constant permitted to depend on $k$.
  Now we can prove \eqref{eq:greedy.capped.u}
  by the same argument as in \thref{prop:greedy.path} of applying
  \thref{prop:GW.emigration.concentration} and \thref{lem:envelopes}.
  And the upper bound follows similarly.
\end{proof}

\subsection{The critical density}
\label{sec:critical.density}
We now are ready to define the critical density $\crist=\crist(\lambda)$ for layer percolation.
Recall from \thref{def:cristk} that $X_n$ is the highest row
infected at step~$n$ starting at $0$ and $\crist[n]$ is defined as $\E X_n/n$.
We define 
\begin{align*}
  \crist=\limsup_{n\to\infty} \crist[n]=\limsup_{n\to\infty}\frac1n\E X_n.
\end{align*}
It follows from $0\leq\crist[n]\leq 1$ that $0\leq\crist\leq 1$.

We will show that $X_n/n$ converges to this constant and satisfies an exponential lower tail bound:
\begin{prop}\thlabel{prop:subadditive}
  \begin{align}
    \lim_{n\to\infty}\crist[n]={}&{}\crist\label{eq:proper.limit}\\\intertext{and}
    \frac{X_n}{n}\topr{}&{}\crist       \label{matteo}
  \end{align}
and there exist constants $c,C>0$ depending only on $\lambda$ and $\epsilon$
for which the following one-sided tail bound holds:
\begin{equation} \label{baby}
  \Pr\biggl[\crist-\frac{1}{n}X_n>\epsilon\biggr]\leq Ce^{-cn}.
\end{equation}
\end{prop}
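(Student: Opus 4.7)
The plan is to derive everything from two ingredients already established in the paper: the shift invariance from \thref{prop:shift}, which will yield superadditivity of $(\E X_n)$, and the $k$-greedy path concentration in \thref{prop:greedy.path}, which will give the exponential lower tail. The upper tail needed for convergence in probability will then follow from a soft argument combining these inputs with the bound $\E X_n/n \leq \crist$ that pops out of superadditivity via Fekete's lemma.

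For superadditivity, fix $m,n\geq 0$ and pick a cell $(r^*, X_m)_m\in\infset[(0,0)_0]_m$ attaining the maximum row. Conditionally on $\Fscr_m$, the future infections from step $m$ onward are independent of the past, so \thref{prop:shift} applies to the process restarted at $(r^*, X_m)_m$ and identifies the highest row reached in $\infset[(r^*,X_m)_m]_n$ with $X_m + X_n'$, where $X_n'\eqd X_n$ is independent of $\Fscr_m$. Because $\infset[(r^*, X_m)_m]_n\subseteq \infset[(0,0)_0]_{m+n}$, we conclude $X_{m+n}\geq X_m+X_n'$ and, in expectation, $\E X_{m+n}\geq \E X_m+\E X_n$. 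Fekete's lemma then gives $\lim_n \crist[n]=\sup_n \crist[n]$, which must coincide with $\limsup_n \crist[n]=\crist$; this proves \eqref{eq:proper.limit} and simultaneously delivers the one-sided bound $\crist[n]\leq \crist$ for all $n$.

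For the exponential lower tail \eqref{baby}, given $\epsilon>0$ I would use \eqref{eq:proper.limit} to choose $k$ with $\crist[k]>\crist-\epsilon/2$. Applying \eqref{eq:greedy.path.s} in \thref{prop:greedy.path} with deviation parameter $t=(\epsilon/2)\sqrt{n}$ to the row coordinate $s_n$ of the $k$-greedy infection path from $(0,0)_0$ yields
\begin{equation*}
  \P\bigl[X_n<(\crist-\epsilon)n\bigr]\leq \P\bigl[s_n<\crist[k] n-\tfrac{\epsilon}{2}n\bigr]\leq 2\exp(-c\epsilon^2 n/4),
\end{equation*}
where $c=c(k,\lambda)>0$, giving the required bound.

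For convergence in probability \eqref{matteo}, the lower tail is supplied by \eqref{baby}. The upper tail requires more care. Set $p_n=\P[X_n/n>\crist+\epsilon]$ and, for an auxiliary $\epsilon'\in(0,\crist)$, $q_n=\P[X_n/n<\crist-\epsilon']$. Bounding $X_n/n$ from below on each of the three pieces $[0,\crist-\epsilon')$, $[\crist-\epsilon',\crist+\epsilon]$, $(\crist+\epsilon,1]$ by its minimum value and combining with $\E X_n/n\leq \crist$ from Step~1 gives
\begin{equation*}
  \crist\geq \E X_n/n\geq (\crist-\epsilon')(1-q_n-p_n)+(\crist+\epsilon)p_n,
\end{equation*}
which rearranges to $p_n(\epsilon+\epsilon')\leq \epsilon'+(\crist-\epsilon')q_n$. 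Since $q_n\to 0$ by \eqref{baby}, $\limsup_n p_n\leq \epsilon'/(\epsilon+\epsilon')$, and letting $\epsilon'\downarrow 0$ forces $p_n\to 0$. I expect this final extraction of the upper tail to be the main obstacle, since the proposition provides an exponential bound only on the lower side; the double-limit argument above is the cleanest route I see to convert the expectation bound and one-sided tail into true convergence in probability.
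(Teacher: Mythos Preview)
Your proof is correct and shares the paper's core strategy: \eqref{baby} via \thref{prop:greedy.path} applied to the $k$-greedy path, and the upper tail for \eqref{matteo} via an expectation-splitting inequality (the paper phrases the latter as a one-line contradiction with the definition of $\crist$, but it is the same computation you write out explicitly). The one structural difference is in \eqref{eq:proper.limit}: you deduce it first, from superadditivity and Fekete's lemma, and this also hands you the uniform bound $\crist[n]\leq\crist$ that you use in your upper-tail step. The paper instead proves \eqref{baby} first---needing only the $\limsup$ definition of $\crist$ to pick $k$---and then reads off \eqref{eq:proper.limit} from \eqref{baby} via $\liminf_n\crist[n]\geq\crist$; superadditivity is described just before the proof as motivation but is never formally invoked. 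Your route is slightly more self-contained; the paper's is slightly more economical.
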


\thref{peanut butter} will provide
the corresponding upper tail bound 
$$\Pr\biggl[\frac1n{X_n}-\crist>\epsilon\biggr]\leq Ce^{-cn},$$ 
implying that the convergence is almost sure as well as in probability.

The random variables $X_n$ are superadditive in the following sense:
Choose some cell $(r,s)_n$ from the highest row in the infection set $\infset[(0,0)_0]_n$.
Now choose a cell $(r',s')_{n+m}$ in the highest row of the infection set $\infset[(r,s)_n]_m$.
Then $(r',s')_{n+m}$ belongs to $\infset[(0,0)_0]_{n+m}$, and by
\thref{prop:shift}, we have $s'\eqd X_n+X_m$, where $X_n$ and $X_m$ are independent.
Thus $X_{n+m}$ is stochastically larger than the sum of independent copies of $X_n$ and $X_m$.
\thref{prop:subadditive} will be a straightforward consequence of this superadditivity.

\begin{proof}[Proof of \thref{prop:subadditive}]
  We start by proving \eqref{baby}, for which we have already done most of the work.
  Fix $\epsilon>0$ and choose $k$ large enough that that $\crist[k]>\crist-\epsilon/2$.
  Let $(r_n,s_n)_n$ be the $n$th step in the $k$-greedy infection path from $(0,0)_0$.
  We have $X_n\geq s_n$, and so by \thref{prop:greedy.path},
  \begin{align*}
    \P\biggl[\frac{1}{n}X_n\leq\crist-\epsilon\biggr]\leq \P\biggl[\frac{1}{n}s_n\leq\crist[k]-\epsilon/2\biggr]\leq 2e^{-cn}
  \end{align*}
  for some $c$ depending on $\epsilon$, $\lambda$, and $k$, proving \eqref{baby}.
  As a consequence, $\liminf_{n\to\infty}\E X_n/n\geq \crist$, proving \eqref{eq:proper.limit}.
  To prove \eqref{matteo},
  if $\limsup_{n\to\infty}\P\bigl[X_n/n>\crist+\epsilon\bigr]$ is greater than $0$, then from
  \eqref{baby} we would have $\frac1n\E X_n>\crist$ along a subsequence, contradicting
  the definition of $\crist$. Hence $\P\bigl[X_n/n>\crist+\epsilon\bigr]\to 0$,
  which combined with \eqref{baby} proves
  \eqref{matteo}.
\end{proof}

\section{The box lemma} \label{sec:box}
\label{subsec:box}
In this section we prove the \emph{box lemma} (\thref{lem:box}), which gives a deterministic 
region of cells almost certainly infected by layer percolation.
By \thref{bethlehem,cor:ip}, any infection path in layer percolation ending at row $\rho n$ corresponds to an extended odometer on $\ii{0,n}$ stable on $\ii{1,n-1}$ leaving density $\rho$ of sleeping particles.
By \thref{prop:subadditive}, it is likely that such infection paths exist for any $\rho<\crist$,
i.e., there is some cell infected starting from $(0,0)_n$ in row~$\rho n$.
The box lemma goes one step further, telling us that there is not just one infected cell
in row~$\rho n$ but an entire interval of length on the order of $n^2$.
One application is to construct odometers stable not only on $\ii{1,n-1}$
but also at $n$.
Another is to prove an exponential upper bound on the probability
that layer percolation infects a cell in row $(\crist+\epsilon)n$, which we
carry out in Section~\ref{sec:upper.bound}. Heuristically, the idea
is that the box lemma asserts that layer percolation behaves predictably in the bulk.
Thus no stretch of steps has a large effect on what row is reached by the infection set,
giving rise to concentration for the highest row reached.

To give the idea of the box lemma before we state it,
let $\rho<\crist$ and imagine we set out to infect
a cell in row $\rho n$ at step~$n$.
Suppose we choose an infection path $(0,0)_0=(r_0,s_0)_0\to\cdots\to(r_n,s_n)_n$
with $s_j\approx \rho j$. Then according to \thref{lem:smin.bound,lem:smax.bound},
the sequence $r_0,\ldots,r_n$ is approximately a critical branching process with immigration
$s_j\approx\rho j$ at step~$j$, and $r_n$ should be close to $\sum_{j=1}^n\rho j\approx\rho n^2/2$.
Thus some cell in the vicinity of $(\rho n^2/2,\,\rho n)_n$
is likely infected starting from $(0,0)_0$, for any $\rho<\crist$.
The box lemma states that something much stronger is true: \emph{all} cells in a box of order
$n^2\times n$ around
$(\rho n^2/2,\,\rho n)_n$ are infected with high probability.

\begin{lemma}[Box lemma]\thlabel{lem:box}
  Suppose that $0<\epsilon\leq \rho\leq \crist-\epsilon$.
  There exist constants
  $\delta=\delta(\epsilon)$, $c=c(\lambda,\epsilon)$, and $C=C(\lambda,\epsilon)$
  such that the following statement holds:
  Define the box
  $$\Blb_{n}(\rho,\delta) := \ii[\bigg]{\frac{\rho}{2}(1-\delta) n^2,\,\frac{\rho}{2}(1+\delta) n^2}
  \times  \ii[\Big]{\rho(1-\delta)n,\,\rho(1+\delta) n}.$$
  Then
  \begin{align}
    \P\Bigl[ \Blb_n(\rho,\delta)\subseteq\infset[(0,0)_0]_n \Bigr] \geq 1-Ce^{-cn}.\label{eq:box.stmt}
  \end{align}
  We can take $\delta$ as $\eta\epsilon^2$ for a sufficiently small
  absolute constant $\eta>0$.
\end{lemma}

\subsection{Tools for establishing infection}
To prove that some cell $(r,s)_t$ is infected starting from $(0,0)_0$, it suffices to show
that the forward infection set from $(0,0)_0$ and the backward infection
set from $(r,s)_t$ at some step have a cell in common.
Using the connectivity of infection sets (see Section~\ref{sec:infection}),
the following lemma gives us an easy way to show
that a forward and backward infection set cross each other:
\begin{figure}
  \centering
  \begin{tikzpicture}[scale=.28,forward/.style={blue!40!white}, backward/.style={red!40!white}, both/.style={red!60!blue!70}, fpath/.style={blue},bpath/.style={red},bothpath/.style={purple}]
    \input{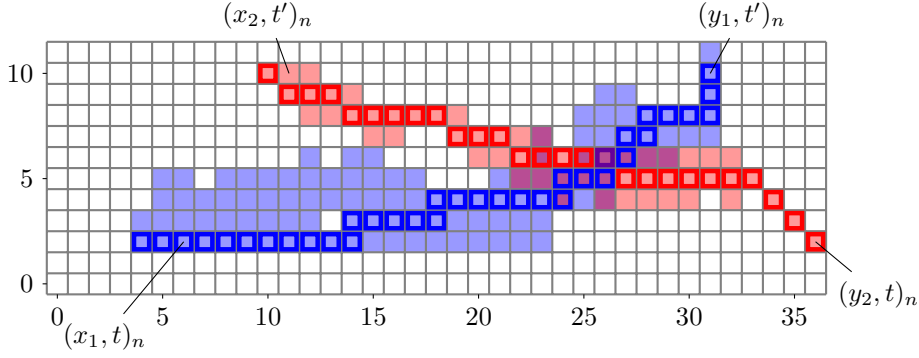}
  \end{tikzpicture}
  \caption{Let $\infset$ be an infection set from some cell in row~$t$,
  and let $\binfset$ be a backward infection set from some cell in row~$t'$ for $t'\geq t$,
  both at step~$n$. If $(x_1,t)_n,\,(y_1,t')\in\infset$ and
  $(x_2,t')_n,\,(y_2,t)_n\in\binfset$ and $x_1\leq y_2$ and $x_2\leq y_1$, then the two infection
  sets must cross, as proven in \thref{lem:X}.
  The blue and red shaded regions indicate the forward and backward infections sets, respectively,
  with their overlap shaded purple. The cells with highlighted outlines are the paths (which are not
  infection paths) constructed
  starting at $(y_1,t')_n$ and $(y_2,t)_n$
  in the proof of \thref{lem:X}. It is shown that these two paths must intersect, and in this case
  they do so at cell $(26,6)_n$.}
  \label{fig:X}
\end{figure}
\begin{lemma}\thlabel{lem:X}
  Let $\infset=\infset[(r,t)_m]_{n-m}$ and $\binfset=\binfset[(r',t')_{m'+n}]_{m'}$
  with $t\leq t'$. Suppose that $(x_1,t)_n,\,(y_1,t')_n$ are
  cells in $\infset$ and $(x_2,t')_n,\,(y_2,t)_n$ are cells in $\binfset$
  with $x_1\leq y_2$ and $x_2\leq y_1$ (see Figure~\ref{fig:X}).
  Then $\infset\cap\binfset\neq\emptyset$.
\end{lemma}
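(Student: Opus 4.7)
The plan is to construct two lattice paths, $P_f \subseteq \infset$ and $P_b \subseteq \binfset$, that are forced to cross, so that any cell in their intersection furnishes the desired element of $\infset\cap\binfset$. I will take $P_f$ to be a monotone path from $(y_1,t')_n$ moving down-and-left to the leftmost cell of $\infset$ in row~$t$, and $P_b$ to be a monotone path from $(y_2,t)_n$ moving left-and-upper-left until either it reaches column~$0$ or it hits the leftmost cell of $\binfset$ in row~$t'$.

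Using \thref{prop:connectivity.forward}, starting from $(y_1,t')_n$ I step at each stage either one cell left or one cell down while remaining in $\infset$. Since no cell of $\infset$ lies below row~$t$, upon entering row~$t$ at any non-leftmost cell the only available move is left, so the path terminates at the leftmost cell of $\infset$ in row~$t$, which I call $(x_f,t)_n$; since $(x_1,t)_n\in\infset$ we have $x_f\leq x_1$. Symmetrically, \thref{prop:connectivity.backward} produces $P_b$ from $(y_2,t)_n$, stepping at each stage either one cell left or one cell diagonally upper-left; this terminates either at some cell in column~$0$ or at the leftmost cell $(x_b,t')_n$ of $\binfset$ in row~$t'$, and the hypothesis $(x_2,t')_n\in\binfset$ gives $x_b\leq x_2$.

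Because $P_f$ and $P_b$ are monotone staircases, in each row~$s$ that a path visits it occupies a contiguous column-interval, denoted $[L_f(s),R_f(s)]$ and $[L_b(s),R_b(s)]$; the boundary conditions are $R_f(t')=y_1$, $L_f(t)=x_f$, $R_b(t)=y_2$, and adjacent rows are linked by the identities $L_f(s+1)=R_f(s)$ and $R_b(s+1)=L_b(s)-1$. The central claim is that these two intervals overlap in some common row, and any cell of overlap belongs to $\infset\cap\binfset$. I would prove this by a short row-by-row induction: at row~$t$ the hypothesis $x_1\leq y_2$ gives $L_f(t)=x_f\leq x_1\leq y_2=R_b(t)$, so the intervals intersect unless $L_b(t)>R_f(t)$. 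If at row~$s$ the intervals still fail to intersect, then $L_b(s)>R_f(s)\geq 0$, so $P_b$ must jump up, producing $R_b(s+1)=L_b(s)-1\geq R_f(s)=L_f(s+1)$; hence the inductive condition $R_b(s)\geq L_f(s)$ propagates upward as long as no intersection has been found. But $P_b$ must terminate: reaching column~$0$ in row~$s^*$ gives $L_b(s^*)=0\leq L_f(s^*)\leq R_f(s^*)$, contradicting non-intersection; reaching row~$t'$ at column~$x_b$ gives $L_b(t')=x_b\leq x_2\leq y_1=R_f(t')$, again a contradiction. The main bookkeeping obstacle is verifying the degenerate cases (when $t=t'$, or when one of the paths reduces to a single cell, or when the initial cell $(y_1,t')_n$ already lies in row~$t$), but each of these reduces to the same one-row overlap of two intervals and should follow without essential change.
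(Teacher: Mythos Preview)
Your proof is correct and follows essentially the same approach as the paper: build a ``blue'' staircase in $\infset$ from $(y_1,t')_n$ down-left to the leftmost cell in row~$t$ via \thref{prop:connectivity.forward}, a ``red'' staircase in $\binfset$ from $(y_2,t)_n$ up-left via \thref{prop:connectivity.backward}, and use the row-linking identities $L_f(s+1)=R_f(s)$ and $R_b(s+1)=L_b(s)-1$ together with the boundary inequalities $x_1\leq y_2$ and $x_2\leq y_1$ to force a row where the two intervals overlap. The only notable difference is that you explicitly track the possibility that $P_b$ terminates at column~$0$ before reaching row~$t'$, whereas the paper's proof leaves this implicit (it is in fact vacuous once the induction is running, since non-intersection forces $L_b(s)>R_f(s)\geq 0$).
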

\begin{proof}
  We make paths as depicted in Figure~\ref{fig:X} and show that they cross.
  To match the figure, call a cell \emph{blue} if it is an element of $\infset$
  and \emph{red} if it is an element of $\binfset$.  
  By \thref{prop:connectivity.forward}, each blue cell except for the leftmost one
  in row~$t$ has a blue neighbor one cell to its left or below it. Thus, 
  we can make a path of blue cells from $(y_1,t')_n$ to the leftmost blue cell in row~$t$
  by moving left or down at each step, choosing arbitrarily when both are possible.
  The cells in the blue path in a given row form an interval, and the start of
  an interval in some row~$s+1$ is the final column of the interval in row~$s$.
  Likewise, by \thref{prop:connectivity.backward} we can form a path of red cells starting
  from $(y_2,t)_n$ that moves to the left or upper-left in each step, terminating 
  either at the leftmost red cell in row~$t'$ or at some red cell $(0,t_0)_n$
  where $t\leq t_0< t'$. Call the ending cell $(x_0,t_0)_n$ in either case,
  with $x_0\leq x_2$ and $t_0=t'$ in the first case
  and $x_0=0$ and $t_0<t'$ in the second. In either case, the blue path has a cell in row~$t_0$
  at or to the right of column $x_0$, in the first case because $x_0\leq x_2\leq y_1$
  and in the second because $x_0=0$. The cells in the red path in a given row form an interval,
  and the interval in some row~$s-1$ starts one column after the end of the interval in row~$s$.
  
  We claim that the red and blue paths contain a common cell.
  Suppose not. Then in each row, either the blue path is entirely to the left
  of the red path or the red path is entirely to the left of the blue.
  In row~$t$, the blue path contains the leftmost blue cell in the row. Since
  $(x_1,t)_n$ is blue and $(y_2,t)_n$ is red and $x_1\leq y_2$, the blue path
  must be to the left of the red path. As we move to higher rows,
  the blue path must remain to the left. Indeed, suppose the blue
  path in some row~$s$ covers columns $\ii{a,b}$ and the red path covers
  columns $\ii{c,d}$ with $b<c$. Then in row~$s+1$, the blue path has $b$
  as its left endpoint while the red path has $c-1$ as its right endpoint;
  the blue path cannot lie entirely to the right of the red path since $b\leq c-1$.
  Thus the blue path is entirely to the left of the red path in all rows.
  But this is a contradiction, since in row~$t_0$ the red path
  contains $(x_0,t_0)$ and the blue path contains some cell at or to the right of $x_0$. 
  Therefore the red and blue paths have a common element, proving
  that $\infset\cap\binfset\neq\emptyset$.
\end{proof}

In Section~\ref{sec:layer.perc.misc}, we defined minimal and
greedy infection paths. When we prove the box lemma, we will need backward
versions of these paths:
\begin{define}
  The \emph{reverse minimal infection path from $(r,s)_m$} is the infection path
  \begin{align*}
    (r_m,s)_0)\to(r_{m-1},s_{m-1})_1\to\cdots\to(r_1,s)_{m-1}\to(r_0,s)_m=(r,s)_m
  \end{align*}
  in which $(r_{k+1},s)_{m-k-1}$ is the leftmost cell in row~$s$ infecting
  $(r_k,s)_{m-k}$ for each $k\in\ii{0,m-1}$.
  
  The \emph{reverse $k$-greedy path from $(r,s)_m$} is defined analogously
  to the $k$-greedy path (\thref{def:k.greedy}), with $(r_{jk},s_{jk})_{m-jk}$
  chosen from the cells in the minimum row of $\binfset[(r_{(j-1)k},s_{(j-1)k})_{m-(j-1)k}]_k$
  for each $j$ and some infection path chosen from $(r_{jk},s_{jk})_{m-jk}$
  to $(r_{(j-1)k},s_{(j-1)k})_{m-(j-1)k}$. Again, it only really matters that our choice of
  cell $(r_{jk},s_{jk})_{m-jk}$ and path
  from $(r_{jk},s_{jk})_{m-jk}$ to $(r_{(j-1)k},s_{(j-1)k})_{m-(j-1)k}$
  is made using only the information contained in the infections
  from steps~$m-jk$ to $m$, but we give a concrete procedure that will
  be dual to the one from \thref{def:k.greedy} (see \thref{lem:reversed.paths}).
  Given $(r_{(j-1)k},s_{(j-1)k})_{m-(j-1)k}$, choose $(r_{jk},s_{jk})_{m-jk}$ to be the rightmost
  cell in the minimal row of $\infset[(r_{(j-1)k},s_{(j-1)k})_{m-(j-1)k}]_k$.
  Then out of all infection paths from $(r_{jk},s_{jk})_{m-jk}$
  to $(r_{(j-1)k},s_{(j-1)k})_{m-(j-1)k}$, choose $(r_{jk-1},s_{jk-1})_{m-jk+1}$
  by first minimizing $s_{jk-1}$ and then maximizing $r_{jk-1}$.
  Then out of all infection paths from $(r_{jk-1},s_{jk-1})_{m-jk+1}$
  to $(r_{(j-1)k},s_{(j-1)k})_{m-(j-1)k}$
  choose $(r_{jk-2},s_{jk-2})_{m-jk+2}$ by first minimizing $s_{jk-2}$
  and then maximizing $r_{jk-2}$.
  
  The \emph{reverse $k$-greedy path from $(r,s)_m$ capped at row~$t$} for $t\leq s$ is
  defined analogously to forward capped $k$-greedy paths, as the infection
  path that follows the reverse $k$-greedy infection path from $(r,s)_m$ until it
  descends to row~$t$, and then follows the reverse minimal infection path from that
  point on.
\end{define}

Recall that \thref{cor:reverse} yields a coupling of two layer percolations such that
when the coupling is valid, the backward infection set $\binfset[(u,t)_m]_n$ in one layer percolation
is the image of the infection set $\infset[(0,0)_0]_n$ in the other under the transformation
$(r,s)_n\mapsto (Z_n+r+s,t-s)_{m-n}$, where $Z_n$ is a branching process defined in
\thref{cp:reverse}.
It is evident from the definitions that this transformation takes minimal and $k$-greedy
infection paths to reverse minimal and reverse $k$-greedy infection paths:
\begin{lemma}\thlabel{lem:reversed.paths}
  Suppose two layer percolations are coupled by \thref{cp:reverse}
  and that $Z_n\geq 0$ so that \thref{cor:reverse} is in effect
  and infection sets from $(0,0)_0$ in the first layer percolation
  are transformed versions of backward infection sets from $(u,t)_n$
  in the second layer percolation.
  Then an infection path
  \begin{align*}
    (0,0)_0=(r_0,s_0)_0\to\cdots (r_n,s_n)_n
  \end{align*}
  in the first layer percolation is the minimal infection path
  (resp.\ the $k$-greedy infection path, the $k$-greedy infection
  path capped at $s'\geq s$)  from $(0,0)_0$ if and only if
  the infection path
  \begin{align*}
    (Z_n+r_n+s_n,t-s_n)_0\too\cdots\too(Z_0+r_0+s_0,t-s_0)_n=(u,t)_n
  \end{align*}
  is the reverse minimal infection path (resp.\ the reverse $k$-greedy infection path,
  the reverse $k$-greedy infection path capped at $t-s'$) from $(u,t)_n$.
\end{lemma}

  \subsection{Sketch of the proof of Lemma~\ref{lem:box}}
  Suppose that $t=\rho n$ is an integer and
  \begin{align*}
    u\in\ii[\bigg]{\frac{\rho}{2}(1-\delta') n^2,\,\frac{\rho}{2}(1+\delta') n^2}
  \end{align*}
  for $\delta'>0$ to be chosen.
  It will be enough by a union bound to show that
  $\P\bigl[(u,t)_n\in\infset[(0,0)_0]_n\bigr]\geq 1-Ce^{-cn}$.
  Our strategy is to
  show that the forward infection set from $(0,0)_0$ and the backward
  infection set from $(u,t)_n$ are likely to intersect at step $n/2$.
  Let $n_0\approx(1/2-\alpha)n$ for a small constant $\alpha>0$, and let $s\approx\beta n/2$ for $\beta$
  slightly smaller than $\rho$. 
  \subsubsection*{Step~$0$ to step~$n_0$: $(0,0)_0\to (R_1,s)_{n_0}$}
   Run the greedy infection path from $(0,0)_0$ capped at row
   $s$ until step $n_0$, when it will be at a cell $(R_1,s)_{n_0}$.
  \subsubsection*{Step~$n-n_0$ to step~$n$: $(R_2,t-s)_{n-n_0}\to(u,t)_n$}
    Symmetrically to the previous step, 
    run the reverse greedy infection path from $(u,t)_n$ capped at row~$t-s$
    until step $n-n_0$, when it will be at some some cell $(R_2,t-s)_{n-n_0}$.
  \subsubsection*{Step~$n_0$ to step~$n/2$: $(R_1,s)_{n_0}\to (X_1,s)_{n/2}$ and 
  $(R_1,s)_{n_0}\to (Y_1,t-s)_{n/2}$}
      From $(R_1,s)_{n_0}$, run the minimal infection path to $(X_1,s)_{n/2}$
      and the greedy infection path capped at row~$t-s$ to $(Y_1,t-s)_{{n/2}}$.
  \subsubsection*{Step~$n/2$ to step~$n-n_0$: $(X_2,t-s)_{n/2}\to (R_2,t-s)_{n-n_0}$
      and $(Y_2,s)_{n/2}\to (R_2,t-s)_{n-n_0}$}
      Symmetrically to the previous step, from $(R_2,t-s)_{n-n_0}$
      run the reverse minimal infection path to $(X_2,t-s)_{n/2}$ and
      the reverse greedy infection path capped at row~$s$ to $(Y_2,s)_{n/2}$.

  These steps are illustrated in Figure~\ref{fig:s}. In the end, we will have produced cells $(X_1,s)_{n/2}$ and $(Y_1,t-s)_{n/2}$
  in $\infset[(R_1,s)_{n_0}]_{{n/2}-n_0}$ and cells $(X_2,t-s)_{{n/2}}$ and
  $(Y_2,s)_{{n/2}}$ in $\binfset[(R_2,t-s)_{n-n_0}]_{n/2-n_0}$.
  With $\alpha$, $\beta$, and $\delta'$ chosen correctly, we will find that these four cells
  are arranged as in Figure~\ref{fig:XX}, with $X_1\leq Y_2$ and $X_2\leq Y_1$,
  thus producing a cell in $\infset[(R_1,s)_{n_0}]_{{n/2}-n_0}\cap
  \binfset[(R_2,t-s)_{n-n_0}]_{{n/2}-n_0}$ by \thref{lem:X} and proving that
  \begin{align*}
    (0,0)_0\to(R_1,s)_{n_0}\to
       (R_2,t-s)_{n-n_0}\to(u,t)_n.
  \end{align*}

  \begin{figure}

\definecolor{col2}{HTML}{CA0020}
\definecolor{col3}{HTML}{F4A582}
\definecolor{col4}{HTML}{92C5DE}
\definecolor{col1}{HTML}{0571B0}
\begin{tikzpicture}[xscale = .9,yscale=.6]
    \begin{scope}
        \draw[->] (0,0) -- (12,0);
        \draw[->] (0,0) -- (0,5);
        \path
          (5,1.6) coordinate (n0)
          (7,2.4) coordinate (n-n0)
          (6,1.6) coordinate (n2s)
          (6,2.4) coordinate (n2t-s);
        \draw[help lines,col1]
          (n2s) ++ (-.1,0) -- +(210:1.2) node[below] {$(X_1,s)_{n/2}$};
        \draw[help lines,col4]
          (n2t-s) ++ (-.1,0) -- +(135:1.2) node[above left] {$(Y_1,t-s)_{n/2}$};
        \draw[help lines,col3]
          (n2s) ++ (.1,0) -- +(330:1.2) node[below right] {$(Y_2,s)_{n/2}$};          
        \draw[help lines,col2]
          (n2t-s) ++ (.1,0) -- +(45:1.2) node[above right] {$(X_2,t-s)_{n/2}$};          
        \draw[very thick] (0,0) -- (4.5,1.6) -- (n0);
        \draw[col1, very thick, ->] (n0) -- (n2s);
        \draw[col4, very thick,->] (n0) -- (5.8,2.4) -- (n2t-s);
        \draw[very thick] (n-n0) -- (7.5,2.4)-- (12,4);
        \draw[->, very thick, col2] (7,2.4) -- (n2t-s);
        \draw[very thick, col3,->] (n-n0) -- (6.2,1.6) -- (n2s);

        \draw (5,0.1) -- (5,-0.1);
        \draw (6,0.1) -- (6,-0.1);
        \draw (7,0.1) -- (7,-0.1);

        \draw (.1,1.6) -- (-0.1,1.6);
        \draw (0.1,2.4) -- (-0.1,2.4);
        \draw (0.1,4) -- (-0.1,4);

        \node[below] at (5,0) {$n_0$};
        \node[below] at (6,0) {$\frac n2$};
        \node[below] at (7,0) {$n-n_0$};
        \node[below] at (12,0) {$n$};

        \node[left] at (0,0) {$0$};
        \node[left] at (0,1.6) {$s$};
        \node[left] at (0,2.4) {$t-s$};
        \node[left] at (0,4) {$t$};

        \node[below] at (6, -0.75) {\bf Steps};
        \node[rotate=90, above] at (-1, 2) {\bf Row};
        \node at (13.5,0) {};
\end{scope}
\end{tikzpicture}

\caption{The step (horizontal axis) in layer percolation versus the row (vertical axis) for the paths
constructed in the proof of \thref{lem:box}. 
The black line from step~$0$ to step~$n_0$ is a greedy infection path capped at row~$s$,
and the black line from step~$n-n_0$ to step~$n$ is a reverse greedy infection path
capped at row~$t-s$. The dark and light blue arrows represent a minimal infection path and
a greedy infection path capped at row~$t-s$, respectively, started from the same cell.
The dark and light red arrows represent a reverse minimal infection path
and a reverse greedy infection path capped at row~$s$, respectively, started from the same cell.
The infection sets at step~$n/2$ are depicted in Figure~\ref{fig:XX}.
} \label{fig:s}
\end{figure}
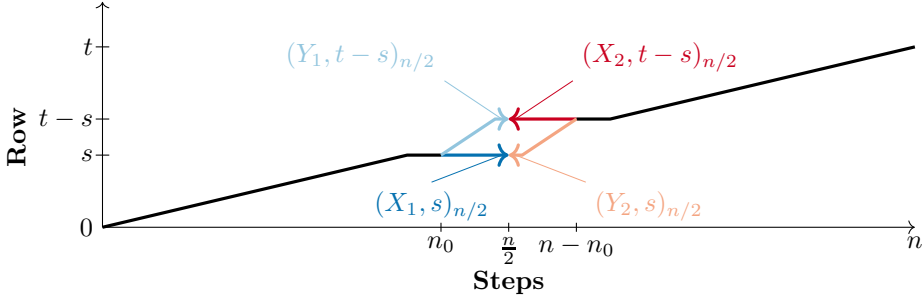

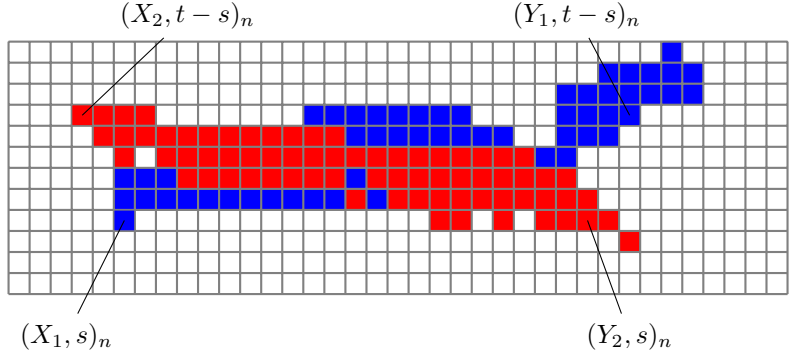
\begin{figure}
    \centering
    \begin{tikzpicture}[scale=.9]
      \begin{scope}[scale=.311,shift={(0,-18)},forward/.style={fill=blue!40!white}, backward/.style={red!40!white}, both/.style={red!60!blue!70}, fpath/.style={blue},bpath/.style={red},bothpath/.style={purple}]
  \begin{scope}[shift={(5,3)},fill=blue,fill opacity=.3]
    \fill (0,0) rectangle +(1,1);
    \fill (0,1) rectangle +(1,1);
    \fill (1,1) rectangle +(1,1);
    \fill (0,2) rectangle +(1,1);
    \fill (1,2) rectangle +(1,1);
    \fill (0,3) rectangle +(1,1);
    \fill (2,1) rectangle +(1,1);
    \fill (3,1) rectangle +(1,1);
    \fill (2,2) rectangle +(1,1);
    \fill (4,1) rectangle +(1,1);
    \fill (5,1) rectangle +(1,1);
    \fill (6,1) rectangle +(1,1);
    \fill (5,2) rectangle +(1,1);
    \fill (5,3) rectangle +(1,1);
    \fill (6,2) rectangle +(1,1);
    \fill (7,1) rectangle +(1,1);
    \fill (6,3) rectangle +(1,1);
    \fill (7,2) rectangle +(1,1);
    \fill (8,1) rectangle +(1,1);
    \fill (8,2) rectangle +(1,1);
    \fill (9,1) rectangle +(1,1);
    \fill (7,3) rectangle +(1,1);
    \fill (7,4) rectangle +(1,1);
    \fill (8,3) rectangle +(1,1);
    \fill (10,1) rectangle +(1,1);
    \fill (9,2) rectangle +(1,1);
    \fill (11,1) rectangle +(1,1);
    \fill (8,4) rectangle +(1,1);
    \fill (9,3) rectangle +(1,1);
    \fill (10,2) rectangle +(1,1);
    \fill (11,2) rectangle +(1,1);
    \fill (12,1) rectangle +(1,1);
    \fill (10,3) rectangle +(1,1);
    \fill (9,4) rectangle +(1,1);
    \fill (13,1) rectangle +(1,1);
    \fill (10,4) rectangle +(1,1);
    \fill (11,3) rectangle +(1,1);
    \fill (9,5) rectangle +(1,1);
    \fill (12,2) rectangle +(1,1);
    \fill (10,5) rectangle +(1,1);
    \fill (14,1) rectangle +(1,1);
    \fill (12,3) rectangle +(1,1);
    \fill (13,2) rectangle +(1,1);
    \fill (11,4) rectangle +(1,1);
    \fill (12,4) rectangle +(1,1);
    \fill (15,1) rectangle +(1,1);
    \fill (13,3) rectangle +(1,1);
    \fill (11,5) rectangle +(1,1);
    \fill (14,2) rectangle +(1,1);
    \fill (13,4) rectangle +(1,1);
    \fill (14,3) rectangle +(1,1);
    \fill (12,5) rectangle +(1,1);
    \fill (15,2) rectangle +(1,1);
    \fill (16,1) rectangle +(1,1);
    \fill (17,1) rectangle +(1,1);
    \fill (14,4) rectangle +(1,1);
    \fill (16,2) rectangle +(1,1);
    \fill (15,3) rectangle +(1,1);
    \fill (13,5) rectangle +(1,1);
    \fill (18,1) rectangle +(1,1);
    \fill (15,4) rectangle +(1,1);
    \fill (16,3) rectangle +(1,1);
    \fill (17,2) rectangle +(1,1);
    \fill (14,5) rectangle +(1,1);
    \fill (15,5) rectangle +(1,1);
    \fill (19,1) rectangle +(1,1);
    \fill (17,3) rectangle +(1,1);
    \fill (18,2) rectangle +(1,1);
    \fill (16,4) rectangle +(1,1);
    \fill (20,1) rectangle +(1,1);
    \fill (18,3) rectangle +(1,1);
    \fill (16,5) rectangle +(1,1);
    \fill (19,2) rectangle +(1,1);
    \fill (17,4) rectangle +(1,1);
    \fill (18,4) rectangle +(1,1);
    \fill (19,3) rectangle +(1,1);
    \fill (20,2) rectangle +(1,1);
    \fill (21,2) rectangle +(1,1);
    \fill (20,3) rectangle +(1,1);
    \fill (21,3) rectangle +(1,1);
    \fill (21,4) rectangle +(1,1);
    \fill (21,5) rectangle +(1,1);
    \fill (22,4) rectangle +(1,1);
    \fill (23,4) rectangle +(1,1);
    \fill (21,6) rectangle +(1,1);
    \fill (22,5) rectangle +(1,1);
    \fill (23,5) rectangle +(1,1);
    \fill (22,6) rectangle +(1,1);
    \fill (23,6) rectangle +(1,1);
    \fill (24,5) rectangle +(1,1);
    \fill (23,7) rectangle +(1,1);
    \fill (24,6) rectangle +(1,1);
    \fill (25,6) rectangle +(1,1);
    \fill (24,7) rectangle +(1,1);
    \fill (25,7) rectangle +(1,1);
    \fill (26,6) rectangle +(1,1);
    \fill (26,7) rectangle +(1,1);
    \fill (27,6) rectangle +(1,1);
    \fill (26,8) rectangle +(1,1);
    \fill (27,7) rectangle +(1,1);
  \end{scope}
  \begin{scope}[fill=red,fill opacity=.3]
    \fill (20,5) rectangle +(1,1);
    \fill (23,4) rectangle +(1,1);
    \fill (12,7) rectangle +(1,1);
    \fill (22,5) rectangle +(1,1);
    \fill (14,7) rectangle +(1,1);
    \fill (27,4) rectangle +(1,1);
    \fill (28,3) rectangle +(1,1);
    \fill (5,7) rectangle +(1,1);
    \fill (19,6) rectangle +(1,1);
    \fill (17,6) rectangle +(1,1);
    \fill (9,5) rectangle +(1,1);
    \fill (10,6) rectangle +(1,1);
    \fill (8,6) rectangle +(1,1);
    \fill (11,5) rectangle +(1,1);
    \fill (16,4) rectangle +(1,1);
    \fill (24,5) rectangle +(1,1);
    \fill (25,4) rectangle +(1,1);
    \fill (13,5) rectangle +(1,1);
    \fill (15,5) rectangle +(1,1);
    \fill (26,5) rectangle +(1,1);
    \fill (18,4) rectangle +(1,1);
    \fill (7,7) rectangle +(1,1);
    \fill (20,4) rectangle +(1,1);
    \fill (21,3) rectangle +(1,1);
    \fill (6,8) rectangle +(1,1);
    \fill (22,4) rectangle +(1,1);
    \fill (21,6) rectangle +(1,1);
    \fill (23,3) rectangle +(1,1);
    \fill (14,6) rectangle +(1,1);
    \fill (12,6) rectangle +(1,1);
    \fill (23,6) rectangle +(1,1);
    \fill (5,6) rectangle +(1,1);
    \fill (4,8) rectangle +(1,1);
    \fill (17,5) rectangle +(1,1);
    \fill (27,3) rectangle +(1,1);
    \fill (9,7) rectangle +(1,1);
    \fill (8,5) rectangle +(1,1);
    \fill (19,5) rectangle +(1,1);
    \fill (11,7) rectangle +(1,1);
    \fill (10,5) rectangle +(1,1);
    \fill (24,4) rectangle +(1,1);
    \fill (25,3) rectangle +(1,1);
    \fill (13,7) rectangle +(1,1);
    \fill (26,4) rectangle +(1,1);
    \fill (15,7) rectangle +(1,1);
    \fill (16,6) rectangle +(1,1);
    \fill (6,7) rectangle +(1,1);
    \fill (20,6) rectangle +(1,1);
    \fill (7,6) rectangle +(1,1);
    \fill (21,5) rectangle +(1,1);
    \fill (18,6) rectangle +(1,1);
    \fill (20,3) rectangle +(1,1);
    \fill (22,6) rectangle +(1,1);
    \fill (12,5) rectangle +(1,1);
    \fill (23,5) rectangle +(1,1);
    \fill (4,7) rectangle +(1,1);
    \fill (14,5) rectangle +(1,1);
    \fill (3,8) rectangle +(1,1);
    \fill (19,4) rectangle +(1,1);
    \fill (5,8) rectangle +(1,1);
    \fill (8,7) rectangle +(1,1);
    \fill (9,6) rectangle +(1,1);
    \fill (10,7) rectangle +(1,1);
    \fill (24,6) rectangle +(1,1);
    \fill (11,6) rectangle +(1,1);
    \fill (25,5) rectangle +(1,1);
    \fill (15,6) rectangle +(1,1);
    \fill (13,6) rectangle +(1,1);
    \fill (26,3) rectangle +(1,1);
    \fill (29,2) rectangle +(1,1);
    \fill (18,5) rectangle +(1,1);
    \fill (21,4) rectangle +(1,1);  
  \end{scope}
\draw[thick,gray] (0,0) grid[step=1] (37,12);
\draw[line width=.06cm,blue] (5,3) ++(.14,.14) rectangle +(.72,.72);
\draw[line width=.06cm,blue] (29,8) ++(.14,.14) rectangle +(.72,.72);
\draw[line width=.06cm,red] (3,8) ++(.14,.14) rectangle +(.72,.72);
\draw[line width=.06cm,red] (27,3) ++(.14,.14) rectangle +(.72,.72);

\path (2.75,-2) node (x1) {$(X_1,s)_n$};
\draw (x1)--(5.5,3.5);
\path (27,13.25) node (y1) {$(Y_1,t-s)_n$};
\draw (y1)--(29.5,8.5);
\path (8.5,13.25) node (x2) {$(X_2,t-s)_n$};
\draw (x2)--(3.5,8.5);
\path (29.5,-2) node (y2) {$(Y_2,s)_n$};
\draw (y2)--(27.5,3.5);

\end{scope}
    \end{tikzpicture}
\caption{Two infection sets at step~$n/2$. In blue is $\infset[(R_1,s)_{n_0}]_{n/2-n_0}$
and in red is $\binfset[(R_2,t-s)_{n-n_0}]_{n/2-n_0}$, with their intersection in purple.
The four cells represented by the arrow tips in Figure~\ref{fig:s} are outlined.
Since the paths are constructed to make $X_2\leq Y_1$ and $X_1\leq Y_2$,
these four cells satisfy the conditions of \thref{lem:X}, demonstrating that
that the two infection sets overlap.}
\label{fig:XX}
\end{figure}

  To put this idea into practice, we
  choose a suitable $\alpha$ and $\beta$ and use
  \thref{lem:ll.bound,prop:greedy.capped} to estimate $R_1$, $X_1$, and $Y_1$,
  pinpointing each one to a window of order $n^2$.
  With an additional application of \thref{cor:reverse}, we can do the same
  with $R_2$, $X_2$, and $Y_2$. At this point we will have proven
  that $X_1\leq Y_2$ and $X_2\leq Y_1$ with high probability, allowing us to
  conclude that $(u,t)_n$ is in $\infset[(0,0)_0]_n$ with high probability.
  The technical complexity comes in picking the appropriate values
  of $\alpha$ and $\beta$ and then making the statements about $R_1$, $R_2$, $X_1$,
  $Y_1$, $X_2$, and $Y_2$ precise.

  \subsection{Estimates on the box lemma paths}
  \label{sec:box.paths}
  In this section, we consider a cell $(u,t)_n$ and construct and bound
  the infection paths described in the previous section used to show that
  $(0,0)_0\to(u,t)_n$ with exponentially vanishing probability.
  Throughout this section, we make the following assumptions:
  As in the statement of \thref{lem:box}, we have
  $0<\epsilon\leq\rho\leq\crist-\epsilon$.
  We assume that $t:=\rho n$ is an integer.
  We take $n$ to be even and choose $k$ large enough that
  $\crist[k]>\crist-\epsilon/4$, so that
  \begin{align}\label{eq:rho.hierarchy}
    \epsilon\leq \rho \leq \crist-\epsilon<\crist-\epsilon/4<\crist[k]\leq\crist.
  \end{align}
  Define
  \begin{align}
    \begin{aligned}
    \alpha&=\epsilon/16,&n_0&=n/2-\floor{\alpha n},\\
    \beta &=\rho-\bigl(\crist-\tfrac{\epsilon}{2}\big)\alpha,\quad&\quad
    s&=\floor{\beta n/2},
    \end{aligned}\label{eq:boxdefs}
  \end{align}
  whose roles are laid out in the sketch.
  We let $\delta' = \alpha^2/4$.
  Assume that
  \begin{align}\label{eq:u.range}
    u\in\ii[\bigg]{\frac{\rho}{2}(1-\delta') n^2,\,\frac{\rho}{2}(1+\delta') n^2}.
  \end{align}
  We also define $\delta_0=\rho\delta'/100$, to be used in some error bounds.
  We say that a statement holds with overwhelming probability (w.o.p.)\ if it fails with
  probability at most $Ce^{-cn}$ where $c$ and $C$ are constants that depend only on $\lambda$
  and $\epsilon$.
  
  The following lemma proves several important facts about the quantities defined in \eqref{eq:boxdefs}.
  Equation~\eqref{eq:beta.big} shows in particular that $0<\beta<1$.
  Bounds \eqref{eq:s/n_0} and \eqref{eq:other.rate} confirm that $\alpha$
  and $\beta$ were chosen appropriately so that the greedy
  paths described in the sketch can reach the rows that they need to: 
  \eqref{eq:s/n_0} shows that the capped $k$-greedy paths
  used from steps~$0$ to $n_0$ and from steps~$n-n_0$ to $n$ progress fast enough
  to reach their caps; and \eqref{eq:other.rate} shows that the capped $k$-greedy paths
  used from steps~$n_0$ to $n/2$ and from steps~$n/2$ to $n-n_0$ reach their caps.
  \begin{lemma}\thlabel{lem:alpha.beta}
  For $n$ larger than some constant depending only on $\epsilon$,
  \begin{align}
    \epsilon/2&\leq\beta \leq\rho\label{eq:beta.big},\\
    \frac{s}{n_0}&\leq\crist[k]-\epsilon/2,\label{eq:s/n_0}\\\intertext{and}
    \frac{t-2s}{n/2-n_0}&\leq \crist[k]-\epsilon/5.\label{eq:other.rate}
  \end{align}
  \end{lemma}
\begin{proof}
  The upper bound in \eqref{eq:beta.big} is immediate from the definition of $\beta$.
  For the lower bound, we have $\rho\geq\epsilon$ and $\crist-\epsilon/2\leq 1$, yielding
  $\beta\geq \epsilon-\epsilon/16\geq\epsilon/2$.
  
  For \eqref{eq:s/n_0}, we let $q=(\crist-\epsilon/2)/\rho$ and have
  \begin{align}\label{eq:s/n_0.start}
    \frac{s}{n_0}\leq\frac{\beta}{1-2\alpha}=\frac{\rho(1-q\alpha)}{1-2\alpha}.
  \end{align}
  Now we prove a bound on $(1-q\alpha)/(1-2\alpha)$.
  Observe that $q\geq 1$ by \eqref{eq:rho.hierarchy}.
  If $q\leq 2$, then
  a bit of calculus shows that
  $(1-qx)/(1-2x)\leq 1 + 2(2-q)x$ for $x\leq 1/4$, yielding
  $(1-qx)/(1-2x)\leq1+2x$ for $x\leq 1/4$. 
  If $q>2$, then $(1-qx)/(1-2x)\leq1$ for all $x\leq 1/4$.
  Hence $(1-qx)/(1-2x)\leq1+2x$ for $x\leq 1/4$ holds in all cases.
  Since $\alpha=\epsilon/16$ and $\epsilon\leq 1$,
  \begin{align*}
    \frac{1-q\alpha}{1-2\alpha} &\leq1+\epsilon/8\leq\frac{\rho+\epsilon/8}{\rho}
      \leq \frac{\crist-7\epsilon/8}{\rho}.
  \end{align*}
  Together with \eqref{eq:s/n_0.start} and \eqref{eq:rho.hierarchy}, this yields
  \begin{align*}
    \frac{s}{n_0}\leq\crist-7\epsilon/8<\crist[k]-5\epsilon/8,
  \end{align*}
  proving \eqref{eq:s/n_0}.
  
  For \eqref{eq:other.rate}, just using \eqref{eq:rho.hierarchy} and \eqref{eq:boxdefs} we have
  \begin{align}\label{eq:t-2s}
    \frac{t-2s}{n/2-n_0}=\frac{t-2s}{\floor{\alpha n}} \leq \frac{\rho   -\beta  +1/n }{\alpha  -1/n }
      = \frac{\bigl(\crist-\frac{\epsilon}{2}\bigr)\alpha+1/n}{\alpha-1/n}
      < \frac{\crist[k]-\frac{\epsilon}{4}+1/\alpha n}{1-1/\alpha n}.
  \end{align}
  For large enough $n$, the right-hand side is bounded by $\crist[k]-\epsilon/5$,
  proving \eqref{eq:other.rate}.
\end{proof}
  
  Next, we start defining and giving estimates on
  the cells from our sketch.
  We write $x+\ii{a,b}$ to denote $\ii{a+x,b+x}$.
  \begin{prop}\thlabel{prop:RS}
    Define the following cells:
    \begin{enumerate}[(i)]
     \item
       $(R_1,S_1)_{n_0}$ lies on the $k$-greedy infection path
   from $(0,0)_0$ capped at row~$s$;
       \item 
      $(R_2,S_2)_{n-n_0}$ lies on the reverse $k$-greedy infection path from
   $(u,t)_n$ capped at row~$t-s$.
    \end{enumerate}
    It holds
    with overwhelming probability that
    \begin{align}
      S_1&=s, &
      R_1&\in \biggl(\frac{2 - \beta/\crist[k]}{4}-\alpha\biggr)\frac{\beta n^2}{2} + 
      \ii{-\delta_0n^2,\delta_0 n^2},\label{eq:1loc}\\
      S_2&=t-s,&
           R_2&\in u - \bigl(\tfrac12 -\alpha)\rho n^2 + \biggl(\frac{2 - \beta/\crist[k]}{4}-\alpha\biggr)\frac{\beta n^2}{2} + 
      \ii{-2\delta_0n^2,2\delta_0 n^2}.
      \label{eq:2loc}
  \end{align}
\end{prop}
\begin{proof}
  By \eqref{eq:s/n_0} from \thref{lem:alpha.beta}, we can apply
  \thref{prop:greedy.capped} to estimate $R_1$ and $S_1$.
  Equation~\eqref{eq:greedy.capped.s} shows that the first equality
  in \eqref{eq:1loc} holds w.o.p.
  Using \eqref{eq:greedy.capped.u} and applying \eqref{eq:boxdefs}, we have concentration of $R_1$ around
  \begin{align*}
    \biggl(n_0-\frac{s}{2\crist[k]}\biggr)s = \biggl(\frac{2-\beta /\crist[k]}{4}-\alpha\biggr)
                                               \frac{\beta n^2}{2} +O(n/\epsilon).
  \end{align*}
  (The factor of $\epsilon^{-1}$ in the error term comes from from $\crist[k]$, which is bounded
  below by $\epsilon$.) Then 
  \eqref{eq:greedy.capped.u} proves that the second part of \eqref{eq:1loc} holds w.o.p.
  
  To prove \eqref{eq:2loc}, we must first
  invoke \thref{cor:reverse} to couple $\binfset[(u,t)_n]_{n_0}$
  with $\infset[(0,0)_0]_{n_0}$ in a different layer percolation.
  Let $(Z_k)_{k\geq0}$ be the branching process from \thref{cp:reverse}, which is a critical geometric
  branching process starting from $u$ with constant emigration $t$.
  According to \thref{cor:reverse,lem:reversed.paths}, we have
  \begin{align}\label{eq:R2S2}
    (R_2,S_2)_{n-n_0} = (Z_{n_0}+R_1' + S_1',\,t-S_1')_{n-n_0} \qquad\text{if $Z_{n_0}\geq 0$,}
  \end{align}
  where $(R_1',S_1')_{n_0}$ lies on the $k$-greedy infection path
  from $(0,0)_0$ capped at row~$s$ in the coupled layer percolation. In particular,
  $(R_1',S_1')$ has the same distribution as $(R_1,S_1)$, which we have already analyzed.
  By \thref{prop:GW.emigration.concentration}, we have $Z_{n_0}$
  concentrated around $u - tn_0$, yielding
  \begin{align}\label{eq:Zn0}
    Z_{n_0}\in u - \bigl(\tfrac12 - \alpha\bigr)\rho n^2 + \ii{-\delta_0 n^2,\delta_0 n^2}
    \text{ w.o.p.}
  \end{align}
  Since $u-\frac12 \rho n^2\geq -\frac12 \rho\delta' n^2$ by \eqref{eq:u.range},
  \begin{align*}
    Z_{n_0} &\geq -\frac{\rho\delta' n^2}{2} + \alpha\rho n^2 - \delta_0 n^2 
    = \biggl(1-\frac{51\alpha}{400}\biggr)\rho \alpha n^2>0\text{ w.o.p.},
  \end{align*}
  since $\alpha\leq 1/16$.
  Thus the coupling is valid w.o.p., and from \eqref{eq:R2S2} and
  $S_1'=s$ w.o.p.\ we obtain the first part of \eqref{eq:2loc}.
  And \eqref{eq:Zn0} together with \eqref{eq:1loc} and \eqref{eq:R2S2}
  prove the second part of \eqref{eq:2loc}.
  \end{proof}
  
  Next we give estimates on the cells at step~$n/2$ that we use for applying
  \thref{lem:X}.
  \begin{prop}\thlabel{prop:XYT}
    Define the following cells:
    \begin{enumerate}[(i)]
      \item $(X_1, S_1)_{n/2}$ lies on the minimal infection path
        from $(R_1,S_1)_{n_0}$;
      \item $(Y_1,T_1)_{n/2}$ lies on the $k$-greedy infection
        path from $(R_1,S_1)_{n_0}$ capped at row~$t-s$;
      \item $(X_2,S_2)_{n/2}$ lies on the reverse minimal infection path from
        $(R_2,S_2)_{n-n_0}$;
      \item $(Y_2,T_2)_{n/2}$ lies on the reverse $k$-greedy infection
        path from $(R_2,S_2)_{n-n_0}$ capped at row~$s$.
    \end{enumerate}
    It holds with overwhelming probability that
    \begin{align}
          X_1 &\in \biggl(\frac{2 - \beta/\crist[k]}{4}\biggr)\frac{\beta n^2}{2} + 
      \ii{-3\delta_0n^2,3\delta_0 n^2}\label{eq:X1loc},\\
    Y_1 &\in \biggl(\frac{2 - \beta/\crist[k]}{4}\biggr)\frac{\beta n^2}{2}
       +  \biggl(\alpha  - \frac{\rho-\beta}{2\crist[k]}\biggr)(\rho-\beta)n^2
       +\ii{-4\delta_0n^2,4\delta_0 n^2},\label{eq:Y1loc}\\
    T_1&=t-s,\label{eq:T1loc}\\
    X_2 &\in u - \frac{\rho n^2}{2}  + \biggl(\frac{2 - \beta/\crist[k]}{4}\biggr)\frac{\beta n^2}{2}
     + \ii{-4\delta_0n^2,4\delta_0n^2},\label{eq:X2loc}\\
      Y_2&\in  u - \frac{\rho n^2}{2} + \biggl(\frac{2 - \beta/\crist[k]}{4}\biggr)\frac{\beta n^2}{2}
        +\biggl(\alpha  - \frac{\rho  -\beta  }{2\crist[k]}\biggr)(\rho  -\beta )n^2
    +\ii{-6\delta_0 n^2,6\delta_0 n^2},  \label{eq:Y2loc}\\
    T_2&=s.  \label{eq:T2loc}
    \end{align}
  \end{prop}
\begin{proof}
  For \eqref{eq:X1loc}, we observe that $X_1+s$ is the $\floor{\alpha n}$th step
  of a critical geometric branching process from $R_1+s$ with constant
  immigration $s$ by \thref{lem:ll.bound}.
  An application of \thref{prop:GW.emigration.concentration} conditional on $R_1$ shows that
  $X_1+s$ is contained in $R_1+s\floor{\alpha n}+\ii{-\delta_0n^2,\delta_0n^2}$ w.o.p.
  Applying \eqref{eq:1loc} from \thref{prop:RS} together with $s\floor{\alpha n}=\alpha\beta n^2/2+O(n)$
  yields \eqref{eq:X1loc}.
  
  To prove \eqref{eq:Y1loc} and \eqref{eq:T1loc}, we first consider a $k$-greedy infection path from $(0,0)_0$
  capped at row $t-2s$. Let $(Y',S')_{\floor{\alpha n}}$ lie on this infection path.
  We claim that
  \begin{align}\label{eq:Y'}
    Y'&=\biggl(\alpha  - \frac{\rho  -\beta  }{2\crist[k]}\biggr)(\rho  -\beta )n^2 + 
    \ii{-\delta_0 n^2,\delta_0 n^2} \text{ w.o.p.},\\
    S' &= t-2s\text{ w.o.p.}\label{eq:S'}
  \end{align}
  By \eqref{eq:other.rate} from \thref{lem:alpha.beta}, the conditions of
  \thref{prop:greedy.capped} are satisfied. We can apply it to prove
  \eqref{eq:S'} and to conclude that
  $Y'$ is concentrated around
  \begin{align*}
    \biggl(\floor{\alpha n} - \frac{t-2s}{2\crist[k]}\biggr)(t-2s)
      &= \biggl(\alpha  - \frac{\rho  -\beta  }{2\crist[k]}\biggr)(\rho  -\beta )n^2 + O_\epsilon(n),
  \end{align*}
  which proves \eqref{eq:Y'}.
  
  By \thref{prop:shift}, there exists a coupling of $Y'$ and $S'$ with our layer percolation
  so that $(Y_1,T_1)_{n/2}=(X_1+Y',\,S'+S_1)_{\floor{\alpha n}}$.
  Since $S'=t-2s$ w.o.p.\ and $S_1=s$ w.o.p.\ by \eqref{eq:1loc}, we obtain \eqref{eq:T1loc}.
  And from \eqref{eq:X1loc} and \eqref{eq:Y'}, we prove \eqref{eq:Y1loc}.
  
  The proof of \eqref{eq:X2loc}--\eqref{eq:T2loc} goes
  just like the proof of \eqref{eq:X1loc}--\eqref{eq:T1loc} but with an application
  of \thref{cor:reverse}. This time,
  we couple $\binfset[(R_2,S_2)_{n-n_0}]_{\floor{\alpha n}}$
  with a forward infection set $\infset[(0,0)_0]_{\floor{\alpha n}}$
  and branching process $(\widebar{Z}_k)_{k\geq 0}$ starting from $R_2$
  with constant emigration $S_2$.
  Applying \thref{prop:GW.emigration.concentration} conditional
  on $R_2$ and $S_2$, 
  \begin{align*}
    \widebar{Z}_{\floor{\alpha n}} \in R_2 - \floor{\alpha n}S_2 + \ii{-\delta_0 n^2,\delta_0 n^2} \text{ w.o.p.}
  \end{align*}
  By \eqref{eq:2loc} from \thref{prop:RS} together
  with $\floor{\alpha n}(t-s)=\alpha \rho n^2  - \alpha\beta n^2 /2 + O(n)$, it holds with overwhelming probability that
  \begin{align*}
    R_2 - \floor{\alpha n}S_2 &\in u - \bigl(\tfrac12 -\alpha)\rho n^2 + \biggl(\frac{2 - \beta/\crist[k]}{4}-\alpha\biggr)\frac{\beta n^2}{2} -\floor{\alpha n}(t-s)+ 
      \ii{-2\delta_0n^2,2\delta_0 n^2}\\
        &\subseteq  u - \frac{\rho n^2}{2} + \biggl(\frac{2 - \beta/\crist[k]}{4}\biggr)\frac{\beta n^2}{2}+ 
      \ii{-3\delta_0n^2,3\delta_0 n^2}.
  \end{align*}
  Hence
  \begin{align}\label{eq:Z2}
    \widebar{Z}_{\floor{\alpha n}} \in u - \frac{\rho n^2}{2} + \biggl(\frac{2 - \beta/\crist[k]}{4}\biggr)\frac{\beta n^2}{2}+ 
      \ii{-4\delta_0n^2,4\delta_0 n^2}\text{ w.o.p.}
  \end{align}
  Since $u-\frac12 \rho n^2\geq -\frac12 \rho\delta' n^2$ by \eqref{eq:u.range}
   and $\epsilon/2\leq\beta\leq\rho\leq\crist[k]$ by \eqref{eq:rho.hierarchy}
   and \eqref{eq:beta.big} from \thref{lem:alpha.beta},
  \begin{align*}
    \widebar{Z}_{\floor{\alpha n}}\geq -\frac{\rho\delta' n^2}{2} + \frac14\frac{\epsilon n^2}{4}
      - 4\delta_0 n^2
      = \biggl(1-\frac{27\rho\epsilon}{3200} \biggr)\frac{\epsilon n^2}{16}\geq 0\text{ w.o.p.},
  \end{align*}
  since $\rho$ and $\epsilon$ are both bounded by $1$.
  Hence by \thref{cor:reverse} the coupling of $\binfset[(R_2,S_2)_{n-n_0}]_{\floor{\alpha n}}$
  and $\infset[(0,0)_0]_{\floor{\alpha n}}$ is in effect w.o.p.
  The minimal infection path from $(0,0)_0$ is simply $(0,0)_0\to(0,0)_1\to\cdots$,
  and the image of this path under the transformation $(r,s)_m\mapsto (\widebar{Z}_m + r+s,S_2-s)_m$
  is the reverse minimal infection path from $(R_2,S_2)_{n-n_0}$ under the coupling
  by \thref{lem:reversed.paths}. Hence
  \eqref{eq:Z2} together with $S_2=t-s$ w.o.p.\ from \thref{prop:RS} proves \eqref{eq:X2loc}.
  
  For \eqref{eq:Y2loc} and \eqref{eq:T2loc}, we consider a $k$-greedy infection path from $(0,0)_0$
  capped at $s-2t$, whose location after $\floor{\alpha n}$ steps
  we have already determined in \eqref{eq:Y'} and \eqref{eq:S'}. 
  The image of this path under the transformation $(r,s)_m\mapsto (\widebar{Z}_m + r+s,S_2-s)_m$
  is a reverse $k$-greedy infection path from $(R_2,S_2)_{n-n_0}$ capped at 
  $S_2-(s-2t)=t$ w.o.p.\ by \eqref{eq:2loc}.
  Then $T_2= S_2-S'$ w.o.p., which by \eqref{eq:2loc} and \eqref{eq:S'} proves \eqref{eq:T2loc}.
  In the same way, $Y_2=\widebar{Z}_{\floor{\alpha n}}+Y'+S'$ w.o.p., and combining
  \eqref{eq:Z2}, \eqref{eq:Y'}, and \eqref{eq:S'} proves \eqref{eq:Y2loc}.  
  \end{proof}
  
  \subsection{Proof of Lemma~\ref{lem:box}}
  \label{sec:finalboxproof}
  
  First, we apply the results from the previous section
  to prove the likely infection of a single cell $(u,t)_n$.
  \begin{prop}\thlabel{prop:one.cell.box}
    Let $n$ be even. Let $t$ be an integer and $\rho=t/n$.
    Suppose that $0<\epsilon\leq\rho\leq\crist-\epsilon$.
    Let $\delta'=\epsilon^2/1024$.
    Suppose that
    \begin{align*}
      u\in\ii[\bigg]{\frac{\rho}{2}(1-\delta') n^2,\,\frac{\rho}{2}(1+\delta') n^2}.
    \end{align*}
    For some constants $c=c(\lambda,\epsilon)$ and $C=C(\lambda,\epsilon)$,
    \begin{align*}
      \P\bigl[ (0,0)_0\to(u,t)_n\bigr] \geq 1 - Ce^{-cn}.
    \end{align*}
  \end{prop}
  \begin{proof}
   
  Define $k$, $\alpha$, $\beta$, $n_0$, $s$, and $\delta_0$
  as in Section~\ref{sec:box.paths}, and note that our definition of $\delta'$
  agrees with the definition $\delta'=\alpha^2/4$ in Section~\ref{sec:box.paths}.
  We consider the cells $(X_1,S_1)_{n/2}$, $(Y_1,T_1)_{n/2}$,
  $(X_2,S_2)_{n/2}$, and $(Y_2,T_2)_{n-n_0}$ from \thref{prop:XYT}.
  By construction, cells $(X_1,S_1)_{n/2}$ and
  $(Y_1,T_1)_{n/2}$ are in $\infset[(R_1,S_1)_{n_0}]_{n/2-n_0}$
  and cells $(X_2,S_2)_{n/2}$ and
  $(Y_2,T_2)_{n/2}$ are in $\binfset[(R_2,S_2)_{n-n_0}]_{n/2-n_0}$. Our goal is to
  show these cells are likely to satisfy the criteria of \thref{lem:X}.
  Then $(R_1,S_1)_{n_0}\to(R_2,S_2)_{n-n_0}$, which then
  proves $(0,0)_0\to(u,t)_n$.

  In \thref{prop:RS,prop:XYT}, we showed that $S_1=T_2=s$ and
  $S_2=T_1=t-s$ w.o.p.
  We  now show that $X_1\leq Y_2$ and
  $X_2\leq Y_1$ w.o.p.
  From \thref{prop:XYT},
  \begin{align*}
    Y_2-X_1 &\geq   \biggl(u - \frac{\rho n^2}{2} \biggr)
        +\biggl(\alpha  - \frac{\rho  -\beta  }{2\crist[k]}\biggr)(\rho  -\beta )n^2 
     - 9\delta_0n^2 \text{ w.o.p.,}\\
    Y_1-X_2 &\geq \biggl(\frac{\rho n^2}{2}-u\biggr)+
         \biggl(\alpha  - \frac{\rho-\beta}{2\crist[k]}\biggr)(\rho-\beta)n^2
       -8\delta_0n^2 \text{ w.o.p.}
  \end{align*}
  By our choice of $u$, we have $\abs{u-\rho n^2/2}\leq \rho\delta' n^2/2$.
  Considering the next term in the bounds,
  \begin{align*}
    \biggl(\alpha  - \frac{\rho  -\beta  }{2\crist[k]}\biggr)(\rho  -\beta )n^2
      &= \biggl(1 - \frac{\crist-\epsilon/2}{2\crist[k]}\biggr)\bigl(\crist-\epsilon/2\bigr)\alpha^2n^2.
  \end{align*}
  Since $1 - (\crist-\epsilon/2)/2\crist[k]\geq1/2$
  and $\crist-\epsilon/2\geq\rho$,
  \begin{align*}
    \biggl(\alpha  - \frac{\rho  -\beta  }{2\crist[k]}\biggr)(\rho  -\beta )n^2
      \geq \frac{\rho\alpha^2n^2}{2}=2\rho\delta' n^2.
  \end{align*}
  Hence $Y_2-X_1\geq (2 -1/2 - 9/100)\rho\delta' n^2>0$ w.o.p., and 
  $Y_1-X_2\geq (2-1/2-8/100)\rho\delta' n^2>0$ w.o.p.
  Now \thref{lem:X} applies to prove
  that $\infset[(R_1,S_1)_{n_0}]_{n/2-n_0}$
  and $\binfset[(R_2,S_2)_{n-n_0}]_{n/2-n_0}$ contain a common cell, 
  proving that $(R_1,S_1)_{n_0}\to (R_2,S_2)_{n-n_0}$ w.o.p.
  Since $(0,0)_0\to(R_1,S_1)_{n_0}$ and $(R_2,S_2)_{n-n_0}\to(u,t)_n$ by construction,
  this proves that $(0,0)\to(u,t)_n$ w.o.p.
  \end{proof}
  
  All that remains is to apply the previous proposition
  for all cells $(u,t)_n$ in a box.
  
  \begin{proof}[Proof of \thref{lem:box}]
    Without loss of generality we can take $n$ to be even, since $(0,0)_0\to(0,0)_1$
    always holds and then we can apply the result starting at step~$1$.
    We may also assume that $0<2\epsilon\leq\rho\leq\crist-2\epsilon$
    rather than $0<\epsilon\leq\rho\leq\crist-\epsilon$.
    Take $\delta=\epsilon^2/2048$.
    Let $\rho_t:=t/n$, and observe that
    $\epsilon\leq \rho_t\leq\crist-\epsilon$ for $t\in\ii[\big]{\rho(1-\delta)n,\,\rho(1+\delta) n}$.
    As before, we say that an event happens with overwhelming probability
    if it occurs with probability $1-Ce^{-cn}$ for constants $c,C>0$
    that may depend on $\epsilon$ and $\lambda$.
    Applying \thref{prop:one.cell.box} over a range of $O(n^2)$ choices of $u$
    and $O(n)$ choices of $t$,
    it holds with overwhelming probability that
    \begin{align*}
      \biggl\{ (u,t)_n\colon\ 
      t\in\ii[\Big]{\rho(1-\delta)n,\,\rho(1+\delta) n},\ 
      u\in\ii[\Big]{\frac{\rho_{t}}{2}(1-\delta') n^2,\,\frac{\rho_{t}}{2}(1+\delta') n^2}
       \biggr\}\subseteq\infset[(0,0)_0]_n,
    \end{align*}
    where $\delta'=\epsilon^2/1024=2\delta$.
    To prove \eqref{eq:box.stmt}, it suffices to show that
    for $t\in\ii[\big]{\rho(1-\delta)n,\,\rho(1+\delta) n}$,
    \begin{align*}
      \ii[\Big]{\frac{\rho}{2}(1-\delta) n^2,\,\frac{\rho}{2}(1+\delta) n^2}
      \subseteq \ii[\Big]{\frac{\rho_{t}}{2}(1-\delta') n^2,\,\frac{\rho_{t}}{2}(1+\delta') n^2}.
    \end{align*}
    And this holds because $\delta'=2\delta'$ and hence
    for $t\in\ii[\big]{\rho(1-\delta)n,\,\rho(1+\delta) n}$,
    \begin{align*}
      \rho_t(1-\delta')&\leq \rho(1+\delta)(1-\delta')\leq\rho(1-\delta)\\\intertext{and}
      \rho_t(1+\delta')&\geq \rho(1-\delta)(1+\delta')\geq\rho(1+\delta).\qedhere
    \end{align*}    
  \end{proof}

\section{Upper tail bounds around the critical density}
\label{sec:upper.bound}

Let $X_n$ be the highest row infected by layer percolation at step~$n$,
as in \thref{def:cristk}.
In Section~\ref{sec:critical.density}, we proved that $X_n/n\to\crist$
in probability and gave an exponential bound on the lower tail of $X_n$.
 In this section we complete the picture by proving
an upper tail bound on $X_n$.

\begin{prop}\thlabel{thm:large.deviations}
  Consider layer percolation with parameter $\lambda>0$, and fix $\rho>\crist(\lambda)$. There exist
  $C,c>0$ depending only on $\lambda$ and $\rho$ such that
  \begin{align*}
    \P[ X_n \geq\rho n  ] \leq Ce^{-cn}.
  \end{align*}
\end{prop}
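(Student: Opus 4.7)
The plan is a block construction argument leveraging the box lemma (\thref{lem:box}) together with the shift invariance of \thref{prop:shift}. Given $\rho>\crist$, I would pick an intermediate $\rho'\in(\crist,\rho)$ and a block size $k$ large enough that $\crist[k]<\rho'$, which is possible because $\crist[k]\to\crist$ by \thref{prop:subadditive}. Assuming $n=Nk$, write
\begin{align*}
  X_n=\sum_{j=1}^N Y_j,\qquad Y_j:=X_{jk}-X_{(j-1)k}\in[0,k],
\end{align*}
and let $\mathcal F_{jk}$ denote the $\sigma$-algebra generated by layer percolation through step $jk$. The target is to show $\sum Y_j<\rho n$ with probability at least $1-Ce^{-cn}$.

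The heart of the argument is a conditional tail bound on $Y_j$. By \thref{prop:shift}, for every cell $c\in\zeta_{(j-1)k}$, the maximum row gain $\tilde X_k^{(c)}$ of the forward infection from $c$ in $k$ fresh steps satisfies $\tilde X_k^{(c)}\eqd X_k$, so $Y_j\le\max_{c\in\zeta_{(j-1)k)}}\tilde X_k^{(c)}$. The maximum ranges over many correlated copies of $X_k$, but the correlation is favorable: cells in the same antidiagonal of $\zeta_{(j-1)k}$ enter the same layer at step $(j-1)k+1$ and so produce infection sets that agree up to vertical translation, collapsing the maximum to one over the distinct antidiagonals represented in $\zeta_{(j-1)k}$. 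The upper-right bound \thref{lem:ur.bound} combined with \thref{lem:box} then shows that $\zeta_{(j-1)k}$ meets at most $O(n^2)$ antidiagonals with overwhelming probability, so on the corresponding good event
\begin{align*}
  \P\bigl[Y_j\ge\rho' k\bigm|\mathcal F_{(j-1)k}\bigr]\le Cn^2\,\P[X_k\ge\rho' k]+Ce^{-cn}.
\end{align*}

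Once the conditional bound $\P[Y_j\ge\rho' k\mid\mathcal F_{(j-1)k}]\le q$ holds for some $q<1$, the indicators $B_j=\mathbf 1\{Y_j\ge\rho' k\}$ form a sub-Bernoulli sequence, and $\sum B_j$ is stochastically dominated by $\Bin(N,q)$. A Chernoff bound concentrates this sum around $qN$ with exponential tails, giving
\begin{align*}
  X_n=\sum_{j=1}^N Y_j\le qNk+(1-q)N\rho' k<\rho n
\end{align*}
with probability at least $1-Ce^{-cn}$, provided $q$ is sufficiently small (specifically $q<(\rho-\rho')/(1-\rho')$).

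The principal obstacle will be extracting the required smallness of $q$. The Markov inequality applied to $\E X_k/k=\crist[k]<\rho'$ supplies a starting estimate $\P[X_k\ge\rho' k]\le\crist[k]/\rho'<1$, but through the polynomial loss $Cn^2$ this is not yet useful — and there is a genuine circularity in that bounding the upper tail of $X_n$ appears to require bounding the upper tail of $X_k$. My plan to resolve this is a bootstrap, iterating the block decomposition at successively growing block scales (e.g.\ $k$ taken as a slowly growing function of $n$, or nested applications of the block argument) so that the polynomial-in-$n$ factor is absorbed by successive improvements of $q_k$ until exponential decay $q_k\le e^{-ck}$ is reached. This is the technically delicate step; the remaining ingredients — the shift invariance, the antidiagonal collapse, the box lemma, and the conditional Chernoff bound — then assemble cleanly to yield the stated exponential tail on $X_n$.
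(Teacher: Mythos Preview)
Your block decomposition and the sub-Bernoulli/Chernoff endgame are correct in spirit, but there are two gaps. First, the antidiagonal collapse fails for $k>1$: cells in the same antidiagonal at step $(j-1)k$ do share a layer one step later, so their one-step images are vertical translates, but those translated cells then lie in \emph{different} antidiagonals (offset by the row difference), so from step $(j-1)k+2$ onward they feed into different layers and the row gains decouple. You can still union over all cells of $\zeta_{(j-1)k}$ via \thref{prop:shift}, paying $O(n^3)$ rather than $O(n^2)$, so this is not fatal. The real obstruction is the one you yourself flag: the polynomial prefactor makes the conditional bound useless until $\P[X_k\ge\rho'k]$ is already known to decay faster than any polynomial, which is essentially the statement being proved. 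The bootstrap you sketch is not worked out, and I do not see how to close it: growing $k$ with $n$ kills the Chernoff rate $e^{-cn/k}$, while iterating at fixed scales reproduces the same polynomial loss at each level.

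The paper sidesteps the bootstrap by making the union over starting cells \emph{local at scale $k$} rather than global at scale $n$. It tiles each step $ik$ into $k^2\times k$ boxes and records, for an infection path, which box it occupies at step $ik$ as a displacement $(a_i,b_i)$ relative to the minimal path from the previous box. Two estimates per block then combine: \thref{lem:box.spread} gives $\P[a_i\ge t]\le Ce^{-\kappa t}$, so summing over all horizontal box choices contributes only an absolute constant per block, independent of $n$; and \thref{lem:badbox} shows that $p_k:=\P[\text{some cell in a fixed }k^2\times k\text{ box gains }\ge\rho'k\text{ rows in }k\text{ steps}]\to0$ as $k\to\infty$. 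The latter is where \thref{lem:box} actually enters---not to bound $|\zeta_{(j-1)k}|$, but to argue that a bad $k^2\times k$ box at step $\approx\epsilon n$ forces, via the box lemma on $\ii{0,\epsilon n}$, the single cell $(0,0)_0$ to be bad at step $(1+\epsilon)n$, reducing the box event to the single-cell convergence \eqref{eq:badcell}. With $k$ fixed large enough that $p_k$ is sufficiently small, the union over all $k$-signatures decays like $2^{-n/k}$.
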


In Section~\ref{sec:peanut.butter}, we will translate this bound to odometers,
thus proving that activated random walk on an interval
rarely stabilizes with a density above $\crist$.

Let $\badcell^{\rho}_n(r,s)_k$ denote the event that $(r,s)_k$ infects
a cell $\rho n$ rows above it in $n$ steps, i.e.,
the infection set $\infset[(r,s)_k]_n$ contains a cell $(r',s')_{n+k}$ with $s'-s\geq \rho n$.
Let $\badbox^\rho_n(r,s)_k$ denote the event that $\badcell^\rho_n(r',s')_k$
holds for some $(r',s')_k$ with $r\leq r'<r+n^2$
and $s\leq s'<s+n$. 
The probabilities of both events depend on $\rho$, $\lambda$, and $n$ but not on the cell $(r,s)_k$:
the step~$k$ is irrelevant since the infections at different steps of layer percolation are i.i.d.,
and $r$ and $s$ are irrelevant by \thref{prop:shift}. We write $\badcell^\rho_n$ and
$\badbox^\rho_n$ as abbreviations for $\badcell^\rho_n(0,0)_0$
and $\badbox^\rho_n(0,0)_0$.

We start with a sketch of the proof of \thref{thm:large.deviations}.
Our goal is to produce an exponential
bound on the probability of $\badcell^\rho_n$.
From the convergence in probability of $X_n/n$ to $\crist$,
we have the weaker statement
\begin{align}\label{eq:badcell}
  \lim_{n\to\infty}\P\bigl[\badcell^\rho_n\bigr]=0.
\end{align}
Our first step is to prove \thref{lem:badbox}, which improves this to
\begin{align}\label{eq:badbox}
  \lim_{n\to\infty}\P\bigl[\badbox^\rho_n\bigr]=0.
\end{align}
We cannot use a union bound here since we have no rate of
convergence for \eqref{eq:badcell}, but the result is
an easy consequence of the regularity established by \thref{lem:box}.

Next, we take $k$ to be a large but fixed constant, and we partition the cells at each level
into boxes of size $k^2\times k$. We fix a sequence of boxes at steps $k,2k,\ldots, jk$ for $j=\floor{n/k}$,
say with lower-left corners $(r_i,s_i)_{ik}$ for $1\leq i\leq j$.
We consider the event that there exists an infection path from $(0,0)_0$ through these
boxes ending beyond row $\rho n$. 
This event can only occur if $\badbox^{\rho'}_k(r_i,s_i)_{ik}$ occurs for a positive
proportion of $i\in\ii{1,j}$, where $\rho'$ is some fixed value between $\crist$ and $\rho$.
Since these events $\badbox^{\rho'}_k(r_i,s_i)_{ik}$ are independent, the probability
of this occurring is exponentially small in $j$, and
by \eqref{eq:badbox} we can control the rate by increasing $k$.

This exponential bound applies only to a single, fixed sequence of boxes.
We would like to apply it in a union bound over all possible sequences of boxes,
but there are too many. Using the tools provided in Section~\ref{sec:bounds.on.infection.paths},
however, we can show that most sequences of boxes are unlikely to have infection paths through them,
since the infection set from any particular box after $k$ steps is concentrated on a $k^2\times k$
set of cells, i.e., on $O(1)$ many boxes. Thus there are effectively only
$C^j$ sequences of boxes to consider, with $C$ independent of $k$.
Increasing $k$ until our exponential probability bound beats this
rate of growth $C^j$, we prove \thref{thm:large.deviations}.

We start now on the proof now, first extending \eqref{eq:badcell} to boxes of size $n^2\times n$.

\begin{lemma}\thlabel{lem:badbox}
  For any fixed $\rho>\crist$ and $\lambda>0$,
  \begin{align*}
    \lim_{n\to\infty}\P\bigl[\badbox^{\rho}_n\bigr]=0.
  \end{align*}
\end{lemma}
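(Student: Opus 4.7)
My plan is to use \thref{lem:box} to embed the target $n^2\times n$ box into the infection set of a cell placed earlier in the process, and then invoke the single-cell limit \eqref{eq:badcell}. Fix $\rho>\crist$ and choose $\rho_0\in(0,\crist)$ together with the parameter $\delta=\delta(\min(\rho_0,\crist-\rho_0))$ from \thref{lem:box}. I extend the layer percolation backward to a single infected cell $(0,0)_{-m}$, for $m=m(n)$ to be specified. Applied at $(0,0)_{-m}$, \thref{lem:box} yields a deterministic box
\[
\Bb_m=\ii[\Big]{\tfrac{\rho_0}{2}(1-\delta)m^2,\,\tfrac{\rho_0}{2}(1+\delta)m^2}\times\ii[\Big]{(\rho_0-\delta)m,\,(\rho_0+\delta)m}
\]
at step $0$, satisfying $\Bb_m\subseteq\infset[(0,0)_{-m}]_m$ with probability at least $1-Ce^{-cm}$. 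Choosing $m\geq n/(2\delta)$ (and $m\geq n/\sqrt{\rho_0\delta}$) guarantees $\Bb_m$ is large enough to contain a translated copy of $\ii{0,n^2-1}\times\ii{0,n-1}$.

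Next, by the translation invariance of layer percolation furnished by \thref{prop:shift}, $\P[\badbox^{\rho}_n]=\P[\badbox^{\rho}_n(r_0,s_0)_0]$ for any $(r_0,s_0)$. I place $(r_0,s_0)$ at the lower-left corner of $\Bb_m$, namely $r_0=\tfrac{\rho_0}{2}(1-\delta)m^2$ and $s_0=(\rho_0-\delta)m$, so that the translated target box sits inside $\Bb_m$. On the intersection of $\{\Bb_m\subseteq\infset[(0,0)_{-m}]_m\}$ with $\badbox^{\rho}_n(r_0,s_0)_0$, some cell $(r,s)$ in the translated target box infects a cell at row at least $s+\rho n\geq s_0+\rho n$ in $n$ more steps; composing infections, $(0,0)_{-m}$ infects a cell at row at least $(\rho_0-\delta)m+\rho n$ at step $m+n$. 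This is precisely $\badcell^{\rho^*}_{m+n}(0,0)_{-m}$ for
\[
\rho^*=\frac{(\rho_0-\delta)m+\rho n}{m+n}.
\]
Provided $\rho^*>\crist$, the limit \eqref{eq:badcell} gives $\P[\badcell^{\rho^*}_{m+n}(0,0)_{-m}]\to 0$, and combined with the exponential bound $1-Ce^{-cm}$ from \thref{lem:box} this yields $\P[\badbox^{\rho}_n]\to 0$.

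The main technical hurdle is choosing $\rho_0$, $\delta$, and $m$ so that the two competing constraints hold simultaneously: covering the target box forces $m\gtrsim n/\delta$, while $\rho^*>\crist$ rearranges to $m\lesssim n(\rho-\crist)/(\crist-\rho_0+\delta)$. Since $\delta$ depends quadratically on $\crist-\rho_0$ in \thref{lem:box}, tuning $\rho_0$ near $\crist$ shrinks both sides of this inequality at different rates, and the heart of the proof will be verifying that a valid $m=m(n)$ exists for every $\rho>\crist$. Once this balance is secured, the remaining pieces---the box-lemma event, the translation invariance, and \eqref{eq:badcell}---slot in directly to finish the argument.
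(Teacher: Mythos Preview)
Your two constraints are incompatible, so the strategy fails as written. Combining $m\geq n/(2\delta)$ with $m<n(\rho-\crist)/(\crist-\rho_0+\delta)$ forces
\[
\crist-\rho_0+\delta<2\delta(\rho-\crist).
\]
When $\rho-\crist<\tfrac12$---the only case that matters, since the lemma's application in Section~\ref{sec:upper.bound} takes $\rho$ arbitrarily close to $\crist$---the right side is less than $\delta$, so the inequality would require $\crist-\rho_0<0$, which is false. More generally, \thref{lem:box} gives $\delta\leq\eta\epsilon^2$ with $\epsilon\leq\crist-\rho_0$, so the lower bound $n/(2\delta)\gtrsim n/(\crist-\rho_0)^2$ always dominates the upper bound $n(\rho-\crist)/(\crist-\rho_0)$ as $\rho_0\to\crist$, leaving no window for $m$. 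The obstruction is structural: swallowing a box of height $n$ costs $m\gtrsim n/\delta$ steps, and those steps dilute the effective rate from $\rho$ down to essentially $\rho_0<\crist$.

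The paper's remedy is a preliminary union bound: tile the $n^2\times n$ target into $O(\delta^{-2})$ sub-boxes of size $\delta n^2\times\delta n$, so that $\P[\badbox^\rho_n]\leq\delta^{-2}\P[\badboxdelta^\rho_n]$ for the analogous sub-box event. With the target shrunk by this extra factor of $\delta$, the box-lemma output covers it after only $O(\epsilon n)$ steps, the rate is diluted merely to $\rho/(1+\epsilon)>\crist$, and \eqref{eq:badcell} finishes. Your plan becomes correct once this union-bound step is inserted; without it the target box is simply too tall for the box lemma to reach in few enough steps.
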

\begin{proof}
  For $\theta>0$ to be specified, let $\badboxdelta^{\rho}_n(r,s)_k$ denote the same event
  as $\badbox^{\rho}_n(r,s)_k$, except that the box has dimensions $\theta n^2\times \theta n$ rather
  than $n^2\times n$. That is, $\badboxdelta^\critical_n(r,s)_k$ denotes the event that
  that there exist $r'\in\ii{r,r+\theta n^2-1}$
  and $s'\in\ii{s,s+\theta n-1}$ such that $\badcell^\critical_n(r',s')_k$ holds.
  By a union bound, for any $0<\theta<1$,
  \begin{align}\label{eq:delta.mod}
    \P\bigl[\badbox_n^{\rho}\bigr]\leq 
      \ceil[\bigg]{\frac{1}{\theta^2}}\P\bigl[\badboxdelta^{\rho}_n(0,0)_0\bigr].
  \end{align}
  Thus it suffices to show $\P\bigl[\badboxdelta_n^\rho(r,s)_k\bigr]\to 0$ for 
  some $\theta>0$ not depending on $n$.
  
  For some $r$, $s$, $k$, and $\theta$ to be given, consider the events
  $\badboxdelta^\rho_n(r,s)_k$ and
  \begin{align*}
    \infectbox(r,s)_k := \Bigl\{ (r',s')_k\in\infset[(0,0)_0]_k\text{ for all $r\leq r'<r+\theta n^2$
      and $s\leq s'<s+\theta n$}\Bigr\}.
  \end{align*}
  If $\infectbox(r,s)_k$ and $\badboxdelta^\rho_n(r,s)_k$ both occur, then
  all cells in $\ii{r,r+\theta n^2-1}\times\ii{s,s+\theta n-1}$
  are infected from $(0,0)_0$ at step~$k$, and some cell in this box
  infects a cell at step~$n+k$ at or above row~$s+\rho n$.
  Hence $\badcell_{n+k}^{\rho'}(0,0)_0$ occurs
  for any $\rho'\leq(s+\rho n)/(n+k)$. 
  Restating this conclusion symbolically, for any $\rho'\leq(s+\rho n)/(n+k)$,
  \begin{align}\label{eq:event.rel}
    \badboxdelta^\rho_n(r,s)_k\subseteq(\infectbox(r,s)_k)^c\cup\badcell_{n+k}^{\rho'}(0,0)_0.
  \end{align}
  Now, to bound the probability of $\badboxdelta^\rho_n(r,s)_k$,
  we will bound the failure probability of $\infectbox(r,s)_k$ with \thref{lem:box}
  and bound the probability of $\badcell_{n+k}^{\rho'}(0,0)_0$ with  \eqref{eq:badcell}.

  We choose $r$, $s$, $k$, and $\theta$ now and carry out the proof.
  First choose $\eta>0$ small enough that $\rho':=\rho/(1+\eta)>\crist$,
  and let $k=\floor{\eta n}$.  
  Let $\pi=\crist/2$. Applying \thref{lem:box} with $\pi$ playing the role of $\rho$
  and with $\epsilon=\crist/2$, all cells at step~$k$ in the box
  \begin{align*}
    \Blb(\pi,\delta):=\ii[\bigg]{\frac{\pi}{2}(1-\delta) k^2,\,\frac{\pi}{2}(1+\delta) k^2}
    \times  \ii[\Big]{\pi(1-\delta)k,\,\pi(1+\delta) k}
  \end{align*}
  are infected with probability approaching $1$ as $k\to\infty$, where $\delta$
  is a constant depending only on $\lambda$ (which determines $\epsilon$ here).
  Thus, we set
  \begin{align*}
    r &= \ceil[\bigg]{\frac{\pi}{2}(1-\delta) k^2},\quad\quad
    s = \ceil[\big]{\pi(1-\delta)k},\quad\quad\text{and}\quad\quad
    \theta = \frac{\delta \pi\eta^2}{2}.
  \end{align*}
  Note that $r$, $s$, and $k$ depend on $n$ but $\theta$ is fixed.
  We have
  \begin{align*}
    \ii{r,r+\theta n^2-1}\times\ii{s,s+\theta n-1}\subseteq
      \Blb(\pi,\delta),
  \end{align*}
  and hence $\P\bigl[\infectbox(r,s)_k\bigr] \to 1$  as $n\to\infty$.
  And since $\rho'>\crist$, we have $\P\bigl[\badcell^{\rho'}_{n+k}\bigr]\to 0$
  as $n\to\infty$.
  Since $\rho'\leq (s+\rho n)/(n+k)$, we can apply \eqref{eq:event.rel} to conclude that
  \begin{align*}
    \lim_{n\to\infty}\P\bigl[\badboxdelta^\rho_n(r,s)_k\bigr]=0.
  \end{align*}
  Noting that the probability of $\badboxdelta^\rho_n(r,s)_k$ does not depend on 
  $(r,s)_k$ and applying \eqref{eq:delta.mod} completes the proof.
\end{proof}

Fix a positive integer $k$ and $\crist<\rho'<\rho$ and consider a sequence of cells
\begin{align}\label{eq:signatured}
  (r_0,s_0)_0,\,(r_1,s_1)_1,\,\ldots,\,(r_{jk},s_{jk})_{jk}
\end{align}
that might or might not be an infection path. Typically we let $j=\floor{n/k}$,
so that our potential infection path is a sequence of cells nearly of length~$n$.
We will associate with this sequence
an object $(a,b,c)$ with $a=(a_1,\ldots,a_j)$, $b=(b_1,\ldots,b_j)$, and $c=(c_1,\ldots,c_j)$
that we call the \emph{$k$-signature} of the possible infection path; note that it
will also depend on $\rho'$.

To summarize the $k$-signature before we define it, imagine partitioning the cells at step~$ik$
of layer percolation into boxes
\begin{align*}
  \bx{i}{x}{y}=\ii{xk^2,(x+1)k^2-1}\times\ii{yk,(y+1)k-1},\qquad x\geq 0,\ y\geq 0.
\end{align*}
For $i=0,\ldots,j$, let $x_i=\floor{r_{ik}/k^2}$ and $y_i=\floor{s_{ik}/k^2}$, so that
$\bx{i}{x_i}{y_i}$ specifies the box that \eqref{eq:signatured} goes through at step~$ik$.
\begin{itemize}
  \item $a_1,\ldots,a_j$ encodes the same information as $x_1,\ldots,x_j$, normalized so that
    the columns of each box are specified relative to the minimal infection path from the previous
    box;
  \item $b_1,\ldots,b_j$ encodes the same information as $y_1,\ldots,y_j$, normalized as above;
  \item each $c_i$ encodes whether the infection path increases in row by an unusual amount
    from step~$(i-1)k$ to step~$ik$, i.e., whether $s_{ik}-s_{(i-1)k}$ is unusually large.
\end{itemize}
Eventually, we will bound
the probability of the existence of an infection path with a given $k$-signature.
Then, considering all possible choices of $a_i$ and $b_i$ and all choices of $c_i$ consistent
with an infection path confirming $\badcell^{\rho}_n$, we will take a union bound
over $k$-signatures. In principle we could work directly with $x_0,\ldots,x_j$ and $y_0,\ldots,y_j$ rather
than transforming them to define the $k$-signature,
but it will be simpler to work with the transformed version.

Let us define the $k$-signature now.
Fix $i\in\{1,\ldots,j\}$.
Let $u_0=x_{i-1}k^2$ and $t_0=y_{i-1}k$, so that $(u_0,t_0)_{(i-1)k}$ is the lower-left
corner of the box $\bx{i-1}{x_{i-1}}{y_{i-1}}$ containing cell $(r_{(i-1)k},s_{(i-1)k})_{(i-1)k}$.
Let
\begin{align}\label{eq:minpathi}
  (u_0,t_0)_{(i-1)k}\to(u_1,t_0)_{(i-1)k+1}\to\cdots\to (u_k,t_0)_{ik}
\end{align}
be the minimal infection path starting from $(u_0,t_0)_{(i-1)k}$, as defined in
Section~\ref{sec:bounds.on.infection.paths}.
Let $\xmin_i = \floor{u_k/k^2}$ and $\ymin_i=\floor{t_0/k}=y_{i-1}$.
Thus $\bx{i}{\xmin_i}{\ymin_i}$ is the lower-left box at step~$ik$ containing
a cell infected starting from within $\bx{i-1}{x_{i-1}}{y_{i-1}}$
by \thref{lem:ll.bound}\ref{i:ll.bound.a}.
We now define $a_i=x_i-\xmin_i$ and $b_i=y_i-\ymin_i=y_i-y_{i-1}$.
Finally, define $(c_1,\ldots,c_j)$ by
\begin{align*}
  c_i = \1\{s_{ik} - s_{(i-1)k}> \rho'k\}\in\{0,1\}.
\end{align*}

An arbitrary sequence of cells \eqref{eq:signatured} could have any $a_1,\ldots,a_n$
and $b_1,\ldots,b_n$ in its $k$-signature. But if \eqref{eq:signatured} is truly an infection path, 
we must have $a_i\geq 0$ and $0\leq b_i\leq 1$ for all $i$:
\begin{lemma}\thlabel{lem:allowable.signatures}
  If $(a,b,c)$ is the $k$-signature of an infection path
  \begin{align*}
    (0,0)_0=(r_0,s_0)_0\to(r_1,s_1)_1\to\cdots\to(r_{jk},s_{jk})_{jk},
  \end{align*}  
  then $a_i\geq 0$, $b_i\in\{0,1\}$, and $c_i\in\{0,1\}$ for all $i\in\ii{1,j}$.
\end{lemma}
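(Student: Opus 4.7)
The proof is essentially a check that each of the three components of the $k$-signature lies in its claimed range, so I would organize it as three short arguments.

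First, for $c_i \in \{0,1\}$: this is immediate from the definition of $c_i$ as an indicator random variable, so no work is needed here.

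For $a_i \geq 0$, the plan is to apply the lower-left bound lemma (\thref{lem:ll.bound}). Let $(u_0,t_0)_{(i-1)k}$ be the lower-left corner of the box $\bx{i-1}{x_{i-1}}{y_{i-1}}$. Since this is the box containing the cell $(r_{(i-1)k},s_{(i-1)k})_{(i-1)k}$ of the infection path, we have $r_{(i-1)k}\geq u_0$ and $s_{(i-1)k}\geq t_0$. By \thref{lem:ll.bound}\ref{i:ll.bound.a}, the minimal infection path starting from $(u_0,t_0)_{(i-1)k}$ is a lower-left bound for the infection sets $\infset[(r_{(i-1)k},s_{(i-1)k})_{(i-1)k}]_0,\infset[(r_{(i-1)k},s_{(i-1)k})_{(i-1)k}]_1,\ldots$. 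In particular, since $(r_{ik},s_{ik})_{ik}$ lies in this infection set, we have $r_{ik}\geq u_k$, which upon taking floors gives $x_i=\floor{r_{ik}/k^2}\geq\floor{u_k/k^2}=\xmin_i$, i.e., $a_i\geq 0$.

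For $b_i\in\{0,1\}$, I would establish the two inequalities separately. The lower bound $b_i\geq 0$ comes from the same lower-left bound argument: the minimal infection path from $(u_0,t_0)_{(i-1)k}$ keeps row constant at $t_0=y_{i-1}k$, so $s_{ik}\geq y_{i-1}k$ and hence $y_i\geq y_{i-1}$. The upper bound $b_i\leq 1$ comes from the fact that in layer percolation, a cell in row~$s$ can only infect cells in rows $s$ and $s+1$; therefore any infection path can increase its row by at most $1$ per step, giving $s_{ik}\leq s_{(i-1)k}+k$. Combining this with $s_{(i-1)k}<(y_{i-1}+1)k$ (which holds since $y_{i-1}=\floor{s_{(i-1)k}/k}$) yields $s_{ik}<(y_{i-1}+2)k$, so $y_i\leq y_{i-1}+1$.

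There is no real obstacle here — the main content is just recognizing that the minimal infection path plays the role of a lower-left bound and that the row increment per step is at most one. All three claims follow by direct computation once these observations are made.
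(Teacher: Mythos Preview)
Your proposal is correct and follows essentially the same approach as the paper's proof: both use \thref{lem:ll.bound} to get $r_{ik}\geq u_k$ and $s_{ik}\geq t_k$ for the lower bounds $a_i\geq 0$ and $b_i\geq 0$, use the one-row-per-step property of layer percolation for $b_i\leq 1$, and note that $c_i$ is an indicator. Your write-up is in fact slightly more explicit than the paper's in deriving $y_i\leq y_{i-1}+1$ from $s_{ik}<(y_{i-1}+2)k$.
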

\begin{proof}
  Fix $i$ and consider the minimal infection path \eqref{eq:minpathi}
  starting from the lower-left corner of the box containing $(r_{(i-1)k},s_{(i-1)k})_{(i-1)k}$.
  We have $(r_{ik},s_{ik})_{ik}\in\infset[(r_{(i-1)k},s_{(i-1)k})_{(i-1)k}]_k$,
  and hence $r_{ik}\geq u_k$ and $s_{ik}\geq t_0$ by \thref{lem:ll.bound}\ref{i:ll.bound.a}.
  By definition of $a_i$ and $b_i$, we have $a_i\geq 0$ and $b_i\geq 0$.
  And $b_i\leq 1$ because an infection path can move up at most one row in each step,
  and hence $s_{ik}-s_{(i-1)k}\leq k$. And $c_i\in\{0,1\}$ since it is an indicator
  by definition.  
\end{proof}

Next, we apply \thref{lem:ll.bound,lem:ur.bound} to show that an infection
path starting from a box is unlikely to move too far from the minimal infection
path from the box. The point of this
is to show that it is unlikely for there to exist an infection path whose
$k$-signature has an especially large $a_i$.
\begin{lemma}\thlabel{lem:box.spread}
  Given $x,y\geq 0$, let $\bx{1}{x_{\min}}{y}$ be the box at step~$k$ containing
  the minimal infection path starting from $(xk^2,yk)_0$, the lower-left corner
  of  $\bx{0}{x}{y}$.
  For any $t\geq 0$, the probability that there is
  an infection path starting within $\bx{0}{x}{y}$ and ending in some
  box $\bx{1}{x'}{y'}$ with $x'-x_{\min}\geq t$ is at most $Ce^{-ct}$
  for absolute constants $c,C>0$.
\end{lemma}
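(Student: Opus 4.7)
The plan is to dominate every infection path originating in $\bx{0}{x}{y}$ by the upper-right cell sequence of \thref{lem:ur.bound} starting from the top-right corner of the box, and to read off $x_{\min}$ from the minimal infection path starting at the bottom-left corner. Let $(u_n)_{n\geq 0}$ be the minimal infection path from $(xk^2, yk)_0$, so that $x_{\min} = \lfloor u_k/k^2 \rfloor$, and let $(r_n)_{n\geq 0}$ be the upper-right cell sequence from $\bigl((x+1)k^2 - 1,\,(y+1)k - 1\bigr)_0$. By \thref{lem:ur.bound}\ref{i:ur.bound.a}, every infection path from within the box ends at step $k$ in a column at most $r_k$. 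Hence, if such a path ends in a box $\bx{1}{x'}{y'}$ with $x' - x_{\min} \geq t$, then $r_k \geq (x_{\min}+t)k^2$, whereas $u_k < (x_{\min}+1)k^2$, which forces $W_k \geq (t-1)k^2$, where $W_n := r_n - u_n$. It thus suffices to establish $\P[W_k \geq (t-1)k^2] \leq Ce^{-ct}$.

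Next I would identify the law of $(W_n)$ by coupling the two sequences to the common $\Geo(1/2)$ layer variables $R_j^{(n)}$ from the definition of layer percolation. Since the minimum of $\layer_n(j)$ is $R_0^{(n)} + \cdots + R_{j-1}^{(n)}$ and the maximum is $R_0^{(n)} + \cdots + R_j^{(n)}$,
\[ W_n = R_{u_{n-1} + yk}^{(n)} + \cdots + R_{r_{n-1} + (y+1)k + n - 2}^{(n)}, \]
so conditional on $W_{n-1}$, the variable $W_n$ is a sum of $W_{n-1} + k + n - 1$ independent $\Geo(1/2)$ random variables, with $W_0 = k^2 - 1$; in particular, $\E W_k = \Theta(k^2)$.

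The heart of the argument is to show that $\E[\exp(W_k/k^2)]$ is bounded by an absolute constant, uniformly in $k$, after which Markov's inequality yields $\P[W_k \geq (t-1)k^2] \leq Ce^{-t}$. Setting $\phi(\theta) := \E[e^{\theta\Geo(1/2)}] = (2 - e^\theta)^{-1}$, the tower identity $\E[e^{\theta W_n} \mid W_{n-1}] = \phi(\theta)^{W_{n-1} + k + n - 1}$ iterates to
\[ \E\bigl[e^{\theta W_k}\bigr] = e^{\theta_0 (k^2 - 1)} \prod_{n=1}^{k} \phi(\theta_n)^{k + n - 1}, \]
with backward recursion $\theta_k = \theta$, $\theta_{n-1} = \log \phi(\theta_n)$. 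Taking $\theta = 1/k^2$ and using the Taylor expansion $\log \phi(\theta) = \theta + \theta^2 + O(\theta^3)$ near $0$, the increment $\theta_{n-1} - \theta_n = O(\theta_n^2)$ is quadratic, so the total accumulated over $k$ backward steps is $O(1/k^3)$. Consequently every $\theta_n$ stays bounded by $2/k^2$, each $\log \phi(\theta_n) \leq 3/k^2$, and summing yields $\log\E[e^{W_k/k^2}] = O(1)$.

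The main obstacle is this uniform-in-$k$ control of the MGF iteration: a priori, iterating a recursion $k$ times could distort the starting point $\theta_k$ significantly, and we need to check it does not. The quadratic nature of the correction makes the total error $O(1/k)$ times the starting value, comfortably negligible. Once the MGF bound is established, Markov's inequality at $\theta = 1/k^2$ finishes the exponential tail on $W_k$, after which the trivial range $t \leq 1$ is absorbed into the constant $C$. Note that the constants are absolute because only the $\Geo(1/2)$ layer widths, and not the sleep parameter $\lambda$, enter the column dynamics.
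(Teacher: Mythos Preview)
Your approach is correct and shares the paper's skeleton: dominate every infection path from the box by the upper-right cell sequence of \thref{lem:ur.bound}, then show that the resulting column at step~$k$ is concentrated within $O(k^2)$ of the minimal infection path. The implementations differ in two places. First, the paper invokes \thref{prop:shift} to reduce to the box $\bx{0}{0}{0}$, after which the minimal path sits identically at column~$0$; you instead couple the upper-right and minimal paths through the shared layer variables $R_j^{(n)}$ and track $W_n=r_n-u_n$ directly, which is a clean way to avoid the shift lemma. Second, the paper reads off the tail bound from the ready-made concentration result \thref{prop:GW.emigration.concentration} (with $\emax=O(k)$, $\abs{x_0}=O(k^2)$, and $t$ replaced by $(t-2)k^2$), whereas you carry out a bespoke MGF iteration. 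Your iteration is sound: the backward recursion $\theta_{n-1}=\log\phi(\theta_n)=\theta_n+\theta_n^2+O(\theta_n^3)$ accumulates at most $O(k\cdot k^{-4})=O(k^{-3})$ over $k$ steps from $\theta_k=k^{-2}$, so all $\theta_n\leq 2k^{-2}$ and $\log\E[e^{W_k/k^2}]=O(1)$ uniformly in $k$. The paper's route is shorter given the machinery already in place; yours is more self-contained and makes explicit why the constants are absolute (only the $\Geo(1/2)$ layer widths enter).
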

\begin{proof}
  Let $U$ be the maximum column infected at step~$k$ starting within
  $\bx{0}{x}{y}$, and let $(u_n,yk)_k$ lie on the minimal infection
  path from $(xk^2,yk)_0$.
  Suppose there is an infection path starting within $\bx{0}{x}{y}$ and ending in
  box $\bx{1}{x'}{y'}$ with $x'-x_{\min}\geq t$.
  Then there are at least $t-1$ boxes strictly between the ones containing columns $U$ and $u_n$,
  and hence $U-u_n\geq(t-1)k^2$.
  Thus it suffices to show that
  $\P[U-u_n\geq (t-1)k^2]\leq Ce^{-ct}$ for absolute constants $c,C$.
  By \thref{prop:shift}, this can be shown by demonstrating that with $U'$ the maximum column
  infected starting within $\bx{0}{0}{0}$,
  \begin{align}\label{eq:box.spread.goal}
    \P[U'\geq (t-1)k^2]\leq Ce^{-ct}.
  \end{align}
  
  To show this, let 
  \begin{align*}
    (k^2,k)_0=(r_0,k)_0,\ (r_1,k+1)_{1},\ (r_2,k+2)_2,\ldots,\,(r_k,2k)_{k}
  \end{align*}
  be the upper-right cell sequence from $(k^2,k)_0$, defined prior to \thref{lem:ur.bound}.
  By \thref{lem:ur.bound}\ref{i:ur.bound.a}, all infection paths starting within
  $\bx{0}{0}{0}$ arrive in step~$k$ at a column bounded by $r_k$.
  And by \thref{lem:ur.bound}\ref{i:ur.bound.b}, the sequence $(r_j+j+1)_{0\leq j\leq k}$
  is a critical geometric branching process with immigration $j+1\leq k+1$ 
  after each step $1\leq j\leq k$.
  Hence $\E[ r_k+k+1]=k^2 + k(k+1)/2+k=3(k^2+k)/2$, and
  \begin{align*}
    \P[r_k\geq tk^2] &= \P[r_k+k+1 - \E[u_k+k+1]\geq tk^2+k+1-3(k^2+k)/2]\\
      & \leq \P[r_k+k+1 - \E[u_k+k+1]\geq(t-2)k^2]\\
      &\leq C\exp\biggl(-\frac{c(t-2)^2k^4}{k(k(k+1)+k^2+k+1+(t-2)k^2)}\biggr)\leq C'e^{-c'tk}\leq C'e^{-c't}
  \end{align*}
  by \thref{prop:GW.emigration.concentration}, for absolute constants
  $c'$ and $C'$, thus establishing \eqref{eq:box.spread.goal}.
\end{proof}

For given sequences $a=(a_1,\ldots,a_j)$, $b=(b_1,\ldots,b_j)$, and $c=(c_1,\ldots,c_j)$,
we write $\Sig_{j}(a,b,c)$ (with implicit dependence on $k$ and $\rho'$)
to denote the event that there exists an infection path
starting from $(0,0)_0$ with $k$-signature $(a,b,c)$ in an
underlying layer percolation with parameter $\lambda>0$.
We will sometimes
write $\Sig_{j}(a,b,c)$ with sequences $a$, $b$, and $c$ of length greater than $j$, 
interpreting them as being truncated at their $j$th terms. For convenience we interpret
$\Sig_{0}(a,b,c)$ as the entire probability space.

Our next lemma bounds the probability of $\Sig_{j}(a,b,c)$.
Eventually, we will apply this bound via a union bound over all possible $k$-signatures
to complete the proof of \thref{thm:large.deviations}.

\begin{prop}\thlabel{lem:sig.bound}
  Let $p_k=\P\bigl[\badbox^{\rho'}_k\bigr]$. For absolute constants $C,\kappa>0$,
  it holds for all sequences $a$, $b$, and $c$ satisfying $a_i\geq 0$ and $b_i,c_i\in\{0,1\}$ for all $i$ that
  \begin{align}\label{eq:sig.bound}
    \P\bigl[ \Sig_{j}(a,b,c)\bigr]\leq\prod_{i=1}^j\min\bigl(Ce^{-\kappa a_{i}},\,p_k^{c_{i}}\bigr).
  \end{align}
\end{prop}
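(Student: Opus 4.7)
The plan is to prove the bound by induction on $j$, with the key observation that the signature is computed block-by-block and the infections in different blocks of $k$ steps are independent. Specifically, note first that $\Sig_j(a,b,c) \subseteq \Sig_{j-1}(a,b,c)$ since truncating a signature of length $j$ to its first $j-1$ entries gives the signature of the truncated infection path. So it suffices to show
\begin{align*}
    \P\bigl[\Sig_j(a,b,c)\bigr] \leq \min\bigl(Ce^{-\kappa a_j},\,p_k^{c_j}\bigr)\cdot\P\bigl[\Sig_{j-1}(a,b,c)\bigr],
\end{align*}
and then iterate.

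Let $\Fscr_{(j-1)k}$ be the $\sigma$-algebra generated by the instructions at sites $1,\ldots,(j-1)k$, which determines all infections in layer percolation through step~$(j-1)k$.  Then $\Sig_{j-1}(a,b,c)$ is $\Fscr_{(j-1)k}$-measurable, and on this event the boxes $\bx{0}{x_0}{y_0},\ldots,\bx{j-1}{x_{j-1}}{y_{j-1}}$ together with the set of infected cells inside $\bx{j-1}{x_{j-1}}{y_{j-1}}$ at step~$(j-1)k$ are all determined.  On $\Sig_{j-1}$, the event $\Sig_j$ holds if and only if some cell in the infection set at step~$(j-1)k$ that lies in $\bx{j-1}{x_{j-1}}{y_{j-1}}$ extends through the $j$th block of $k$ steps to a cell in $\bx{j}{x_j}{y_j}$ with $y_j=y_{j-1}+b_j$, $x_j-\xmin_j=a_j$, and $s_{jk}-s_{(j-1)k}>\rho'k$ precisely when $c_j=1$. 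Since the $j$th block's infections are determined by the instructions at sites $(j-1)k+1,\ldots,jk$, they are independent of $\Fscr_{(j-1)k}$.

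We now produce two separate bounds on $\P[\Sig_j\mid\Fscr_{(j-1)k}]\mathbf{1}_{\Sig_{j-1}}$.  For the $Ce^{-\kappa a_j}$ bound, apply \thref{lem:box.spread} (together with the translation invariance from \thref{prop:shift}): for the $j$th block, the probability that any infection starting from any cell in $\bx{j-1}{x_{j-1}}{y_{j-1}}$ lands in a box at step $jk$ whose column index exceeds $\xmin_j$ by at least $a_j$ is at most $Ce^{-\kappa a_j}$.  For the $p_k^{c_j}$ bound, note that if $c_j=1$ then the extension requires some cell of $\bx{j-1}{x_{j-1}}{y_{j-1}}$ to infect a cell at step~$jk$ that is $\rho'k$ rows higher; by definition this is the event $\badbox_k^{\rho'}$ based at that box, which has probability $p_k$ by shift invariance.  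If $c_j=0$, then $p_k^{c_j}=1$ and the bound is trivial.  Taking the minimum and then expectations yields the inductive step.

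The main obstacle is bookkeeping rather than mathematical depth: one must check that the events ``$x_j-\xmin_j\geq a_j$'' and ``$s_{jk}-s_{(j-1)k}>\rho'k$'' truly dominate the extension events corresponding to the signature step $(a_j,b_j,c_j)$, so that \thref{lem:box.spread} and the definition of $p_k$ can be applied with the starting box determined by $\Fscr_{(j-1)k}$.  Once the conditioning is set up correctly, independence of blocks and translation invariance (\thref{prop:shift}) turn the telescoping argument into a straightforward induction whose base case $j=0$ is trivial since $\Sig_0(a,b,c)$ is the whole probability space.
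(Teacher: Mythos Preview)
Your proposal is correct and follows essentially the same approach as the paper's proof: both argue by induction on $j$, condition on the $\sigma$-algebra $\Fscr_{(j-1)k}$ generated by the layer percolation up to step $(j-1)k$, observe that the box sequence $(x_0,y_0),\ldots,(x_{j-1},y_{j-1})$ is determined by this information, and then bound the conditional probability of the $j$th step by invoking \thref{lem:box.spread} for the $Ce^{-\kappa a_j}$ factor and the definition of $\badbox^{\rho'}_k$ (together with translation invariance) for the $p_k^{c_j}$ factor. The paper's write-up is slightly more explicit about the measurability of the box sequence with respect to $\Fscr_{(j-1)k}$, but your outline captures exactly the same argument.
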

\begin{proof}
  Let $\Fscr_i$ denote the $\sigma$-algebra generated by all
  information in the underlying layer percolation up to 
  level~$i$, i.e., the $\sigma$-algebra generated by the random variables $R_0,R_1,\ldots$
  and $B^0,B^1,\ldots$ defining the infections from level $\ell-1$ to $\ell$, for all
  $\ell$ from $1$ up to $i$.
  We will show that 
  \begin{align}\label{eq:sig.bound2}
    \P\bigl[ \Sig_{j+1}(a,b,c)\bigmid \Fscr_{jk} \bigr] \leq \min\bigl(Ce^{-\kappa a_{j+1}},\,p_k^{c_{j+1}}\bigr),
  \end{align}
  from which the proposition will follow by induction.
  First, we observe that the information in $\Fscr_{jk}$ determines the boxes on levels
  $1,k,2k,\ldots,jk$ that an infection path must go through if it is to have $k$-signature
  $(a,b,c)$. That is, 
  there is a unique sequence 
  $(x_1,y_1),\,\ldots,(x_{j},y_{j})$ measurable with respect to $\Fscr_{jk}$
  such that if $(r_{ik},s_{ik})_{ik}$ is not in $\bx{i}{x_i}{y_i}$ for any $i\in{1,j}$, then
  \begin{align*}
    (r_0,s_0)_0,\,\ldots,\,(r_{(j+1)k},s_{(j+1)k})_{(j+1)k}
  \end{align*}
  does not have $k$-signature $(a,b,c)$.
  
  To bound the left-hand side of \eqref{eq:sig.bound2} by $p_k^{c_{j+1}}$, we assume
  $c_{j+1}=1$ and must show that
  \begin{align}\label{eq:sig.bound.pk}
    \P\bigl[ \Sig_{j+1}(a,b,c)\bigmid \Fscr_{jk} \bigr] \leq p_k.
  \end{align}
  Since $\Sig_{j+1}(a,b,c)$ can only occur if $\badbox^{\rho'}_k(x_jk^2,y_jk)_{jk}$ occurs,
  and this event still has probability $p_k$ after conditioning on  
  $\Fscr_{jk}$, we obtain \eqref{eq:sig.bound.pk}.
  
  For the other bound on \eqref{eq:sig.bound}, 
  we observe that if $\Sig_{j+1,k}(a,b,c)$ occurs, then some cell in $\bx{j}{x_j}{y_j}$
  infects a cell at step~$(j+1)k$ in a box that is at least $a_{j+1}$ boxes beyond
  the box infected by the minimal infection path from $(x_jk^2,y_jk)_{jk}$.
  By \thref{lem:box.spread}, the probability of this decays exponentially
  in $a_{j+1}$, completing the proof of \eqref{eq:sig.bound2}.
  
  Finally, we observe that if $\Sig_{j+1,k}(a,b,c)$ holds, then $\Sig_{j,k}(a,b,c)$ holds.
  Since $\Sig_{j,k}(a,b,c)$ is measurable with respect to $\Fscr_{jk}$,
  it follows from \eqref{eq:sig.bound2} that
  \begin{align*}
    \P\bigl[\Sig_{j+1,k}(a,b,c)\bigr] &\leq \P\bigl[\Sig_{j,k}(a,b,c)\bigr]
       \min\bigl(Ce^{-\kappa a_{j+1}},\,p_k^{c_{j+1}}\bigr),
  \end{align*}
  and now the proposition follows by induction.
\end{proof}

We are nearly ready to prove \thref{thm:large.deviations}, which we will accomplish by
summing the bound \eqref{eq:sig.bound} over all $k$-signatures that would allow
$\badcell^{\rho}_n(0,0)_0$ to occur. First we give a technical lemma obtained
by summing the right-hand
the sum of \eqref{eq:sig.bound} over all choices of $a=(a_1,\ldots,a_j)$.
\begin{lemma}\thlabel{lem:suma}
  Let $p_k=\P\bigl[\badbox^{\rho'}_k\bigr]$.
  For fixed $b,c\in\{0,1\}^j$,
  \begin{align*}
      \sum_{a\in\NN^j}\P\bigl[\Sig_{j,k}(a,b,c)\bigr]\leq 
        A^{j}\bigl(Bp_k\log\bigl(\tfrac{1}{p_k}\bigr)\bigr)^{\abs{c}},
  \end{align*}
  where $A$ and $B$ are absolute constants and $\abs{c}$ denotes $\sum_{j=1}^jc_j$.
\end{lemma}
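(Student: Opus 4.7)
The plan is to apply \thref{lem:sig.bound} and then sum the resulting factorized bound over $a \in \NN^j$. Since the bound in \eqref{eq:sig.bound} is a product over $i$ and the summation over $a\in\NN^j$ is over independent coordinates, we have
\begin{align*}
  \sum_{a\in\NN^j}\P\bigl[\Sig_{j,k}(a,b,c)\bigr]
  \leq \prod_{i=1}^j \sum_{a_i=0}^\infty \min\bigl(Ce^{-\kappa a_i},\,p_k^{c_i}\bigr).
\end{align*}
The lemma then reduces to the single-coordinate estimate: show that the $i$th factor is at most $A$ when $c_i=0$ and at most $B p_k \log(1/p_k)$ when $c_i=1$, for absolute constants $A,B$.

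When $c_i=0$ the factor $p_k^{c_i}=1$ so the minimum is just $Ce^{-\kappa a_i}$, and the geometric series gives the constant bound $A := C/(1-e^{-\kappa})$ (which we enlarge if necessary so that $A\geq 1$, so that the final product bound $A^{j-|c|}(Bp_k\log(1/p_k))^{|c|}\leq A^j(Bp_k\log(1/p_k))^{|c|}$ holds). When $c_i=1$ the plan is the standard split-at-crossover argument: the two expressions $Ce^{-\kappa a}$ and $p_k$ are equal at $a^* := \frac{1}{\kappa}\log(C/p_k)$, so we split the sum and bound
\begin{align*}
  \sum_{a=0}^\infty \min\bigl(Ce^{-\kappa a},\,p_k\bigr)
  \leq (\lceil a^*\rceil+1)p_k + \sum_{a>\lceil a^*\rceil}Ce^{-\kappa a}
  \leq p_k\bigl(\tfrac{1}{\kappa}\log(C/p_k) + 2 + \tfrac{1}{1-e^{-\kappa}}\bigr).
\end{align*}
For $p_k$ small enough, the $\log(1/p_k)$ term dominates the constants and yields the desired $B p_k\log(1/p_k)$ bound; we only need to be careful about the edge case where $p_k$ is not small, which is handled by enlarging $B$ (or noting that the claimed inequality is vacuous when $Bp_k\log(1/p_k)\geq 1$, provided $A$ is chosen large enough to absorb the $2^j$ worst case).

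The only potentially subtle point is the handling of the regime where $p_k$ is not yet small enough that $\log(1/p_k)$ swallows the constants; this is purely a matter of choosing $A$ and $B$ large, not a genuine obstacle. Once the per-coordinate bound is established, multiplying across $i\in\ii{1,j}$ yields exactly $A^{j-|c|}(Bp_k\log(1/p_k))^{|c|}\leq A^j(Bp_k\log(1/p_k))^{|c|}$, completing the proof. This lemma will then combine with a counting argument over possible values of $b$ and $c$ (handled outside this lemma) to deliver the exponential upper tail bound in \thref{thm:large.deviations}.
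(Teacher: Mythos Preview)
Your proposal is correct and follows essentially the same approach as the paper: apply \thref{lem:sig.bound}, interchange the sum over $a\in\NN^j$ with the product to reduce to a single-coordinate sum, bound that sum by a geometric series when $c_i=0$, and by a split-at-crossover argument when $c_i=1$. The paper's proof is slightly terser about the edge case where $p_k$ is not small, but your handling (enlarge $A,B$) is fine and matches the spirit of the paper's choice of absolute constants.
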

\begin{proof}
  Applying \thref{lem:sig.bound}, we bound
  \begin{align*}
    \sum_{a\in\NN^j}\P\bigl[\Sig_{j,k}(a,b,c)\bigr]\leq 
      \sum_{a\in\NN^j}\prod_{i=1}^j\min\bigl(Ce^{-\kappa a_{i}},\,p_k^{c_{i}}\bigr)
      = \prod_{i=1}^j\sum_{\ell=0}^{\infty}\min\bigl(Ce^{-\kappa \ell},\,p_k^{c_{i}}\bigr)
  \end{align*}
  Regardless of $c_i$,
  \begin{align*}
    \sum_{\ell=0}^{\infty}\min\bigl(Ce^{-\kappa \ell},\,p_k^{c_{i}}\bigr)
      &\leq \sum_{a_i=0}^{\infty}Ce^{-\kappa \ell} \leq A,
  \end{align*}
  where $1\leq A<\infty$ is an absolute constant.
  If $c_i=1$, then we let $L=\ceil{\log(C/p_k)/\kappa}$ and break
  the sum in two parts at $L$ to obtain  
  \begin{align}\label{eq:a.bound2}
    \sum_{\ell=0}^{\infty}\min\bigl(Ce^{-\kappa \ell},\,p_k^{c_{i}}\bigr) \leq
    L p_k + \sum_{\ell=L}^{\infty}C e^{-\kappa\ell}\leq
    Lp_k + \frac{p_k}{1-e^{-\kappa}}\leq B p_k\log(1/p_k)
  \end{align}
  for an absolute constant $B$.
  Thus we obtain
  \begin{align*}
    \sum_{a\in\NN^j}\P\bigl[\Sig_{j,k}(a,b,c)\bigr]&\leq 
      A^{j-\abs{c}}(Bp_k\log(1/p_k))^{\abs{c}}.\qedhere
  \end{align*}
\end{proof}

\begin{proof}[Proof of \thref{thm:large.deviations}]
  If $\rho>1$, then the proposition holds trivially since the infection set can move upward
  by at most one row per step, and hence $X_n\leq n$ a.s.
  Thus we can assume that $\crist<\rho\leq 1$.
  
  First, we choose $\rho'\in(\crist,\rho)$ and $\beta\in(0,1)$ sufficiently small that
  $(1-\beta)\rho'+\beta<\rho$. To be concrete, take $\rho'=(\crist+\rho)/2$
  and $\beta=(\rho-\crist)/(4-2\rho-2\crist)$, which yields
  \begin{align*}
    (1-\beta)\rho' + \beta = \frac{\crist + 3\rho}{4}<\rho.
  \end{align*}
  Fix an integer $k$ to be specified later, and let $j=\floor{n/k}$.
  
  Suppose an infection path of length~$jk$ starting from $(0,0)_0$ has $k$-signature
  $(a,b,c)$ with $\abs{c}\leq\beta j$. From step~$(i-1)k$ to $ik$, the infection path
  increases in row by at most $\rho'k$ if $c_i=0$ and by at most $k$ if $c_i=1$.
  Thus at step~$jk$, the infection path is at row at most
  \begin{align*}
    (j-\abs{c})\rho'k + \abs{c}k\leq (1-\beta )\rho'n + \beta n \leq \frac{\crist + 3\rho}{4} n.
  \end{align*}
  If the infection path continues to step~$n$, it can reach at most $k$ rows higher, which
  is still less than $\rho n$ assuming $n\geq n_0(\rho,k)$. Thus we conclude
  that $\badcell^{\rho}_n(0,0)_0$ can occur only if
  $\Sig_{j,k}(a,b,c)$ occurs for $\abs{c}>\beta j$. Now by a union bound,
  \begin{align*}
    \P\bigl[ \badcell^{\rho}_n(0,0)_0 \bigr] &\leq 
      \sum_{\substack{c\in\{0,1\}^j \\\abs{c}>\beta j}} \sum_{b\in\{0,1\}^j}\sum_{a\in\NN^j}
      \P\bigl[\Sig_{j,k}(a,b,c)\bigr]
      \leq \sum_{\substack{c\in\{0,1\}^j \\\abs{c}>\beta j}}
         2^jA^{j}\bigl(Bp_k\log\bigl(\tfrac{1}{p_k}\bigr)\bigr)^{\abs{c}},
  \end{align*}
  applying \thref{lem:suma} and summing over the $2^j$ values of $b\in\{0,1\}^j$.
  By \thref{lem:badbox}, we can choose $k$ large enough that
  \begin{align*}
    4A\Bigl(Bp_k\log\bigl(\tfrac{1}{p_k}\bigr)\Bigr)^\beta \leq 1/2.
  \end{align*}
  Applying this bound and summing over the at most $2^j$ values of $c\in\{0,1\}^j$
  with $\abs{c}>\beta j$,
  \begin{align*}
    \P\bigl[ \badcell^{\rho}_n(0,0)_0 \bigr] &\leq 
      (4A)^j\bigl(Bp_k\log\bigl(\tfrac{1}{p_k}\bigr)\bigr)^{\beta j}\leq 2^{-j}\leq 2^{-n/k}
  \end{align*}
  for all $n\geq n_0(\rho, k)$. Since 
  our choice of $k$ depended only on $\rho$ and $\lambda$, we have proven the theorem.
\end{proof}

\section{Establishing the critical values}
\label{sec:critical.values}

We now translate our results on layer percolation back to odometers to
prove the paper's main results. We prove lower bounds on critical values
by constructing stable odometers leaving density $\crist-\epsilon$, and we prove upper bounds
by showing nonexistence of stable odometers
leaving a density of $\crist+\epsilon$. 
For the constructions, the proof is slightly different for each model.
In Section~\ref{sec:construction.tools}, we prove a few simple lemmas that we will use
together with \thref{lem:box} to construct odometers.
Next in Section~\ref{sec:peanut.butter} we prove a very general nonexistence proof, 
from which the upper bounds for each
individual model follow easily. Then in Section~\ref{sec:final.density.proofs}
we apply these results to the different models of ARW.
Once we are done proving equality
of all the critical values,  we prove upper and lower bounds on
$\crist$ in Section~\ref{sec:critical.bounds} that hence extend to $\critDD,$ $\critPS$, $\critFE$, and $\critCY$.

\subsection{Odometer construction tools}
\label{sec:construction.tools}

Any extended odometer on $\ii{0,n}$ obtained from a length~$n$ infection path in
layer percolation is automatically stable on $\ii{1,n-1}$.
In the next lemma, we give a criterion for stability at $n$.

\begin{lemma}\thlabel{lem:stab.criterion}
  Consider an extended  odometer $u\in\eosos{n}(\Instr,\sigma,u_0,f_0)$ on $\ii{0,n}$
  corresponding to an infection path $(0,0)_0=(r_0,s_0)_0\to\cdots (r_n,s_n)_n$
  in $\ip{n}(\Instr,\sigma,u_0,f_0)$ under the map $\Phi$ from
  Section~\ref{sec:ARW.percolation.connection}. Let $\mo$ be the minimal odometer
  of $\eosos{n}(\Instr,\sigma,u_0,f_0)$.
  Then $u$ is stable at $n$ if and only if
  \begin{align}\label{eq:stab.criterion}
    r_n = f_0 +\sum_{v=1}^n\abs{\sigma(v)} - \rt{\mo}{n} - s_n.
  \end{align}
\end{lemma}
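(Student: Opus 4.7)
The plan is to compute $h(n)$ (in the notation of \thref{def:stable}) directly from what $u$ being stable on $\ii{1,n-1}$ already forces, and then observe that stability at $n$ is equivalent to a single scalar equation relating $h(n)$ to $s_n - s_{n-1}$, which upon substitution becomes \eqref{eq:stab.criterion}.

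First I would exploit the fact that $u \in \eosos{n}(\Instr,\sigma,u_0,f_0)$, so by \thref{lem:flow} we have the flow identity
\begin{align*}
  \rt{u}{n-1} - \lt{u}{n} = f_0 + \sum_{i=1}^{n-1}\abs{\sigma(i)} - s_{n-1}.
\end{align*}
Since $u$ vanishes outside $\ii{0,n}$, we also have $\lt{u}{n+1} = 0$. Plugging these into the defining formula $h(n) = \sigma(n) + \rt{u}{n-1} + \lt{u}{n+1} - \lt{u}{n} - \rt{u}{n}$ yields
\begin{align*}
  h(n) = f_0 + \sum_{i=1}^{n}\abs{\sigma(i)} - s_{n-1} - \rt{u}{n}.
\end{align*}

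Next I would note that $u$ is stable at $n$ precisely when both conditions \ref{i:balance} and \ref{i:end.in.sleep} of \thref{def:stable} hold at $v = n$; since $s_n - s_{n-1} = \1\{\Instr_n(u(n)) = \Sleep\} \in \{0,1\}$ by definition of $s_n$, these two conditions together are equivalent to the single equation $h(n) = s_n - s_{n-1}$. Combining this with the computation of $h(n)$ above gives the equivalent condition
\begin{align*}
  \rt{u}{n} = f_0 + \sum_{i=1}^{n}\abs{\sigma(i)} - s_n.
\end{align*}

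Finally, the map $\Phi$ from Section~\ref{sec:ARW.percolation.connection} is defined so that $r_n = \rt{u}{n} - \rt{m}{n}$, so substituting $\rt{u}{n} = r_n + \rt{m}{n}$ into the displayed equation rearranges directly to \eqref{eq:stab.criterion}. There is no real obstacle here; the argument is essentially bookkeeping, and the only subtlety is recognizing that the two clauses of stability collapse to a single equation once one knows $s_n - s_{n-1}$ is already the indicator of ending in a $\Sleep$ instruction.
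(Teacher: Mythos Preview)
Your proof is correct and takes essentially the same approach as the paper: both arrive at the condition $\rt{u}{n} = f_0 + \sum_{i=1}^n\abs{\sigma(i)} - s_n$ via \thref{lem:flow} and then substitute $\rt{u}{n} = r_n + \rt{m}{n}$ from the definition of $\Phi$. The only cosmetic difference is that the paper invokes \thref{lem:flow} a second time for the site $n$, while you unpack \thref{def:stable} directly to see that stability at $n$ collapses to $h(n) = s_n - s_{n-1}$.
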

\begin{proof}
  Let $f_v=\rt{u}{v}-\lt{u}{v+1}$. Since $u$ is stable on $\ii{1,n-1}$, we have
  \begin{align}\label{eq:fsc}
    f_v=f_0+\sum_{i=1}^v\abs{\sigma(i)}-s_v
  \end{align}
  for all $v\in\ii{0,n-1}$
  by \thref{lem:flow} together with the definition of $\Phi$.
  By \thref{lem:flow} again, it is stable at $n$ as well if and only if \eqref{eq:fsc} also holds for
  $v=n$. Here $f_n=\rt{u}{n}$ since $u(n+1)=0$, and so \eqref{eq:fsc} for $v=n$ states that
  \begin{align*}
    \rt{u}{n}=f_0+\sum_{i=1}^n\abs{\sigma(i)}-s_n.
  \end{align*}
  And since $\rt{u}{n}=\rt{\mo}{n}+r_n$ under the correspondence given by $\Phi$,
  this statement holds if and only if \eqref{eq:stab.criterion} does.  
\end{proof}

Reordering and labeling \eqref{eq:stab.criterion} makes its similarity to
\thref{def:stable} more apparent:
\begin{align*}
    \underbrace{\sum_{v=1}^n\abs{\sigma(v)}}_{A} + 
         \underbrace{f_0}_{B} 
         - \underbrace{\bigl(r_n +\rt{\mo}{n}\bigr)}_{C}
      = \underbrace{s_n}_{D}
\end{align*}
Term~$A$ is the number of particles starting on $\ii{1,n}$. Terms~$B$ and $C$ record
the flow into $\ii{1,n}$ from $0$ and the flow out $\ii{1,n}$ from $n$, respectively.
And term~$D$ is the number of particles ending on $\ii{1,n}$.

Layer percolation produces extended odometers that may include meaningless negative values.
We will use the following lemmas to show that the ones we generate do not take negative values
and hence can be used to invoke the least-action principle.
  
  \begin{lemma}\thlabel{lem:nonnegative}
    Suppose that $u$ is an extended odometer on $\ii{0,n}$ stable
    on $\ii{1,n-1}$.
    Let $f_v=\rt{u}{v}-\lt{u}{v+1}$, the net flow from $v$ to $v+1$
    under the odometer, as in \thref{lem:flow}.
    \begin{enumerate}[(a)]
      \item For any $v\in\ii{0,n-1}$, if $u(v)\leq 0$ and $f_v\geq 1$ then $u(v+1)<0$.
        \label{i:nonnegative.a}
      \item For any $v\in\ii{1,n}$, if $u(v)<0$ and $f_{v-1}\leq 0$, then $u(v-1)<0$.
        \label{i:nonnegative.b}
    \end{enumerate}
    
  \end{lemma}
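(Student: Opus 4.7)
The plan is to unfold the definitions of $\rt{u}{\cdot}$ and $\lt{u}{\cdot}$ for extended odometers and combine them with the balance identity $f_v=\rt{u}{v}-\lt{u}{v+1}$; no deeper machinery is required. The key observation, which deserves stating once, is that the sign of $u(v)$ is tightly controlled by the signs of $\rt{u}{v}$ and $\lt{u}{v}$. Specifically:
\begin{enumerate}[(i)]
\item If $u(v)\geq 0$, then both $\rt{u}{v}\geq 0$ and $\lt{u}{v}\geq 0$, since these just count Right and Left instructions among $\instr_v(1),\ldots,\instr_v(u(v))$.
\item If $u(v)<0$, then $\rt{u}{v}\leq 0$ and $\lt{u}{v}\leq -1$. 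The strict bound on $\lt{u}{v}$ is the crucial asymmetry: because $\instr_v(0)=\Left$ is always included in the count on the negative side, $u(v)\leq -1$ forces the (negative of the) Left count to be at most $-1$.
\end{enumerate}
Taking contrapositives, $\lt{u}{v}\leq -1$ forces $u(v)<0$, and $\rt{u}{v}<0$ also forces $u(v)<0$.

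For part \ref{i:nonnegative.a}, assume $u(v)\leq 0$ and $f_v\geq 1$. By observation (i)--(ii) applied at $v$, we have $\rt{u}{v}\leq 0$. The balance identity then gives
\[\lt{u}{v+1}=\rt{u}{v}-f_v\leq 0-1=-1,\]
so by the contrapositive just mentioned, $u(v+1)<0$.

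For part \ref{i:nonnegative.b}, assume $u(v)<0$ and $f_{v-1}\leq 0$. By observation (ii) at $v$, we have $\lt{u}{v}\leq -1$. Rearranging $f_{v-1}=\rt{u}{v-1}-\lt{u}{v}\leq 0$ gives
\[\rt{u}{v-1}\leq \lt{u}{v}\leq -1,\]
and the contrapositive again yields $u(v-1)<0$. The only mild subtlety in the whole argument is the asymmetric lower bound $\lt{u}{v}\leq -1$ (rather than $\leq 0$) in observation (ii), which comes entirely from the convention $\instr_v(0)=\Left$ fixed in Section~\ref{sec:ARW.percolation.connection}; no other feature of the extended odometer framework is used.
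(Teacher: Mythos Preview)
Your proof is correct and is essentially the same as the paper's own argument: both reduce to the sign observations $u(v)\leq 0\Rightarrow\rt{u}{v}\leq 0$ and $u(v)<0\Rightarrow\lt{u}{v}\leq -1$, then chase the inequality through $f_v=\rt{u}{v}-\lt{u}{v+1}$. You just make the sign dictionary (i)--(ii) more explicit up front, whereas the paper invokes it inline.
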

  \begin{proof}
    Suppose $u(v)\leq 0$ and $f_v\geq 1$ for some $v\in\ii{0,n-1}$.
    Then $\rt{u}{v}\leq 0$ and $\rt{u}{v}-\lt{u}{v+1}\geq 1$.
    Thus $\lt{u}{v+1}\leq\rt{u}{v}-1\leq -1$, implying $u(v+1)<0$ and proving \ref{i:nonnegative.a}.
    
    Similarly, suppose $u(v)<0$ and $f_{v-1}\leq 0$ for some $v\in\ii{1,n}$.
    Then $\lt{u}{v}\leq -1$ (recall that $\lt{u}{v}$ is strictly negative
    whenever $u(v)$ is), and $\rt{u}{v-1}-\lt{u}{v}\leq 0$, yielding
    $\rt{u}{v-1}\leq -1$. Thus $u(v-1)<0$, proving \ref{i:nonnegative.b}.
  \end{proof}

Now we show that if an extended odometer is nonnegative at its endpoints
and can be broken up into an interval of negative flow followed by an interval of positive
flow, then it is nonnegative everywhere.
\begin{lemma}\thlabel{lem:uniflow}
  Let $u$ be an extended odometer on $\ii{0,n}$ stable on $\ii{1,n-1}$ with $u(0)\geq 0$
  and $u(n)\geq 0$.
  Let $f_v=\rt{u}{v}-\lt{u}{v+1}$.
  Suppose there exists $k$ such that $f_v\leq 0$ for $0\leq v<k$
  and $f_v\geq 1$ for $k\leq v\leq n$. Then $u(v)\geq 0$ for all $v\in\ii{0,n}$.  
\end{lemma}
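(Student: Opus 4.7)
The plan is a short contradiction argument built directly on the two one-step propagation rules provided by the preceding \thref{lem:nonnegative}. I would suppose for contradiction that $u(v)<0$ for some $v\in\ii{0,n}$ and then drive the negativity out to one of the endpoints, where it contradicts $u(0)\geq 0$ or $u(n)\geq 0$.

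First I would dispose of the boundary indices immediately: $v=0$ contradicts $u(0)\geq 0$, and $v=n$ contradicts $u(n)\geq 0$. Otherwise I would split on whether $v\leq k$ or $v>k$, and in each case chain the appropriate part of \thref{lem:nonnegative}.

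If $v\leq k$, then for every $v'\in\ii{1,v}$ we have $v'-1\leq k-1<k$, so the hypothesis gives $f_{v'-1}\leq 0$. Applying part~\ref{i:nonnegative.b} of \thref{lem:nonnegative} repeatedly, starting from $u(v)<0$, yields $u(v-1)<0$, then $u(v-2)<0$, and so on down to $u(0)<0$, contradicting $u(0)\geq 0$. If instead $v>k$, then for every $v'\in\ii{v,n-1}$ we have $v'\geq v>k$, so the hypothesis gives $f_{v'}\geq 1$. Applying part~\ref{i:nonnegative.a} of \thref{lem:nonnegative} repeatedly, starting from $u(v)<0$ (which in particular satisfies $u(v)\leq 0$), yields $u(v+1)<0$, then $u(v+2)<0$, and so on up to $u(n)<0$, contradicting $u(n)\geq 0$.

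There is essentially no real obstacle here beyond confirming that the index ranges fit the hypothesis: the backward rule needs $f_{v-1}\leq 0$, which is available exactly when $v\leq k$, and the forward rule needs $f_v\geq 1$, which is available exactly when $v\geq k$. The two regimes overlap at $v=k$ and together cover all of $\ii{0,n}$, so one or the other always applies, completing the contradiction.
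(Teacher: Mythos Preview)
Your proof is correct and matches the paper's own argument essentially line for line: both argue by contradiction, splitting on the position of $v$ relative to $k$ and chaining the appropriate part of \thref{lem:nonnegative} outward to an endpoint. The only cosmetic difference is that the paper splits at $v\geq k$ versus $v<k$ while you split at $v\leq k$ versus $v>k$; as you note, either case applies at $v=k$, so the two decompositions are interchangeable.
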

\begin{proof}
  Suppose $u(v)<0$ for some $v\in\ii{1,n-1}$. If $v\geq k$, then 
  repeated application of \thref{lem:nonnegative}\ref{i:nonnegative.a} shows that
  $u(i)<0$ for all $v\leq i\leq n$, a contradiction since $u(n)\geq 0$.
  If $v<k$, then repeated application of \thref{lem:nonnegative}\ref{i:nonnegative.b}
  shows that $u(i)<0$ for all $0\leq i\leq v$, a contradiction since $u(0)\geq 0$.
\end{proof}

\subsection{Nonexistence of stable odometers above the critical density}
\label{sec:peanut.butter}
In this section we obtain an exponential bound on the probability of activated random
walk leaving a high density of particles on an interval, starting from \emph{any} initial configuration:
\begin{thm}\thlabel{peanut butter}
  Consider activated random walk with sleep rate $\lambda>0$.
  Let $\sigma$ be an initial configuration with no sleeping particles
  on $\ii{0,n}$.
  Let $Y_n$ be the number of particles left sleeping on $\ii{0,n}$ in the stabilization of 
  $\sigma$ on
  $\ii{0,n}$. For any $\rho>\crist(\lambda)$,
  \begin{align*}
    \P[Y_n\geq \rho n] \leq Ce^{-cn}
  \end{align*}
  where $C,c$ are positive constants depending on $\lambda$ and $\rho$ but not on $n$ or $\sigma$.
\end{thm}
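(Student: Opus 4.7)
The plan is to translate the question about sleeping particles back to layer percolation and then apply \thref{thm:large.deviations} via a union bound over the boundary parameters of the true odometer. Let $u^\ast$ denote the true odometer stabilizing $\sigma$ on $\ii{0,n}$, and set $U_0 = u^\ast(0)$ and $F_0 = \rt{u^\ast}{0} - \lt{u^\ast}{1}$. Then $u^\ast$ is stable on $\ii{1,n-1}$, so $u^\ast \in \eosos{n}(\Instr,\sigma,U_0,F_0)$, and by \thref{bethlehem} it maps under $\Phi$ to an infection path in $\ip{n}(\Instr,\sigma,U_0,F_0)$ ending at a cell $(r_n, s_n)_n$, where $s_n$ counts the sleeping particles $u^\ast$ leaves on $\ii{1,n}$. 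Since at most one sleeping particle can rest at site $0$, $Y_n \le s_n + 1$, so on the event $\{Y_n \ge \rho n\}$ the layer percolation $\ip{n}(\Instr,\sigma,U_0,F_0)$ admits an infection path from $(0,0)_0$ reaching row at least $\rho n - 1$.

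Now fix $\rho' \in (\crist,\rho)$. For any deterministic pair $(u_0,f_0)$, \thref{prop:reduced.to.layerperc} shows that $\ip{n}(\Instr,\sigma,u_0,f_0)$ has the law of a standard layer percolation starting at $(0,0)_0$, and \thref{thm:large.deviations} therefore bounds the probability that this layer percolation contains an infection path reaching row $\rho' n$ by $C e^{-cn}$, with $c,C>0$ depending only on $\lambda$ and $\rho$. Decomposing over $(U_0,F_0)$ against a chosen candidate set $T_n \subseteq \Z_{\ge 0}\times\Z$,
\begin{align*}
\Pr[Y_n \ge \rho n] \;\le\; |T_n|\cdot C e^{-cn} \;+\; \Pr\bigl[(U_0,F_0)\notin T_n\bigr].
\end{align*}

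It remains to produce $T_n$ of polynomial size in $n$ for which $\Pr[(U_0,F_0)\notin T_n] \le C' e^{-c' n}$ uniformly in $\sigma$. The balance equation at site~$0$ (\thref{def:stable}) gives $F_0 = \abs{\sigma(0)} - h(0) - \lt{u^\ast}{0}$ with $\lt{u^\ast}{0}$ equal to the number of particles that leave through the left sink. First reduce WLOG to $\abs{\sigma(v)}\le n+1$ at every site: excess particles above this level can always be released to sinks one at a time before stabilizing, without altering the final count $Y_n$. This reduction gives $\abs\sigma \le (n+1)^2$, so $\abs{\lt{u^\ast}{0}}\le (n+1)^2$ and hence $\abs{F_0}\le 2n^2$ deterministically. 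Since $U_0 = \lt{u^\ast}{0}+\rt{u^\ast}{0}+\sleepmoves{u^\ast}{0}$ with $\sleepmoves{u^\ast}{0}\le 1$, the remaining task is to control $\rt{u^\ast}{0}$, which counts crossings from $0$ to $1$. A standard ARW odometer estimate (by comparison with simple random walk, or by the Lyapunov argument used for \thref{prop:min.odometer.concentration}) yields $\rt{u^\ast}{0}\le n^A$ with probability $1 - C'' e^{-c''n}$ for some absolute $A$. Taking $T_n = \ii{0,\,3n^A}\times\ii{-2n^2,\,2n^2}$ then makes both terms in the union bound at most a polynomial times $e^{-cn}$, which is absorbed into the exponential by shrinking $\rho'$ slightly away from $\crist$.

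The main obstacle is this last step: producing a polynomial bound on $(U_0,F_0)$ that is uniform in the initial configuration $\sigma$, together with the justification that truncating $\sigma$ site-wise preserves $Y_n$. Once that deterministic/exponential control on the true boundary odometer is in place, the bulk of the proof consists in a clean application of the correspondence $\Phi$ of \thref{bethlehem} combined with the exponential large-deviation bound of \thref{thm:large.deviations}, so the overall structure is short.
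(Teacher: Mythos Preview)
Your overall plan is the paper's: the true odometer $u_*$ lies in $\eosos{n}(\Instr,\sigma,U_0,F_0)$, \thref{bethlehem} turns it into an infection path reaching row $\ge\rho' n$, \thref{thm:large.deviations} bounds this by $Ce^{-cn}$ for each fixed $(u_0,f_0)$, and a union bound over a polynomial range finishes. You correctly locate the obstacle---controlling $(U_0,F_0)$ uniformly in $\sigma$---but both of your sketched fixes are flawed.

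The truncation is not valid as stated. You cannot ``release excess particles to sinks without altering $Y_n$'': particles can reach a sink only via topplings through boundary sites, and simply deleting particles from $\sigma$ can shift $Y_n$ in either direction (an added active particle may itself sleep, raising $Y_n$, or may wake a sleeper that then exits, lowering it). The paper instead \emph{topples} sites with two or more particles until none remain. By the abelian property this preserves $Y_n$, and the unread portion of each instruction stack remains i.i.d. The reduced configuration then has at most $n+1$ particles total, which forces $f_0\in\ii{-(n-1),1}$---an $O(n)$ range rather than your $O(n^2)$.

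The bound on $U_0$ is also not justified. The claim that the \Sleep\ count at site~$0$ is at most~$1$ is false (many \Sleep\ instructions can be executed while two or more particles sit there), and neither simple-random-walk comparison nor \thref{prop:min.odometer.concentration} gives a uniform-in-$\sigma$ polynomial bound on $\rt{u_*}{0}$---the latter concerns the \emph{minimal} odometer, not the true one. The paper bypasses $\rt{u_*}{0}$ entirely: once there are at most $n+1$ particles, at most $n+1$ can exit left, so $\lt{u_*}{0}\le n+1$. If $u_*(0)>4(1+\lambda)n$, then the first $\lceil 4(1+\lambda)n\rceil$ instructions at site~$0$ contain at most $n+1$ \Left\ instructions against an expected $2n$, which by Hoeffding has probability at most $e^{-c''n}$. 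The union bound therefore runs over just $O(n^2)$ pairs $(u_0,f_0)$, and the tail $\{u_*(0)>4(1+\lambda)n\}$ is handled by this direct large deviation on the instruction stack.
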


\begin{proof}
  \thref{thm:large.deviations} establishes that it is unlikely there is an infection set
  starting at $(0,0)_0$ going beyond row
  $\crist n$ in $n$ steps. Such infection paths correspond to odometers in
  $\eosos{n}(\Instr,\sigma,u_0,f_0)$ leaving more than $\crist n$ sleeping particles
  on $\ii{1,n}$, according to \thref{bethlehem}. The idea of this proof is to
  take a union bound on the existence of such an odometer over all choices of $u_0$ and $f_0$,
  applying \thref{thm:large.deviations} when $u_0$ is small and using an alternate bound
  when $u_0$ is large.

  To start, we may assume
  without loss of generality that $\sigma$ places $0$ or $1$ particle at each site,
  since we can topple each site with two or more particles until no such sites exist.
  
    For any integer $f_0$, nonnegative integer $u_0$, and configuration $\sigma$ in which
  all particles are active,
  let $\stable_{n,\rho}(\sigma,u_0,f_0)$ be the event that there exists an
  odometer $u$ on $\ii{0,n}$ stable on $\ii{1,n-1}$ for initial configuration $\sigma$ 
  with $u(0)=u_0$ and 
  $\rt{u}{0}-\lt{u}{1}=f_0$ and
  \begin{align*}
    \sum_{v=1}^{n}\1\bigl\{\Instr_v(u(v))=\Sleep\bigr\} \geq \rho n,
  \end{align*}
  with $(\Instr_v)_{v\in\ii{0,n}}$ 
  the instructions for activated random walk with sleep rate $\lambda>0$.
  
  We claim that
  \begin{align}\label{eq:stabclaim}
    \P\bigl[\stable_{n,\rho}(\sigma,u_0,f_0)\bigr]\leq Ce^{-cn}
  \end{align}
  for constants $c$ and $C$ depending only on $\lambda$ and $\rho$, not on $\sigma$, $u_0$,
  and $f_0$. Indeed, if $\stable_{n,\rho}(\sigma,u_0,f_0)$ holds, then by \thref{bethlehem,cor:ip} the infection
  set $\infset_n=\infset_n^{(0,0)_0}$ in a coupled instance of layer percolation
  contains a cell in row $\ceil{\rho n}$, 
  thus proving \eqref{eq:stabclaim} by \thref{thm:large.deviations}.

  Suppose that $Y_n\geq\rho n$, and let $u_*$ be the true odometer stabilizing $\sigma$
  on $\ii{0,n}$. Then $u_*$ is stable on $\ii{0,n}$ and leaves at least
  $\rho n-1$ particles on $\ii{1,n}$.
  Hence $\stable_{n,\rho'}(\sigma,u_0,f_0)$ occurs for $\rho'=\rho-\frac{1}{n}$ and some $u_0$ and $f_0$.
  Note that for $n\geq n_0$, where $n_0$ is a constant depending only on $\lambda$ and $\rho$, we have
  $\rho'>\crist(\lambda)$.
  Since there is at most
  one particle on each site initially, we have $-(n-1)\leq f_0\leq 1$. All together, we can say
  that if $Y_n\geq \rho n$, then either the event
  \begin{align}\label{eq:stable.unb}
    \bigcup_{\substack{0\leq u_0\leq 4(1+\lambda)n\\-(n-1)\leq f_0\leq 1}}\stable_{n,\rho'}(\sigma,u_0,f_0)
  \end{align}
  occurs, or $u_*(0)>4(1+\lambda)n$. By \eqref{eq:stabclaim} and
  a union bound, the probability of \eqref{eq:stable.unb}
  is at most $C'e^{-c'n}$ for constants $c,C'$ depending only on $\lambda$ and $\rho$.
  
  Now consider the event that $u_*(0)>4(1+\lambda)n$. 
  Since there are at most $n+1$ particles initially on the interval,
  we always have $\lt{u_*}{0}\leq n+1$. 
  Thus $u_*(0)>4(1+\lambda)n$ can occur only if there are at most
  $n+1$ \Left\ instructions within $\Instr_0(1),\ldots,\Instr_0(\ceil{4(\lambda+1)n})$.
  Thus we have bounded $\P\bigl[u_*(0)>4(1+\lambda)n\bigr]$
  by 
  \begin{align*}
    \P\biggl[\Bin\biggl(\ceil[\big]{4(\lambda+1)n},\frac{1}{2(\lambda+1)}\biggr)\leq n+1\biggr],
  \end{align*}
  which by Hoeffding's inequality is at most $e^{-c''n}$
  with $c''$ depending only on $\lambda$.
  Together with our bound on the probability of \eqref{eq:stable.unb},
  this completes the proof.
\end{proof}

\subsection{All critical densities are equal to \texorpdfstring{$\crist$}{ρ\_*}}
\label{sec:final.density.proofs}

We now apply our results to give upper and lower bounds on the critical
densities of each of the four models discussed in Section~\ref{sec:results}.
For the lower bounds for the driven-dissipative and point-source models, we use layer percolation to produce
stable odometers leaving a density of $\crist-\epsilon$.
We then derive the lower bounds for the fixed-energy and cyclic models from the point-source model bound.
For the upper bounds, we apply \thref{peanut butter} to each model.

\subsubsection{Driven-dissipative model}
\label{sec:driven.dissipative}
We start with the lower bound on the density under the invariant distribution of the driven-dissipative
model. We note that we can work directly with this invariant distribution: as proven
by Levine and Liang in \cite{levine2021exact}, 
the stabilization of a region starting with an initial configuration
of a single active particle at every site exactly has this invariant distribution.
\begin{prop}\thlabel{prop:driven.dissipative.lower}
  Let $S_n$ be distributed as the number of sleeping particles under the invariant
  distribution of the driven-dissipative Markov chain on $\ii{0,n}$.
  For any $\rho<\crist(\lambda)$,
  \begin{align*}
    \P\bigl[ S_n\geq\rho (n+1)\bigr] \geq 1-Ce^{-cn}
  \end{align*}
  for constants $c,C>0$ depending only on $\lambda$ and $\rho$.
\end{prop}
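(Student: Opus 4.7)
The plan is to apply the Levine--Liang exact sampling result to identify $S_n$ with the sleeping count in the stabilization of $\sigma\equiv 1$ on $\ii{0,n}$, and then to produce, with overwhelming probability, a nonnegative odometer $u'$ on $\ii{0,n}$ weakly stable on all of $\ii{0,n}$ that leaves at least $\rho(n+1)$ sleeping particles. The least-action principle (\thref{lem:dual.lap}) will then transfer the lower bound to the true odometer stabilizing $\sigma$.

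Fix $\rho'\in(\rho,\crist)$. Via \thref{bethlehem} we realize $u'$ as an element of $\eosos{n}(\Instr,\sigma,u_0,f_0)$ coming from an infection path $(0,0)_0\to\cdots\to(r_n,s_n)_n$; such a $u'$ is automatically stable on $\ii{1,n-1}$. Stability at site~$n$ is enforced by choosing the ending cell to satisfy \thref{lem:stab.criterion}, which demands $r_n = f_0+n-\rt{m}{n}-s_n$. We aim at $s_n=\lceil\rho' n\rceil$ and calibrate $f_0\approx-(1-\rho')n/2$, with $u_0$ chosen so that $u'$ is weakly stable at site~$0$ (i.e.\ $\lt{u'}{0}\in\{-f_0,\,1-f_0\}$, which fixes $u_0$ as an instruction index of size $\Theta(n)$). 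By \thref{prop:min.odometer.concentration}, $\rt{m}{n}$ concentrates around $-\rho' n^2/2$ with $O(n^{3/2})$ fluctuations, so the required target column $r_n$ lies within $O(n^{3/2})$ of $\rho' n^2/2$. The box lemma (\thref{lem:box}), applied with $\rho'$ in place of $\rho$, provides a deterministic rectangle $\mathcal{B}_n$ of width $\Theta(n^2)$ and height $\Theta(n)$ centered at $(\rho' n^2/2,\rho' n)_n$ that lies in $\infset[(0,0)_0]_n$ with probability $1-Ce^{-cn}$. On this event a cell of $\mathcal{B}_n$ satisfying the stability identity is infected, and an infection path from $(0,0)_0$ to it exists.

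The main technical hurdle is that $u'$ arises as an extended odometer, so we must verify its nonnegativity for it to qualify as an $\NN$-valued odometer to which the least-action principle applies. With our calibration, the stability identity yields $\rt{u'}{n}=f_0+n-s_n\approx(1-\rho')n/2\geq 0$; combining the identities $\rt{u'}{v}=r_v+\rt{m}{v}$ and $\lt{u'}{n}=r_{n-1}+s_{n-1}+\lt{m}{n}$ from the proof of \thref{bethlehem} with concentration yields $\lt{u'}{n}\geq 0$ as well, hence $u'(n)\geq 0$; and $u'(0)=u_0\geq 0$ by construction. The net-flow profile $f_v=f_0+v-s_v$ is non-decreasing with integer jumps in $\{0,1\}$, starts negative at $f_0$, and ends at $f_n\approx(1-\rho')n/2\geq 1$, so it transitions cleanly from $\leq 0$ to $\geq 1$ at some crossover $k\in\ii{0,n}$. \thref{lem:uniflow} then delivers $u'(v)\geq 0$ for every $v\in\ii{0,n}$.

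On the intersection of the box-lemma event and the minimal-odometer concentration event, each of complementary probability at most $Ce^{-cn}$, the odometer $u'$ is nonnegative, weakly stable on $\ii{0,n}$, and leaves at least $\lceil\rho' n\rceil\geq\rho(n+1)$ sleeping particles on $\ii{0,n}$. Invoking \thref{lem:dual.lap} together with the Levine--Liang identification completes the proof.
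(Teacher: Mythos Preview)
Your proposal is correct and follows essentially the same route as the paper's proof: Levine--Liang reduction, the choice $f_0\approx -(1-\rho')n/2$ with $u_0$ pinned by a \Left\ instruction of index $\Theta(n)$ to ensure (weak) stability at $0$, concentration of $\rt{m}{n}$ near $-\rho' n^2/2$, the box lemma to hit the target cell dictated by \thref{lem:stab.criterion}, then \thref{lem:uniflow} for nonnegativity and \thref{lem:dual.lap} to conclude. One small imprecision: writing ``weakly stable at $0$, i.e.\ $\lt{u'}{0}\in\{-f_0,1-f_0\}$'' only handles condition~(a); you should either take $\lt{u'}{0}=1-f_0$ so $h(0)=0$ and condition~(b$'$) is vacuous (this is what the paper does), or else arrange $\Instr_0(u_0)=\Sleep$ when $h(0)=1$.
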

\begin{proof}
  Let $\sigma$ be the initial configuration placing one active particle on each
  site in $\ii{0,n}$. By \cite[Theorem~1]{levine2021exact}, after stabilization this model
  has the invariant distribution of the driven-dissipative model on $\ii{0,n}$.
  
  Let $f_0=-\floor{(1-\rho)(n+1)/2}+1$. Choose $u_0$ to be the smallest value so
  that the first $u_0$ instructions at site~$0$ include $-f_0+1$ \Left\ instructions,
  which according to \thref{prop:stable.at.0} will make all odometers in $\eosos{n}(\Instr,\sigma,u_0,f_0)$
  stable at $0$. We will now construct an odometer $u\in\eosos{n}(\Instr,\sigma,u_0,f_0)$
  stable at $n$ with $\rt{u}{n}=-f_0+1$. By \thref{lem:lap}, the true odometer stabilizing
  $\ii{0,n}$ thus ejects at most density $1-\rho$ of particles and hence retains at least density $\rho$.
      
  We say that an event holds with overwhelming probability (w.o.p.)\ if its probability is bounded for all $n$ by $Ce^{-cn}$,  where $c$ and $C$ may depend on $\lambda$ and $\rho$.
  Let $\mo$ be the minimal odometer of $\eosos{n}(\Instr,\sigma,u_0,f_0)$.
  We claim that $\rt{\mo}{n}$ is concentrated around $-\rho n^2/2$ at a quadratic scale, i.e.,
  for any constant $D=D(\lambda,\rho)>0$,
  \begin{align}\label{eq:rtmnc}
    \abs[\big]{\rt{\mo}{n}+\rho n^2/2}\leq Dn^2 \text{ w.o.p.}
  \end{align}
  To prove this claim, we apply
  \thref{prop:min.odometer.concentration} conditionally on $u_0$ to show that
  $\rt{\mo}{n}$ is concentrated around
  \begin{align}
    \frac{u_0}{2(1+\lambda)} + \sum_{i=1}^n\biggl(-f_0-i\biggr)
      &=\frac{u_0}{2(1+\lambda)} + \Biggl(\floor[\bigg]{{\frac{(1-\rho)(n+1)}{2}}}+1\Biggr)n - \frac{n(n+1)}{2}.
      \label{eq:odom.a.n}
  \end{align}
  Since $u_0$ is a sum of $\abs{f_0}+1$ independent geometric random variables,
  we have $u_0/2(1+\lambda)<Dn^2/3$ w.o.p.\ by \cite[Theorem~2]{Janson}.
  The rest of the right-hand side of \eqref{eq:odom.a.n} is within $O(n)$
  of $-\rho n^2/2$.
  By \thref{prop:min.odometer.concentration}, with overwhelming probability $\rt{\mo}{n}$
  is within $Dn^2/3$ of \eqref{eq:odom.a.n}. Hence $\rt{\mo}{n}$ is within
  $2Dn^2/3+O(n)$ of $-\rho n^2/2$ w.o.p., proving \eqref{eq:rtmnc}.
  
  Now we construct our odometer $u$ stable at $n$ with $\rt{u}{n}=-f_0+1$.
  Let
  \begin{align*}
    r=-f_0+1-\rt{\mo}{n}\qquad\text{and}\qquad s=2f_0+n-1.
  \end{align*}
  According to \thref{bethlehem,cor:ip}, it suffices to construct an infection
  path in $\ip{n}(\Instr,\sigma,u_0,f_0)$ ending at cell $(r,s)_n$.
  For an odometer $u\in\eosos{n}(\Instr,\sigma,u_0,f_0)$ in correspondence
  with this infection path, the condition on $r$ makes $\rt{u}{n}=r+\rt{\mo}{n}=-f_0+1$,
  and \thref{lem:stab.criterion} makes $u$ stable at $n$.

  Let $\delta=\delta(\epsilon)$ be the constant from \thref{lem:box},
  with $\epsilon=\min\bigl(\crist-\rho,\rho)/2$.
  Setting $D=\delta/2$ and applying \eqref{eq:rtmnc}, it holds with overwhelming probability
  that $\rt{\mo}{n}$ is within $(\delta/2)n^2$ of $-\rho n^2/2$.
  Since $-f_0+1 = O(n)$, we have
  $r$ within $\delta n^2$ of $\rho n^2/2$ w.o.p.
  Since $s\leq \rho'n$ for $\epsilon\leq \rho<\rho'\leq\crist-\epsilon$,
  the infection set $\infset[(0,0)_0]_n$ contains
  $(r,s)_n$ w.o.p.\ by \thref{lem:box}, yielding the desired infection path
  ending at $(r,s)_n$.
  
  To summarize, we have produced an infection path such that any corresponding
  extended odometer $u\in\eosos{n}(\Instr,\sigma,u_0,f_0)$ is stable on $\ii{0,n}$
  and leaves at least $s\geq \rho(n+1)$ particles on $\ii{1,n}$.
  The proof follows from \thref{lem:dual.lap} once we show that
  $u(v)\geq 0$ for all $v$.
  Since $\lt{u}{0}$ and $\rt{n}{0}$ are positive, we have $u(0)>0$ and $u(n)>0$.
  Let $f_v=\rt{u}{v}-\lt{u}{v+1}$, consistent with the definition of $f_0$.
  By \thref{lem:flow} and our choice of $\sigma$, we have $f_0\leq\cdots\leq f_{n-1}$.
  Hence $u(v)\geq 0$ for all $v$ by \thref{lem:uniflow}.
  Thus $u$ is an odometer and not just an extended odometer, and the proof is complete.
\end{proof}

The upper bound on the density is just a special case of \thref{peanut butter}:
\begin{prop}\thlabel{prop:driven.dissipative.upper}
  Let $S_n$ be distributed as the number of particles under the invariant
  distribution of the driven-dissipative Markov chain on an interval of length~$n$.
  For any $\rho>\crist(\lambda)$,
  \begin{align*}
    \P\bigl[S_n\leq \rho n\bigr] \geq 1 - Ce^{-cn}
  \end{align*}
  for constants $C,c$ depending only on $\lambda$ and $\rho$.
\end{prop}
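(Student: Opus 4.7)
The plan is to reduce this upper bound directly to \thref{peanut butter} by using the Levine--Liang identification of the invariant measure of the driven-dissipative chain. By \cite[Theorem~1]{levine2021exact}, the stationary distribution of the driven-dissipative chain on $\ii{0,n}$ is the law of the stabilization, with sinks at $-1$ and $n+1$, of the initial configuration $\sigma \equiv 1$ that places a single active particle at every site of $\ii{0,n}$. In particular, if $Y_n$ denotes the number of particles left sleeping on $\ii{0,n}$ after stabilizing this specific initial configuration, then $Y_n \eqd S_n$.

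Since $\sigma \equiv 1$ has no sleeping particles, it satisfies the hypotheses of \thref{peanut butter}. Applying that theorem with this choice of $\sigma$ gives
\begin{equation*}
  \P[Y_n \geq \rho n] \leq C e^{-cn}
\end{equation*}
for any $\rho > \crist(\lambda)$, with constants $c, C > 0$ depending only on $\lambda$ and $\rho$. Translating this through the distributional identity $Y_n \eqd S_n$ yields the desired bound on $S_n$.

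There is essentially no obstacle here: all of the substantive work has been done in establishing \thref{peanut butter}, which is crucially uniform over the initial configuration $\sigma$. The invariant distribution corresponds to one particular such configuration, so the present proposition is an immediate specialization. The only minor bookkeeping issue is the off-by-one discrepancy between the $n+1$ sites in $\ii{0,n}$ and the factor $\rho n$ appearing in the statement, but this is absorbed harmlessly into the constants $c$ and $C$ (one can replace $\rho$ by any $\rho' \in (\crist, \rho)$ and note $\rho n \geq \rho' (n+1)$ for all $n$ sufficiently large).
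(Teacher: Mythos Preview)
Your proposal is correct and matches the paper's own proof essentially verbatim: both invoke \cite[Theorem~1]{levine2021exact} to identify the invariant distribution with the stabilization of $\sigma\equiv 1$, then apply \thref{peanut butter} directly.
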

\begin{proof}
  Let $\sigma$ be the initial configuration placing one active particle on each
  site of the interval. By \cite[Theorem~1]{levine2021exact}, after stabilization this model
  has the invariant distribution of the driven-dissipative model on the interval.
  The result then follows immediately from \thref{peanut butter}.
\end{proof}

\subsubsection{Point-source model}
The lower bound on density for the point-source model is similar to that of the driven-dissipative
model, though it is a bit more work to construct the odometer.
Note that the lower bound on density for the point-source model is expressed as an upper bound
on the spread of a fixed number of particles starting at the origin.
\begin{prop}\thlabel{prop:point.source.lower}
  Let $\ii{A_N,B_N}$ be the smallest interval containing all sites ever visited by a particle
  in the point-source model on $\ZZ$ with $N$ particles.
  For any $\epsilon>0$,
  \begin{align*}
    \P\Biggl[ \ii{A_N,B_N}\subseteq \ii[\bigg]{- \frac{N}{2(\crist-\epsilon)},\ \frac{N}{2(\crist-\epsilon)}}\Biggr] \geq 1 - Ce^{-cN}
  \end{align*}
  for constants $c,C>0$ depending only on $\lambda$ and $\epsilon$.
\end{prop}
\begin{proof}
  Let our initial configuration
  $\sigma$ consist of $N$ active particles at site~$0$ and no
  particles elsewhere. We will choose $n\approx N/2(\crist-\epsilon)$
  and construct a stable odometer on $\ii{-(n+1),n+1}$.
  
  We say that an event holds with overwhelming probability (w.o.p.)\ if its probability is bounded for all $N$ by $Ce^{-cN}$,  where $c$ and $C$ may depend on $\epsilon$ and $\lambda$.
  In our construction of an odometer, we will want the first instruction
  on the stack at sites~$n+1$ and $-(n+1)$ to be $\Sleep$. Thus we define
  $n$ as the largest integer such that
  \begin{align*}
    \Instr_{n+1}(1)=\Instr_{-(n+1)}(1)=\Sleep\quad\text{and}\quad n\leq \frac{N}{2(\crist-\epsilon/2)}.
  \end{align*}
  Since $\Instr_v(1)=\Sleep$ with probability $\lambda/(1+\lambda)$, independently for each $v$,
  \begin{align}\label{eq:not.too.far}
    n\geq \frac{(1-\epsilon/4)N}{2(\crist-\epsilon/2)}\geq \frac{N}{2(\crist-\epsilon/4)} \text{ w.o.p.}
  \end{align}
  Note that we have revealed information only about the instructions at site~$v$ for
  $\abs{v}>n$;
  we have not imparted any conditioning on $\Instr_{-n},\ldots,\Instr_n$.
  
  Now we use layer percolation to construct our odometer $u$. 
  We will make it so that $\lt{u}{-n}=\rt{u}{n}=1$, and then we set $u(-n-1)=u(n+1)=1$
  so that single particles are sent to $-n-1$ and to $n+1$ and then sleep there. We construct
  $u$ so that the remaining $N-2$ particles sleep on $\ii{-n,n}$.
  Since our notation is set up to construct odometers on an interval from $0$ to a positive
  number, we let $\Instr'_v=\Instr_{v-n}$ and $\sigma'(v)=\sigma(v-n)$ and construct an odometer
  in $\eosos{2n}(\Instr',\sigma',u_0,f_0)$. From now on, $\lt{\cdot}{\cdot}$
  and $\rt{\cdot}{\cdot}$ will use $\Instr'$ rather than $\Instr$.
  
  Let $f_0=-1$ and choose $u_0$ to be the index of the first \Left\ instruction after index~$0$
  in $\Instr'_0$. Let $\rho=(N-2)/2n$, observing that
  $\rho\leq \crist-\epsilon/4$ w.o.p.\ by \eqref{eq:not.too.far}  
  and let $\delta=\delta(\epsilon/4)$ be the constant
  from \thref{lem:box}.
  Let $\mo$ be the minimal odometer of $\eosos{2n}(\Instr',\sigma',u_0,f_0)$.
  According to \thref{prop:min.odometer.concentration}, the value of $\rt{\mo}{2n}$ is concentrated around
  \begin{align*}
    \frac{u_0}{2(1+\lambda)}+\sum_{i=1}^{2n}\bigl(-f_0 - N\1\{i\geq n\}\bigr) = 
    \frac{u_0}{2(1+\lambda)}-2\rho n^2
  \end{align*}
  Since $u_0\sim\Geo\bigl(1/2(1+\lambda)\bigr)$, we have $u_0\leq n$ w.o.p.
  By \thref{prop:min.odometer.concentration}, the value of $\rt{\mo}{2n}$
  is within $(\delta/2)(2n)^2$ of $-2\rho n^2$ w.o.p.
  
  By \thref{cor:ip}, the set $\ip{2n}(\Instr',\sigma',u_0,f_0)$ is distributed as the set
  of length~$0$ infection paths in layer percolation.  
  Let $s=N-2$ and let
  \begin{align}\label{eq:stablrrr}
     r = f_0+\sum_{v=1}^{2n}\abs{\sigma'(v)} - \rt{\mo}{2n} - s = 1-\rt{\mo}{2n}.
  \end{align}
  Now $r$ is within $\delta (2n)^2$ of $2\rho n^2$, and by \thref{lem:box} the infection set
  $\infset[(0,0)_0]_{2n}$ contains $(r,s)_{2n}$ w.o.p.
  Thus there is an infection path in $\ip{2n}(\Instr',\sigma',u_0,f_0)$ ending at $(r,s)_{2n}$ w.o.p.
  By \thref{bethlehem}, this infection path has a corresponding extended 
  odometer $u'\in\eosos{2n}(\Instr',\sigma',u_0,f_0)$
  stable on $\ii{1,2n-1}$ with $\rt{u}{2n}=r+\rt{\mo}{2n}=1$.
  By \thref{lem:stab.criterion} and \eqref{eq:stablrrr}, the odometer $u'$ is stable at $2n$.
  And it is stable at $0$ by \thref{prop:stable.at.0}.
  
  To see that $u'$ takes nonnegative values, observe that since $\lt{u'}{0}>0$
  and $\rt{u'}{2n}>0$, we have $u'(0)>0$ and $u'(2n)>0$.
  Let $f_v=\rt{u'}{v}-\lt{u'}{v+1}$.
  By \thref{lem:flow}, we have $f_v\leq -1$ for $v\leq -1$
  and $f_v\geq 1$ for $v\geq 0$. Hence $u'(v)\geq 0$ for all $v\in\ii{0,2n}$
  by \thref{lem:uniflow}.
  
  By \thref{lem:lap}, with overwhelming probability the stabilization of
  $\ii{0,2n}$ using instructions $\Instr'$ sends at most one particle to site~$-1$ and 
  at most one particle to site~$2n+1$. Shifting back to $\ii{-n,n}$,
  we have shown that with overwhelming probability the stabilization of $\ii{-n,n}$
  sends at most one particle to site~$-n-1$ and at most one particle to site~$n+1$.
  Since $\Instr_{-n-1}(1)=\Instr_{n+1}(1)=\Sleep$, no particle moves to the left
  of $-n-1$ or to the right of $n+1$, showing that $A_n\geq -n-1$ 
  and $B_N\leq n+1$ w.o.p., thus completing the proof since
  $n\leq \frac{N}{2(\crist-\epsilon/2)}$.
\end{proof}

And now we give the upper bound on density:
\begin{prop}\thlabel{prop:point.source.upper}
  Let $\ii{\tilde{A}_N,\tilde{B}_N}$ be the smallest interval containing all sleeping particles after stabilization
  in the point-source model on $\ZZ$ with $N$ particles.
  For any $\epsilon>0$,
  \begin{align*}
    \P\Biggl[ \ii{\tilde{A}_N,\tilde{B}_N}\supseteq \ii[\bigg]{- \frac{N}{2(\crist+\epsilon)},\ \frac{N}{2(\crist+\epsilon)}}\Biggr] \geq 1 - Ce^{-cN}
  \end{align*}
  for constants $c,C>0$ depending only on $\lambda$ and $\epsilon$.
\end{prop}
\begin{proof}
  Let $\sigma$ denote the configuration placing $N$ particles at $0$ and none elsewhere.
  Let $u_*$ be the true odometer on $\ZZ$ obtained by stabilizing $\sigma$.
  If $\abs{\tilde{A}_N}\leq N/2(\crist+\epsilon)$, then one of the following events occurs:
  \begin{enumerate}[(i)]
    \item $\tilde{B}_N> N/2(\crist-\epsilon/2)$;\label{i:psu1}
    \item $\abs{\tilde{A}_N}\leq N/2(\crist+\epsilon)$ and
        $\tilde{B}_N\leq N/2(\crist-\epsilon/2)$.\label{i:psu3}
  \end{enumerate}
  The probability of \ref{i:psu1} vanishes exponentially by \thref{prop:point.source.lower}.
  If \ref{i:psu3} occurs, then the density after
  stabilization is $N/(\tilde{A}_N+\tilde{B}_N+1)\geq\crist+\delta$ for some constant $\delta>0$ depending on
  $\lambda$ and $\epsilon$. 
  By \thref{peanut butter}, the probability of this event decays exponentially in $N$ with the rate
  depending only on $\lambda$ and $\epsilon$.
  This confirms that the probability that $\abs{\tilde{A}_N}\leq N/2(\crist+\epsilon)$ decays exponentially.
  By symmetry, the same holds for the probability that $\tilde{B}_N\leq N/2(\crist+\epsilon)$, completing the proof.
\end{proof}

\begin{cor}[Conjecture 1 from \cite{levine2023universality} for $d= 1$] \thlabel{cor:ps}
Let $\ii{ A_N,  B_N}$ be the smallest interval containing all sites visited until stabilization
  in the point-source model on $\ZZ$ with $N$ particles.
  For any $\epsilon>0$,
  \begin{align*}
    \P\Biggl[ \ii[\bigg]{- \frac{N}{2(\crist+\epsilon)},\ \frac{N}{2(\crist+\epsilon)}}
    \subseteq\ii{ A_N, B_N}\subseteq \ii[\bigg]{- \frac{N}{2(\crist-\epsilon)},\ \frac{N}{2(\crist-\epsilon)}}\Biggr] \geq 1 - Ce^{-cN}
  \end{align*}
  for constants $c,C>0$ depending only on $\lambda$ and $\epsilon$.
\end{cor}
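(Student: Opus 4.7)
The plan is to deduce the corollary immediately from Propositions~\ref{prop:point.source.lower} and~\ref{prop:point.source.upper}, which already contain the two inclusions (in slightly different guises) and together yield the sandwich bound after a single union bound over the two bad events. The key conceptual observation is that any sleeping particle at a site $v$ must have been visited at some point, so the set of sites with a sleeping particle is contained in the set of visited sites. Since active particles in ARW perform nearest-neighbor random walks starting at the origin, the visited set is in fact a (random) interval containing $0$, so $\ii{\tilde A_N,\tilde B_N}$ is exactly this interval.

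For the outer inclusion, note that Proposition~\ref{prop:point.source.lower} uses precisely the same random interval $\ii{A_N,B_N}$ of visited sites as $\ii{\tilde A_N,\tilde B_N}$ in the corollary, and gives directly
\begin{align*}
  \Pr\Biggl[\ii{\tilde A_N,\tilde B_N}\subseteq \ii[\bigg]{-\frac{N}{2(\crist-\epsilon)},\,\frac{N}{2(\crist-\epsilon)}}\Biggr]\geq 1-Ce^{-cN}.
\end{align*}

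For the inner inclusion, let $\ii{A_N',B_N'}$ denote the smallest interval containing all sleeping particles after stabilization, as in Proposition~\ref{prop:point.source.upper}. Since every sleeping site was visited, $\ii{A_N',B_N'}\subseteq \ii{\tilde A_N,\tilde B_N}$ deterministically. Proposition~\ref{prop:point.source.upper} then gives
\begin{align*}
  \Pr\Biggl[\ii[\bigg]{-\frac{N}{2(\crist+\epsilon)},\,\frac{N}{2(\crist+\epsilon)}}\subseteq\ii{\tilde A_N,\tilde B_N}\Biggr]\geq 1-Ce^{-cN}.
\end{align*}
A union bound over the two exponentially small failure events completes the proof, with updated constants $c,C>0$ depending only on $\lambda$ and $\epsilon$. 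There is no real obstacle here; the only subtle point is the switch between ``sleeping particles'' and ``visited sites'' in the two halves, resolved by the trivial containment above.
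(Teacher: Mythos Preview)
Your proposal is correct and follows exactly the same approach as the paper: the outer inclusion comes directly from Proposition~\ref{prop:point.source.lower} (whose interval of visited sites coincides with $\ii{\tilde A_N,\tilde B_N}$), the inner inclusion comes from Proposition~\ref{prop:point.source.upper} via the trivial containment of the sleeping-particle interval in the visited interval, and a union bound finishes. The paper's proof is simply a two-sentence version of what you wrote.
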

\begin{proof}
The smallest interval containing all sleeping particles after fixation is contained in the smallest
interval containing all sites visited.
    Thus the conjecture follows directly from \thref{prop:point.source.lower} and \thref{prop:point.source.upper}.
\end{proof}
\subsubsection{Fixed-energy model on \texorpdfstring{$\mathbb Z$}{Z}}
It suffices to consider activated random walk on $\mathbb Z$ with an i.i.d.\ Bernoulli$(\rho)$-distributed number of particles initially at each site. By the main result of \cite{rolla2019universality}, if this Bernoulli configuration fixates or remains active almost surely for a given $\rho$, then the same behavior occurs for any stationary configuration with mean $\rho$. 


\begin{prop} \thlabel{prop:fixed.energy.active}
If $\rho > \crist$, then the fixed-energy model almost surely remains active.
\end{prop}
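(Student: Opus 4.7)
The plan is to reduce to the Bernoulli$(\rho)$ initial configuration via \cite{rolla2019universality} and then deduce non-fixation from a finite-volume flux estimate enabled by \thref{peanut butter}. Fix any $\rho' \in (\crist,\rho)$ and consider the stabilization of $\ii{-n,n}$ with sinks at $\pm(n+1)$; let $u_n$ be its true odometer. Since the Bernoulli configuration has no sleeping particles, \thref{peanut butter} bounds the number of sleeping particles left after stabilization by $\rho'(2n+1)$ with probability $1 - Ce^{-cn}$. A Hoeffding bound on the initial particle count gives at least $(\rho-\epsilon)(2n+1)$ active particles initially, again with exponentially high probability. Subtracting, the total mass ejected to the two sinks, $E_L := \lt{u_n}{-n}$ and $E_R := \rt{u_n}{n}$, satisfies $E_L + E_R \geq (\rho-\rho'-\epsilon)(2n+1)$ with overwhelming probability.

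The reflection $v \mapsto -v$ combined with the swap $\Left \leftrightarrow \Right$ is a symmetry of the joint law of the Bernoulli initial configuration and the instruction stacks, so $E_L \eqd E_R$. Combined with the high-probability bound $E_L + E_R \geq 2\tilde c n$, where $\tilde c := (\rho-\rho'-\epsilon)/2 > 0$, the elementary implication $E_L + E_R \geq 2\tilde c n \Rightarrow \max(E_L,E_R) \geq \tilde c n$, a union bound, and reflection symmetry together yield $\P[E_R \geq \tilde c n] \geq 1/3$ for all sufficiently large $n$. The balance equation from \thref{def:stable} at site $n$ reads $\rt{u_n}{n-1} = \lt{u_n}{n} + \rt{u_n}{n} + h(n) - \sigma(n) \geq E_R - 1$, so $\P\bigl[\rt{u_n}{n-1} \geq \tilde c n - 1\bigr] \geq 1/3$.

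To promote this finite-volume estimate to the infinite-volume odometer $u_\infty(v) := \sup_n u_n(v)$, I invoke the abelian property: toppling only sites in $\ii{-n,n}$ within the infinite-volume dynamics is a legal partial stabilization whose odometer on $\ii{-n,n}$ equals $u_n$, and any subsequent topplings can only increase it, so $u_\infty(v) \geq u_n(v)$ for every $v \in \ii{-n,n}$ and hence $\rt{u_\infty}{n-1} \geq \rt{u_n}{n-1}$. Translation invariance of the Bernoulli configuration and the i.i.d.\ instructions gives $\rt{u_\infty}{n-1} \eqd \rt{u_\infty}{0}$. Therefore $\P[\rt{u_\infty}{0} \geq \tilde c n - 1] \geq 1/3$ for all large $n$, which upon sending $n \to \infty$ forces $\P[u_\infty(0)=\infty] \geq 1/3 > 0$. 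The $0$--$1$ law of \cite{RollaSidoraviciusZindy19} then upgrades positive-probability non-fixation to almost-sure non-fixation.

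The main obstacle is converting the combined ejection bound $E_L + E_R \geq cn$ (holding with overwhelming probability) into a probabilistic lower bound on $E_R$ alone that survives the passage to the infinite volume. Reflection symmetry and a pigeonhole trick deliver this at the cost of replacing the exponential rate with a constant-probability lower bound, which is still sufficient because translation invariance converts the bound at site $n-1$ into a bound at the fixed site $0$ that strengthens as $n \to \infty$. A minor point to verify is the abelian monotonicity $u_\infty \geq u_n$ on $\ii{-n,n}$ and that \thref{peanut butter} applies, both of which are immediate since the Bernoulli configuration carries no sleeping particles.
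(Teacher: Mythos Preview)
Your proof is correct but takes a genuinely different route from the paper. The paper invokes a black-box criterion from Rolla's survey \cite[Theorem~2.11]{rolla2020activated}: if the expected fraction of mass ejected from $\ii{-n,n}$ satisfies $\limsup_n \E M_n/(2n+1)>0$, then the infinite-volume system stays active. With that criterion in hand, the paper only needs the high-probability bound $M_n = |\sigma_n| - S_n \geq \delta n$, which follows immediately from \thref{peanut butter} plus a binomial concentration bound, and is done in a few lines.

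You instead reprove the essential content of that criterion by hand. From the same high-probability estimate on total ejection, you extract a constant-probability lower bound on the one-sided flux $E_R$ via reflection symmetry and pigeonhole, then pass to the infinite-volume odometer through the monotonicity $u_\infty \geq u_n$, and use translation invariance to transport the growing lower bound at site $n-1$ back to site $0$, forcing $\P[u_\infty(0)=\infty]>0$ before invoking the 0--1 law. Your argument is more self-contained (it avoids citing the ejection criterion) at the cost of additional steps; the paper's is shorter because it outsources exactly this passage-to-infinite-volume work. One minor simplification: you could use $\rt{u_n}{n}=E_R$ directly rather than detouring through the balance equation at site $n$ to reach $\rt{u_n}{n-1}$, since site $n$ is already inside the stabilized interval and the monotonicity $u_\infty(n)\geq u_n(n)$ applies there.
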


\begin{proof}
  Consider the stabilization of the i.i.d.\ $\Ber(\rho)$ initial configuration on $\ii{-n,n}$,
  and let $M_n$ be the number of particles starting in $\ii{-n,n}$ that terminate
  at the sinks $-n-1$ and $n+1$.
\cite[Theorem 2.11]{rolla2020activated} states that if
\begin{align}
    \limsup_{n\to\infty} \frac{\E M_n }{ 2n+1} >0, \label{eq:suff.active}
\end{align}
then the fixed-energy system on $\mathbb Z$ stays active a.s. 

Let $|\sigma_n|$ be the total number of particles starting in $\ii{-n,n}$ and $S_n$ be the number of particles sleeping in $\ii{-n,n}$ once the system stabilizes. Since mass is conserved, $M_n = |\sigma_n| - S_n$. Fix $0 < \delta < \rho - \crist$. We will show that $|\sigma_n| - S_n$ is larger than $\delta n$ with exponentially high probability, establishing \eqref{eq:suff.active} and, thus, the desired result. 

Observe that $|\sigma_n|\sim\Bin(2n+1,\rho)$.
Let $\ManyParticles_n$ be the event that $|\sigma_n| \geq  (\crist + \delta)(2n+1)$. 
A standard concentration estimate ensures that $$\P(\ManyParticles_n) \geq 1- A e^{-a n}$$ for positive constants $A$ and $a$ that depend only on $\delta$. 
Define $\FewSleepers_n$ to be the event that $S_n \leq \crist + \delta/2$.
It follows from \thref{peanut butter} that 
$$\P(\FewSleepers_n \mid \ManyParticles_n) \geq 1 - B e^{-b n}$$
for positive constants $B$ and $b$ that depend only on $\delta$ and $\lambda$.
Since $M_n = |\sigma_n| - S_n$, on the event $\ManyParticles_n \cap \FewSleepers_n$ we have $M_n \geq \delta n$. Thus, $\P(M_n \geq \delta n) \geq 1-Ce^{-cn}$ for positive constants $C$ and $c$ that depend only on $\delta$.
This implies that
\begin{align*}
    \limsup_{n\to\infty} \frac{\E M_n }{ 2n+1} &\geq\delta/2.\qedhere
\end{align*}
\end{proof}


\begin{prop} \thlabel{prop:fixed.energy.fixation}
    If $\rho < \crist$, then the fixed-energy model almost surely fixates. 
\end{prop}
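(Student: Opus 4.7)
The plan mirrors the layer-percolation constructions in \thref{prop:driven.dissipative.lower} and \thref{prop:point.source.lower}. By \cite{rolla2019universality}, it suffices to prove fixation for the Bernoulli$(\rho)$ initial configuration $\sigma$ on $\ZZ$. I will show that for each $n$, the stabilization of $\sigma|_{\ii{-n,n}}$ on $\ii{-n,n}$ with sinks at $\pm(n+1)$ absorbs zero particles with probability at least $1-Ce^{-cn}$, and then deduce fixation via Borel--Cantelli combined with a standard abelian/ergodicity argument.

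First, after translating to $\ii{0,2n}$, I would construct an extended odometer $\tilde u_n\in\eosos{2n}(\Instr',\sigma',u_0,f_0)$ that is stable on the entire interval (including both endpoints), satisfies $\lt{\tilde u_n}{0}=\rt{\tilde u_n}{2n}=0$, and takes only nonnegative values. The recipe is the same as in \thref{prop:driven.dissipative.lower}: choose $u_0$ to force $\lt{\tilde u_n}{0}=0$, set $f_0$ so that the target sleeping count and the zero-flux endpoint constraints are mutually compatible, and use \thref{lem:stab.criterion} to read off the target ending column $r$ of an infection path at step~$2n$. Because the total mass $|\sigma|_{\ii{-n,n}}|$ concentrates around $\rho(2n+1)$ with $\rho<\crist$, the targeted cell $(r,s)_{2n}$ lies inside the box $\mathcal B_{2n}$ of \thref{lem:box} with overwhelming probability, and the resulting infection path encodes $\tilde u_n$ via \thref{bethlehem}. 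Nonnegativity then follows from \thref{lem:uniflow} once I verify that the flow $f_v=\rt{\tilde u_n}{v}-\lt{\tilde u_n}{v+1}$ has the required monotone-sign structure.

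Next, applying the least-action principle (\thref{lem:lap}) to the true sink odometer $u^{\text{sink}}_n$, I get $u^{\text{sink}}_n\leq\tilde u_n$ pointwise, forcing $\lt{u^{\text{sink}}_n}{-n}=\rt{u^{\text{sink}}_n}{n}=0$. Consequently, on an event $F_n$ of probability at least $1-Ce^{-cn}$, no particle is ever absorbed by the sinks, i.e., $M_n=0$. Since $\sum_n\P(F_n^c)<\infty$, Borel--Cantelli yields $M_n=0$ for all sufficiently large $n$ almost surely, meaning that $\sigma|_{\ii{-n,n}}$ stabilizes using only internal topplings. A standard argument---fixation is a shift-invariant event of the ergodic Bernoulli measure, so its probability is $0$ or $1$, and admitting a.s.\ an in-place stabilizing odometer on arbitrarily large intervals rules out the $0$-probability alternative (compare with the techniques in \cite{rolla2020activated, RollaSidoraviciusZindy19})---then promotes this to fixation on $\ZZ$.

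The main obstacle is the last step: cleanly converting $\P(M_n=0)\to 1$ into a.s.\ finiteness of the true odometer at each site of $\ZZ$. Several criteria in the literature link finite-volume stabilization to infinite-volume fixation (odometer tightness, decay of the sink current, cycle-stabilization time), and identifying the one that pairs most naturally with our exponential bound on $F_n^c$ requires some care. The construction in the first step is more routine given the template of \thref{prop:driven.dissipative.lower}, but still needs to enforce zero flux at \emph{both} endpoints simultaneously, which is a slightly more rigid setup than the one-sided boundary conditions handled previously and pins down $u_0$ and $f_0$ essentially uniquely.
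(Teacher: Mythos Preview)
Your approach differs substantially from the paper's, which avoids a direct layer-percolation construction for the Bernoulli configuration and instead routes through the already-proven point-source bound (\thref{prop:point.source.lower}). Sources are placed at $x_n=\pm kn^2$, each particle first random-walks to a source, and then the particles accumulated at each source run ARW. By the point-source estimate, the particles from source $x_n$ stay within a disjoint interval $I_n$ with probability at least $1-Ce^{-ck|n|}$; for large $k$ all of these containment events hold simultaneously with positive probability, yielding a finite stabilizing odometer on all of $\ZZ$. Least action and the zero-one law \cite[Theorem~2.13]{rolla2020activated} then give almost-sure fixation.

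Your plan has two genuine gaps. First, the nonnegativity step: \thref{lem:uniflow} requires the flow $f_v$ to change sign at most once, which holds for $\sigma\equiv 1$ (then $f_v$ is nondecreasing) and for the point-source configuration (flow is unimodal around the source), but for a Bernoulli configuration $f_v-f_{v-1}=\sigma(v)-\1\{\Instr_v(u(v))=\Sleep\}\in\{-1,0,1\}$ has no such structure; the box lemma pins down only the endpoint $(r_{2n},s_{2n})$, not the intermediate $s_v$, so you cannot force a single sign change. Second, the step you flag is not covered by standard criteria: even if $M_n=0$ for all large $n$ almost surely, the true odometers $u_n^{\mathrm{sink}}(v)$ at a fixed interior site can still diverge, because particles newly revealed at $\pm(n+1)$ may wander deep into the interior and cause many extra topplings before sleeping inside $\ii{-(n+1),n+1}$. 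Your $\tilde u_n$ bounds $u_n^{\mathrm{sink}}$ but is not itself bounded in $n$, and its extension by zero is unstable at every $|v|>n$ with $\sigma(v)=1$, so it is not a stable odometer on $\ZZ$. The paper's source scheme is designed precisely to build a finite odometer on all of $\ZZ$ at once, which is what fixation actually requires.
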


\begin{proof}
Fix an odd positive integer $k$. Partition $\ZZ$ into successive intervals $\ldots,I_{-1},I_0,I_1,\ldots$
where $\abs{I_n}=k(2\abs{n}+1)$, and let $x_n$ be the integer at the center of $I_n$. 
Declare the points $x_n$ to be sources.
The idea will be to first let all particles move as random walks with no interaction until they reach
a source; this only increases the odometer since we are awakening particles
whenever they sleep. We then allow ARW to run as usual.
By applying \thref{prop:point.source.lower}, we will show that for sufficiently large $k$,
it holds with positive probability that the particles at each source fall asleep
without interacting with particles from any other source. Thus the fixed-energy model
fixates with positive probability, and by the zero-one law \cite[Theorem~2.7]{rolla2020activated}
it fixates a.s.

To carry out this plan, fix $n\in\ZZ$
and arbitrarily topple sites in $\ii{x_n+1,x_{n+1}-1}$
containing particles,
regardless of whether the particles are sleeping, until all particles
settle on $x_n$ or $x_{n+1}$.
Do the same for $\ii{x_{n-1}+1,x_n-1}$ so that all particles starting
in $\ii{x_{n-1},x_{n+1}}$ are placed onto sites $x_{n-1}$, $x_n$,
and $x_{n+1}$, and let $Z_n$ be the number of particles settling on $x_n$.
Then stabilize interval $I_n$, and define $\contained_n$ as the event
that all the particles on $x_n$ stabilize without leaving $I_n$.

We claim that that for large enough $k$, it holds with positive probability
that $\contained_n$ occurs for all $n\in\ZZ$.
To prove this, we give an exponentially decaying failure bound on $\contained_n$.
First consider $Z_n$, the number of particles initially settling on $x_n$.
By symmetry, the sites in $\ii{x_{n-1}+1,x_n-1}$ contribute equally
to $Z_{n-1}$ and $Z_n$ in expectation; likewise, the sites in $\ii{x_{n}+1,x_{n+1}-1}$
contribute equally to $Z_n$ and $Z_{n+1}$  in expectation. 
Adding in the contribution to $Z_n$ from site~$x_n$ itself, we have for any $n\geq 1$
\begin{align*}
  \E Z_n = \tfrac12\rho\abs[\big]{\ii{x_{n-1}+1,x_n-1}} + 
          \tfrac12\rho\abs[\big]{\ii{x_{n}+1,x_{n+1}-1}} + \rho
          =\tfrac12\rho(x_{n+1}-x_{n-1})=\rho\abs{I_n}.
\end{align*}
Fix any $\rho'$ such that $\rho<\rho'<\crist$.
Since $Z_n$ can be realized as a sum of independent Bernoulli random variables,
a Chernoff bound gives
\begin{align}
\P\bigl[ Z_n\leq \rho' \abs{I_n}\bigr] \geq 1- e^{-ank} \label{eq:small}
\end{align}
for constant $a$ not depending on $n$ or $k$.
By \thref{prop:point.source.lower}, conditional on $Z_n\leq \rho' \abs{I_n}$, the 
particles on
$x_n$ will stabilize within 
\begin{align*}
  \ii[\bigg]{x_n-\frac{Z_n}{2\rho'},\ x_n+\frac{Z_n}{2\rho'}}
    \subseteq \ii[\bigg]{x_n-\frac{\abs{I_n}}{2},\ x_n+\frac{\abs{I_n}}{2}}=I_n
\end{align*}
with probability at least $1- Be^{-b nk}$ for positive constants $B$ and $b$ that do not depend 
on $n$ or $k$. Together with \eqref{eq:small}, this shows that
for some constants $c$ and $C$ independent of $n$ and $k$,
we have $\P[\contained_n] \geq 1 - Ce^{-cnk}$ for all $n\geq 1$.
By symmetry $\P[\contained_{-n}]$ obeys the same bound.
Thus by taking $k$ sufficiently large, we can make the probability
of $\cap_{n\neq 0}\contained_n$ arbitrarily close to $1$. Since
$\P[\contained_0]>0$, this proves that for some $k$,
we have $\P[\cap_{n\in\ZZ}\contained_n]>0$.

To prove that the system fixates, we argue
along similar lines as \cite[Section~4]{rolla2020activated} that on the event
$\cap_{n\in\ZZ}\contained_n$,
 for each $n$ we have a finite sequence
of acceptable topplings (i.e., sites with sleeping or active particles may be toppled) stabilizing
$\ii{x_{-n},x_n}$. The odometer for this sequence of topplings provides an upper bound
on the true odometer stabilizing $\ii{x_{-n},x_n}$ by \cite[Lemma~2.1]{rolla2020activated}
(or by our own least-action principle, once it is noted that the odometer
of a sequence of acceptable topplings stabilizing a set is always stable on that set).
And since our upper bound on the odometer at $0$ remains bounded as $n\to\infty$, we have fixation a.s.\ 
by \cite[Theorem~2.7]{rolla2020activated}.
\end{proof}

\subsubsection{The cycle}
\label{sec:CY}
\begin{prop} \thlabel{prop:cycle}
    Fix $\rho \in (0,1)$ and place $\lfloor \rho n \rfloor$ active particles uniformly throughout the sites of the cycle with $n$ vertices. Let $\tau_n$ be the total number of instructions used by all  particles once the system has stabilized. 
    \begin{enumerate}[label = (\roman*)]
        \item If $\rho < \crist$, then $\P(\tau_n > C n \log^2n ) \leq n^{-b}$ and $\P(\tau_n > C'n^4) \leq Be^{-b'n}$ for positive constants $C,C',B,b,b'$ that do not depend on $n$.
        \item If $\rho > \crist$, then $\P(\tau_n < e^{cn}) \leq e^{-cn}$ for a positive constant $c$ that does not depend on $n$.
    \end{enumerate}
\end{prop}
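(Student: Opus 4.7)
For Part (i), my plan is to adapt the source-based fixation argument of \thref{prop:fixed.energy.fixation} to the cycle. Place $m$ equally-spaced sources on $\mathbb{Z}/n\mathbb{Z}$; let red particles perform simple random walks until they hit a source (whereupon they become blue), and then let each blue cluster stabilize via ARW dynamics using the given instructions. Each source receives approximately $\rho n/m$ particles, with deviations controlled by Chernoff bounds on a binomial, and \thref{prop:point.source.lower} ensures that with the appropriate probability guarantee a blue cluster of $N$ particles stays within distance $N/(2(\crist-\epsilon))$ of its source. Since $\rho<\crist$, this distance is strictly less than $n/(2m)$, so adjacent clusters do not overlap and the combined configuration is stable; the least-action principle (\thref{lem:lap}) then bounds $\tau_n$ by the total number of instructions used in the construction. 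For the $O(n\log^2 n)$ bound with $n^{-b}$ failure probability, take $m=\Theta(n/\log n)$ so each cluster has $\Theta(\log n)$ particles: red walks cost $O((n/m)^2)=O(\log^2 n)$ instructions per particle for $O(n\log^2 n)$ total, and blue stabilization is polylogarithmic per cluster. For the $O(n^4)$ bound with $e^{-cn}$ failure probability, take $m=\Theta(1)$ so all concentration estimates become exponential in $n$.

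For Part (ii), suppose toward a contradiction that $\tau_n<e^{cn}$ for some small $c>0$. Then the cycle's true odometer $u$ is everywhere finite, and letting $A=\rt{u}{n-1}$ and $B=\lt{u}{0}$ we have $A,B\leq\tau_n<e^{cn}$. Cut the cycle at site $0$ and view it as the interval $\ii{0,n-1}$; define the modified initial configuration $\tilde\sigma^{A,B}$ by setting $\tilde\sigma^{A,B}(0)=\sigma(0)+A$, $\tilde\sigma^{A,B}(n-1)=\sigma(n-1)+B$, and $\tilde\sigma^{A,B}(v)=\sigma(v)$ otherwise. A direct check of \thref{def:stable} shows that the restriction of $u$ to $\ii{0,n-1}$ is a stable odometer for $\tilde\sigma^{A,B}$ using the same instruction stacks (at interior sites the cycle and interval balance equations agree, while at $0$ and $n-1$ the extra $A$ and $B$ particles exactly absorb the wrap-around contributions $\rt{u}{n-1}$ and $\lt{u}{0}$). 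This stable odometer leaves precisely $\lfloor\rho n\rfloor$ sleepers on the interval, so by \thref{lem:dual.lap} the true stabilization of $\tilde\sigma^{A,B}$ leaves at least $\lfloor\rho n\rfloor\geq\rho' n$ sleepers for some $\rho'\in(\crist,\rho)$ and all large $n$. Since $\rho'>\crist$ and $\tilde\sigma^{A,B}$ has no sleeping particles, \thref{peanut butter} bounds this event by $Ce^{-c_1 n}$ uniformly in the initial configuration. A union bound over the at most $e^{2cn}$ integer pairs $(A,B)\in\ii{0,\lfloor e^{cn}\rfloor}^2$ gives
\[
\P(\tau_n<e^{cn})\leq e^{2cn}\cdot Ce^{-c_1 n},
\]
so choosing $c<c_1/3$ yields the desired exponentially small bound.

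The main technical obstacle for Part (i) is the same one encountered in \thref{prop:fixed.energy.fixation}: making the red-random-walk / blue-stabilization decomposition produce a genuine stable odometer for the unmodified cycle dynamics, which requires a careful reinterpretation of the red phase within the sitewise instruction framework together with tight instruction-counting estimates for the red and blue phases. For Part (ii), the delicate points are the verification that the cut odometer is simultaneously stable on $\ii{0,n-1}$ for $\tilde\sigma^{A,B}$ and that \thref{peanut butter} applies uniformly over the added boundary mass; once these are in place, the union bound is straightforward because $\tau_n<e^{cn}$ forces $A,B<e^{cn}$.
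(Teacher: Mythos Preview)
Your Part~(i) follows the same source-scheme argument the paper sketches: sources at spacing $\Theta(\log n)$ for the polynomial-probability bound, a single source (or $\Theta(1)$ sources) for the exponential-probability bound, with \thref{prop:point.source.lower} controlling each cluster and the least-action principle transferring the bound to the true odometer. Both your outline and the paper's defer the instruction-counting details to \cite{BasuGangulyHoffmanRichey19}.

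Your Part~(ii) is correct but takes a genuinely different route. The paper follows the recycling scheme of \cite[Theorem~2]{BasuGangulyHoffmanRichey19}: stabilize with a frozen site at $0$, collect the roughly $(\rho-\crist-\epsilon)n$ excess particles guaranteed there by \thref{peanut butter}, release them to reactivate a macroscopic set, re-freeze at $\lfloor n/2\rfloor$, and iterate; each round costs $\Omega(n)$ instructions and the iteration survives for exponentially many rounds with high probability. Your argument instead cuts the cycle at one edge, compensates for the lost wrap-around flux by adding $A=\rt{u}{n-1}$ particles at $0$ and $B=\lt{u}{0}$ at $n-1$, observes that the cycle's odometer is then a stable interval odometer for $\tilde\sigma^{A,B}$ leaving all $\lfloor\rho n\rfloor$ particles asleep, and invokes \thref{lem:dual.lap} together with \thref{peanut butter} directly; the union bound over $(A,B)\in\ii{0,\lfloor e^{cn}\rfloor}^2$ costs only a factor $e^{2cn}$, absorbed by choosing $c$ small relative to the rate $c_1$ in \thref{peanut butter}. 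Your route is shorter and avoids the iterative recycling machinery entirely, at the price of giving no dynamical picture of where the instructions are spent; the paper's route stays closer to existing literature and makes the slow-mixing mechanism more explicit.
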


\begin{proof}
   The proofs of (i) and (ii) are  similar to the arguments in \cite{BasuGangulyHoffmanRichey19}. We give a brief sketch and refer the reader to \cite{BasuGangulyHoffmanRichey19} for more details. 
   
   To prove (i), a source scheme like in the proof of \thref{prop:fixed.energy.active} is used. Sources are spaced uniformly at distance $c_0 \log n$ apart. Particles perform random walk until reaching a source, upon which they switch to activated random walk dynamics. It takes $O(n\log^2 n)$ steps to freeze all of the particles. With \thref{prop:point.source.lower}, we can deduce that the particles coming from each source are likely to stabilize in $O(n\log^2n)$ steps without interacting with particles at other sources. Following the proof of \cite[Theorem 1]{BasuGangulyHoffmanRichey19}, this happens for all of the sources with probability at least $1- n^{-b}$ for some $b>0$. To obtain the exponential bound the argument can be repeated with a single source at 0. It is exponentially likely that all particles will reach the source within $n^4$ steps and, by \thref{prop:point.source.lower}, that the particles will then stabilize in no more than $n^4$ additional steps without reaching $\lfloor n/2\rfloor$. 

   To prove (ii), the idea is to stabilize the process on the cycle while freezing points at 0 then to recycle the approximately $(\rho - \crist - \epsilon)n$ particles that are overwhelmingly likely to freeze there by \thref{peanut butter}. These particles are used to reactivate many particles of which many are then frozen at site $n/2$ again by \thref{peanut butter}. The proof of \cite[Theorem 2]{BasuGangulyHoffmanRichey19} shows that this process is overwhelmingly likely to continue for exponentially many steps. Adapting their approach gives (ii). 
\end{proof}
\subsubsection{Proof of \texorpdfstring{\thref{thm:universal}}{Theorem 1.1}}
Let $S_n$ be the number of sleeping particles in a sample from the invariant
distribution of the driven-dissipative chain on $\ii{0,n}$.
\thref{prop:driven.dissipative.lower} proves that 
\begin{align*}
  \liminf_{n \to \infty}\f{\E[S_{n}] }{n+1} &\geq\crist,\\\intertext{and 
         \thref{prop:driven.dissipative.upper} proves that}
  \limsup_{n\to\infty}\f{\E[S_{n}] }{n+1} &\leq\crist,
\end{align*}
demonstrating that $\critDD=\crist$.
Similarly, \thref{prop:point.source.lower,prop:point.source.upper} prove
$\critPS=\crist$, and \thref{prop:fixed.energy.active,prop:fixed.energy.fixation}
prove $\critFE=\crist$, and \thref{prop:cycle} proves 
$\critCY=\crist$.

\subsection{Bounds on critical densities away from 0 and 1}
\label{sec:critical.bounds}
As we mentioned in the introduction, with it now established that
\begin{align*}
  \crist=\critDD=\critPS=\critFE=\critCY,
\end{align*}
we can approach the classical problem of bounding these critical densities away from $0$ and $1$
using layer percolation.
We give simple arguments that the critical density for one-dimensional ARW
is bounded away from $0$
for all $\lambda$ and away from $1$ for small enough $\lambda$. 
We have not tried to optimize our proofs.
Rather, we wish to demonstrate that these two bounds, which were major accomplishments
when first carried out in \cite{rolla2012absorbing} and \cite{BasuGangulyHoffman18},
are easy consequences of our theory.
We start with a slight improvement of Rolla and Sidoravicius's celebrated lower bound
$\critFE\geq \frac{\lambda}{1+\lambda}$ \cite{rolla2012absorbing}:
\begin{prop}
  For any $\lambda>0$, we have $\crist\geq\frac{1}{1/2+\lambda}$.
\end{prop}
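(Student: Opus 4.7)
The plan is to exploit superadditivity of $(\E X_n)_{n\geq 0}$ to reduce the claim to a lower bound on $\E X_k/k$ for some $k$. The superadditivity $\E X_{n+m}\geq \E X_n+\E X_m$ was already established in the proof of \thref{prop:subadditive} via greedy concatenation of infection paths, so by Fekete's lemma $\crist = \sup_n \E X_n/n \geq \E X_k/k$ for every $k\geq 1$. It therefore suffices to exhibit a $k$ with $\E X_k/k\geq 1/(1/2+\lambda)$.

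The case $k=1$ is a direct computation. By construction of layer percolation, $X_1=1$ iff at least one of the indicators $B^0_0,\ldots,B^0_{R_0}$ equals $1$, where $R_0\sim\Geo(1/2)$ and each $B^0_i\sim\Ber(\lambda/(1+\lambda))$ independently. Conditioning on $R_0$ and summing a geometric series gives $\P(X_1=0) = \E[(1+\lambda)^{-R_0-1}] = 1/(1+2\lambda)$, so $\E X_1 = 2\lambda/(1+2\lambda) = \lambda/(1/2+\lambda)$. Combined with superadditivity, this already yields $\crist\geq \lambda/(1/2+\lambda)$, which matches the stated bound in the regime $\lambda\geq 1$.

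To cover $\lambda<1$ and obtain the full constant $1/(1/2+\lambda)$, I would push the argument to $k>1$, exploiting that the top-row width $W_k$ of the infection set tends to widen on steps where no rise occurs. The per-step rise probability is $1-(1/(1+2\lambda))^{W_k}$, and conditional on failure at a step the expected width at the next step is $1+W_k/(1+2\lambda)>1$, so the amortized per-step rate strictly exceeds the naive $W=1$ value $\lambda/(1/2+\lambda)$. The main obstacle is the joint Markov evolution of $(X_k,W_k)$ through rises, which reset $W_k$ to the number of Bernoulli successes in the newly infected top row; without an obvious stationary distribution for $W_k$, the cleanest route I can see is either to identify a potential function on $(X_k,W_k)$ whose linear drift is exactly $1/(1/2+\lambda)$, or to compute $\E X_k/k$ explicitly at a specific small $k$ that already attains the bound and then invoke superadditivity.
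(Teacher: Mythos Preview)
The statement as printed contains a typo: the intended bound is $\crist\geq\frac{\lambda}{1/2+\lambda}$, not $\frac{1}{1/2+\lambda}$. Indeed $\crist\leq 1$ always, whereas $\frac{1}{1/2+\lambda}>1$ for $\lambda<1/2$, so the printed inequality is impossible in that range; moreover the paper's own proof concludes with ``yielding $\crist\geq\frac{\lambda}{1/2+\lambda}$,'' and the preceding sentence frames the result as a slight improvement of the Rolla--Sidoravicius bound $\frac{\lambda}{1+\lambda}$.

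With that correction, your $k=1$ computation is correct and is essentially the paper's argument. The paper phrases it as a one-step conditional bound---from any top-row cell, the probability of infecting some cell one row higher is $1-\E[(1+\lambda)^{-(R_0+1)}]=\lambda/(1/2+\lambda)$, so $\P(X_{n+1}=X_n+1\mid\mathscr{F}_n)\geq\lambda/(1/2+\lambda)$ and hence $\E X_n\geq n\lambda/(1/2+\lambda)$---but this is the same computation you carried out, iterated directly rather than routed through superadditivity and Fekete. Your entire final paragraph, attempting to push beyond $\lambda/(1/2+\lambda)$ to reach $1/(1/2+\lambda)$ for $\lambda<1$, is therefore unnecessary: you were chasing a typo, and the incomplete sketch there can simply be dropped.
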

\begin{proof}
  Let $X_n$ denote the greatest row present in the infection set $\infset[(0,0)_0]_n$.
  Any given cell in layer percolation infects $1+\Geo(1/2)$ cells in its row in the next step;
  in the row above it, it infects this quantity of cells thinned by $\lambda/(1+\lambda)$,
  which is at least $1$ with probability $\frac{\lambda}{1/2+\lambda}$.
  Thus, conditional on layer percolation up to step~$n$, we have $X_{n+1}=X_n+1$ 
  with at least probability $\frac{\lambda}{1/2+\lambda}$, yielding $\crist\geq\frac{\lambda}{1/2+\lambda}$.
\end{proof}

Next, we reproduce the result of \cite{BasuGangulyHoffman18} that $\critFE$ is
strictly below $1$ for small enough $\lambda$.

\begin{prop}
  If $\lambda<1$, then $\crist<1$.
\end{prop}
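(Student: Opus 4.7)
The plan is a first-moment argument. For layer percolation started from $(0,0)_0$, let $M_{s,n}$ be the number of cells in row $s$ of $\infset[(0,0)_0]_n$, and set $\mu_{s,n}=\E M_{s,n}$. First I would derive a linear recursion for $\mu$: in one step of layer percolation a cell in row $s'$ infects only cells in rows $s'$ and $s'+1$, so cells in row $s$ of $\infset_{n+1}$ come exclusively from cells in rows $s-1$ and $s$ of $\infset_n$. From a cell in row $s'$ and diagonal $j$, the expected number of infected cells in row $s'$ of its layer at step $n+1$ is $\E[R_j+1]=2$, and the expected number in row $s'+1$ is $\E\bigl[\sum_{i=0}^{R_j}B^j_i\bigr]=c$, where $c:=2\lambda/(1+\lambda)$. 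Counting infected cells with multiplicity (an upper bound on the distinct count, since adjacent layers share a boundary column) and applying linearity of expectation yields
\begin{equation*}
\mu_{s,n+1}\leq 2\mu_{s,n}+c\,\mu_{s-1,n},\qquad \mu_{s,0}=\1\{s=0\}.
\end{equation*}
A straightforward induction on $n$ then gives $\mu_{s,n}\leq\binom{n}{s}2^{n-s}c^{s}$.

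Next I would apply a union bound: since $\{X_n\geq s\}=\bigcup_{s'\geq s}\{M_{s',n}\geq 1\}$, Markov's inequality gives
\begin{equation*}
\P[X_n\geq s]\leq\sum_{s'\geq s}\mu_{s',n}\leq\sum_{s'=s}^n\binom{n}{s'}2^{n-s'}c^{s'}=(2+c)^n\,\P[\Bin(n,p)\geq s],
\end{equation*}
where $p:=c/(2+c)$, using that $(2+c)p=c$ and $(2+c)(1-p)=2$. The assumption $\lambda<1$ is equivalent to $c<1$, which is in turn equivalent to $1/p=(2+c)/c>2+c$, so the relative entropy $D(1\|p)=\log(1/p)$ strictly exceeds $\log(2+c)$. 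By continuity of $D(q\|p)$ in $q$ on $(p,1]$, I can choose $\epsilon>0$ small enough that $D((1-\epsilon)\|p)>\log(2+c)$. A Chernoff bound for the binomial then yields
\begin{equation*}
\P[X_n\geq(1-\epsilon)n]\leq (2+c)^n\exp\bigl(-nD((1-\epsilon)\|p)\bigr)\leq e^{-c'n}
\end{equation*}
with $c':=D((1-\epsilon)\|p)-\log(2+c)>0$.

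Finally I would combine these bounds with $X_n\leq n$ and the layer-cake formula $\E X_n=\sum_{s=1}^n\P[X_n\geq s]$. Splitting the sum at $\lceil(1-\epsilon)n\rceil$ and bounding the first block by its number of terms (each summand at most $1$) and the second by the Chernoff tail gives $\E X_n\leq (1-\epsilon)n+1+n e^{-c'n}$, so $\crist=\lim_n\E X_n/n\leq 1-\epsilon<1$. The only subtlety is the first-moment recursion: one must observe that although two neighboring cells at step $n$ can generate the same cell at step $n+1$ (adjacent layers overlap at one boundary column), counting with multiplicity only overestimates, preserving the inequality. Beyond this bookkeeping I do not anticipate any obstacle.
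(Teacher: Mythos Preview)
Your proof is correct and follows essentially the same first-moment strategy as the paper: both derive the recursion $\mu_{s,n+1}\leq 2\mu_{s,n}+2\lambda_0\mu_{s-1,n}$ (your $c=2\lambda_0$) and the bound $\mu_{s,n}\leq 2^n\lambda_0^s\binom{n}{s}$, then show the tail is exponentially small when $\lambda<1$. The only cosmetic differences are that the paper bounds a single term $\mu_{\rho n,n}$ via a Stirling-type estimate and invokes \thref{prop:subadditive}, whereas you sum over rows, recognize a binomial tail, and apply Chernoff before bounding $\E X_n$ directly.
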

\begin{proof}
  A given cell infects on average $2$ cells in its row in the next step
  and on average $2\lambda_0$ cells in the row above, where $\lambda_0=\lambda/(1+\lambda)$.
  Hence, if $Z_n(s)$ is the number of cells in the infection set
  $\infset[(0,0)_0]_n$ in row~$s$, we have
  \begin{align*}
    \E\bigl[ Z_{n+1}(s) \mid Z_n(\cdot)\bigr] \leq 2Z_n(s) + 2\lambda_0 Z_n(s-1).
  \end{align*}
  Setting $\mu_n(s)=\E Z_n(s)$, we thus have $\mu_{n+1}(s)\leq 2\mu_n(s)+2\lambda_0\mu_n(s-1)$.
  We can view this inequality as stating that each component of the vector $\mu_{n+1}(\cdot)$ is bounded
  by the corresponding component of $A\mu_n(\cdot)$, where $A$ is the infinite matrix
  \begin{align*}
    A= \begin{bmatrix}
      2 & 0 & 0 & 0 &\cdots\\
      2\lambda_0 & 2 & 0 &0&\cdots\\
      0 & 2\lambda_0 &2  & 0&\cdots\\
      0 &0 & 2\lambda_0 &2  & \cdots\\
      \vdots&\vdots&\vdots&\vdots&\ddots
    \end{bmatrix}.
  \end{align*}
  We can iterate the inequality to bound each component of $\mu_n(\cdot)$ 
  by that of $A^n\mu_0(\cdot)$, with $\mu_0(s)=\1\{s=0\}$. Hence $\mu_n(\cdot)$
  is bounded by the first column of $A^n$, which after a calculation yields
  \begin{align*}
    \mu_n(s) &\leq 2^n\lambda_0^s\binom{n}{s}.
  \end{align*}
  Now let $s=\rho n$ for $0<\rho<n$ and apply the standard bound $\binom{n}{s}=\binom{n}{n-s}\leq 
  (en/(n-s))^{n-s}$ to get
  \begin{align}
    \P\bigl[Z_n(\rho n)\geq 1\bigr]\leq \E Z_n(\rho n)
    \leq \biggl(2\lambda_0^{\rho } \biggl(\frac{e}{1-\rho}\biggr)^{1-\rho}\biggr)^n.\label{eq:rowreachedbound}
  \end{align}
  The expression $2\lambda_0^\rho((e/(1-\rho))^{1-\rho}$ converges to $2\lambda_0$ as $\rho\to 1$, 
  which is strictly less than $1$ by our assumption $\lambda<1$. Thus there exists
  $\rho<1$ so that the right-hand side of \eqref{eq:rowreachedbound}
  decays exponentially. For such $\rho$,
  the infection set at step~$n$ is exponentially unlikely to contain cells in rows at
  or above $\rho n$,
  which by \thref{prop:subadditive} proves that $\crist\leq\rho$.
\end{proof}

As we mentioned, we have not tried to optimize these results.
We suspect that by similar technique, we could achieve
the optimal lower bound $\crist\geq C\sqrt{\lambda}$ as $\lambda\to 0$
proven in \cite{AsselahSchapiraRolla19}, and that we could show 
$\crist<1$ for all $\lambda>0$ as proven in \cite{HoffmanRicheyRolla20}.

\section{Acknowledgements}
Thanks to Vittoria Silvestri and Joshua Meisel for helpful conversations. Junge was partially supported by NSF grants DMS-2115936 and DMS-2238272.

\appendix
\section{Galton--Watson processes with migration}
\label{sec:appendix}

In this appendix, we prove the following concentration result for
critical geometric branching processes with migration (see \thref{def:GW}).
\begin{prop}\thlabel{prop:GW.emigration.concentration}
  Let $(X_j)_{j\geq 0}$ be a critical geometric branching process with migration $(e_j)_{j\geq 1}$.
  Let $X_0=x_0$ and let $\abs{e_j}\leq \emax$ for all $j$, for some $\emax\geq 1$. Then
  for any $t\geq 0$,
  \begin{align}
        \max\Bigl(\P\bigl[ X_j-\mu_j\geq t\bigr],\ 
        \P\bigl[ X_j-\mu_j\leq -t\bigr]\Bigr) &\leq C \exp\Biggl( -\frac{ct^2}{j(j\emax+\abs{x_0}+t)} \Biggr),
      \label{eq:GW.ec.general}
  \end{align}
  for some absolute constants $c,C>0$ and
  \begin{align*}
    \mu_j = \E X_j = x_0 + \sum_{i=1}^j e_j.
  \end{align*}
\end{prop}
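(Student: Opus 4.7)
The plan is to view $M_j := X_j - \mu_j$ as a martingale and apply a Freedman-style Chernoff bound, adapted to the sub-exponential increments produced by the geometric offspring distribution. Since $\E L = 1$ for $L \sim \Geo(1/2)$, a direct calculation in the three cases $X_j > 0$, $X_j < 0$, $X_j = 0$ shows
\[
  M_{j+1} - M_j = \sgn(X_j)\sum_{i=1}^{\abs{X_j}}(L_i - 1),
\]
which is mean zero conditional on $\Fscr_j := \sigma(X_0,\ldots,X_j)$, so $(M_j)$ is indeed a martingale starting from $M_0 = 0$.

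Next, I would build an exponential supermartingale. The cumulant generating function $\psi(\theta) := \log\E[e^{\theta(L-1)}]$ is analytic on a neighborhood of $0$ with $\psi(0) = \psi'(0) = 0$ and $\psi''(0) = 2$, so there is an absolute constant $c_0 > 0$ with $\max(\psi(\theta),\psi(-\theta)) \leq \theta^2$ for all $\abs{\theta} \leq c_0$. Hence for every $\abs{\theta} \leq c_0$, regardless of the sign of $X_j$,
\[
  \E\bigl[e^{\theta(M_{j+1}-M_j)} \bigm| \Fscr_j\bigr] \leq e^{\theta^2 \abs{X_j}},
\]
so $Z_j := \exp\bigl(\theta M_j - \theta^2 \sum_{i=0}^{j-1}\abs{X_i}\bigr)$ is a supermartingale.

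The main obstacle is that $\sum \abs{X_i}$ is random, and a crude bound using $\abs{X_i} \leq \abs{x_0} + i\emax + \abs{M_i}$ only works once $\abs{M_i}$ is controlled. I would handle this by the standard device of introducing the stopping time $\tau := \inf\{i \geq 0 : \abs{M_i} > t\}$. On $\{\tau > i\}$ we have $\abs{X_i} \leq \abs{x_0} + j\emax + t$, so stopping $Z_j$ at $\tau \wedge j$ gives $\sum_{i < j \wedge \tau}\abs{X_i} \leq j(\abs{x_0} + j\emax + t)$. Doob's optional stopping theorem together with Markov's inequality then yields, for any $\theta \in (0,c_0]$,
\[
  \P\bigl[M_{j\wedge\tau} \geq t\bigr] \leq \exp\bigl(-\theta t + \theta^2 j(\abs{x_0} + j\emax + t)\bigr).
\]
Optimizing gives $\theta^\star = t\bigl/\bigl(2j(\abs{x_0}+j\emax+t)\bigr)$, which produces a bound of the form $\exp\bigl(-t^2/4j(\abs{x_0}+j\emax+t)\bigr)$ precisely as claimed.

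The one technical point is that $\theta^\star$ must lie in $[0,c_0]$ for the cumulant bound to apply. This is where the hypothesis $t \geq 2\sqrt{j(j\emax+\abs{x_0})}$ is used: in the ``exponential regime'' $t \geq j\emax+\abs{x_0}$ one has $\theta^\star \leq 1/(2j) \leq c_0$ (after absorbing small $j$ into the constants), while in the ``Gaussian regime'' $t \leq j\emax+\abs{x_0}$ the assumed lower bound on $t$ yields $\theta^\star \leq t/(2j(\abs{x_0}+j\emax)) \leq c_0$ for the same reason. If $\theta^\star$ ever exceeds $c_0$, we replace it by $c_0$, and the resulting linear-in-$t$ bound is absorbed into the claimed exponential estimate (possibly adjusting $c$). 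Finally, to eliminate the stopping time, observe that $\{M_j \geq t\} \subseteq \{\sup_{i \leq j}M_{i \wedge \tau} \geq t\}$: either $\tau > j$ and $M_{j \wedge \tau} = M_j \geq t$, or $\tau \leq j$ in which case $M_\tau \geq t$ by definition; Doob's submartingale inequality applied to $Z_{j\wedge\tau}$ picks up both possibilities. Running the same argument with $\theta \in [-c_0,0)$ and $-M$ in place of $M$ gives the lower tail, yielding \eqref{eq:GW.ec.general}.
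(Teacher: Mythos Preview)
Your martingale/Freedman approach is sound and genuinely different from the paper's proof. The paper does not use the martingale structure at all: instead it proves a convex-order comparison (Lemma~A.3) showing that the centered signed process $X_j-\mu_j$ is dominated by $Y_j-Z_j-\E[Y_j-Z_j]$, where $Y_j,Z_j$ are \emph{independent} critical geometric branching processes with constant immigration $\emax$. These have explicit distributions as sums of independent geometrics (Proposition~A.2), to which Janson's tail bounds for geometric sums are applied directly; the convex-order inequality then transfers the bound back to $X_j$ via carefully chosen piecewise-linear convex test functions. Your route is arguably more elementary and more portable---it would work verbatim for any mean-one offspring law with a finite exponential moment in a neighborhood of zero---while the paper's route exploits the exact solvability of the geometric case and produces explicit distributional identities that are used elsewhere in the paper.

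There is one slip you should fix. In the last paragraph you assert that on $\{M_j\geq t\}\cap\{\tau\leq j\}$ one has ``$M_\tau\geq t$ by definition''. This is false: the definition of $\tau$ only gives $\abs{M_\tau}>t$, and it is entirely possible that $M_\tau<-t$ while $M_j\geq t$ at the later time $j$. The repair is immediate: write
\[
  \P[M_j\geq t]\leq \P[M_{j\wedge\tau}\geq t]+\P[M_{j\wedge\tau}\leq -t],
\]
since if $\tau\leq j$ and $M_\tau<-t$ then $M_{j\wedge\tau}=M_\tau\leq -t$. The second term is bounded by the same supermartingale argument with $-\theta$ in place of $\theta$, at the cost of a harmless factor of $2$. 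Incidentally, the hypothesis $t\geq 2\sqrt{j(j\emax+\abs{x_0})}$ plays no essential role in your argument: once $j\geq 1$ one always has $V:=j(j\emax+\abs{x_0}+t)\geq t$, so $\theta^\star\leq 1/(2j)$ automatically, and if $\theta^\star>c_0$ the substitution $\theta=c_0$ yields $e^{-c_0 t/2}\leq e^{-c_0 t^2/(2V)}$ directly. Your bound therefore holds for all $t\geq 0$, which is slightly stronger than what is stated.
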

In a typical application, we have $j,\emax=O(n)$, $\abs{x_0}=O(n^2)$, and $t=sn^2$ for $s$ bounded
away from $0$.
The right-hand side of \eqref{eq:GW.ec.general} is then bounded by $Ce^{-csn}$ for some constants $c$ and $C$,
and in particular for any fixed $s$ we obtain an exponential bound in $n$.

A benefit of considering branching processes with geometric offspring distribution
is that the distribution of the $j$th generation can be explicitly calculated.
The following facts are well known and can be proven by simple
inductive arguments.
\begin{prop}\thlabel{prop:GW.distributions}
  Consider a critical geometric branching process $(X_j)_{j\geq 0}$ with $X_0=1$
  and constant migration $m$ after each step.
  \begin{enumerate}[(a)]
    \item If $m=0$, then $\P[X_j>0] = 1/(j+1)$ and the conditional distribution of $X_j$ given $X_j>0$
      is $1+\Geo\bigl(1/(j+1)\bigr)$.
    \item If $m=1$, then $X_j$ has distribution $1+\Geo\bigl(1/(j+1)\bigr)$.
  \end{enumerate}
  
\end{prop}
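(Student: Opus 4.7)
The plan is to prove both parts by induction on the probability generating function (PGF), exploiting the fact that the offspring PGF $f(s) = \tfrac12 / (1 - \tfrac12 s) = 1/(2-s)$ for the $\Geo(1/2)$ distribution is a Möbius transformation, so its iterates have an explicit closed form. Concretely, I will first establish by induction on $j$ that the $j$-step PGF starting from $X_0 = 1$ with no migration is
\begin{align*}
  f_j(s) = \frac{j - (j-1)s}{(j+1) - js}.
\end{align*}
The base case $j=1$ gives $f_1(s) = 1/(2-s) = f(s)$, and the induction step follows from $f_{j+1}(s) = f(f_j(s))$ via a routine algebraic simplification. From this, part (a) is immediate: $\P[X_j = 0] = f_j(0) = j/(j+1)$, giving $\P[X_j > 0] = 1/(j+1)$, and the conditional PGF is
\begin{align*}
  \frac{f_j(s) - f_j(0)}{1 - f_j(0)} = \frac{s/(j+1)}{1 - (j/(j+1))s},
\end{align*}
which is exactly the PGF of $1 + Y$ for $Y \sim \Geo(1/(j+1))$.

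For part (b), let $g_j$ denote the PGF of $X_j$ in the migration-one case. Because one particle is added deterministically after each branching step, $g_{j+1}(s) = s \cdot g_j(f(s))$. I will check by induction that $g_j(s) = s/\bigl((j+1) - js\bigr)$, i.e.\ the PGF of $1 + \Geo(1/(j+1))$. The base case $g_0(s) = s$ is trivial (since $X_0 = 1$ deterministically, which coincides with $1 + \Geo(1)$), and the induction step is the same Möbius composition as in part (a). There is no substantive obstacle; everything reduces to two mechanical PGF calculations made clean by the Möbius structure of $f$. If one prefers a more conceptual explanation, part (b) can alternatively be obtained by coupling each generation of the migration-one process with the \emph{conditioned} process from part (a) via the standard size-biased / spine construction, but the direct PGF induction is shortest and self-contained.
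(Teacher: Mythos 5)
Your proof is correct, and all the computations check out: the iterate formula $f_j(s)=\bigl(j-(j-1)s\bigr)/\bigl((j+1)-js\bigr)$, the conditional PGF $s/\bigl((j+1)-js\bigr)$, and the recursion $g_{j+1}(s)=s\,g_j(f(s))$ (valid since with $X_0=1$ and immigration $1$ the process never goes negative, so the signed dynamics never intervene) all verify. The paper itself gives no proof of this proposition, stating only that the facts are well known and follow from simple inductive arguments; your Möbius-iteration PGF induction is exactly such an argument and fills the omission cleanly.
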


In the next result, we consider a signed Galton--Watson process $(V_j)_{j\geq 0}$
with migration $(e_j)_{j\geq 1}$ satisfying $\abs{e_j}\leq\emax$.
We then consider two (nonnegative) Galton--Watson processes $(Y_j)_{j\geq 0}$ and $(Z_j)_{j\geq 0}$,
both of which have constant immigration $\emax$ at each step.
The idea is that $(Y_j-Z_j)_{j\geq 0}$ has similar dynamics as $(V_j)_{j\geq 0}$ but should be less
concentrated because of the extra immigration. We make this precise by showing
that after centering both processes  around their means,
the first is dominated by the second in the convex stochastic order, signifying
that it is stochastically less variable than the second (see \cite[Section~3.A]{SS}).
Thus we can bound $V_j$ in terms of $Y_j-Z_j$, whose distribution can be explicitly
calculated when the child distributions are geometric.
\begin{lemma}\thlabel{lem:GW.emigration.comparison}
  Let $(V_j)_{j\geq 0}$ be a signed Galton--Watson process with migration $(e_j)_{j\geq 0}$
  whose child distribution has mean~$1$.
  Suppose that $V_0=v_0$ for
  some integer $v_0$ and that  $\abs{e_j}\leq \emax$ for all $j$.
  Let $(Y_j)_{j\geq 0}$ and $(Z_j)_{j\geq 0}$ be independent Galton--Watson processes
  with constant immigration $\emax$ and
  the same child distribution as $(V_j)$. Let $Y_0=v_0$ and $Z_0=0$
  if $v_0\geq 0$, and let $Y_0=0$ and $Z_0=\abs{v_0}$ if $v_0<0$.  
  For any 
  convex function $\varphi\colon\RR\to\RR$,
  \begin{align}\label{eq:cx.order}
    \E\varphi(V_j-\E V_j) \leq \E \varphi\bigl(Y_j-Z_j - \E[Y_j-Z_j]\bigr).
  \end{align}
\end{lemma}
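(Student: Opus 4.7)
My plan is to prove \eqref{eq:cx.order} by constructing a joint coupling of $(V_j)$, $(Y_j)$, and $(Z_j)$ on a common probability space and then applying conditional Jensen's inequality. Write $M^V_j := V_j - \E V_j$ and $W_j := Y_j - Z_j - \E[Y_j - Z_j]$. The goal of the coupling is to arrange that $M^V_j = \E[W_j \mid \mathcal F_j]$, where $\mathcal F_j := \sigma(V_0, \ldots, V_j)$; once this holds, conditional Jensen gives $\E\varphi(M^V_j) = \E\varphi(\E[W_j \mid \mathcal F_j]) \leq \E\varphi(W_j)$ for every convex $\varphi$.

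I would build the coupling step by step, maintaining the inductive invariants $Y_j \geq V_j^+$ and $Z_j \geq V_j^-$. At each step $j$, draw fresh i.i.d.\ copies $L^{(j)}_1, L^{(j)}_2, \ldots$ of the child distribution and form $V_j$ in the usual way from $|V_{j-1}|$ of them. To form $Y_j$ and $Z_j$: if $V_{j-1} > 0$, identify the first $V_{j-1}$ children of $Y$ at step $j$ with $L^{(j)}_1, \ldots, L^{(j)}_{V_{j-1}}$, and draw the remaining $Y_{j-1} - V_{j-1}$ children of $Y$ and all $Z_{j-1}$ children of $Z$ as fresh independent copies of $L$; if $V_{j-1} < 0$, perform the symmetric construction with $Y$ and $Z$ swapped; if $V_{j-1} = 0$, draw every child of $Y$ and $Z$ independently. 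The invariants persist because, on the side receiving the shared children, the new state dominates $|V_j|$ up to the nonnegative slack $\emax - |e_j|$, while the other side enjoys the baseline boost $\emax$, which itself bounds the opposite sign of $V_j$ since $|V_j|$ there is at most $\emax$.

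The payoff of the coupling is the decomposition of the one-step increment $D^W_j := W_j - W_{j-1}$ as $D^W_j = D^V_j + D^{\mathrm{noise}}_j$, where $D^V_j := M^V_j - M^V_{j-1}$ is reproduced exactly from the shared children, and $D^{\mathrm{noise}}_j$ is a signed sum of centered $(L - 1)$ terms built only from children that, by design, are independent of every child used in $V$ and of $V$'s entire history. Summing yields $W_j = M^V_j + \sum_{i=1}^j D^{\mathrm{noise}}_i$, so verifying $\E[W_j \mid \mathcal F_j] = M^V_j$ reduces to showing $\E[D^{\mathrm{noise}}_i \mid \mathcal F_j] = 0$ for each $i \leq j$. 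Conditioning further on $(Y_{i-1}, Z_{i-1})$, the unshared step-$i$ children appearing in $D^{\mathrm{noise}}_i$ remain i.i.d.\ copies of $L$ independent of the conditioning, so they have conditional mean zero; averaging out $(Y_{i-1}, Z_{i-1})$ then preserves this.

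The subtlest point in the execution---the step I expect will require the most care---is verifying that, under this coupling, the marginal joint law of $(Y, Z)$ remains that of two \emph{independent} Galton--Watson processes with immigration $\emax$, despite the sign of $V_{j-1}$ choosing which of $Y$ or $Z$ gets the shared children at step $j$. The resolution is that conditional on that sign, the families of step-$j$ children assigned to $Y$ and to $Z$ are each i.i.d.\ copies of $L$ and are mutually independent, with a conditional joint law that does not depend on the sign; hence the same joint law holds unconditionally, and $(Y, Z)$ is a bona fide pair of independent Galton--Watson processes with the correct marginals, making the Jensen step legitimate.
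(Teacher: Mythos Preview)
Your approach is correct and essentially matches the paper's: both construct a coupling of $(V,Y,Z)$ in which $V_j-\E V_j$ is a conditional expectation of $Y_j-Z_j-\E[Y_j-Z_j]$, and then apply conditional Jensen. The paper packages the same coupling via persistent sign/visibility labels on a single population (the ``invisible'' signed count $I_j$ playing the role of your accumulated noise) and proves the conditional-expectation identity by induction on $\E[I_j\mid V_j^+,V_j^-]$ rather than term-by-term on the increments, but the underlying construction and logic are the same.
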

  \newcommand{\Bplus}{\mathrm{B}+}
  \newcommand{\Bminus}{\mathrm{B}-}
  \newcommand{\Rplus}{\mathrm{R}+}
  \newcommand{\Rminus}{\mathrm{R}-}
\begin{proof}
  \newcommand{\visplus}{\mathrm{vis}+}
  \newcommand{\visminus}{\mathrm{vis}-}
  \newcommand{\invplus}{\mathrm{inv}+}
  \newcommand{\invminus}{\mathrm{inv}-}
  We define a more complicated branching process and embed $(V_j)$, $(Y_j)$,
  and $(Z_j)$ in it. Each population member has a sign (positive or negative)
  and a visibility (visible or invisible). Generation~$0$ consists
  of $\abs{v_0}$ visible members, which are all positive if $v_0\geq 0$
  and negative if $v_0<0$. From generation~$j-1$, we produce generation~$j$ by the following steps:
  \begin{enumerate}[1.]
    \item Each member of generation~$j-1$ gives birth to an independent quantity
      of children, sampled from the offspring distribution. Each child has the same sign
      as its parent and provisionally has the same visibility.
    \item Add $\emax$ new positive invisible population members and $\emax$ new
      negative invisible population members.
    \item 
      \begin{enumerate} \item If $e_j\geq 0$, switch $e_j$ negative visible members to invisible; if there are
      fewer than $e_j$ negative visible members, switch all of them to invisible and then
      switch positive invisible members to visible so that a total of $e_j$ members
      switch visibility.
    \item If $e_j<0$, switch $\abs{e_j}$ positive visible members to invisible;
      if there are
      fewer than $\abs{e_j}$ positive visible members, switch all of them to invisible and then
      switch negative invisible members to visible so that a total of $\abs{e_j}$ members
      switch visibility.
      \end{enumerate}

  \end{enumerate}
  
  Observe that it is always possible to switch the visibility of $\abs{e_j}$ population members
  because $\abs{e_j}\leq\emax$ and we add $\emax$ positive and negative invisible elements
  at each step. Also observe that in each generation, all visible members have the same sign.
  This is true by definition at generation~$0$, and then it holds at successive generations
  by induction:  each successive generation has all its provisionally visible
  elements with the same sign, and then new elements of a given sign are turned visible only when
  there are no visible elements of the opposite sign.
  
  We define $V_j^+$ and $V_j^-$ as the number of visible positive and negative members, respectively,
  at generation~$j$. Let $V_j=V_j^+-V_j^-$.
  Let $I_j^+$ and $I_j^-$ be the number of invisible positive and negative members, respectively,
  at generation~$j$, and let $I_j=I_j^+-I_j^-$.
  Finally let $Y_j = V_j^+ + I_j^+$ and $Z_j = V_j^-+I_j^-$.
  We claim that
  \begin{enumerate}[(i)]
    \item $(V_j)_{j\geq 0}$ is a signed Galton--Watson process
      with migration $e_j$, consistent with its definition in the statement of the 
      proposition;\label{i:Vj}
    \item $(Y_j)_{j\geq 0}$ and $(Z_j)_{j\geq 0}$ are independent Galton--Watson processes
      with immigration~$\emax$, and $V_j+I_j=Y_j-Z_j$;\label{i:Vj+Ij}
    \item $\E[V_j+I_j\mid V_j] = V_j - \sum_{i=1}^je_i$ a.s.\ for all $j\geq 0$.\label{i:condexp}
  \end{enumerate}
  Claim~\ref{i:Vj} is evident by considering the dynamics of the visible population members only.
  The part of claim~\ref{i:Vj+Ij} about $(Y_j)$ and $(Z_j)$ is proven similarly: 
  The processes $(Y_j)$ and $(Z_j)$ give the counts of positive and negative members, respectively,
  with visibility ignored. For each of these processes, the changes in visibility are irrelevant,
  and we simply see two independent Galton--Watson processes with $\emax$ new members added
  at each step. And the statement $V_j+I_j=Y_j-Z_j$ holds by definition.
  
  To prove claim~\ref{i:condexp}, we will show that
  \begin{align}\label{eq:stronger}
    \E\bigl[ I_j\bigmid V_j^+,\,V_j^-\bigr] &= -\sum_{i=1}^j e_i \text{ a.s.}
  \end{align}
  Claim~\ref{i:condexp} follows by adding $V_j$ to both sides of the equation
  and taking conditional expectations with respect to $V_j$.
  When $j=0$, equation~\eqref{eq:stronger} is trivial. Now we proceed inductively, 
  assuming \eqref{eq:stronger}.
  Let $(L_i^{\visplus})_{i\geq 1}$, $(L_i^{\visminus})_{i\geq 1}$,
  $(L_i^{\invplus})_{i\geq 1}$, and $(L_i^{\invminus})_{i\geq 1}$ be
  the offspring counts of each visible positive, visible negative, invisible positive, and invisible negative
  population member, respectively, in generation~$j$. These random variables have mean~$1$
  and
  are independent of each other and of the process
  up to generation~$j$. 
  From step~$j$ to step~$j+1$, the invisible members give birth to new provisionally invisible
  members of the same signs as their parents, a quantity of $\emax$ positive and negative invisible
  elements are added, and $\abs{e_{j+1}}$ elements have their visibility switched.
  Hence,
  \begin{align*}
    I_{j+1} &= \sum_{i=1}^{I_{j}^+}L_i^{\invplus} - \sum_{i=1}^{I_{j}^-}L_i^{\invminus}-e_{j+1}.
  \end{align*}
  Now, we consider $I_{j+1}$ conditional on the full information at generation~$j$ together
  with the offspring counts of the visible elements at generation~$j$. This information
  is independent of $(L_i^{\invplus})_{i\geq 1}$ and $(L_i^{\invminus})_{i\geq 1}$, yielding
  \begin{align*}
    &\E\bigl[ I_{j+1} \bigmid V_{j}^+,\,V_{j}^-,\,I_{j}^+,\,I_{j}^-,\,
                            (L_i^{\visplus})_{i\geq 1},\, (L_i^{\visminus})_{i\geq 1}\bigr]\\
         &\qquad\qquad\qquad\qquad\qquad\qquad= I_{j}^+ - I_{j}^- - e_{j+1} = I_{j}-e_{j+1}
         \text{ a.s.}
  \end{align*}
  Taking a conditional expectation of both sides of this equation given the above information except for 
  $I_{j}^+$ and $I_{j}^-$ gives
  \begin{align*}
    &\E\bigl[ I_{j+1} \bigmid V_{j}^+,\,V_{j}^-,\,
                            (L_i^{\visplus})_{i\geq 1},\, (L_i^{\visminus})_{i\geq 1}\bigr]\\
        &\qquad\qquad\qquad\qquad= \E\bigl[I_{j}\bigmid V_{j}^+,\,V_{j}^-,\,
                            (L_i^{\visplus})_{i\geq 1},\, (L_i^{\visminus})_{i\geq 1}\bigr]
                            -e_{j+1}\text{ a.s.}\\
       &\qquad\qquad\qquad\qquad=  \E\bigl[I_{j}\bigmid V_{j}^+,\,V_{j}^-\bigr]
          -e_{j+1} =-\sum_{i=1}^{j+1} e_i  \text{ a.s.}
  \end{align*}
  The second to last equality is by Doob's conditional independence property 
  \cite[Proposition~6.6]{Kallenberg02}, and the last is by the inductive hypothesis \eqref{eq:stronger}.
  Since $V_{j+1}^+$ and $V_{j+1}^-$ are measurable with respect to $\sigma\bigl(V_{j}^+,\,V_{j}^-,\,
                            (L_i^{\visplus})_{i\geq 1},\, (L_i^{\visminus})_{i\geq 1}\bigr)$,
  we can now take expectations with respect to $V_{j+1}^+$ and $V_{j+1}^-$ to obtain \eqref{eq:stronger}
  with $j$ replaced by $j+1$,
  thus completing the proof of claim~\ref{i:condexp}.
  
  It follows from claim~\ref{i:condexp} that
  \begin{align*}
    V_j - \E V_j = V_j - v_0 - \sum_{i=1}^j e_j &= \E\bigl[V_j+I_j \bigmid V_j\bigr] - v_0.
  \end{align*}
  Finally, by \ref{i:Vj+Ij} we have $V_j+I_j=Y_j-Z_j$ and $\E[Y_j-Z_j]=v_0$, yielding
  \begin{align*}
    V_j - \E V_j
     &= \E\Bigl[Y_j-Z_j - \E[Y_j-Z_j]\Bigmid V_j\Bigr].
  \end{align*}
  Now \eqref{eq:cx.order} follows by Jensen's inequality.
  \end{proof}

\begin{lemma}\thlabel{lem:Janson}
  Let
  \begin{align*}
    X=\sum_{i=1}^N \bigl(1 + \Geo(p_i)\bigr)
  \end{align*}
  and let $p_*=\min_i p_i$.
  If $\E X\leq\nu$, then for all $t\geq 0$
  \begin{align}\label{eq:Janson1}
    \P[ X - \nu\geq t] &\leq \exp\Biggl(-\frac{p_*t^2}{2(\nu+t)}    \Biggr).
  \end{align}
  If $\E X\geq\nu$, then for all $t\geq 0$
  \begin{align}\label{eq:Janson2}
    \P[ X - \nu\leq -t] &\leq \exp\Biggl(-\frac{p_*t^2}{2(\nu+t)}    \Biggr).
  \end{align}
\end{lemma}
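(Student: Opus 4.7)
The plan is to apply the Chernoff method to the moment generating function of the shifted geometric distribution, producing a Bernstein-type inequality of the kind that appears in Janson's paper on tails of sums of geometric variables (indeed, an equivalent statement is essentially \cite[Theorem~2.1]{Janson}, so an alternative to the proof below is simply to invoke it and verify constants).

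Write $X_i = 1 + \Geo(p_i)$ so that $X = \sum_{i=1}^N X_i$ with the $X_i$ independent, $\E X_i = 1/p_i$, and $\Var X_i = (1-p_i)/p_i^2 \leq 1/p_i^2$. The MGF is
\begin{align*}
    M_i(\theta) = \E e^{\theta X_i} = \frac{p_i e^\theta}{1 - (1-p_i)e^\theta}, \qquad \theta < -\log(1-p_i).
\end{align*}
The key step is to show that for $0 \leq \theta < p_*$,
\begin{align*}
    \log M_i(\theta) \leq \frac{\theta}{p_i} + \frac{\theta^2}{2 p_i^2 (1 - \theta/p_*)}.
\end{align*}
This follows from expanding $-\log(1 - (1-p_i)e^\theta) + \log p_i + \theta$ around $\theta = 0$ and bounding the remainder term by a geometric series in $\theta/p_*$; it is the analogue of the classical one-sided Bernstein MGF bound, adapted to the heavy (geometric) tail by the factor $(1 - \theta/p_*)^{-1}$.

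Summing over $i$, using $\sum 1/p_i = \E X \leq \nu$ and $\sum 1/p_i^2 \leq \nu/p_*$, gives
\begin{align*}
    \log \E e^{\theta X} \leq \theta \nu + \frac{\theta^2 \nu}{2 p_* (1 - \theta/p_*)},
\end{align*}
and hence, by Markov's inequality applied to $e^{\theta X}$,
\begin{align*}
    \P[X - \nu \geq t] \leq \exp\biggl(-\theta t + \frac{\theta^2 \nu}{2 p_* (1 - \theta/p_*)}\biggr).
\end{align*}
Choosing $\theta = \frac{p_* t}{\nu + t}$, which keeps $\theta/p_* < 1$ and is the standard Bernstein optimization, yields \eqref{eq:Janson1} after a short calculation.

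For the lower-tail bound \eqref{eq:Janson2}, apply the same method with the negative-argument MGF $\E e^{-\theta X_i} = p_i e^{-\theta}/(1 - (1-p_i)e^{-\theta})$, which is now defined for all $\theta > 0$ with no domain restriction and admits the even cleaner bound $\log \E e^{-\theta X_i} \leq -\theta/p_i + \theta^2/(2 p_i^2)$ for $\theta \geq 0$. Using $\E X \geq \nu$ in place of $\E X \leq \nu$, and the same Bernstein-style choice of $\theta$, produces the stated bound.

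The main obstacle is the MGF estimate in the first paragraph: the heavy geometric tail forces a $(1-\theta/p_*)^{-1}$ factor, so one must restrict to $\theta < p_*$ and verify the constants so that the final optimization gives exactly the factor $p_*/(2(\nu+t))$ in the exponent. Everything else is routine Chernoff bookkeeping.
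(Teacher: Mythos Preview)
Your approach differs from the paper's: the paper invokes Janson's published Theorems~2.1 and~3.1 to obtain $\P[X\gtrless \nu+s]\leq e^{-p_*h(\mu)}$ with $h(x)=(s+\nu-x)-x\log((s+\nu)/x)$, then exploits the monotonicity of $h$ (decreasing on $(0,s+\nu]$, increasing on $[s+\nu,\infty)$) to replace $h(\mu)$ by $h(\nu)$, and finally applies the elementary inequality $x-\log(1+x)\geq x^2/(2(1+|x|))$ to reach the stated form. You instead redo the Chernoff argument from scratch via an explicit sub-exponential MGF bound; this is a legitimate alternative and indeed close to how Janson himself proves his theorems.

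For the upper tail your outline is sound (the claimed MGF bound can be verified). However, there is a genuine gap in your lower-tail argument. After summing $\log\E e^{-\theta X_i}\leq -\theta/p_i+\theta^2/(2p_i^2)$ you need to control $\sum_i 1/p_i^2$. In the upper-tail case you used $\sum_i 1/p_i^2\leq(1/p_*)\sum_i 1/p_i=\mu/p_*\leq\nu/p_*$, valid precisely because $\mu\leq\nu$ there. For the lower tail the hypothesis is reversed, $\mu\geq\nu$, so the same chain gives only $\sum_i 1/p_i^2\leq\mu/p_*$ with $\mu$ possibly much larger than $\nu$. Your ``same Bernstein-style choice of $\theta$'' then produces an exponent of order $-p_*t^2/(2\mu)$ rather than $-p_*t^2/(2(\nu+t))$, which does not yield the stated bound when $\mu>\nu+t$. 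This is exactly the obstacle the paper's monotonicity step from $h(\mu)$ to $h(\nu)$ is built to overcome. To repair your argument you would need an analogous reduction: for instance, first prove the bound in the special case $\nu=\mu$, and then observe that with $t'=t+(\mu-\nu)$ one has $\P[X\leq\nu-t]=\P[X\leq\mu-t']$ together with $t'^2/(\mu+t')\geq t^2/(\nu+t)$.
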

\begin{proof}
  Let $\mu=\E X$ and fix $\nu,s\in\RR$. Define
  \begin{align*}
    h(x) &= x\Biggl(\frac{s-x+\nu}{x} - \log\biggl(\frac{s+\nu}{x}\biggr)\Biggr).
  \end{align*}
  Observe that $h'(x)=-\log((s+\nu)/x)$, and hence $h$ is decreasing on $(0,s+\nu]$
  and increasing on $[s+\nu,\infty)$.

  Suppose $\mu\leq\nu$ and $s\geq 0$.
  By \cite[Theorem~2.1]{Janson},
  \begin{align}\label{eq:Jut}
    \P[X-\nu\geq s] \leq e^{-p_*h(\mu)}\leq e^{-p_*h(\nu)},
  \end{align}
  with the final inequality holding because $h$ is decreasing on the interval from
  $\mu$ to $\nu$.
  Similarly, $\mu\geq\nu$ and $-\mu\leq s\leq 0$.
  By \cite[Theorem~3.1]{Janson},
  \begin{align}\label{eq:Jlt}
    \P[X-\nu\leq s] \leq e^{-p_*h(\mu)}\leq e^{-p_*h(\nu)},
  \end{align}
  with the final inequality holding because $h$ is increasing on the interval
  from $\nu$ to $\mu$.
  
  Now we bound the right-hand sides of \eqref{eq:Jut} and \eqref{eq:Jlt}. Assuming that
  $s\geq-\nu$,
  \begin{align*}
    e^{-p_*h(\nu)} &= \exp\Biggl(-p_* \nu\biggl(\frac{s}{\nu} - \log\biggl(1 + \frac{s}{\nu}\biggr)\Biggr)
    \leq \exp\Biggl(-\frac{p_*s^2}{2(\nu+\abs{s})}    \Biggr),
  \end{align*}
  using the inequality $x-\log(1+x)\geq x^2/2(1+\abs{x})$ for all $x\geq -1$.
  (To prove this inequality, observe that the derivative of $x-\log(1+x)-x^2/2(1+\abs{x})$
  is negative for $-1<x<0$ and positive for $x>0$.)
  Applying \eqref{eq:Jut} combined with this bound when $\mu\leq\nu$ and $s\geq 0$
  proves \eqref{eq:Janson1}, setting $t=s$.
  And applying \eqref{eq:Jlt} combined with this bound when $\mu\geq\nu$ and $-\nu\leq s\leq 0$
  proves \eqref{eq:Janson2}, setting $t=-s$, at least for the case $t\leq \nu$.
  And in fact \eqref{eq:Janson2} holds even when $t>\nu$ since the left-hand side
  of \eqref{eq:Janson2} is equal to zero in that case.\end{proof}

\begin{proof}[Proof of \thref{prop:GW.emigration.concentration}]
  Let $(Y_j)$ and $(Z_j)$ be independent Galton--Watson processes
  with child distribution $\Geo(1/2)$ and immigration $\emax$
  and let $Y_0=\abs{x_0}$ and $Z_0=0$.
  First, we work out the distributions of $Y_j$ and $Z_j$.
  By \thref{prop:GW.distributions},
  \begin{align*}
    Z_j \eqd \sum_{1}^{\emax}\Bigl(1+\Geo\bigl(\tfrac{1}{j}\bigr)\Bigr),
  \end{align*}
  with the summands taken as independent random variables with the given distributions.
  Thinking of $Y_j$ as being distributed as $Z_j$ plus $\abs{x_0}$ many independent
  Galton--Watson processes starting from $1$ with no immigration,
  \begin{align}\label{eq:Yjdist}
    Y_j \eqd \sum_1^{\emax}\Bigl(1 + \Geo\bigl(\tfrac{1}{j}\bigr)\Bigr) + \sum_1^{\abs{x_0}}\Ber\bigl(\tfrac{1}{j+1}\bigr)\bigl(1+ \Geo\Bigl(\tfrac{1}{j+1}\bigr)\Bigr).
  \end{align}
  
  Now, we want to prove concentration bounds for $Y_j$ and $Z_j$ that we can then transfer to 
  $X_j$ via \thref{lem:GW.emigration.comparison}.
  We start with $Z_j$. Since $\E Z_j =j\emax$,
  by \thref{lem:Janson} for all $t\geq 0$,
  \begin{align}
    \max\Bigl(\P[Z_j\geq \E Z_j + t],\,\P[Z_j\leq \E Z_j -t]\Bigr)\leq
    \exp\biggl(-\frac{t^2}{2j(j\emax + t)}\biggr).\label{eq:Zj}
  \end{align}
  For $Y_j$, we prove bounds on on the second sum in \eqref{eq:Yjdist}
  by separately controlling the Bernoulli and geometric random variables.
  Let $S\sim\Bin(\abs{x_0},\frac{1}{j+1})$ so that we can express the second sum as
  \begin{align*}
    Y_j':=\sum_{1}^S\bigl(1+ \Geo\Bigl(\tfrac{1}{j+1}\bigr)\Bigr).
  \end{align*}
  We will then bound the four terms on the right-hand sides of
  \begin{align}
    \P[Y'_j \geq \E Y'_j + 2t] \leq \P\biggl[Y'_j \geq \E Y'_j + 2t\text{ and } S\leq\frac{\abs{x_0}+t}{j+1}\biggr]
      + \P\biggl[S>\frac{\abs{x_0}+t}{j+1}\biggr]\label{eq:Yj.bound.ut}\\\intertext{and}
      \P[Y'_j \leq \E Y'_j - 2t] \leq \P\biggl[Y'_j \leq \E Y'_j  -2t\text{ and } S\geq\frac{\abs{x_0}-t}{j+1}\biggr]
      + \P\biggl[S<\frac{\abs{x_0}-t}{j+1}\biggr]\label{eq:Yj.bound.lt}
  \end{align}
  for $t\geq 0$.
  By Bernstein's inequality,
  \begin{align}\label{eq:Bernstein}
    \max\Biggl(\P\biggl[S > \frac{\abs{x_0} + t}{j+1}\biggr],\ 
                \P\biggl[S < \frac{\abs{x_0} - t}{j+1}\biggr]\Biggr)&\leq
    \exp\biggl( -\frac{t^2}{2(j+1)\bigl(\abs{x_0}+t/3\bigr)}\biggr). 
  \end{align}
  
  To bound the other two terms, we apply \thref{lem:Janson} conditionally given $S$.
  Let $\nu = \E Y'_j+t=\abs{x_0}+t$,
  so that the event $\{Y'_j\geq \E Y'_j+2t\}$ can be viewed
  as $\{Y'_j\geq \nu+t\}$.
  On the event $S\leq(\abs{x_0}+t)/(j+1)$, we have
    $\E[Y'_j\mid S] = S(j+1)\leq\nu$ a.s.
  Applying \thref{lem:Janson},
  \begin{align*}
    \P[Y'_j\geq \E Y'_j+ 2t\mid S]\1\biggl\{S\leq\frac{\abs{x_0}+t}{j+1}\biggr\}
      &= \P\Bigl[Y'_j \geq \nu+t\Bigmid S\Bigr]\1\biggl\{S\leq\frac{\abs{x_0}+t}{j+1}\biggr\}\\
      &\leq \exp\Biggl(-\frac{t^2}{2(j+1)(\abs{x_0}+2t)}\Biggr)\text{ a.s.}
  \end{align*}
  Hence
  \begin{align}\label{eq:66.3}
    P\biggl[Y'_j \geq \E Y'_j + 2t\text{ and } S\leq\frac{\abs{x_0}+t}{j+1}\biggr]
      &\leq 
      \exp\Biggl(-\frac{t^2}{2(j+1)(\abs{x_0}+2t)}\Biggr).
  \end{align}
  Similarly, for $\nu'=\E Y'_j-t$, we have $\E[Y'_j\mid S]\geq \nu'$ a.s.\ 
  on the event $S\geq(\abs{x_0}-t)/(j+1)$, and by \thref{lem:Janson} we obtain
  \begin{align*}
    \P[Y'_j-\E Y'_j\leq -2t \mid S]\1\biggl\{S\geq\frac{\abs{x_0}-t}{j+1}\biggr\}
    &= \P\Bigl[Y'_j-\nu' \leq -t\Bigmid S\Bigr]\1\biggl\{S\leq\frac{\abs{x_0}-t}{j+1}\biggr\}\\
      &\leq \exp\Biggl(-\frac{t^2}{2(j+1)(\nu'+t)}\Biggr)\text{ a.s.},
  \end{align*}
  yielding
  \begin{align}\label{eq:66.4}
    P\biggl[Y'_j \leq \E Y'_j - 2t\text{ and } S\geq\frac{\abs{x_0}-t}{j+1}\biggr]
      &\leq 
      \exp\Biggl(-\frac{t^2}{2(j+1)\abs{x_0}}\Biggr).
  \end{align}
  Combining \eqref{eq:Bernstein}, \eqref{eq:66.3}, and \eqref{eq:66.4}, the right-hand sides
  of \eqref{eq:Yj.bound.ut} and \eqref{eq:Yj.bound.lt} are both bounded by
  \begin{align*}
    2\exp\Biggl(-\frac{t^2}{2(j+1)(\abs{x_0}+2t)}\Biggr).
  \end{align*}
  Expressing $Y_j-Z_j$ as $Y'_j+Z_j'-Z_j$ with $Z_j'$ an independent copy of $Z_j$
  and applying our bound on \eqref{eq:Yj.bound.ut} and \eqref{eq:Yj.bound.lt}
  together with \eqref{eq:Zj}, we obtain
  \begin{align}
    \begin{split}
    &\max\biggl(\P\Bigl[Y_j - Z_j - \E[Y_j-Z_j] \geq t\Bigr],\ \P\Bigl[Y_j - Z_j - \E[Y_j-Z_j] \leq -t\Bigr]\bigr)\\
    &\qquad\qquad\qquad \leq C_0\exp\Biggl(-\frac{c_0t^2}{j(j\emax +\abs{x_0}+t)}\Biggr)
    \end{split}\label{eq:Yj-Zj}
  \end{align}
  for some constants $c_0,C_0>0$.
  
  Now, we transfer these estimates to $X_k$ using \thref{lem:GW.emigration.comparison}.
  Fix some $s\geq 2$ and
  define
  \begin{align*}
    \varphi(x) = \biggl(\frac{x}{\sqrt{j(j\emax+\abs{x_0})}} - s+1\biggr)\1\biggl\{\frac{x}{\sqrt{j(j\emax+\abs{x_0})}} - s+1\geq 0\biggr\},
  \end{align*}
  which is convex. Since $\varphi(x)\geq 1$ for $x\geq s\sqrt{j(j\emax+\abs{x_0})}$,
  \begin{align}
    \begin{split}
    \P\Bigl[ X_j-\E X_j \geq s\sqrt{j(j\emax+\abs{x_0})}\Bigr] &\leq \E \varphi\bigl(X_j-\E X_j\bigr)\\
      &\leq \E\varphi\bigl(Y_j-Z_j - \E[Y_j-Z_j]\bigr),
    \end{split}
    \label{eq:GW.Xj.bound1}
  \end{align}
  applying \thref{lem:GW.emigration.comparison} in the second line. (If $x_0<0$, then $Y_j-Z_j$
  should be replaced by $Z_j-Y_j$. For notational simplicity we continue to write $Y_j-Z_j$,
  since our tail bound \eqref{eq:Yj-Zj} on $Y_j-Z_j$ is symmetric anyhow.)
  Continuing the calculation,
  \begin{align}
    \begin{split}
    \E\varphi\bigl(Y_j-Z_j - \E[Y_j-Z_j]\bigr)&=
      \int_0^{\infty}\P\Bigl[ \varphi\bigl(Y_j-Z_j - \E[Y_j-Z_j]\bigr) \geq u\Bigr]\,du\\
      &= \int_{s-1}^{\infty}\P\Bigl[ Y_j-Z_j-\E[Y_j-Z_j] \geq v\sqrt{j(j\emax+\abs{x_0})} \Bigr]\,dv\\
      &\leq\int_{s-1}^{\infty}C_0\exp\Biggl(-\frac{c_0v^2}{1+\frac{v}{\sqrt{\emax  + \abs{x_0}/j}}}\Biggr)\,dv
    \end{split}
    \label{eq:GW.Xj.bound2}
  \end{align}
  using \eqref{eq:Yj-Zj}.
  Let
  \begin{align*}
    h(v)=\frac{c_0v^2}{1+\frac{v}{\sqrt{\emax+\abs{x_0}/j}}}.
  \end{align*}
  The function $h(v)$ is convex and hence for $v\geq v_0\geq 1$
  \begin{align}\label{eq:h(v).convex}
    h(v) - h(v_0)\geq h'(v_0)(v-v_0)\geq h'(1)(v-v_0).
  \end{align}
  We compute
  \begin{align*}
    h'(1) &= \frac{2c_0}{1+\frac{1}{\sqrt{\emax + \abs{x_0}/j}}} - \frac{c_0}{\sqrt{\emax + \abs{x_0}/j} +2 + \frac{1}{\sqrt{\emax+\abs{x_0}/j}} }\\
      &\geq \frac{2c_0}{1+\frac{1}{\sqrt{\emax + \abs{x_0}/j}}} - \frac{c_0}{\sqrt{\emax + \abs{x_0}/j} +2  }.
  \end{align*}
  This expression is increasing in $\emax$ and $\abs{x_0}$, and hence
  it is bounded below by its value when $\emax=1$ and $x_0=0$, yielding
  $h'(1)\geq \frac23 c_0$.
  
  Combining this bound with \eqref{eq:GW.Xj.bound1}--\eqref{eq:h(v).convex} and 
  recalling that we have assumed $s\geq 2$
  which makes $v\geq 1$ in the integral,
  \begin{align*}
    \P\Bigl[ X_j-\E X_j \geq s\sqrt{j(j\emax+\abs{x_0})}\Bigr] &\leq 
      \int_{s-1}^{\infty} C_0e^{-h(s-1)}\exp\Bigl(-\bigl(h(v)-h(s-1)\bigr)\Bigr)\,dv\\
      &\leq C_0 e^{-h(s-1)}\int_{s-1}^{\infty}e^{-\frac23 c_0(v-s+1)}\,dv\\
      &\leq C_1 \exp\Biggl( -\frac{c_1s^2}{1+\frac{s}{\sqrt{\emax+\abs{x_0}/j}}} \Biggr)
  \end{align*}
  for absolute constants $c_1,C_1>0$. Finally, substituting $t=s\sqrt{j(j\emax+\abs{x_0})}$ gives
  \begin{align*}
    \P\bigl[ X_j-\E X_j\geq t\bigr] &\leq C_1 \exp\Biggl( -\frac{c_1t^2}{j(j\emax+\abs{x_0}+t)} \Biggr),
  \end{align*}
  thus proving the upper bound in \eqref{eq:GW.ec.general}
  so long as $t\geq 2\sqrt{j(j\emax+\abs{x_0})}$. The lower bound
  under the same assumption on $t$
  is derived from \eqref{eq:Yj-Zj} in the exact same way,
  using the convex function
  \begin{align*}
        x\mapsto \abs[\bigg]{\frac{x}{\sqrt{j(j\emax+x_0)}} - s-1}\1\biggl\{\frac{x}{\sqrt{j(j\emax+x_0)}} - s-1\leq 0\biggr\},
  \end{align*}
  for fixed $s\leq -2$.
  To remove the condition $t\geq 2\sqrt{j(j\emax+\abs{x_0})}$,
  observe the the right-hand side of \eqref{eq:GW.ec.general} is decreasing in $t$
  and when $t=2\sqrt{j(j\emax+\abs{x_0})}$ is equal to
  \begin{align*}
    C \exp\Biggl( -\frac{4c(j\emax+\abs{x_0})}{j\emax+\abs{x_0}+2\sqrt{j(j+\emax+\abs{x_0})}} \Biggr)
      &\geq Ce^{-4c},
  \end{align*}
  and by increasing $C$ to ensure that $Ce^{-4c}\geq 1$, we obtain \eqref{eq:GW.ec.general}
  trivially for the case that $0\leq t\leq 2\sqrt{j(j\emax+\abs{x_0})}$.  
\end{proof}

\bibliographystyle{amsalpha}
\bibliography{main}

\end{document}